\newtheorem{theorem}{Theorem}[section]
\newtheorem{lemma}[theorem]{Lemma}
\newtheorem{remark}[theorem]{Remark}
\newtheorem{claim}[theorem]{Claim}
\newtheorem{definition}[theorem]{Definition}
\newtheorem{prop}[theorem]{Proposition}
\newtheorem{cor}[theorem]{Corollary}
\newtheorem{example}[theorem]{Example}
\let\oldtocsection=\tocsection
\let\oldtocsubsection=\tocsubsection
\renewcommand{\tocsection}[2]{\hspace{0em}\oldtocsection{#1}{#2}}
\renewcommand{\tocsubsection}[2]{\hspace{1em}\oldtocsubsection{#1}{#2}}
\title{A deformation of Robert-Wagner foam evaluation and link homology}
 \author{Mikhail Khovanov}
 \address{Department of Mathematics, Columbia University, New York, NY 10027, USA}
 \email{\href{mailto:khovanov@math.columbia.edu}{khovanov@math.columbia.edu}}
 \author{Nitu Kitchloo}
 \address{Department of Mathematics, Johns Hopkins University, Baltimore, MD 21218, USA}
 \email{\href{mailto:nitu@math.jhu.edu}{nitu@math.jhu.edu}}
\date{April 30, 2020}
\begin{document}
\begin{abstract}
    We consider a deformation of the Robert-Wagner foam evaluation formula, with an eye toward a relation to formal groups. Integrality of the deformed evaluation is established, giving rise to state spaces for planar $GL(N)$ MOY graphs (Murakami-Ohtsuki-Yamada graphs). Skein relations for the deformation are worked out in details in the $GL(2)$ case. These skein relations deform $GL(2)$ foam relations of Beliakova, Hogancamp, Putyra and Wehrli. We establish the Reidemeister move invariance of the resulting chain complexes assigned to link diagrams, giving us a link homology theory.
\end{abstract}

\def\R{\mathbb R}
\def\Q{\mathbb Q}
\def\Z{\mathbb Z}
\def\N{\mathbb N}
\def\C{\mathbb C}
\def\CP{\mathbb P}
\renewcommand\SS{\ensuremath{\mathbb{S}}}
\def\l{\lbrace}
\def\r{\rbrace}
\def\o{\otimes}
\def\lra{\longrightarrow}
\def\Hom{\mathrm{Hom}}
\def\Id{\mathrm{Id}}
\def\mc{\mathcal}
\def\mf{\mathfrak}
\def\Ext{\mathrm{Ext}}
\def\MU{\mathrm{MU}}
\def\U{\mathrm{U}}
\def\U{\mathrm{U}}
\def\EU{\mathrm{EU}}
\def\BU{\mathrm{BU}}
\def\mfgl{\mathfrak{gl}}
\def\mfglN{\mathfrak{gl}_N}
\def\Foamt{\mathrm{Foam}_2}
\def\Free{\mathrm{Fr}}

\def\lra{\longrightarrow}
\def\kk{\mathbf{k}}  
\def\kR{\widetilde{R}} 
\def\lF{\langle F\rangle}  
\def\lG{\langle G\rangle} 
\def\gdim{\mathrm{gdim}}  
\def\rk{\mathrm{rk}}
\def\wchi{\widetilde{\chi}}
\def\wR{\widetilde{R}}
\def\ovmu{\overline{\mu}}
\def\Rminus{R^{-}}
\def\Rminuslong{\Z[\rho_0, \rho_1^{\pm 1}]\llbracket E_1, E_2 \rrbracket}
\def\ovR{\overline{R}}

\newcommand{\brak}[1]{\ensuremath{\left\langle #1\right\rangle}}
\newcommand{\oplusop}[1]{{\mathop{\oplus}\limits_{#1}}}
\newcommand{\ang}[1]{\langle #1 \rangle_{RW} } 
\newcommand{\angf}[1]{\langle #1 \rangle } . 
\newcommand{\pseries}[1]{\kk\llbracket #1 \rrbracket}
\newcommand{\addfigure}{\vspace{0.1in} \begin{center} {\color{red} ADD FIGURE} \end{center} \vspace{0.1in} }
\newcommand{\add}[1]{\vspace{0.1in} \begin{center} {\color{red} ADD FIGURE #1} \end{center} \vspace{0.1in} }

\def\NK#1{{\color{green}[NK: #1]}}
\def\MK#1{{\color{red}[MK: #1]}}%

\maketitle
\tableofcontents

%
%

\section{Introduction}
\label{sec:introduction}


\subsection{MOY graphs and quantum invariants for level one representation}

Foams are 2-dimensional combinatorial CW-complexes, often with extra decorations, embedded in $\R^3$. They naturally appear~\cite{Kh2,KRo2,MV1,MSV,QR,RWd} in the study of link homology theories that categorify quantum $\mathfrak{sl}_N$ or $\mfglN$ link invariants for level one representations when $N\ge 3$.

Reshetikhin-Turaev-Witten invariants~\cite{RT,W} of oriented links $L$ in the 3-sphere $\SS^3$ depend on the choice of a simple Lie algebra $\mf{g}$ and an irreducible representation of $\mf{g}$ associated to each component of $L$. When $\mf{g}=\mf{sl}_N$ and the  components are labelled by level one representations of $\mf{sl}_N$, the Reshetikhin-Turaev-Witten invariant $P(L)\in\Z[q,q^{-1}]$ can be written~\cite{MOY} as a linear combinations of terms $P(\Gamma)\in \Z_+[q,q^{-1}]$ over trivalent oriented planar graphs $\Gamma$ with edges labelled by integers between $1$ to $N$. $P(\Gamma)$ is known as the Murakami-Ohtsuki-Yamada or \emph{MOY invariant} of $\Gamma$.

An edge labelled $a$ corresponds to the identity intertwiner of $\Lambda_q^a V$,
the latter a quantum group representation which $q$-deforms
the $a$-th exterior power of the fundamental representation of the Lie algebra $\mf{sl}_N$.
At this point it's convenient to shift from $\mf{sl}_N$ to $\mfglN$, and view $\Lambda_q^a V$ as a representation of
$U_q(\mfglN)$ rather than that of $\mf{sl}_N$. This change will be more essential at the categorified level of homological invariants rather than for uncategorified quantum invariants, taking values in $\Z[q,q^{-1}]$.

Oriented labelled graphs $\Gamma$ are built out of trivalent vertices that correspond to suitably scaled inclusion and projection of $\Lambda_q^{a+b}V$ into and out of the tensor product $\Lambda_q^a V\otimes \Lambda_q^b V$, see Figure~\ref{fig:MOY_graphs}.

\begin{figure}[h]
\begin{center}
\includegraphics[scale=0.60]{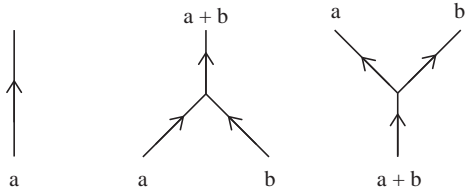}
\caption{\label{fig:MOY_graphs} Generating diagrams for $GL(N)$ MOY graphs. They correspond to the identity intertwiner on $\Lambda_q^a V$ and  projection and inclusion (up to scaling) between $\Lambda_q^a V\otimes \Lambda_q^b V $ and $\Lambda_q^{a+b}V$. }
\end{center}
\end{figure}
Quantum $\mfglN$ (or MOY) invariant of $\Gamma$ is given by a suitable convolution of these maps, which for closed graphs $\Gamma$ results in a Laurent polynomial $P(\Gamma)\in \Z[q,q^{-1}]$ with nonnegative coefficients, see~\cite{MOY} for integrality and~\cite[Appendix 2A]{RW2} for nonnegativity via a suitable state sum formula. Planar graph invariant $P(\Gamma)$ can be computed either via a state sum formula or inductively via skein relations.

As we mention earlier, $\Z[q,q^{-1}]$-linear combinations
of invariants $P(\Gamma)$ give quantum link invariants $P(L)$, when $\mf{g}=\mathfrak{gl}_N$ and components of $L$ are labelled by level one representations, that is, by $\Lambda^a_q V$, over different $a$'s.

The reason for the popularity of this specialization (from $\mf{g}$ to $\mfglN$ and to level one representations), especially with an eye towards categorification, is the relative simplicity of these formulas compared to the case of general $\mf{g}$ and its representations, where canonical choices of intertwiners associated to graph's vertices are harder to guess, spaces of these intertwiners may be  more than one-dimensional,  decomposition of a crossing into a linear combinations of planar graphs has more complicated coefficients or may be difficult to select, and evaluations of $P(\Gamma)$ lose positivity, acquire denominators and live in $\Q(q)$ rather than $\Z_+[q,q^{-1}]$. Any such complication makes  categorical lifting noticeably harder. An approach to categorification of the  Reshetikhin-Turaev-Witten link invariants for an arbitrary $\mf{g}$ and arbitrary representations has been developed by  Webster~\cite{We}. It's an open problem to find a foam-like interpretation of Webster link homology theories and refine them to achieve functoriality under link cobordisms.


\subsection{Foams and Robert-Wagner evaluation}

The key property of $P(\Gamma)$ is it having non-negative coefficients, that is, taking values in $\Z_+[q,q^{-1}]$, rather than just in $\Z[q,q^{-1}]$, where link invariants $P(L)$ live. In the lifting of
$P(L)$ to homology groups, state spaces $\brak{\Gamma}$ will be graded, with graded rank (as a free module over the graded ring $R_N$ of symmetric functions, see below) having non-negative coefficients, thus lying in $\Z_+[q,q^{-1}]$, Homology groups $H(L)$ come from complexes of state spaces $\brak{\Gamma}$, built from various resolutions $\Gamma$ of $L$.

Louis-Hadrien Robert and Emmanuel Wagner  discovered a remarkable evaluation formula for $GL(N)$ foams~\cite{RW1}. Their formula leads to a natural construction of homology groups (or state spaces) for each planar trivalent  MOY graph $\Gamma$ as above.

At the categorified level of this story, Robert-Wagner foam evaluation leads to a state space $\brak{\Gamma}$, a
graded module over the ring $R_N=\Z[x_1,\dots, x_N]^{S_N}$  of symmetric polynomials in $x_1, \dots, x_N$ with coefficients in $\Z$. Robert and Wagner prove~\cite{RW1} that the graded $R_N$-module is free
and finitely-generated, of graded rank $P(\Gamma)$.

Thus, graded rank of $R_N$-module $\brak{\Gamma}$  categorifies the quantum $\mfglN$ invariant (the Murakami-Ohtsuki-Yamada invariant) of these planar graphs. Forming suitable complexes out of these state spaces and taking homology groups leads to bigraded homology theories of links that categorify the HOMFLYPT polynomial and its generalizations to other quantum exterior powers of the fundamental representation~\cite{ETW}, see also earlier approaches~\cite{Y,Wu1,Wu2} to categorification of $\mfglN$ link homology with components colored by arbitrary level one representations.

We now recall the details of Robert-Wagner's foam invariant.
A $GL(N)$-foam $F$ is a two-dimensional piecewise-linear compact  $CW$-complex $F$ embedded in $\R^3$. Its facets are oriented in a compatible way and labelled by numbers from $0$ to $N$ called the \emph{thickness} of a facet (facets  of thickness 0 may be removed) with points of three types:
\begin{itemize}
    \item A regular point on a facet of thickness $a$.
    \item A point on a singular edge, which has a neighbourhood homeomorphic to the product of a tripod $T$ and an interval $I$. The three facets must have thickness $a,b,a+b$ respectively. One can think of thickness a,b facets as merging into the thick facet or vice versa, of the facet of thickness $a+b$ splitting into two thinner facets of thickness $a$ and $b$.
    \item A singular vertex where four singular edges meet. The six corners of the foam at the vertex have thickness $a,b,c,a+b,b+c,a+b+c$ respectively.
\end{itemize}
Neighbourhoods of these three types of points are depicted below.

\begin{figure}[h]
\begin{center}
\includegraphics[scale=0.75]{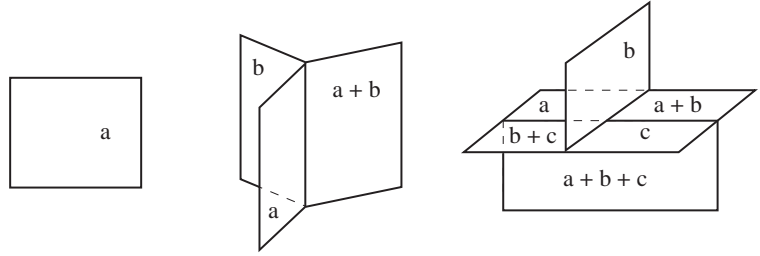}
\caption{Three types of points on a foam}
\end{center}
\end{figure}
Orientations of facets are compatible at singular edges, see Figure~\ref{fig:ori_conv} below.

A singular vertex can be viewed, see Figure~\ref{fig:sing_vertex}, as the singular point of the cobordism between two labelled trees that are the two splittings of an edge of thickness $a+b+c$ into edges of thickness $a,b,c,$ respectively. This is a kind of "associativity" cobordism, which is invertible when viewed as an appropriate module map between state spaces associated to MOY planar graphs in the foam theory.
\begin{figure}[h]
\begin{center}
\includegraphics[scale=0.60]{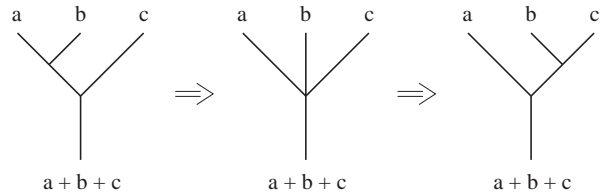}
\caption{\label{fig:sing_vertex} Cross-sections near a singular vertex}
\end{center}
\end{figure}
We follow the orientation conventions from~\cite{ETW}. They show compatible orientations on facets of thickness $a$ and $b$ attached along a singular edge to a facet of thickness $a+b$. The same diagram shows induced orientations on top and bottom boundaries of foam $F$. This convention will be used once we pass from closed foams to foams with boundary, viewed as cobordisms between $GL(N)$ MOY graphs.
\begin{figure}[h]
\begin{center}
\includegraphics[scale=0.7]{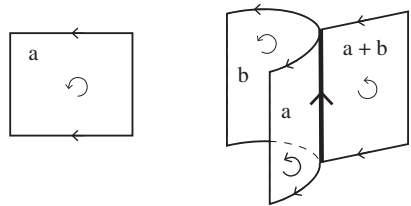}
\caption{\label{fig:ori_conv} Orientation conventions from~\cite[Figure 1]{ETW}. An orientation of a facet induces an orientation of its top boundary (if non-empty, for non-closed foams only) by sticking the first vector of the orientation basis up out of the foam. The remaining vector then induces an orientation of the boundary. For the bottom boundary the resulting orientation is reversed. To induce an orientation on a singular circle, approach it with an orientation basis from a thin facet and point the first vector into the thick facet. The second vector then defines an orientation of the singular circle (or a singular arc, if foam is not closed). This is the one convention we choose out of the four possible conventions for inducing orientations on the boundary and on singular lines, given an orientation of a facet. }
\end{center}
\end{figure}

 Facets $f$ of a foam $F$ are the connected components of the set $F\setminus s(F)$, where $s(F)$ is the set of the singular points of $F$. Thickness of $f$ is denoted $\ell(f)$. The set of facets of $F$ is denoted $f(F)$. A coloring $c$ of $F$ is a map
$c: f(F) \lra 2^{I_N}$ from the set of facets to the set of subsets of $I_N=\{1,\dots, N\}$ such that subset $c(f)$ has cardinality $\ell(f)$ and for any three facets $f_1, f_2, f_3$ attached to a singular edge with $\ell(f_3)=\ell(f_1)+\ell(f_2)$ equality $c(f_3)=c(f_1)\sqcup c(f_2)$ holds. In other words, the subset for $f_3$ is the union of subsets for $f_1$ and $f_2$. A foam may come with decorations (dots). A dot  on a facet $f$ of thickness $a$ represents a homogeneous symmetric polynomial $P_f$ in $a$ variables.

Any coloring $c$ gives rise to closed surfaces $F_i(c)$, $1\le i \le N$,  which are unions of facets $f$ such that $c(f)$ contains $i$.  One also forms symmetric differences $F_{ij}(c)=F_i(c)\Delta F_j(c)$, which are the unions of facets $f$ such that $c(f)$ contains exactly one element of the set $\{i,j\}$. Surfaces $F_{ij}(c)$, $i\not= j$ are closed orientable as well.

Rogert-Wagner evaluation $\ang{F,c}$ of a foam on a coloring $c$ is
\begin{equation}\label{eq:RW1}\ang{F,c} = (-1)^{s(F,c)}\frac{P(F,c)}{Q(F,c)} ,
\end{equation}
where
\begin{eqnarray*}
s(F,c) & = & \theta^+(c) +\sum_{i=1}^N i \chi(F_i(c))/2  \ , \\
\theta^+(c) & = & \sum_{i<j}\theta^+_{ij}(c), \\
P(F,c) & = & \prod_{f\in f(F)} P_{f}(c), \\
Q(F,c) & = & \prod_{1\le i<j\le N}
(x_i - x_j)^{\chi(F_{ij}(c))/2}
\end{eqnarray*}
Here $\theta^+_{ij}(c)$ counts the number of circular seams on the surface $F_{ij}(c)$ along which the cyclic order of the three attached facets (with $i$ but not $j$ in the coloring $c$,
with $j$ but not $i$ in the coloring, and with $i,j$ in the coloring) is one of the two types, called \emph{positive} type. Positivity is determined by the left hand rule with the direction of the thumb along the positive orientation, on turning the fingers of the left hand from the facet with color $i$ to the one with color $j$ for $i < j$.

\begin{figure}[h]
\begin{center}
\includegraphics[scale=0.70]{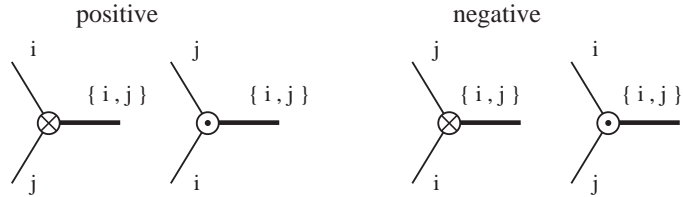}
\caption{\label{fig:sign_contr} $\theta^+_{ij}$ counts the number of positive $(i,j)$-circles, $i<j$}
\end{center}
\end{figure}

$\chi(S)$ denotes the Euler characteristic of the surface $S$.
Term $P_f(c)$ is the product of symmetric functions associated to the dots on the facet $f$, in variables $x_k$, where $k$ runs over the elements of $c(f)$.

Now define
\begin{equation} \label{eq:RW1_sum}
\ang{F} \ = \ \sum_c \ang{F,c},
\end{equation}
the sum over all colorings $c$ of $F$. We refer the readers to~\cite{RW1} for more details on $GL(N)$-foams and their evaluations.

Later in the paper we deform Robert-Wagner evaluation and for the most part work with the deformation. To keep notations light, we use Robert-Wagner's notation $\angf{F}$ to denote deformed evaluation and denote their  original one by $\ang{F}$.

One of the first key results of Robert and Wagner~\cite{RW1} is that $\ang{F}$ is a (symmetric) polynomial rather than just a rational function of $x_1, \dots, x_N$, thus an element of $R_N$.

Generic intersection of a foam $F$ with a plane $\R^2$ in $\R^3$ will result in an oriented planar graph $\Gamma$, which is exactly a $GL(N)$ MOY graph. It is straightforward to introduce foams with boundary.
With the evaluation $\ang{F}$ for closed foams at hand, one can now define the state space $\ang{\Gamma}$ of $\Gamma$ as a graded  $R_N$-module freely generated by symbols $\ang{F}$ of foams $F$ from the empty graph to $\Gamma$, modulo the relations that $\sum_k a_k \ang{F_k}=0$ for foams $F_k$ from $\emptyset$ to $\Gamma$ and $a_k \in R_N$ iff for any foam $G$ from $\Gamma$ to the empty graph $\sum_k a_k \ang{GF_k} =0$. Here $GF_k$ is a closed foam, the gluing or composition of $G$ and $F_k$ along $\Gamma$. Robert-Wagner state spaces (or homology) of graphs $\Gamma$ are then used as building blocks for link homology groups~\cite{ETW}.

As an informal remark, we want to point out that the foams considered in the Robert-Wagner construction~\cite{RW1} should really be called \emph{$GL(N)$-foams}. For \emph{$SL(N)$-foams} one would want to allow seamed edges along which three facets of thickness $a,b,c$ with $a+b+c=N$ or $a+b+c=2N$ meet and allow singular vertices along which such seam edges interact. Robert and Wagner~\cite{RW1} briefly discuss how to extend their evaluation to such foams.

Fundamental applications of Robert-Wagner foam evaluation are developed in~\cite{ETW,RW2,RW3}, with more clearly on the way, see also~\cite{KR,Bo}. Foam evaluation in the limit $N\to\infty$ and restricted to foams in special position provides a connection between foams and Soergel and singular Soergel bimodules~\cite{RW2,KRW}. Other approaches to Soergel and singular Soergel bimodules via foams~\cite{Vz,MV1,QR,RWd,Wd} do not use foam evaluation, utilizing instead matrix factorizations, more direct foam computations in $N=2,3$ cases, and other methods. Earlier, an extension of the Kapustin-Li formula was proposed for foam evaluation~\cite{KRo2}, but due to its more implicit nature was not easy to apply~\cite{MSV}.

Looking beyond foam evaluation, both foams as they are used in link homology and spin foams~\cite{Ba} have "foam" in their names, but we don't know if there is a relation between the two theories beyond this observation. Also see Natanzon~\cite{Nt} and the follow-up papers for yet another direction in the foam theory.


\subsection{Formal groups as a motivation}

In this paper we propose a deformation of the Robert-Wagner evaluation formula, motivated by algebraic topology and generalized cohomology theories related to formal groups. Link homology theories in the $SL(2)$ case have been lifted to spectra by Lipshitz and Sarkar~\cite{LS1,LS2} and Hu, Kriz and Kriz~\cite{HKK}. More recently, a lifting of bigraded $GL(N)$ link homologies as well as the triply-graded homology to  equivariant spectra has been constructed by the second author~\cite{K1,K2,K3}.

Application of generalized cohomology theories to these spectra results in new homological link invariants as well as cohomological operations on them. A purely combinatorial or algebraic description of these homological invariants is clearly desirable, and modifying foam theory and foam evaluation may be a natural first step in this direction.

$GL(N)$ foams are closely related to Grassmannians and partial and full complex flag varieties.
A family of cohomology theories known as complex oriented cohomology theories is related to these varieties as well, and to deformations of the formula
for the first Chern class in singular cohomology of the tensor product of line bundles $c_1(L_1\otimes L_2)=c_1(L_1) + c_1(L_2)$ to formulas
\[ c_1(L_1\otimes L_2) = \mathscr{F}(c_1(L_1), c_1(L_2))
\]
that hold for the first Chern class invariant in these geralized cohomology theories, where
\[ \mathscr{F}(x, y) = x + y + \sum_{i+j>1}  a_{i,j} x^i y^j
\]
is, in general, a power series in $x,y$ with coefficients in the ground ring.  Such a power series admits rich internal structure, making it a {\em Formal group law}. In Section~\ref{sec:formal} we shall study formal group laws in detail, but let us briefly point out some relevant structure in this introduction. Among the relations satisfied by $\mathscr{F}(x,y)$ is the associativity relation, which leads to polynomial relations on $a_{i,j}$ which admit a universal solution with one generator for each $k=i+j-1$, $k\ge 1$. This solution is hard to write down explicitly, and most manipulations with general formal group laws are implicit~\cite{Ha,St} (see section \ref{sec:formal} for examples).

With formal group law $\mathscr{F}(x,y)$ at hand, one defines $-_{\mathscr{F}} x$ or $[-1]x$ as
power series $-x+\dots$ which solves the equation $\mathscr{F}(x,[-1]x)=0$, and forms the power series $x -_{\mathscr{F}} y=\mathscr{F}(x,-_{\mathscr{F}} y)$, also denoted $x[-1]y$:
\[ x[-1]y \  = \ x -_{\mathscr{F}} y \  :=  \ \mathscr{F}(x,-_{\mathscr{F}} y).
\]
This expression deforms $x-y$, so that $x -_{\mathscr{F}} y = x-y + \mathrm{higher \ order \ terms}.$ One can show
$x -_{\mathscr{F}} y = (x-y)q(x,y)$ for an invertible element $q(x,y)$ of a suitable power series ring.  We write $x-y=p(x,y)(x -_{\mathscr{F}} y)$ where $p(x,y)q(x,y)=1,$ and use $p(x,y)=q(x,y)^{-1}$
in our computations.

From the standpoint of algebraic topology, $x-_{\mathscr{F}} y$ represents the Euler class (in the cohomology theory corresponding to the formal group law $\mathscr{F}(x,y)$) of the line bundle $\mathcal{L}_1 \otimes \mathcal{L}_2^\ast$, where $\mathcal{L}_1$ and $\mathcal{L}_2$ represent the tautological line bundles over the product space $\C\CP^{\infty} \times \C\CP^{\infty}$. In other words, the expression $q(x,y)$ should be interpreted as the {\em relative Euler class} for the bundle $\mathcal{L}_1 \otimes \mathcal{L}_2^\ast$, in the sense that one compares the Euler classes in the cohomology theory corresponding to $\mathscr{F}(x,y)$, to the standard Euler class in singular cohomology. In this context, products of the form $q(x_{i_1},x_{j_1})\, q(x_{i_2},x_{j_2})\ldots q(x_{i_k},x_{j_k})$ (which we will come across often in this paper) may be interpreted as the relative Euler class of the {\em direct sum} of the line bundles corresponding to each factor.

\medskip
Robert-Wagner foam evaluation formulas contain powers of $x_i - x_j$ in the denominator, and a natural idea would be to carefully replace them with  $x_i -_{\mathscr{F}} x_j$.  We pursue a variant of this idea in this paper. Similar replacements have already been considered for various formulas in the theory of symmetric functions, including the Weyl formula for the Schur function, see~\cite{NN1,NN2,Na} and references therein. Foam evaluation specializes to the Weyl formula for the Schur function in the case of the so-called theta-foam and its natural generalizations.

\medskip
On the algebraic topology side, the expressions $x_i-x_j$ for $1 \leq i \neq j \leq N$ have a natural meaning as the Euler classes in singular cohomology for the roots $\alpha$ of $GL(N)$ (we have included both positive and negative roots). In particular, the deformation $x_i -_{\mathscr{F}} x_j$ can be interpreted as the Euler class of $\alpha$ in an exotic cohomology theory corresponding to the formal group law $\mathscr{F}(x,y)$. We may therefore speculate that the corresponding deformed foam evaluation formula is obtained by applying an exotic cohomology theory to a (hitherto undefined) homotopy type. The existence of such a homotopy type for foam evaluations is very compelling given the results by the second author \cite{K1,K2,K3}. Since $GL(N)$ has $N(N-1)$ roots $\alpha$ representing the weights $x_i - x_j$ in the standard basis for $1 \leq i \neq j \leq N$, we see that our deformed evaluation formulas will be expressible in terms of $N(N-1)$ parameters given by the relative Euler classes $q(x_i,x_j)$. However, these extra parameters will satisfy certain constraints with coefficients in the algebra of symmetric power series in $N$-variables (which is the $GL(N)$-equivariant cohomology of a point). This suggests that the possible underlying homotopy type for foam evaluations is built from universal bundles using suitable subsets of roots of $GL(N)$.

\medskip
 The discussion above motivates our deformation using the language of formal group laws and related cohomology theories. We go into this further in section \ref{sec:formal}.  Interestingly however, although motivated by it, our deformation setup will end up not requiring all the constraints on the power series $q(x,y)$ imposed by a formal group law. For instance, we will not require associativity from our analogue of the power series $\mathscr{F}(x,y)$. We therefore take as Ansatz, the series $p(x,y)$, the inverse of $q(x,y)$, with arbitrary coefficients.  Most of the information in the coefficients of $q(x,y)$ will turn out to be redundant in our framework, at least in the $GL(2)$ case. However, it is conceivable that one may endow our constructions with the action of cohomology operations which are sensitive to more coefficients in the power series $q(x,y)$.


\subsection{Plan of the paper}
In section~\ref{sec:deformed}, motivated by analogies with formal group laws, we write down a multi-parameter deformation of the Robert-Wagner evaluation of closed $GL(N)$-foams and prove its integrality for any such foam. In section~\ref{sec:formal} we review formal group laws and corresponding generalizations of the divided difference operators. In section~\ref{sec:deform} we specialize to $N=2$ and study this deformation, which ultimately adds two more variables, of the $GL(2)$ foam evaluation. Skein  relations for the deformed $GL(2)$ foam evaluation  are derived in section~\ref{sec:skein_relations}. In section~\ref{sec:prefoams} we work out the ground ring $R$ for the deformed theory, which has four generators $E_1,E_2,$ $\rho_0,\rho_1$ of degrees $2,4,-2,0$, respectively. For comparison, the ground ring for the usual  $GL(2)$-equivariant link homology has generators $E_1, E_2$ (also denoted $h,t$, up to a minus sign).
In section~\ref{subsec:direct_sum_dec} we show, unsurprisingly, that the state spaces (or homology) of planar $GL(2)$ webs are free modules over the graded ring $R$ of rank $(q+q^{-1})^k$ over the ground ring $R$, where $k$ is the number of thin circles in a web. In section~\ref{sec:reidemeister} we extend the state spaces to homology groups of planar link diagrams and  show the invariance under the Reidemeister moves.

Specializing power series $p(x,y)$ to $p(x,y)=1$ recovers the $GL(2)$ foam theory of Beliakova, Hogancamp, Putyra, and Wehrli~\cite{BHPW}. Simplifying computations in our section~\ref{sec:deform} to this case gives a foam evaluation approach to their theory.

$GL(2)$ foam theory that comes from this deformation seems very similar to the $SL(2)$ theory as set up by Vogel~\cite{V} and extended by him to get a strong invariant of tangle cobordisms, without the sign indeterminacy. The relation is given by dropping double facets but remembering singular circles along which the facets attach to the thin surface of a $GL(2)$ foam.   $GL(2)$ foam theory also has an overlap with Ehrig-Stroppel-Tubbenhauer's generic $GL(2)$ foams~\cite{EST1}.

We've already mentioned connections to Clark-Morrison-Walter~\cite{CMW} and Caprau~\cite{Ca1,Ca2} who have achieved full functoriality of the $SL(2)$ link homology via diagrammatical calculi that employ singular circles on thin surfaces. These circles should be remnants of attached double facets. Caprau's theory is $GL(2)$ equivariant, with variables $h$ and $a$ in place of our $E_1$ and $E_2$.

Twisting of $SL(2)$ theories in Vogel~\cite{V} is related to the deformation via power series $p(x,y)$ in this paper. We plan to elucidate connections to Vogel~\cite{V}, Ehrig-Stroppel-Tubbenhauer~\cite{EST1}, and  to Turaev-Turner's rank two Frobenius algebra structures~\cite{TT} in a follow-up paper and also see whether the $p(x,y)$ deformation  corresponds to the  twisting~\cite{Kh4,V} in the $N=2$ and the general case.

\subsection{Acknowledgments}
M.K. was partially supported by NSF grants  DMS-1664240 and DMS-1807425 while working on this paper.
The authors are grateful to Yakov Kononov, Louis-Hadrien Robert and Lev Rozansky for valuable discussions and  would like to thank Elizaveta Babaeva\footnote{Elizaveta Babaeva, https://www.behance.net/lizababaiva} for help with producing figures for the paper.

%
%

\section{Deformed evaluation for $GL(N)$ foams} \label{sec:deformed}

The $GL(N)$ Robert-Wagner formula has denominators of the form $(x_i-x_j)^{\chi_{ij}(c)/2}$, where $\chi_{ij}(c)=\chi(F_{ij}(c))$ is the Euler characteristic of the bicolored surface $F_{ij}(c)$. The expression $x_i-x_j$ can be generalized to $x_i -_{\mathscr{F}} x_j = x_i[-1]x_j$, where $F$ is a formal group law. Unlike the additive case, when $x-y = - (y-x)$, most formal group laws do not satisfy $x [-1] y = - ( y [-1] x)$, while those that do are called \emph{symmetric}.  Converting
$(x_i-x_j)^{\chi_{ij}(c)/2}$ to
$(x_i [-1] x_j)^{\chi_{ij}(c)/2}$ to modify the Robert-Wagner formula may be possible, but it would not contain the opposite terms $x_j [-1] x_i$, that perhaps should be present to maintain some symmetry, despite us having fixed a set of positive roots $\{x_i-x_j\}_{i<j}.$

To distribute the exponent $\chi_{ij}(c)/2$ across both terms $x_i -_{\mathscr{F}} x_j$ and $x_j -_{\mathscr{F}} x_i$, we recall the relation~\cite[Lemma 2.7]{RW1} on Euler characteristics
\begin{equation} \label{eq-euler-char}
    \chi(F_{ij}(c)) \ = \ \chi(F_i(c)) + \chi(F_j(c)) - 2 \chi(F_{i\cap j}(c)),
\end{equation}
where $F_{i\cap j}(c)$ is the surface, possibly with boundary, consisting of the union of facets that contain colors $i$ and $j$,
$$ F_{i\cap j}(c) = F_i(c) \cap F_j(c).$$
The Euler characteristic $\chi(F_{i\cap j}(c))$ may be odd, due to the presence of boundary, but $2 \chi(F_{i\cap j}(c))$ is even, as are the other three terms in the formula (since the other three are Euler characteristics of closed surfaces).

Formula (\ref{eq-euler-char}) simply describes the Euler characteristic of the symmetric difference of two spaces, specialized to the case of surfaces $F_i(c)$ and $F_j(c)$ inside a foam. Using shorthand notations, we can rewrite it as
\begin{equation} \label{eq-euler-char-s}
    \chi_{ij}(c) \ = \ \chi_i(c) + \chi_j(c) - 2 \chi_{i\cap j}(c),
\end{equation}
where $\chi_{i\cap j}(c)= \chi(F_{i\cap j}(c))$.
We can now modify the evaluation formula by
changing the $(i,j)$ color pair contribution to the denominator to
$$(x_i [-1] x_j)^{\chi_i(c)/2} (x_j [-1] x_i)^{\chi_j(c)/2}$$
and multiplying the numerator by $(x_i-x_j)^{\chi_{i\cap j}(c)},$ to
define

\begin{equation} \label{eq:deformed_glN_1}
\angf{F,c} = (-1)^{\theta^+(c)}\prod_{f\in f(F)}{P_f(c)} \prod_{i<j}
\frac{(x_i - x_j)^{\chi_{i\cap j}(c)}}{(x_i [-1] x_j)^{\chi_i(c)/2} (x_j [-1] x_i)^{\chi_j(c)/2}}
\end{equation}
We denote modified evaluation by $\angf{F,c}$ and the original one in~\cite{RW1} by $\ang{F,c}$.

\medskip
Setting aside formal group laws at this point, let us now formally {\em define} $x[-1]y$ as follows.  Choose a commutative graded ring $\kk$ and homogeneous elements $\beta_{k,\ell}\in \kk$ in degree $-2(k+\ell)$ for all $k,\ell\in \Z_+=\{0,1,2, \dots \}$ such that $(k,\ell)\not= (0,0).$ The element
\begin{equation} \label{eq_p_series}
    p(x,y) \ = \ 1 + \sum_{(k,\ell)\not= (0,0)}
    \beta_{k,\ell} x^k y^{\ell}
\end{equation}
belongs to the power sum ring $\kk\llbracket x,y\rrbracket$. In general, $\beta_{k,\ell}\not= \beta_{\ell,k}.$ The element $p(x,y)$ has the inverse $q(x,y) = p^{-1}(x,y)\in \pseries{x,y}.$ Define
\begin{equation}
    x [-1] y \ = \ q(x,y)(x-y)  \ = \ p(x,y)^{-1}(x-y) \in \pseries{x,y}.
\end{equation}
Equivalently,
$x-y=p(x,y)(x [-1] y).$
Denote $p_{i,j}=p(x_i,x_j)$ or, interchangeably, $p_{ij}$, and $q_{ij}=q(x_i,x_j)=p_{ij}^{-1}$. Then
\begin{equation} \label{eq_minus_F}
   x_i [-1] x_j \ = \ (x_i - x_j) p_{ij}^{-1}.
\end{equation}

Note that $x [-1] y = - (y [-1] x)$ iff $p(x,y)=p(y,x)$ iff  $\beta_{\ell,k}=\beta_{k,\ell}$ for all $k,\ell$. We refer to this as the \emph{symmetric} case.

The universal case is that of the ring
\begin{equation}\label{eq_ring_kk}
\kk \ = \  \Z[\beta_{k,\ell}]
\end{equation}
over all $k,\ell$ as above ($k,\ell \in \Z+, (k,\ell)\not= (0,0)$). This ring is non-positively graded, with nontrivial homogeneous components in
even non-positive degrees $0, -2, -4, \dots$.
It's a graded polynomial ring with $k+1$ generators in degree $-2k$ over all $k\ge 1$.
The universal symmetric case is when $\beta_{\ell,k}=\beta_{k,\ell}$ are formal variables over all $0\le k \le \ell, (k,\ell)\not= (0,0)$.

Convert denominators in (\ref{eq:deformed_glN_1}) via (\ref{eq_minus_F}) and combine with a power of $x_i-x_j$ in the numerator to get
\begin{eqnarray*}
&  & \frac{(x_i - x_j)^{\chi_{i\cap j}(c)}}{(x_i [-1] x_j)^{\chi_i(c)/2} (x_j [-1] x_i)^{\chi_j(c)/2}} =
\frac{(x_i - x_j)^{\chi_{i\cap j}(c)}}{(x_i - x_j)^{\chi_i(c)/2}q_{ij}^{\chi_i(c)/2} (x_j - x_i)^{\chi_j(c)/2}q_{ji}^{\chi_j(c)/2}} =  \\
& & \frac{(-1)^{\chi_j(c)/2}p_{ij}^{\chi_i(c)/2} p_{ji}^{\chi_j(c)/2}}{(x_i-x_j)^{\chi_{ij}(c)/2}}
\end{eqnarray*}
Taking the product over all $1\le i < j \le N$, the minus signs will combine to $(-1)^{\sum_{j=1}^N(j-1)\chi_j(c)/2}$ and
\begin{equation} \label{eq:new_eval_1}
    \angf{F,c} = (-1)^{s'(F,c)}\prod_{f\in f(F)}{P_f(c)} \prod_{i<j}
\frac{p_{ij}^{\chi_i(c)/2} p_{ji}^{\chi_j(c)/2}}{(x_i - x_j)^{\chi_{ij}(c)/2}} ,
\end{equation}
where
\begin{equation}\label{eq:s_prime}
    s'(F,c) = \theta^+(c) + \sum_{j=1}^N (j-1)\chi_j(c)/2.
\end{equation}
We can then define, as before,
\begin{equation} \label{eq:new_eval_2}
\angf{F} = \sum_{c} \angf{F,c}.
\end{equation}
In the Robert-Wagner formula, due to $\chi_i(c)=\chi(F_i(c))$ being even, one can rewrite the sign as
\begin{equation}
(-1)^{s(F,c)} = (-1)^{\theta^+(c)+
\bigl( \sum^N_{i=1, i \  \mathrm{ odd}}\chi_i(c) \bigr) / 2 }
\end{equation}
versus
\begin{equation}
(-1)^{s'(F,c)} = (-1)^{\theta^+(c)+
\bigl( \sum^N_{i=1, i \  \mathrm{ even}}\chi_i(c) \bigr) / 2 }
\end{equation}
in (\ref{eq:s_prime}).
These two signs differ by
$(-1)^{\bigl( \sum^N_{i=1}\chi_i(c) \bigr) / 2} . $

The sum $\sum^N_{i=1}\chi_i(c)$ does not depend on the coloring $c$ of $F$ and can be computed as a sort of the Euler characteristic of $F$, denoted $\chi(F)$. An open facet $f$ of thickness $a$ contributes $a\chi(f)$, an edge where facets of thickness $a,b,a+b$ meet contributes $-(a+b)$, a vertex along which facets of thickness $a,b,c$ merge in two ways into a facet of thickness $a+b+c$ contributes $a+b+c$. In each case, a generalized 0-, 1- or 2-cell of a foam contributes its Euler characteristic times its thickness, defined as the number of surfaces $F_i(c)$, over all $i$, that contain that generalized cell.
Consequently, the sign difference in the two evaluations is by $(-1)^{\chi(F)/2}$, with
\begin{equation}
    \chi(F) = \sum_{i=1}^N \chi(F_i(c)), \ \ \mathrm{for \ any \ coloring \ } c .
\end{equation}

Thus, to recover the Robert-Wagner evaluation $\ang{F}$ from (\ref{eq:new_eval_1}) and (\ref{eq:new_eval_2}) one should specialize $p(x,y)=1$, so that $p_{ij}=1$ for all $i\not= j$ and scale by the sign,
\begin{equation}
    \ang{F} = (-1)^{\chi(F)/2} \angf{F}|_{p(x,y)=1}
\end{equation}
We keep the sign term $(-1)^{\chi(F)/2}$ so that, in $N=2$ case, the 2-sphere of thickness one carrying a single dot would evaluate to $1$ (upon specializing to $p_{ij}=1$), as in~\cite{BHPW}, rather than $-1$, as in~\cite{RW1}. Adding this sign term is a matter of preference, while we hope that the deformation via $p(x,y)$ will eventually prove significant.

\medskip
\noindent
Let us now show that the formula (\ref{eq:new_eval_2}) given by summing the expressions (\ref{eq:new_eval_1}) over all colorings gives rise to a symmetric power series that does not involve denominators. We begin with a simple lemma:

\begin{lemma} \label{Foam invariance}
Given a coloring $c$, let $p(c)$ denote the expression
\[ p(c) =  \prod_{1 \leq i < j \leq N} p_{ij}^{\chi_i(c)/2}p_{ji}^{\chi_j(c)/2}. \]
Let $c'$ be a coloring obtained from $c$ by a Kempe move relative to 1 and 2 along a connected surface $\Sigma_s$, then one has a relation
\[ p(c') = p(c) \, p_{1,2,s}, \]
where $p_{1,2,s}$ is the expression
\[ p_{1,2,s} := \frac{p_{12}^{\chi(F_2 \cap \Sigma_s)/2 - \chi(F_1 \cap \Sigma_s)/2}}{p_{21}^{\chi(F_2 \cap \Sigma_s)/2-\chi(F_1 \cap \Sigma_s)/2}} \prod_{2 < j \leq N} \frac{p_{1j}^{\chi(F_2 \cap \Sigma_s)/2 - \chi(F_1 \cap \Sigma_s)/2}}{p_{2j}^{\chi(F_2 \cap \Sigma_s)/2-\chi(F_1 \cap \Sigma_s)/2}}. \]
Notice, in particular, that $p_{1,2,s}$ is an invertible power series starting with $1$, and the transposition $\sigma$ that switches the variables $x_1$ and $x_2$ has the property
\[ \sigma(p_{1,2,s}) = \frac{1}{p_{1,2,s}}. \]
Furthermore, $p_{1,2,s}$ is of the form $p_{1,2,s} = 1 \mod (x_1 - x_2)$.
\end{lemma}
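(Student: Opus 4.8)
The plan is to compute the ratio $p(c')/p(c)$ by bookkeeping how each Euler characteristic $\chi_i(c)=\chi(F_i(c))$ is affected by the Kempe move, and then to read off the three supplementary properties from the resulting closed formula. As a preliminary I would rewrite the defining product in a more symmetric form: for each ordered pair $(a,b)$ with $a\neq b$ the series $p_{ab}$ occurs in $p(c)$ exactly once, coming from the factor indexed by $\{a,b\}$, and it always occurs with exponent $\chi_a(c)/2$ (it sits in the ``$p_{ij}$'' slot when $a<b$ and in the ``$p_{ji}$'' slot when $a>b$). Hence
\[ p(c)=\prod_{a\neq b}p_{ab}^{\,\chi_a(c)/2}, \qquad \frac{p(c')}{p(c)}=\prod_{a\neq b}p_{ab}^{\,(\chi_a(c')-\chi_a(c))/2}. \]

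Next I would record the elementary effects of the move. By definition $c'$ agrees with $c$ off $\Sigma_s$ and interchanges the colors $1$ and $2$ on the facets of $\Sigma_s$; admissibility of $c'$ at the singular edges bordering $\Sigma_s$ uses that $\Sigma_s$ is a whole connected component of $F_{12}(c)$. Consequently, for $i\ge 3$ a facet contains $i$ in $c$ iff it does in $c'$, so $F_i(c')=F_i(c)$ and the corresponding exponents are unchanged; and a facet contains exactly one of $\{1,2\}$ (resp.\ both) in $c$ iff it does in $c'$, so $F_{12}(c')=F_{12}(c)$ and $F_{1\cap 2}(c')=F_{1\cap 2}(c)$. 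The displayed ratio thus collapses to the factors with $a\in\{1,2\}$, namely $p_{12}^{\Delta_1}p_{21}^{\Delta_2}\prod_{2<j\le N}p_{1j}^{\Delta_1}p_{2j}^{\Delta_2}$ with $\Delta_i:=\tfrac12(\chi_i(c')-\chi_i(c))$; and feeding the invariance of $F_{12}$ and $F_{1\cap 2}$ into identity~(\ref{eq-euler-char-s}) for the pair $(1,2)$, applied to $c$ and to $c'$, yields $\Delta_1+\Delta_2=0$. It remains only to evaluate $\Delta_1$.

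For $\Delta_1$ I would argue geometrically. Write $A:=F_1(c)\cap\Sigma_s$ and $B:=F_2(c)\cap\Sigma_s$ for the unions of facets of $\Sigma_s$ carrying $1$, respectively $2$, in $c$; these are compact subsurfaces of the closed surface $\Sigma_s$ with $A\cup B=\Sigma_s$, and $A\cap B$ is a union of singular circles of $F$ (each bordered by a facet of $A$, a facet of $B$, and a third facet carrying both $1$ and $2$, hence not in $\Sigma_s$). Let $Z$ be the closure of $F_1(c)\setminus A$. Since the move exchanges the facets of $A$ for those of $B$ and fixes everything else, one has $F_1(c)=Z\cup A$ and $F_1(c')=Z\cup B$ with the \emph{same} $Z$; moreover $Z\cap A$ and $Z\cap B$ both coincide with the union of circles $A\cap B$, so they are one-dimensional with vanishing Euler characteristic. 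Inclusion--exclusion then gives $\chi(F_1(c'))-\chi(F_1(c))=\chi(B)-\chi(A)$, i.e.\ $\Delta_1=\tfrac12(\chi(F_2\cap\Sigma_s)-\chi(F_1\cap\Sigma_s))$ and $\Delta_2=-\Delta_1$. Plugging these exponents into the collapsed ratio reproduces exactly the asserted $p_{1,2,s}$, so $p(c')=p(c)\,p_{1,2,s}$.

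The three remaining assertions are then formal consequences of $p_{1,2,s}$ being a finite product of integer powers of the series $p_{ab}$. Each $p_{ab}$ is invertible with constant term $1$, hence so is $p_{1,2,s}$. The transposition $\sigma$ of $x_1,x_2$ interchanges $p_{12}\leftrightarrow p_{21}$ and $p_{1j}\leftrightarrow p_{2j}$ for $j>2$, which negates every exponent in $p_{1,2,s}$, so $\sigma(p_{1,2,s})=p_{1,2,s}^{-1}$. Finally, modulo $(x_1-x_2)$ we have $x_1\equiv x_2$, so $p_{12}\equiv p_{21}$ and $p_{1j}\equiv p_{2j}$ (all invertible), whence each factor of $p_{1,2,s}$ is $\equiv 1$ and $p_{1,2,s}=1 \bmod (x_1-x_2)$. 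The step that genuinely requires care is the geometric one --- pinning down the local picture along $\Sigma_s$ and its bounding seams so as to justify $F_1(c')=Z\cup B$ with $Z\cap A=Z\cap B=A\cap B$ a union of circles; this rests on the standard smoothness of the bicolored surfaces $F_i$ and $F_{ij}$ near singular vertices in the Robert--Wagner setup, and once granted, the Euler-characteristic count is immediate.
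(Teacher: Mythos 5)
Your proof is correct, and it is worth noting how it relates to the paper's own treatment. The paper dispatches the main identity $p(c')=p(c)\,p_{1,2,s}$ with ``straightforward to verify, left to the reader,'' whereas you actually supply the verification: the rewriting $p(c)=\prod_{a\neq b}p_{ab}^{\chi_a(c)/2}$, the observation that $F_i$ for $i\ge 3$, $F_{12}$ and $F_{1\cap 2}$ are untouched by the Kempe move (so $\Delta_1+\Delta_2=0$ via (\ref{eq-euler-char-s})), and the inclusion--exclusion computation $\chi_1(c')-\chi_1(c)=\chi(F_2\cap\Sigma_s)-\chi(F_1\cap\Sigma_s)$ using $F_1(c)=Z\cup A$, $F_1(c')=Z\cup B$ with $Z\cap A=Z\cap B=A\cap B$ of Euler characteristic zero. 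That geometric step, which you rightly flag as the delicate one, does go through: the only caveat is that for $N\ge 3$ the curves of $A\cap B$ are in general not whole singular circles of $F$ but closed curves made of singular edges passing through singular vertices; a check of the local picture at a vertex (exactly zero or two ``first-type'' edge germs occur there) confirms that $A\cap B$ is a closed $1$-manifold and that both-colored facets of $Z$ meet $\Sigma_s$ only along it, which is all your argument needs. For the final assertion you also take a genuinely different route: the paper argues that $p_{1,2,s}-p_{1,2,s}^{-1}$ is $\sigma$-antisymmetric, hence divisible by $x_1-x_2$, and then extracts divisibility of $p_{1,2,s}-1$ from the factorization of $p_{1,2,s}^2-1$; you instead note directly that $p_{12}\equiv p_{21}$ and $p_{1j}\equiv p_{2j}$ modulo $(x_1-x_2)$, so each (invertible) factor of $p_{1,2,s}$ is $\equiv 1$. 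Your argument is more elementary and avoids the implicit use of $(x_1-x_2)$ being prime that the paper's factorization step relies on, while the paper's trick has the virtue of using only the already-established antisymmetry $\sigma(p_{1,2,s})=p_{1,2,s}^{-1}$ rather than the explicit product formula. Either way, the lemma is fully established by your proposal, and your derivation moreover makes the integrality of the exponent $\chi(F_2\cap\Sigma_s)/2-\chi(F_1\cap\Sigma_s)/2$ transparent, since it equals $(\chi_1(c')-\chi_1(c))/2$ with both terms Euler characteristics of closed surfaces.
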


\begin{proof}
The relation between $p(c')$ and $p(c)$ is straightforward to verify, and is left to the reader. Now, since $p_{1,2,s}$ is an invertible power series, we may consider the expression
\[ p_{1,2,s} - \frac{1}{p_{1,2,s}} = \frac{1}{p_{1,2,s}}((p_{1,2,s})^2 - 1). \]
The above expression switches sign under the action of the transposition $\sigma$, and is therefore divisible by $(x_1-x_2)$. We conclude that $(x_1-x_2)$ divides the expression $(p_{1,2,s})^2-1$. Factoring this expression, se see that $(x_1-x_2)$ must divide $p_{1,2,s}-1$.
\end{proof}

\begin{remark}\label{deform_locality}
Notice that the definition of $p_{1,2,s}$ as a ratio of $p(c')$ and $p(c)$ can be extended to the case when $c'$ is a coloring obtained from $c$ by a $(1,2)$-Kempe move along several connected components \[ \Sigma_{\underline{s}} := \Sigma_{s_1} \sqcup \ldots \sqcup \Sigma_{s_k}. \]
This expression, $p_{1,2,\underline{s}}$ satisfies a locality property that is crucial for the following theorem
\[ p_{1,2,\underline{s}} = p_{1,2,s_1} \ldots p_{1,2,s_k}, \]
where $p_{1,2,s_i}$ denotes the ratio of $p(c'_i)$ and $p(c)$, with $c'_i$ obtained from $c$ by a $(1,2)$-Kempe move on $\Sigma_{s_i}$.
\end{remark}
\medskip
\noindent
The above lemma allows us to prove

\medskip
\begin{theorem} \label{Foam_denominator}
The $GL(N)$-foam evaluation $\angf{F}$ is a symmetric power series in the variables $x_1, x_2, \ldots, x_N$. In particular, $\angf{F}$ is free of denominators.
\end{theorem}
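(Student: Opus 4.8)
The plan is to reduce the claim to the Robert--Wagner integrality theorem (recalled in the excerpt: $\ang{F}$ is a symmetric polynomial, hence free of denominators) by exploiting the fact that the deformed evaluation differs from the undeformed one only through the insertion of the invertible power series factors $p(c)$, together with the locality/Kempe-move machinery of Lemma~\ref{Foam invariance} and Remark~\ref{deform_locality}. Concretely, I would first recall the structure of Robert and Wagner's original proof: for a fixed pair of colors $(i,j)$, one groups the colorings of $F$ into orbits under $(i,j)$-Kempe moves, and shows that within each orbit the sum over colorings of the terms $\ang{F,c}$ is divisible (in the appropriate localized ring) by the problematic factor $(x_i-x_j)^{\chi_{ij}(c)/2}$ — the key point being that a single Kempe move on a connected surface $\Sigma_s$ changes $\chi_{ij}$ in a controlled way and swapping $x_i \leftrightarrow x_j$ relates the summands, so the pole along $x_i = x_j$ cancels after summing. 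I would then run exactly this argument for $\angf{F}$.

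The main steps, in order: (1) Fix the pair $(1,2)$ (the general pair is identical by relabeling) and write $\angf{F} = \sum_c \angf{F,c}$, splitting the sum over $(1,2)$-Kempe orbits. (2) For a single orbit, compare $\angf{F,c'}$ and $\angf{F,c}$ where $c'$ is obtained from $c$ by a Kempe move along a connected $\Sigma_s$: by Lemma~\ref{Foam invariance} the $p$-factors change by $p_{1,2,s}$, which is invertible, starts with $1$, and — crucially — satisfies $\sigma(p_{1,2,s}) = p_{1,2,s}^{-1}$ and $p_{1,2,s} \equiv 1 \pmod{x_1-x_2}$. (3) Observe that, apart from the $p$-factors, $\angf{F,c}$ and $\ang{F,c}$ agree up to a sign that depends only on $c$ and up to an overall regular monomial, and in particular carry the same denominator $(x_1-x_2)^{\chi_{12}(c)/2}$ with the same residue behavior along $x_1 = x_2$. (4) Conclude that the sum over the orbit of $\angf{F,c}$ has no pole along $x_1 = x_2$: the Robert--Wagner cancellation already kills the pole of $\sum_c \ang{F,c}$; inserting the factors $p(c)$, which are congruent to $1$ modulo $(x_1-x_2)$ within an orbit (since $p(c') / p(c) = p_{1,2,s}$ and, by Remark~\ref{deform_locality}, any two colorings in the same orbit differ by a product of such factors, each $\equiv 1 \pmod{x_1-x_2}$), does not reintroduce it — one expands $p(c) = p(c_0)(1 + (x_1-x_2)(\cdots))$ for a fixed basepoint $c_0$ of the orbit and checks the leading and correction terms separately, the leading term being handled by Robert--Wagner and the correction term having strictly smaller pole order. (5) Since this holds for every pair $(i,j)$ and the localized ring $\kk\llbracket x_1,\dots,x_N\rrbracket[(x_i-x_j)^{-1}]_{i<j}$ intersected over the requirement of being regular along each $x_i = x_j$ brings us back into $\kk\llbracket x_1,\dots,x_N\rrbracket$, we get that $\angf{F}$ is a genuine power series; symmetry is immediate since the whole construction (the sum over all colorings, the sign $s'(F,c)$, and the product of $p$-factors) is manifestly $S_N$-equivariant, using $\sigma(p_{1,2,s}) = p_{1,2,s}^{-1}$ to track the $p$-factors under permutations.

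I expect the main obstacle to be step (4): making precise the claim that "inserting factors congruent to $1$ mod $(x_1-x_2)$ into a sum that is already regular along $x_1=x_2$ keeps it regular," when the individual summands have poles of order up to $\chi_{12}(c)/2 > 1$. One cannot simply say "mod $(x_1-x_2)$" because the pole order can exceed $1$; the correct bookkeeping is to induct on the pole order, or equivalently to verify that after multiplying through by $(x_1-x_2)^{\max}$ the resulting polynomial vanishes to sufficiently high order along $x_1 = x_2$, and that the $p$-factor corrections only improve (never worsen) this vanishing because each correction carries an extra factor of $(x_1-x_2)$. A clean way to organize this is to pull the common denominator $(x_1-x_2)^{m}$ (for $m$ the maximal pole order in the orbit) out front, write the orbit sum as $(x_1-x_2)^{-m}$ times a power series $A_c$ times $p$-correction factors, and note that Robert--Wagner's vanishing of the numerator modulo $(x_1-x_2)^m$ for the undeformed sum, combined with $p(c) \equiv p(c_0) \pmod{x_1-x_2}$, forces the deformed numerator to vanish modulo $(x_1-x_2)$ — but one actually needs vanishing modulo $(x_1-x_2)^m$, so the argument must be iterated, peeling off one power of $(x_1-x_2)$ at a time and re-invoking the structure of Robert--Wagner's proof at each stage. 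This is the technical heart of the matter; everything else is formal manipulation with the locality property of Remark~\ref{deform_locality} and the $\sigma$-equivariance recorded in Lemma~\ref{Foam invariance}.
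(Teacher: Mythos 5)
You have the right reduction in place (restrict to the pair $(1,2)$ by symmetry, group colorings into $(1,2)$-Kempe orbits, track the deformation through the factors $p_{1,2,s}$ of Lemma~\ref{Foam invariance}, and use $p_{1,2,s}\equiv 1 \bmod (x_1-x_2)$), but the step you yourself flag as the technical heart --- handling poles of order $\chi_{12}(c)/2>1$ by ``peeling off one power of $(x_1-x_2)$ at a time and re-invoking the structure of Robert--Wagner's proof at each stage'' --- is a genuine gap, not a detail. Once you subtract the leading term $p(c_0)\cdot(\text{undeformed numerator})$, the remainder is no longer a Robert--Wagner-type orbit sum (the correction terms carry coefficients coming from the expansions of the various $p_{1,2,s}$, taken over subsets of flipped components), so there is nothing to ``re-invoke,'' and the proposed induction on pole order is not actually an argument.

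The idea that makes the paper's proof (and Robert--Wagner's original one) work, and which your proposal is missing, is a multiplicative factorization of the orbit sum over the connected components $\Sigma_1,\dots,\Sigma_r$ of $F_{12}(c)$: using the locality of Remark~\ref{deform_locality}, the sum over the Kempe orbit $C_c$ is written as a common prefactor times $\prod_{s}(x_1-x_2)^{-\chi(\Sigma_s)/2}T_s(F,c)$, with $T_s(F,c)=\widehat{P}_{\Sigma_s}(F,c)+(-1)^{\chi(\Sigma_s)/2}p_{1,2,s}\,\sigma(\widehat{P}_{\Sigma_s}(F,c))$. The total pole order $m$ is distributed one power per genus-zero component, so each factor has a pole of order at most one, and the only divisibility ever needed is first order: $T_s\equiv \widehat{P}_{\Sigma_s}-\sigma(\widehat{P}_{\Sigma_s})\bmod (x_1-x_2)$ (using $p_{1,2,s}\equiv 1$), and the right-hand side is $\sigma$-antisymmetric, hence divisible by $(x_1-x_2)$. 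In other words, the higher-order-pole problem you are trying to solve by iteration never arises once the orbit sum is factored component by component; without that factorization your step (4) does not go through, and with it no induction is needed.
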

\begin{proof}
The proof of the above theorem is essentially a simple variation on the argument given in \cite[Proposition 2.18]{RW1}. Consider the expression $\angf{F}$. It is clear that it is symmetric in the variables $x_1, x_2, \ldots, x_N$ with possible denominators of the form $(x_i-x_j)^k$. By symmetry, the proof of the theorem will follow if we can show that the denominator $(x_1-x_2)$ does not appear in $\angf{F}$.

\medskip
Let us decompose the set of colorings of $\angf{F}$ into a collection of equivalence classes relative to the colors $1$ and $2$. Given a coloring $c$ of $F$, decompose $F_{12}(c)$ into connected components,
\[ F_{12}(c) = {\bf \Sigma} = \Sigma_1 \cup \Sigma_2 \cup \ldots \cup \Sigma_r. \]
The equivalence class of colorings $C_c$ that contains $c$ consists of colorings of $F$ that can be obtained from $c$ by performing Kempe moves about various connected components $\Sigma_s \subseteq F_{12}(c)$, where we recall that a Kempe move about $\Sigma_s$ switches the colors $1$ and $2$ of the facets in $\Sigma_s$.

As in \cite[Proposition 2.18]{RW1}, consider the expressions
\[ P_{F/{\bf \Sigma}}(F,c) = p(c) \prod_{f \,\,  \mbox{not a facet in} \, {\bf \Sigma}} P(c(f)), \quad \quad \widetilde{Q}(F,c) = \frac{Q(F,c) \prod_{s, k>2}(x_1-x_k)^{l_{\Sigma_s}(c,k)/2}}{(x_1-x_2)^{\chi_{12}(c)/2}} \]
where $p(c)$ is as defined in Lemma \ref{Foam invariance}, and the integers $l_{\Sigma_s}(c,k)$ are as defined in \cite[Lemma 2.10]{RW1}. Also, for $1 \leq s \leq r$, define
\[ T_s(F,c) = \widehat{P}_{\Sigma_s}(F,c) + (-1)^{\chi(\Sigma_s)/2} \, p_{1,2,s} \, \sigma(\widehat{P}_{\Sigma_s}(F,c)), \]
where $\sigma$ is the transposition that swaps $x_1$ and $x_2$, the term $p_{1,2,s}$ is as defined in Lemma \ref{Foam invariance}, and the expression $\widehat{P}_{\Sigma_s}(F,c)$ is defined as
\[ \widehat{P}_{\Sigma_s}(F,c) = \prod_{f \, \, \mbox{a facet in} \, \Sigma_s} P(c(f)) \, \prod_{2 < k \leq N} (x_1-x_k)^{l_{\Sigma_s}(c,k)/2}.\]
Using Lemma \ref{Foam invariance} and Remark \ref{deform_locality}, we may express the foam evaluation $\angf{F,c}$ on summing over the equivalence class $C_c$ as
\[ \sum_{c' \in C_c} \angf{F,c'} = (-1)^{s'(F,c)} \frac{P_{F/{\bf \Sigma}}(F,c)}{\widetilde{Q}(F,c)} \prod_{s=1}^r (x_1-x_2)^{-\chi(\Sigma_s)/2}T_s(F,c). \]
Since $\widetilde{Q}(F,c)$ is not divisible by $x_1-x_2$, it is enough for our purposes to show that the expression $(x_1-x_2)^{-\chi(\Sigma_s)/2}T_s(F,c)$ does not have a denominator given by a power of $x_1-x_2$. The only case that is relevant is when $\Sigma_s$ is a surface of genus zero. It is therefore sufficient to show that $T_s(F,c)$ is divisible by $(x_1-x_2)$ when $\Sigma_s$ is a surface of genus zero. In this case, we have
\[ T_s(F,c) = \widehat{P}_{\Sigma_s}(F,c) - \, p_{1,2,s} \, \sigma(\widehat{P}_{\Sigma_s}(F,c)). \]
By Lemma \ref{Foam invariance}, recall that $p_{1,2,s}$ is of the form $1 \mod (x_1-x_2)$. We therefore have
\[ T_s(F,c) = \widehat{P}_{\Sigma_s}(F,c) - \sigma(\widehat{P}_{\Sigma_s}(F,c)) \mod (x_1-x_2). \]
However, the expression $\widehat{P}_{\Sigma_s}(F,c) - \sigma(\widehat{P}_{\Sigma_s}(F,c))$ is also divisible by $x_1 - x_2$ since it switches sign under $\sigma$. It follows that $T_s(F,c)$ is divisible by $x_1-x_2$ whenever $\Sigma_s$ is a surface of genus zero. The proof of the theorem easily follows on summing $\angf{F,c'}$ over all the equivalence classes $C_c$.
\end{proof}

%
%

\section{Formal groups and generalized divided difference operators}
\label{sec:formal}

In this section we study formal group laws and their relationship to topology in some detail. Good references are \cite{Ha, St} and the references therein.
Due to standard conventions the choice of notation $R$ in this section conflicts with its use in the next section.

Let us begin by recalling the definition of a formal group law. A formal group law defined over a ring $R$ is a power series $\mathscr{F}(x,y)$ with coefficients in $R$ so that $\mathscr{F}(x,y)$ represents a commutative group structure on the formal affine line over $R$. In other words, one requires $\mathscr{F}(x,y)$ to satisfy the following three properties
\[\mathscr{F}(x,y) = \mathscr{F}(y,x) \quad \quad \mbox{commutativity} \]
\[ \mathscr{F}(0,x) = \mathscr{F}(x,0) = x \quad \quad \mbox{unitarity} \]
\[ \mathscr{F}(x,\mathscr{F}(y,z)) = \mathscr{F}(\mathscr{F}(x,y),z) \quad \quad \mbox{associtivity}. \]

\medskip
\begin{remark}
There is a universal ring known as the Lazard ring which is initial among all rings that support a formal group law. This ring $L$ can be defined to be generated by symbols $a_{i,j}$ where the universal formal group law has the form
\[\mathscr{F}(x,y) = x + y + \sum_{i,j > 0} a_{i,j} x^i y^j. \]
We then impose relations on the generators $a_{i,j}$ that are forced by the relations of commutativity and associativity (the relation for unitarity is built into the form of $\mathscr{F}(x,y)$). For instance, commutativity implies that $a_{i,j} = a_{j,i}$. The relation for associtivity is clearly more involved.
\end{remark}

\medskip
In topology, formal group laws appear when one describes the $E$-cohomolgy of a space $\BU(1)$, where $E$ is any complex oriented cohomology theory and $\BU(1)$ denotes the classifying space of the group $\U(1)$ (the space $\BU(1)$ is equivalent to the infinite projective plane $\C\CP^{\infty}$). More precisely, one starts with the observation that $E^*(\BU(1))$ can be expressed as $R\llbracket x \rrbracket$, with $R = E^*(pt)$ and $x$ being the first Chern class in cohomological degree 2. The abelian group structure on $\U(1)$ induces a map
\[ \BU(1) \times \BU(1) \longrightarrow \BU(1). \]
Evaluating this map in $E$-cohomology then gives rise to the underlying a formal group law $\mathscr{F}_E(x,y)$ for the complex oriented cohomology theory $E$:
\[ R\llbracket x \rrbracket = E^*(\BU(1)) \longrightarrow E^*(\BU(1) \times \BU(1)) = R\llbracket x, y \rrbracket, \quad \quad x \longmapsto \mathscr{F}_E(x,y). \]
In what follows therefore, we work in the graded setting. So $\mathscr{F}(x,y)$ will denote a formal group law over a graded power series ring $R\llbracket x,y \rrbracket$, where $R$ is a graded $\Z$-algebra, and the variables $x$ and $y$ are defined to have degree 2. We assume that $\mathscr{F}(x,y)$ is in homogeneous degree 2, namely
\[ \mathscr{F}(x,y) = x + y + \sum_{i,j > 0} a_{ij} x^i y^j, \quad \quad a_{ij} \in R^{2-2(i+j)}. \]

\medskip
\begin{definition}
The formal negative of the variable $x$ is defined to be the (unique) power series $[-1] x$ with the property
\[ \mathscr{F}(x, [-1] x) = \mathscr{F}([-1] x,  x) = 0. \]
The formal difference $x [-1] y$ is defined as $\mathscr{F}(x, [-1] y)$. It is a power series in two variables $x,y$ that has homogeneous degree 2.
\end{definition}

\medskip
\begin{example} \label{1}
Let $R$ be the $\Z$-algebra $\Z[\beta]$, where $\beta$ is in degree $-2$. The multiplicative formal group law $\mathscr{F}(x,y)$ and its formal difference is given by
\[ \mathscr{F}(x,y) = x + y - \beta xy, \quad \quad x [-1] y = \frac{x-y}{1-\beta y}, \quad \quad [-1](x) = -\sum_{i \geq 0} \beta^i x^{i+1}.\]
\end{example}

\medskip
\begin{example}\label{1b}
Let $R$ be the $\Z$-algebra $\Z[\beta^2]$ as before with $\beta^2$ in degree $-4$. The Lorentz or L-formal group law $\mathscr{F}(x,y)$ and its formal difference is given by
\[ \mathscr{F}(x,y) = \frac{x + y}{1+ \beta^2 xy}, \quad \quad x [-1] y = \frac{x-y}{1-\beta^2 xy}, \quad \quad [-1](x) = -x.\]
\end{example}

\medskip
\begin{example}\label{1c}
Let $R$ be the $\Z$-algebra $\Z[\frac{1}{2}, \beta^2]$ with $\beta^2$ in degree $-4$. The $\hat{A}$-formal group law $\mathscr{F}(x,y)$ and its formal difference is given by
\[ \mathscr{F}(x,y) = x\sqrt{1 + \beta^2 (y/2)^2} \, + \, y\sqrt{1+ \beta^2 (x/2)^2}, \]
\smallskip
\[ x [-1] y = x\sqrt{1 + \beta^2 (y/2)^2} \, - \, y\sqrt{1+ \beta^2 (x/2)^2}, \quad \quad [-1](x) = -x.\]
where the radicals are expressed as a power series (with coefficients in $\Z[\frac{1}{2}, \beta^2]$) by the formal application of the binomial expansion.
\end{example}

\medskip
\begin{example} \label{1d}
Let $R$ be the $\Z$-algebra $\Z[\frac{1}{2}, \epsilon, \delta]$ with the degree of $\delta$ being $-4$ and that of $\epsilon$ being $-8$. The Jacobi formal group law $\mathscr{F}(x,y)$ and its formal inverse is given by
\[ \mathscr{F}(x,y) = \frac{x\sqrt{J(y)} + y\sqrt{J(x)}}{1-\epsilon x^2 y^2}, \quad \mbox{where} \quad J(z) = 1 - 2\delta z^2 + \epsilon z^4. \]

\[ x [-1] y = \frac{x\sqrt{J(y)} - y\sqrt{J(x)}}{1-\epsilon x^2 y^2}, \quad \quad [-1](x) = -x.\]
Examples \ref{1b} and \ref{1c} are specializations of \ref{1d} at the ``cusps" described by $\epsilon = \beta^4, \delta = \beta^2$ and $\epsilon = 0, \delta = -\beta^2/8$ respectively.
\end{example}

\medskip
Let us return to the universal example. In other words, we consider the example of $R$ being the Lazard ring introduced earlier. On introducing a grading on the variables $x$ and $y$ so that the universal formal group law belongs in homogeneous degree 2, the Lazard ring naturally acquires a grading as described earlier. With this grading, the Lazard ring can be shown to be isomorphic to the graded coefficient ring of a complex oriented cohomology theory known as complex cobordism, $\MU$. In other words $R \cong \MU^\ast(pt)$ as a graded ring.

\medskip
By the definition of complex cobordism, the elements of $\MU^{-k}(pt)$ are cobordism classes of $k$-dimensional manifolds endowed with an almost complex structure on their stable normal bundle, and with the ring structure being induced by the cartesian product of manifolds. The ring $\MU^\ast(pt)$ can be shown to be a polynomial algebra over $\Z$, with one generator in each negative even degree. Working rationally, the generator in degree $-2n$ may be chosen to be the cobordism class of the complex projective space of dimension $2n$, denoted by $[\C \CP^n]$.

\medskip
Any formal group law over a $\Q$-algebra is isomorphic to the additive formal group law. This isomorphism is called the logarithm, written as $\log_\mathscr{F}(x)$, and is the unique power series with leading term being $x$, that interpolates the given formal group law $\mathscr{F}(x,y)$ with the additive one $\mathbb{G}_a(x,y) = x+y$.

\medskip
On extending scalars from $\Z$ to $\Q$, the logarithm in the universal case has an explicit description
\[ \log_{\MU} (x) = \sum_{k \geq 0} \frac{[\C\CP^k]}{k+1} x^{k+1}, \quad \quad \mbox{so that} \quad \quad \log_{\MU}(\mathscr{F}_{\MU}(x,y)) = \log_{\MU}(x) + \log_{\MU}(y) \]
An immediate corollary of the above description is the following example

\medskip
\begin{example}\label{1e}
Let $R$ be the $\Z$-algebra $\MU^\ast(pt)$. The universal formal group law $\mathscr{F}_{\MU}(x,y)$ and its formal difference is given by
\[ \mathscr{F}_{\MU}(x,y) =  \exp_{\MU}(\log_\mathscr{F}(x) + \log_\mathscr{F}(y)) = \exp_{\MU}(\sum_{k \geq 0} \frac{[\C \CP^k]}{k+1}(x^{k+1} + y^{k+1})), \]
\[ x [-1] y = \exp_{\MU}(\sum_{k \geq 0} \frac{[\C \CP^k]}{k+1}(x^{k+1} - y^{k+1})) = \exp_{\MU}((x-y) \sum_{k \geq 0} \frac{[\C\CP^k]}{k+1} \mbox{S}_k(x,y)), \]
where $\exp_{\MU}(z)$ is the compositional inverse of $\log_{\MU}(z)$, and $\mbox{S}_k(x,y)$ is the symmetric sum
\[ \mbox{S}_k(x,y) = x^k + x^{k-1}y + \cdots + x y^{k-1} + y^k. \]
Notice that even though the expressions for $\mathscr{F}_{\MU}(x,y)$ and $x [-1] y$ above appear to have denominators, these denominators cancel away in the ring $\MU^\ast(pt)$ once one expands the expression as a power series in $x$ and $y$.
\end{example}

\medskip
One may notice that the formal difference $x [-1] y$ in each of the above examples appears to be divisible by the expression $(x-y)$. In fact, this is always true as we now show

\medskip
\begin{claim} \label{2}
Given an arbitrary formal group law, there is a unique homogeneous degree 0 element $q(x,y) \in R\llbracket x,y \rrbracket$ so that
\[ x[-1]y = (x-y) \, \cdot q(x,y). \]
Furthermore, $q(x,y)$ is invertible and $q(x,y) \equiv 1\mod (y)$.
\end{claim}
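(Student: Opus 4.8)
The plan is to exploit the defining equation $\mathscr{F}(x,[-1]x)=0$ together with the homogeneity conventions to peel off the factor $(x-y)$ from $x[-1]y = \mathscr{F}(x,[-1]y)$, and then to identify the cofactor and check its invertibility by reducing modulo $(y)$.

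First I would establish that $x[-1]y$ vanishes when we set $x=y$. Since $[-1]y$ satisfies $\mathscr{F}(y,[-1]y)=0$ by definition, substituting $x=y$ into $\mathscr{F}(x,[-1]y)$ gives $0$. Working in the power series ring $R\llbracket x,y\rrbracket$, a homogeneous power series that vanishes on the locus $x=y$ is divisible by $x-y$: concretely, write $x[-1]y$ as a power series in $(x-y)$ and $y$ by the change of variables $u=x-y$, and observe that the constant term in $u$ is obtained by setting $x=y$, hence is zero, so $u=(x-y)$ divides the whole series. This produces a unique $q(x,y)\in R\llbracket x,y\rrbracket$ with $x[-1]y=(x-y)\,q(x,y)$; uniqueness is immediate because $x-y$ is not a zero divisor in $R\llbracket x,y\rrbracket$. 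The degree bookkeeping then forces $q(x,y)$ to be homogeneous of degree $0$: $x[-1]y$ has homogeneous degree $2$, and so does $x-y$, so the quotient lies in degree $0$.

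Next I would pin down $q(x,y)\bmod(y)$. Setting $y=0$, unitarity gives $[-1]0=0$ and $\mathscr{F}(x,0)=x$, so $x[-1]0=\mathscr{F}(x,[-1]0)=\mathscr{F}(x,0)=x$. On the other hand $x[-1]0=(x-0)\,q(x,0)=x\,q(x,0)$, and cancelling $x$ (again using that $x$ is a non–zero–divisor) yields $q(x,0)=1$, i.e. $q(x,y)\equiv 1\bmod(y)$. Finally, a power series in $R\llbracket x,y\rrbracket$ whose reduction mod $(y)$ — and in fact whose constant term — equals $1$ is a unit, since $R\llbracket x,y\rrbracket$ is a local-type ring in which $1$ plus anything in the ideal $(x,y)$ is invertible via the geometric series; so $q(x,y)$ is invertible.

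I do not anticipate a serious obstacle here — the statement is essentially a formal consequence of unitarity plus the divisibility of a power series vanishing on $x=y$. The only point requiring a little care is the divisibility argument itself, making sure the reasoning is carried out correctly in the complete ring $R\llbracket x,y\rrbracket$ (so that the "change of coordinates" $u=x-y$ genuinely identifies $R\llbracket x,y\rrbracket$ with $R\llbracket u,y\rrbracket$ and the vanishing-at-$u=0$ conclusion is legitimate) and that all cancellations use the fact that $x-y$, $x$, and $y$ are non–zero–divisors. If one wants to avoid the coordinate change, an equivalent route is to note that the $R$-linear "difference quotient" operator, or simply the Taylor expansion of $\mathscr{F}(x,z)$ in $z$ around $z=[-1]x$, exhibits the factorization directly; either way the computation is routine.
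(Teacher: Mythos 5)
Your proposal is correct and follows essentially the same route as the paper: the paper factors out $(x-y)$ by expanding $\mathscr{F}(y+z,[-1]y)$ in $z$ (which is exactly your change of variables $u=x-y$), gets uniqueness from $x-y$ being a non-zero-divisor, obtains $q(x,0)=1$ by setting $y=0$ via unitarity, and concludes invertibility from $q\equiv 1 \bmod (y)$. Your write-up just makes the coordinate change and the degree bookkeeping more explicit.
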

\begin{proof}
Consider the formal expansion of the expression $(y+z)[-1]y := \mathscr{F}(y+z, [-1]y)$. On setting $z$ as $0$, we see that the expression vanishes. Therefore, it is divisible by $z$. Setting $z$ as $(x-y)$, we conclude that there is a power series $q(x,y)$ that satisfies the relation
\[ (x-y) \, q(x,y) := \mathscr{F}(y+x-y, [-1]y) = x[-1]y. \]
Since $(x-y)$ is not a zero divisor in $R\llbracket x,y \rrbracket$, we see that $q(x,y)$ is unique. Next, by setting $y$ as $0$, we see that $q(x,0)=1$. In particular, $q(x,y)$ has the form $1 \mod (y)$, and is therefore a unit. \end{proof}

\medskip
\begin{remark}
It is not hard to show using the definition of the power series $q(x,y)$ that the coefficients in its expansion generate the same sub algebra of $R$ as the coefficients of the formal group law $\mathscr{F}(x,y)$. To see this, first observe that the power series $[-1]y$ can be expressed in terms of the coeffiients of $q(x,y)$ using the fact that $[-1]y = -yq(0,y)$. Next, observe that $\mathscr{F}(x,y) = x[-1]([-1]y) = (x-[-1]y)q(x,[-1]y)$. These two observations together establish what we seek to show.
\end{remark}

\medskip
\begin{remark} \label{2b}
In example \ref{1d}, one may verify that $q(x,y)$ is the following (symmetric) expression
\[ q(x,y) = \frac{x+y}{x\sqrt{J(y)} + y\sqrt{J(x)}}. \]
In general however, $q(x,y)$ need not be symmetric in $x,y$ as is easily seen from example \ref{1}.
\end{remark}

\medskip
\begin{remark} \label{2a}
The universal example \ref{1e} allows us to deduce some interesting properties about $q(x,y)$. For instance, we see that $q(x,y)$ has the form
\[ q(x,y) = \sum_{k \geq 0} \frac{[\C\CP^k]}{k+1} \mbox{S}_k(x,y) + \sum_{n \geq 1} q_n(x,y) (x-y)^n, \]
where $q_n(x,y)$ are symmetric power series in $x$ and $y$. Note that each individual series $q_n(x,y)$ involves denominators. However, on setting $x = y$, those terms vanish and we obtain the interesting (universal) relation that does not involve denominators
\[ q(x,x) = \frac{d}{d x} \log_\mathscr{F}(x). \]
\end{remark}

\medskip
\begin{claim} \label{2c}
Assume that the $\Z$-algebra $R$ is torsion free. Then, given a formal group law $\mathscr{F}(x,y)$ over $R$, the power series $q(x,y)$ is symmetric if and only if $\log_\mathscr{F}(x)$ is an odd power series. Equivalently, $q(x,y)$ is symmetric if and only if $q(x,x)$ is an even power series. Note that these conditions are automatic if $R$ has no nontrivial elements in degrees $2 \mod 4$.
\end{claim}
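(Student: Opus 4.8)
The plan is to pass to the logarithm of $\mathscr{F}$, which is available after tensoring with $\Q$. Since $R$ is torsion-free it embeds into the $\Q$-algebra $R\otimes\Q$, and both properties in question---symmetry of $q(x,y)$ and evenness of $q(x,x)$---are coefficient-wise, hence hold over $R$ if and only if they hold over $R\otimes\Q$. So I would work over $R\otimes\Q$, where $\ell(x):=\log_\mathscr{F}(x)$ and its compositional inverse $e(x):=\exp_\mathscr{F}(x)$ are defined, homogeneous of degree $2$, each with leading term $x$. The basic identity I would record is
\[ x[-1]y \ = \ e\bigl(\ell(x)-\ell(y)\bigr), \]
which follows from $\ell(\mathscr{F}(u,v))=\ell(u)+\ell(v)$ together with $\ell([-1]y)=-\ell(y)$ (the latter because $\mathscr{F}(y,[-1]y)=0$); dividing by the non-zero-divisor $x-y$ gives $q(x,y)=e(\ell(x)-\ell(y))/(x-y)$.

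The heart of the argument is the equivalence: $q(x,y)$ is symmetric if and only if $e$ is an odd power series. If $e$ is odd, then $e(\ell(y)-\ell(x))=-\,e(\ell(x)-\ell(y))$, i.e.\ $y[-1]x=-(x[-1]y)$; rewriting both sides through $q$ and cancelling the factor $y-x$ gives $q(y,x)=q(x,y)$. Conversely, if $q$ is symmetric then the same manipulation run backwards gives $x[-1]y=-(y[-1]x)$, i.e.\ $e(\ell(x)-\ell(y))=-\,e(\ell(y)-\ell(x))$; performing the invertible substitution $x\mapsto e(s)$, $y\mapsto e(t)$ (with inverse $s\mapsto\ell(x)$, $t\mapsto\ell(y)$) turns this into the power-series identity $e(s-t)=-\,e(t-s)$ in $s,t$, and setting $t=0$ together with $e(0)=0$ yields $e(s)=-e(-s)$. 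Finally, a power series with leading term $x$ is odd if and only if its compositional inverse is odd (by uniqueness of compositional inverses), so ``$e$ odd'' is the same as ``$\ell$ odd''.

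To obtain the ``equivalently'' clause, I would invoke Remark~\ref{2a}, which gives $q(x,x)=\ell'(x)$: the derivative of an odd power series is even, and over a $\Q$-algebra an antiderivative of an even power series is odd, with $\ell(0)=0$ forcing the integration constant to vanish; hence $\ell$ odd $\iff$ $\ell'$ even $\iff$ $q(x,x)$ even. For the last sentence, write $\ell(x)=\sum_{k\ge 0}m_k x^{k+1}$ with $m_k\in(R\otimes\Q)^{-2k}$; oddness of $\ell$ means precisely $m_k=0$ for all odd $k$, and for odd $k$ the degree $-2k$ is $\equiv 2\pmod 4$, so this holds automatically once $R$ (hence $R\otimes\Q$) has no nonzero elements in degrees $\equiv 2\pmod 4$.

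The step I expect to require the most care is the forward direction of the equivalence with oddness of $e$: one must justify that the substitution $x\mapsto e(s)$, $y\mapsto e(t)$ genuinely converts an identity of power series in $x,y$ into one in $s,t$---which is clear since it is invertible---and that the resulting two-variable identity is equivalent to its one-variable specialization at $t=0$. Everything else is bookkeeping with gradings and the reduction to $R\otimes\Q$.
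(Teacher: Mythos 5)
Your proof is correct and follows essentially the same route as the paper: both reduce to $R\otimes\Q$, use the identity $x[-1]y=\exp_\mathscr{F}(\log_\mathscr{F}(x)-\log_\mathscr{F}(y))$ to show that symmetry of $q(x,y)$ is equivalent to oddness of $\exp_\mathscr{F}$ (hence of $\log_\mathscr{F}$), and obtain the second equivalence from Remark~\ref{2a} ($q(x,x)=\tfrac{d}{dx}\log_\mathscr{F}(x)$) together with the degree count for the final sentence. The only cosmetic difference is in the converse direction, where the paper factors $q(x,y)=s(x,y)\cdot\exp_\mathscr{F}(z)/z$ with $z=(x-y)s(x,y)$ symmetric-times-antisymmetric and argues inside the subring $(R\otimes\Q)\llbracket z\rrbracket$, whereas you apply the invertible substitution $x\mapsto\exp_\mathscr{F}(s)$, $y\mapsto\exp_\mathscr{F}(t)$ and specialize at $t=0$; both steps are valid.
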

\begin{proof}
The equivalence of the two conditions follows from remark \ref{2a} above. It remains to establish the first condition. Now $\log_\mathscr{F}(x)$ is an odd power series if and only if its compositional inverse $\exp_\mathscr{F}(x)$ is an odd power series. We will now proceed to show that symmetry of $q(x,y)$ is equivalent to $\exp_\mathscr{F}(x)$ being an odd power series. Using the universal example \ref{1e}, we see that the formal difference $x [-1] y$ for the formal group law $\mathscr{F}(x,y)$ has the form
\[ x [-1] y = \exp_\mathscr{F}(z), \quad \quad \quad \quad z = (x-y) s(x,y),\]
with $s(x,y)$ being a symmetric power series
\[ s(x,y) = \sum_{k \geq 0} \frac{l_k}{k+1} \mbox{S}_k(x,y), \quad \quad \mbox{where} \quad \quad \log_\mathscr{F}(x) = \sum_{k \geq 0} \frac{l_k}{k+1} x^{k+1}. \]
Note that $s(x,y)$ is invertible in $(R \otimes \Q) \llbracket x, y \rrbracket$, and so $z$ can be chosen to be a power series generator. We therefore have an inclusion
\[ (R \otimes \Q) \llbracket z \rrbracket \subset (R \otimes \Q) \llbracket x, y \rrbracket, \quad \quad z \longmapsto (x-y)s(x,y). \]
It follows that $q(x,y) \, s(x,y)^{-1} = \exp_\mathscr{F}(z)/z$ is symmetric if and only if $\exp_\mathscr{F}(z)/z$ is even, or that $\exp_\mathscr{F}(z)$ is odd.
\end{proof}

%
%

\medskip
Let us now study the divided difference operators in the context of formal group laws.

\medskip
\begin{definition} \label{3}
Consider the formal power series ring $R\llbracket x_1,x_2, \ldots, x_n \rrbracket$. Let $\alpha$ denote any pair $(i,j)$ for $1 \leq i < j \leq n$. We think of $\alpha$ as a positive root of $\U(n)$ so that the pairs $(i,j)$ are indexed by the set $\Delta_+$ of positive roots of $\U(n)$. Given a formal group law defined over $R$, we define the generalized divided difference operator $A_\alpha$ as the operator on $R\llbracket x_1,x_2, \ldots, x_n \rrbracket$
\[ A_\alpha (f) := \frac{f}{x_i [-1] x_j} + \frac{r_\alpha(f)}{x_j [-1] x_i}, \quad f \in R\llbracket x_1,x_2, \ldots, x_n \rrbracket, \]
where $r_\alpha$ is the reflection on $R\llbracket x_1,x_2, \ldots, x_n \rrbracket$ given by switching $x_i$ and $x_j$. Using elementary algebra, one can check that the operator $A_\alpha$ is well defined and does not involve denominators.
\end{definition}

\medskip
\begin{remark} \label{claim_pushpull}
If $E$ is a complex oriented cohomology theory with underlying formal group law $\mathscr{F}_E$ and the coefficients of a point being $E^*(pt) = R$, then the operators $A_\alpha$ have a natural meaning in terms of push-pull oprators on the $\U(n)$-equivariant $E$-cohomology ring of the flag variety $\U(n)/T$ (see \cite{BE}). More precisely, recall that the $\U(n)$-equivariant $E$-cohomology of a $\U(n)$-space $X$ is defined as the $E$-cohomology of the space $\EU(n) \times_{\U(n)} X$ with $\EU(n)$ being the principal contractible $\U(n)$-space. For $X = \U(n)/T$, the $\U(n)$-equivariant $E$-cohomology ring is isomorphic to $R\llbracket x_1, x_2, \ldots, x_n \rrbracket$, supporting the operator $A_\alpha$ that is defined as the pushforward in equivariant $E$-cohomology followed by the pullback: $\pi^\ast \circ \pi_\ast$, where $\pi$ denotes the $\U(n)$-equivariant fibration
\[ \pi : \U(n)/T \longrightarrow \U(n)/\U_\alpha(n), \]
with $\U_\alpha(n)$ being the maximal compact subgroup in the parabolic subgroup corresponding to the positive root $\alpha$.
\end{remark}

\medskip
\begin{claim} \label{4}
Given a root $\alpha \in \Delta_+$ defined by the pair $(i < j)$, let $q(\alpha)$ denote the unit $q(x_i, x_j)$ as defined in \ref{2}. Then the intersection of the kernels of all the operators $A_{\alpha_i}$, where $\alpha_i = x_i - x_{i+1}$ is a simple root, is a rank one free module over the ring of symmetric power series $R\llbracket x_1,x_2, \ldots, x_n \rrbracket^{\Sigma_n}$ generated by the unit $q(\Delta_+)$, where
\[ q(\Delta_+) := \prod_{\alpha \in \Delta_+} q(\alpha). \]
\end{claim}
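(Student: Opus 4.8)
The plan is to reduce the claim to the classical (additive) case, where the analogous statement — that the intersection of kernels of the classical divided difference operators $\partial_{\alpha_i}$ is the free rank-one module over symmetric power series generated by the Vandermonde-type product $\prod_{\alpha\in\Delta_+}(x_i-x_j)$ — is well known from the Schubert calculus of the flag variety. The key algebraic fact linking the two settings is Claim \ref{2}: writing $x_i[-1]x_j=(x_i-x_j)q(x_i,x_j)$ with $q(x_i,x_j)$ a unit congruent to $1$ modulo $(x_j)$, one can directly compare $A_\alpha$ with $\partial_\alpha$ through multiplication by these units.

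First I would establish the conjugation identity. For a single root $\alpha=(i<j)$ with reflection $r_\alpha$, one computes, using $q(x_i,x_j)\,r_\alpha(q(x_i,x_j))=q(x_i,x_j)q(x_j,x_i)$ (which is $r_\alpha$-invariant), that
\[
A_\alpha(f)=\frac{f}{(x_i-x_j)q(x_i,x_j)}+\frac{r_\alpha(f)}{(x_j-x_i)q(x_j,x_i)}
=\frac{1}{q(x_i,x_j)q(x_j,x_i)}\,\partial_\alpha\!\bigl(q(x_j,x_i)\,f\bigr),
\]
where $\partial_\alpha(g)=\dfrac{g-r_\alpha(g)}{x_i-x_j}$ is the classical operator; here one uses $r_\alpha\bigl(q(x_j,x_i)\,f\bigr)=q(x_i,x_j)\,r_\alpha(f)$ and that $q(x_i,x_j)q(x_j,x_i)$ is $r_\alpha$-fixed so it pulls out of $\partial_\alpha$. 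Consequently $A_\alpha(f)=0$ if and only if $\partial_\alpha\bigl(q(x_j,x_i)f\bigr)=0$, i.e. if and only if $q(x_j,x_i)f$ is $r_\alpha$-invariant. I would then iterate: for a simple root $\alpha_i=(i,i+1)$, $f$ lies in $\ker A_{\alpha_i}$ iff $q(x_{i+1},x_i)f$ is fixed by $r_{\alpha_i}$. The subtlety is that the various $q(x_{j},x_i)$ do not assemble into a single global multiplier compatible with all simple reflections simultaneously, so one cannot simply say "$f$ is symmetric." Instead I would argue by induction on the length of a reduced word, or more cleanly: by Claim \ref{2} each $q(\alpha)=q(x_i,x_j)$ is a unit, so $q(\Delta_+)=\prod_{\alpha\in\Delta_+}q(\alpha)$ is a unit, and I would show that $f\in\bigcap_i\ker A_{\alpha_i}$ precisely when $f/q(\Delta_+)$ is a symmetric power series — equivalently when $f\in q(\Delta_+)\cdot R\llbracket x_1,\dots,x_n\rrbracket^{\Sigma_n}$.

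To prove that last equivalence I would proceed in two directions. For the inclusion $\supseteq$: it suffices to check $A_{\alpha_i}\bigl(q(\Delta_+)\bigr)=0$ for each simple root, since $A_{\alpha_i}$ is $R\llbracket\dots\rrbracket^{\Sigma_n}$-linear (symmetric power series are $r_{\alpha_i}$-invariant and hence central for $A_{\alpha_i}$ in the appropriate sense — this is the standard Leibniz-type property of divided differences, which I would verify holds for $A_\alpha$ directly from its definition). Applying the conjugation identity, $A_{\alpha_i}(q(\Delta_+))$ vanishes iff $q(x_{i+1},x_i)\,q(\Delta_+)$ is $r_{\alpha_i}$-invariant; writing $q(\Delta_+)=q(x_i,x_{i+1})\cdot\bigl(\text{rest}\bigr)$ and noting that the "rest" — the product over the remaining positive roots — is $r_{\alpha_i}$-invariant (since $r_{\alpha_i}$ permutes $\Delta_+\setminus\{\alpha_i\}$), the invariance of $q(x_{i+1},x_i)q(x_i,x_{i+1})$ under the transposition swapping $x_i,x_{i+1}$ finishes it. For the inclusion $\subseteq$: given $f\in\bigcap_i\ker A_{\alpha_i}$, set $g=f/q(\Delta_+)\in R\llbracket x_1,\dots,x_n\rrbracket$ (legitimate since $q(\Delta_+)$ is a unit); the same computation shows $A_{\alpha_i}(q(\Delta_+)g)=0$ translates into $r_{\alpha_i}(g)=g$ for every simple root, and since the simple reflections generate $\Sigma_n$, $g$ is symmetric. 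Freeness and rank one are then immediate: the module is $q(\Delta_+)\cdot R\llbracket x_1,\dots,x_n\rrbracket^{\Sigma_n}$ and $q(\Delta_+)$ is a non-zero-divisor.

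I expect the main obstacle to be the bookkeeping in the conjugation identity — specifically, verifying that $A_\alpha$ enjoys the correct "twisted Leibniz rule" $A_\alpha(fg)=A_\alpha(f)g$ for $g$ $r_\alpha$-invariant, and tracking which of $q(x_i,x_j)$ versus $q(x_j,x_i)$ appears where, since unlike the additive case these are genuinely different unless we are in the symmetric setting of Claim \ref{2c}. Once the single-root identity $A_\alpha=\bigl(q(x_i,x_j)q(x_j,x_i)\bigr)^{-1}\partial_\alpha\circ\bigl(q(x_j,x_i)\cdot{-}\bigr)$ is pinned down, the rest follows the classical template essentially verbatim, with the invariance of $q(\Delta_+)$ under each simple reflection being the one genuinely new (but short) verification.
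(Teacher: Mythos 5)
Your proof is correct and takes essentially the same route as the paper: your conjugation identity is exactly the paper's relation $A_\alpha = D_\alpha \circ Q(\alpha)^{-1}$, i.e.\ the observation that $f\in\ker A_{\alpha_i}$ iff $q$-twisting makes it (anti-)invariant under $r_i$. The decisive point in both arguments is the same one, namely that $r_i$ permutes $\Delta_+\setminus\{\alpha_i\}$, so the complementary product $\check{q}(\alpha_i)$ is an $r_i$-invariant unit, allowing the generator $q(\alpha_i)$ of $\ker A_{\alpha_i}$ to be replaced by $q(\Delta_+)$ before intersecting over the simple roots.
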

\begin{proof}
Given a simple root $\alpha_i \in \Delta_+$, let us rewrite the action of $A_{\alpha_i}$ on $f$ as
\[ A_{\alpha_i}(f) = \frac{f}{x_i [-1] x_{i+1}} + r_i(\frac{f}{x_i [-1] x_{i+1}}). \]
Hence $f$ is in the kernel of $A_{\alpha_i}$ if the expression $\frac{f}{x_i [-1] x_{i+1}}$ switches sign under $r_i$. Notice that the ratio of any two such elements is invariant under $r_i$. On the other hand, by claim \ref{2}, it follows that given an $r_i$-invariant element $g$, the expression $q(\alpha_i) g$ is in the kernel of $A_{\alpha_i}$. In particular, we have shown that for a fixed $\alpha_i$, the kernel of $A_{\alpha_i}$ is {\em precisely} the rank one module of $r_i$ invariants generated by the unit $q(\alpha_i)$. Now let us fix $\alpha_i \in \Delta_+$, and consider
\[ \check{q}(\alpha_i) := \prod_{\beta \in \Delta_+, \, \beta \neq \alpha_i} q(\beta), \quad \quad \mbox{so that} \quad q(\alpha_i) \, \check{q}(\alpha_i) = q(\Delta_+). \]
Since $r_i$ permutes all positive roots besides $\alpha_i$, we see that $\check{q}(\alpha_i)$ is an $r_i$-invariant unit. Hence the kernel of $A_{\alpha_i}$ is a rank one free module of $r_i$-invariants generated by the element $q(\Delta_+)$. Taking intersection over all simple roots $\alpha_i \in \Delta_+$ we get the required result.
\end{proof}

\medskip
\begin{example} \label{5}
For the multiplicative formal group law of example \ref{1}, the intersection of the kernels of all the generalized divided difference operators $A_{\alpha_i}$ is a rank one free module over symmetric power series $R\llbracket x_1,x_2, \ldots, x_n \rrbracket^{\Sigma_n}$ generated by the unit $q(\Delta_+)$, where
\[ q(\Delta_+) = \frac{1}{(1-\beta x_2)(1-\beta x_3)^2 \ldots (1-\beta x_n)^{n-1}}. \]
Note that $q(\Delta_+)$ is not $\Sigma_n$-invariant for any $n > 1$.
\end{example}

\medskip
Let $D_\alpha$ denote the classical divided difference operator (i.e. the divided difference operator for the additive formal group law). Definition \ref{3} and claim \ref{2} imply that we have
\begin{equation} \label{eq:ADrelation}A_\alpha (f) = D_\alpha (q(\alpha)^{-1} f ), \quad \mbox{in other words} \quad A_\alpha = D_\alpha \circ Q(\alpha)^{-1}, \end{equation}
where $Q(\alpha)$ denotes the operator given by multiplication with $q(\alpha)$. In particular, the $q$-twisted operators $Q(\alpha_i) \circ A_{\alpha_i}$ satisfy the braid relations, and generate an algebra isomorphic to the nilHecke algebra, namely, the algebra generated by the operators $D_{\alpha_i}$. We also have

\medskip
\begin{theorem}\label{theorem_nilHecke}
Given a formal group law defined over $R$, let $\mathscr{A}(n)$ denote the algebra of operators on $R\llbracket x_1, x_2, \ldots, x_r \rrbracket$ generated by multiplication operators, and the generalized divided difference operators $A_{\alpha_i}$ for $1 \leq i < n$. Then $\mathscr{A}(n)$ is identically the same as the (completed) affine nilHecke algebra over the ground ring $R$. In other words, $\mathscr{A}(n)$ agrees with the algebra generated by the operators $D_{\alpha_i}$ and multiplication operators with respect to $R\llbracket x_1, x_2, \ldots, x_r \rrbracket$. In particular, $\mathscr{A}(n)$ is a free (left or right) module of rank $n!$ over the subalgebra $R\llbracket x_1, x_2, \ldots, x_n \rrbracket$. Alternatively, $\mathscr{A}(n)$ is a matrix algebra of rank $(n!)^2$ over the subalgebra $R\llbracket x_1, x_2, \ldots, x_n \rrbracket^{\Sigma_n}$. It follows that $R\llbracket x_1, x_2, \ldots, x_n \rrbracket^{\Sigma_n} \subset \mathscr{A}(n)$ is the center.
\end{theorem}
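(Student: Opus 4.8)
The plan is to leverage equation~(\ref{eq:ADrelation}), which already tells us that each generalized divided difference operator $A_{\alpha_i}$ factors through the classical one as $A_{\alpha_i} = D_{\alpha_i} \circ Q(\alpha_i)^{-1}$, where $Q(\alpha_i)$ is multiplication by the unit $q(\alpha_i) = q(x_i, x_{i+1})$. The first step is to observe that this identity, together with the fact that $q(\alpha_i)$ and $q(\alpha_i)^{-1}$ are themselves multiplication operators lying in $\mathscr{A}(n)$ (indeed in the multiplication subring $R\llbracket x_1, \dots, x_n\rrbracket$), immediately gives a two-sided containment of algebras: $D_{\alpha_i} = A_{\alpha_i} \circ Q(\alpha_i)$ shows the classical nilHecke algebra sits inside $\mathscr{A}(n)$, and $A_{\alpha_i} = D_{\alpha_i} \circ Q(\alpha_i)^{-1}$ shows the reverse. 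Since both algebras are, by definition, generated by their respective divided difference operators together with the full ring of multiplication operators $R\llbracket x_1, \dots, x_n\rrbracket$, and the two sets of generators are mutually expressible, the two algebras coincide as subalgebras of $\mathrm{End}_R(R\llbracket x_1, \dots, x_n\rrbracket)$.

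Once the identification $\mathscr{A}(n) = $ (completed, or polynomial — see below) affine nilHecke algebra $\mathrm{NH}_n$ over $R$ is established, the remaining assertions are just the standard structural facts about $\mathrm{NH}_n$, which I would cite rather than reprove. Specifically: $\mathrm{NH}_n$ is free of rank $n!$ as a left (and as a right) module over $R\llbracket x_1,\dots,x_n\rrbracket$, with basis given by the operators $D_w = D_{i_1}\cdots D_{i_\ell}$ over reduced words for $w \in \Sigma_n$ (well-defined because the $D_{\alpha_i}$ satisfy the braid relations and $D_{\alpha_i}^2 = 0$); the center of $\mathrm{NH}_n$ is exactly the symmetric subring $R\llbracket x_1,\dots,x_n\rrbracket^{\Sigma_n}$; and $\mathrm{NH}_n$ is a matrix algebra of rank $(n!)^2$ over its center, realized concretely via the action on $R\llbracket x_1,\dots,x_n\rrbracket$, which is a free module of rank $n!$ over $R\llbracket x_1,\dots,x_n\rrbracket^{\Sigma_n}$ (a completed version of the coinvariant-algebra statement). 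These transport verbatim to $\mathscr{A}(n)$ through the algebra isomorphism.

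The one genuine subtlety — and the step I expect to require the most care — is bookkeeping about which version of the nilHecke algebra appears, and in particular checking that conjugating the nilHecke relations by the units $q(\alpha_i)$ does not disturb them. The operators $Q(\alpha_i) \circ A_{\alpha_i}$ are asserted in the text just before the theorem to satisfy the braid relations; I would want to verify directly that the $A_{\alpha_i}$ themselves satisfy $A_{\alpha_i}^2 = 0$ and the braid relations $A_{\alpha_i} A_{\alpha_{i+1}} A_{\alpha_i} = A_{\alpha_{i+1}} A_{\alpha_i} A_{\alpha_{i+1}}$ and $A_{\alpha_i} A_{\alpha_j} = A_{\alpha_j} A_{\alpha_i}$ for $|i-j|>1$. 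For $A_{\alpha_i}^2 = 0$ this is clear from $A_{\alpha_i} = D_{\alpha_i} \circ Q(\alpha_i)^{-1}$ and a short computation using that $D_{\alpha_i}$ is $R\llbracket x_1,\dots,x_n\rrbracket^{r_i}$-linear together with $D_{\alpha_i}^2=0$; for the braid relations it is cleanest to argue that the subalgebra generated by the $A_{\alpha_i}$ and multiplications is literally equal to that generated by the $D_{\alpha_i}$ and multiplications (the containment argument above), so the braid relations are inherited rather than checked by hand. One should also be explicit that ``completed'' refers to working over the power series ring $R\llbracket x_1,\dots,x_n\rrbracket$ rather than the polynomial ring, so that the module-freeness and matrix-algebra statements are the appropriately completed ones; with that caveat the proof is essentially formal once~(\ref{eq:ADrelation}) is in hand.
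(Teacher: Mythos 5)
Your proposal is correct and follows the paper's own argument: both use the identity $A_{\alpha_i} = D_{\alpha_i} \circ Q(\alpha_i)^{-1}$ from (\ref{eq:ADrelation}), with $Q(\alpha_i)$ an invertible multiplication operator, to conclude that the two algebras have the same generators and hence coincide, and then invoke standard structural results for the (completed) affine nilHecke algebra. The extra checks you flag (nilpotence and braid relations for the $A_{\alpha_i}$, and the polynomial-versus-power-series distinction) are reasonable care but not needed beyond what the containment argument already gives, exactly as in the paper.
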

\begin{proof}
By (\ref{eq:ADrelation}), we see that the operators $A_{\alpha_i}$ are of the form $D_{\alpha_i} \circ Q(\alpha_i)^{-1}$, where $Q(\alpha_i)$ is the invertible multiplication operator corresponding to $q(\alpha_i)$. It follows that the operators $A_{\alpha_i}$ generate the same algebra as $D_{\alpha_i}$ when extended with multiplication operators, which is the affine nilHecke algebra by definition (once we complete polynomials to power series). The rest of the claim follows from well-known results on the affine nilHecke algebra.
\end{proof}

\begin{remark} \label{remark_surprise}
The above theorem may come as a surprise to the reader, since it has been known for some time that for an arbitrary compact Lie group $G$, the push-pull operators $A_{\alpha_i}$ defined as in remark \ref{claim_pushpull} and acting on the equivariant cohomology $E^\ast_{G}(G/T)$, do not satisfy the braid relations, unless the formal group law $\mathscr{F}_E$ underlying the cohomology theory $E$ is highly restrictive (see theorem 3.7 in  \cite{BE}, see also \cite{HMSZ}). It is possible that theorem \ref{theorem_nilHecke}  only holds for the compact Lie group $G = \U(n)$, though we have not verified this. It is important to note that the classes $q(\alpha_i)$ that allow for the proof of the above theorem have been studied before (see \cite{C}, \cite{Na}), though the main observation of theorem \ref{theorem_nilHecke} appears to be new.
\end{remark}

%
%

\section{Deformed $GL(2)$ foam evaluation}
\label{sec:deform}

\subsection{$GL(2)$ foams and their colorings}

The original formulation~\cite{Kh1} of $SL(2)$ link homology did not use foams. Hints at foams appeared in the work of Clark, Morrison, and Walker~\cite{CMW} and Caprau~\cite{Ca1,Ca2}, who used disorientation lines on surfaces involved in the construction of $SL(2)$ homology to control minus signs that appear throughout the theory. This allowed them to establish full functoriality of the theory under cobordisms rather than the functoriality up to an overall minus sign, as shown in the earlier work~\cite{J,Kh3,BN}. One can think of disorientation lines as remnants of the 2-facets of $GL(2)$ foams along which they were attached to the 1-facets.

Earliest constructions of $SL(3)$ and $SL(N)$ link homology for $N>3$ used foams explicitly~\cite{Kh2} and implicitly~\cite{KRo2}.

Blanchet~\cite{B} pioneered the use of foams for the $SL(2)$ (more precisely, $GL(2)$) homology theory. A detailed exploration of various flavours of $GL(2)$ foams and applications can be found in~\cite{EST1,EST2}.

We find its useful to follow the $GL(2)$ foam calculus of Beliakova, Hogancamp, Putyra, Wehrli~\cite{BHPW}. That's the calculus deformed in this section.

We consider $GL(2)$ foams (or, simply, foams) in this paper. A closed $GL(2)$ foam $F$ is a combinatorial compact two-dimensional CW-complex embedded in $\R^3$ (or $\SS^3$). The only allowed singularities of the CW-complex are \emph{singular circles}, such that any point on the circle has a neighbourhood homeomorphic to the product of the tripod and the interval.

The set of points of $F$ on its singular circles is denoted $s(F)$, and connected components of $F\setminus s(F)$ are called \emph{facets}.
Facets of $F$ are subdivided into 1-facets and 2-facets. One-facets are also called  \emph{thin} facets, two-facets are also called \emph{double} or \emph{thick} facets. We require that along each singular circle two 1-facets and one 2-facet meet, see Figure~\ref{fig:gl2_sing_circle}.

\begin{figure}[h]
\begin{center}
\includegraphics[scale=0.70]{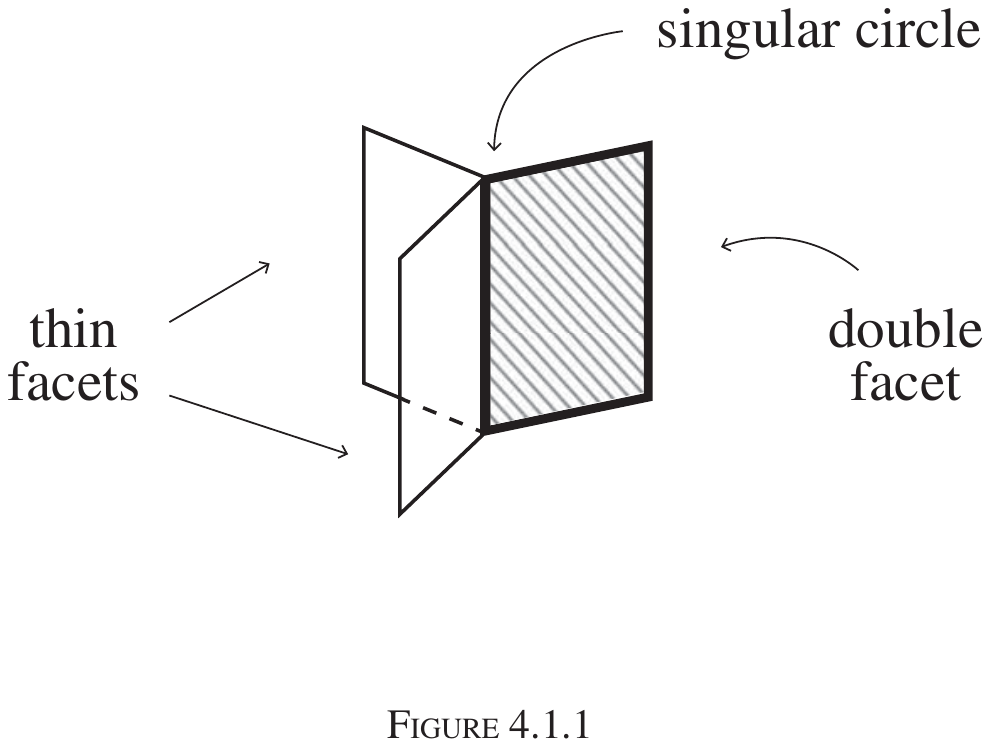}
\caption{\label{fig:gl2_sing_circle} Part of a singular circle and its neighbourhood, with two thin and one double facet.}
\end{center}
\end{figure}

This implies, in particular, that no 'monodromy' is possible along any singular circle, so it has a neighbourhood in $F$ homeomorphic to the product of $\SS^1$ and a tripod with 'two thin legs and a double leg'.

Each facet is oriented in such a way that all three facets along any singular circle induce a compatible orientation on this circle, see Figure~\ref{fig:gl2_orientations}. In most diagrams that follow, it's clear whether a facet is thin or double, and we usually omit the corresponding label $1$ or $2$.

\begin{figure}[h]
\begin{center}
\includegraphics[scale=0.70]{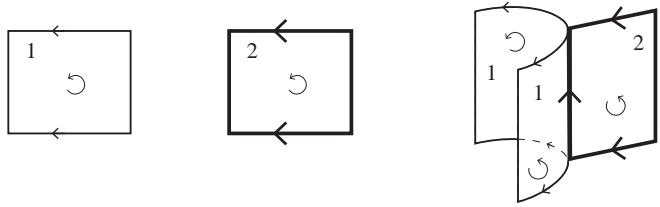}
\caption{\label{fig:gl2_orientations} Left: oriented think and thick facets, with induced orientations on the top and bottom boundary. Right: compatibility between orientations of a singular circle and adjacent thin facets and the double facet. Also shown induced orientations of the top and bottom foam's boundary. They'll be needed when we pass from closed foams to foams with boundary in Section~\ref{subsec:direct_sum_dec}}
\end{center}
\end{figure}

Thin facets may carry dots, which can move freely along a facet, but cannot jump to an adjacent facet. If a facet carries $n$ dots, we may record them as a single dot with label $n$. It's possible to allow similar decorations on 2-facets, namely symmetric polynomials in two variables, but we avoid doing so in the paper, instead moving any such decoration from
a 2-facet to the coefficient of the foam.

\emph{Remark:} Unlike $GL(N)$ foams for $N\ge 3$ and $SL(N)$ foams for $N\ge 4$, $GL(2)$ foams can't have singular vertices.

A \emph{coloring} (or \emph{admissible coloring}) of
a foam $F$ is a map $c$ from the set of its thin facets to the set $\{1,2\}$ such that along any singular circle, the two thin facets are mapped to different numbers. It's convenient to extend $c$ to double facets, coloring each double facet by the set $\{1,2\}$. This produces the flow condition, that the union of colors of 1-facets along each singular circle is the color of the double facet, that is, the entire set $\{1,2\}$.

Notice that $F_{12}(c)$ does not depend on the coloring $c$ and is a closed surface which is the union of closures of 1-facets of $F$. We denote it by $F_{12}$ and
call the \emph{thin surface} of $F$. Likewise, $F_{1\cap 2}(c)$ does not depend on $c$ and is
the union of closures of 2-facets of $F$.
We denote it by $F_{1\cap 2}$ and call the \emph{double surface} of $F$. The boundary of $F_{1\cap 2}$ is exactly the set of singular circles of $F$.

Often it's convenient to identify a facet $f$ with its closure $\overline{f}$ in $F$. In particular, the Euler characteristics of $f$ and $\overline{f}$ are equal, since the two spaces differ only by a union of circles, which is the boundary of $\overline{f}$. From now on, unless otherwise specified, by a facet we mean a closed facet.

 Surface $F_{12}$ has finitely many connected components $\Sigma_1, \dots, \Sigma_n$. Each component may contain one or more singular circles. The union of these singular circles is zero when viewed as an element of $H_1(\Sigma_k,\Z/2)$, for any $k$, due to our orientation requirements on $F$. In particular, each $\Sigma_k$ admits exactly two checkerboard colorings of its regions, where along each singular circle in $\Sigma_k$ the coloring is reversed.

A choice of such coloring for each $\Sigma_k$ is equivalent to a coloring of $F$. Hence, $F$ has $2^n$ colorings, where $n$ is the number of connected components of $F_{12}$.

Quantity $\theta^+(c)=\theta_{12}^+(c)$ counts the number of positive circles for a coloring $c$, see Figure~\ref{fig:theta_signs}.

\begin{figure}[h]
\begin{center}
\includegraphics[scale=0.70]{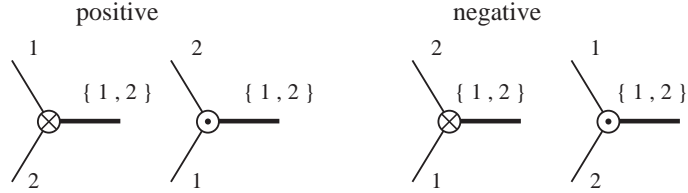}
\caption{\label{fig:theta_signs} $\theta^+_{12}(c)$ or simply $\theta^+(c)$ counts the number of positive circles.}
\end{center}
\end{figure}


\subsection{Deformed evaluation for $GL(2)$ foams}

Modified Robert-Wagner evaluation formula, in the $GL(2)$ case, specializes to
\begin{equation}\label{eq_eval_modify}
 \angf{F,c} \ = \ (-1)^{\theta^+_{12}(c)}
\frac{(x_1-x_2)^{\chi(F_{1\cap 2}(c))}x_1^{d_1(c)}x_2^{d_2(c)}}
{(x_1[-1] x_2)^{\chi_1(c)/2}(x_2[-1] x_1)^{\chi_2(c)/2}}
\end{equation}
Since
\begin{equation} \label{eq_minus_12}
x_1 [-1] x_2 = (x_1-x_2)p_{12}^{-1}, \ \  x_2 [-1] x_1 = (x_2-x_1)p_{21}^{-1},
\end{equation}
we have
\begin{equation} \label{eq:deformed_glN_2}
\frac{(x_1 - x_2)^{\chi_{1\cap 2}(c)}}{(x_1 [-1] x_2)^{\chi_1(c)/2} (x_2 [-1] x_1)^{\chi_2(c)/2}}
= (-1)^{\chi_2(c)/2}\frac{p_{12}^{\chi_1(c)/2}p_{21}^{\chi_2(c)/2}}{(x_1 - x_2)^{\chi_{12}(c)/2}}
\end{equation}
In the 2-color case, $F_{1\cap 2}(c)=F_{1\cap 2}$ is the union of facets of thickness two and does not depend on $c$.  Likewise, $F_{12}(c)=F_{12}$ does not depend on $c$ either. Its Euler characteristic is denoted $\chi_{12}(F)= \chi(F_{12}).$

Equation (\ref{eq_eval_modify}) can be rewritten
\begin{eqnarray*}
\angf{F,c} & = & (-1)^{\theta^+_{12}(c)+\chi_2(c)/2} \
\frac{x_1^{d_1(c)}x_2^{d_2(c)}}
{(x_1- x_2)^{\chi(F_{12})/2} p_{12}^{-\chi_1(c)/2}
p_{21}^{-\chi_2(c)/2}}   \\
& = & (-1)^{\theta^+_{12}(c)+\chi_2(c)/2} \
\frac{x_1^{d_1(c)}x_2^{d_2(c)}p_{12}^{\chi_1(c)/2}
p_{21}^{\chi_2(c)/2}}
{(x_1- x_2)^{\chi(F_{12})/2}
}  \\
& = & (-1)^{s'(F,c)} \frac{P(F,c)}{Q(F,c)}\   p_{12}^{\chi_1(c)/2}
p_{21}^{\chi_2(c)/2} \\
& = & (-1)^{\chi(F)/2} \ang{F,c} \ p_{12}^{\chi_1(c)/2}
p_{21}^{\chi_2(c)/2}.
\end{eqnarray*}
Above, $s'(F,c)=\theta^+_{12}(c)+\chi_2(c)/2$ and $d_1(c)$, $d_2(c)$ is the number of dots on thin facets colored by $1$, resp. $2$ by $c$.
We see that the original evaluation $\ang{F,c}$ is scaled by an invertible element, which is a product of powers of $p_{12}$ and $p_{21}$ and a sign. Also, the power of $x_1-x_2$ in the denominator depends on $F$ only. Let us write down the formula again.

\begin{equation}\label{eq:def_eval_gl2}
\angf{F,c}  \ = \  (-1)^{\theta^+_{12}(c)+\chi_2(c)/2} \
\frac{x_1^{d_1(c)}x_2^{d_2(c)}
p_{12}^{\chi_1(c)/2} p_{21}^{\chi_2(c)/2}}
{(x_1- x_2)^{\chi(F_{12})/2}
}
\end{equation}

We now define
\begin{equation}\label{eq:def_eval_gl3}
    \angf{F} \ = \ \sum_{c} \angf{F,c},
\end{equation}
the sum over all colorings of $F$. Let $E_1=x_1+x_2$, $E_2=x_1x_2$. The symmetric group $S_2$ acts on $\pseries{x_1,x_2}$ by permuting $x_1,x_2$.

\begin{theorem} \label{thm:no_denoms_GL2} $\angf{F}\in \pseries{x_1,x_2}^{S_2} \cong \pseries{E_1,E_2}$ for any $GL(2)$ foam $F$.
\end{theorem}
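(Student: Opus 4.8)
The plan is to deduce Theorem~\ref{thm:no_denoms_GL2} as the $N=2$ specialization of Theorem~\ref{Foam_denominator}, together with a convergence-of-degrees observation that promotes ``power series'' to ``power series in $E_1,E_2$''. First I would observe that $\angf{F}$ is manifestly $S_2$-symmetric: the full sum $\sum_c \angf{F,c}$ is taken over all colorings of $F$, and the transposition $\sigma$ swapping $x_1,x_2$ permutes the set of colorings (it sends the coloring $c$ to its ``global'' Kempe flip, i.e. reverses the checkerboard coloring on every component $\Sigma_k$ of $F_{12}$ simultaneously), so $\sigma$ merely reindexes the sum. One must check that $\sigma$ fixes each summand up to the reindexing, i.e. $\sigma(\angf{F,c}) = \angf{F, \sigma(c)}$; this is a direct inspection of formula~(\ref{eq:def_eval_gl2}), using $\sigma(p_{12}) = p_{21}$, $\sigma(x_1^{d_1}x_2^{d_2}) = x_1^{d_2}x_2^{d_1}$, that $\chi_1, \chi_2, d_1, d_2$ get swapped while $\chi(F_{12})$ is coloring-independent, and that $\theta^+_{12}(c)$ transforms so that the sign works out (this last point is exactly the sign bookkeeping already present in Robert--Wagner; for the deformed formula it is inherited because $\angf{F,c} = (-1)^{\chi(F)/2}\ang{F,c}\,p_{12}^{\chi_1(c)/2}p_{21}^{\chi_2(c)/2}$ as displayed just above, and $\ang{F,c}$ is known to behave correctly).

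Next I would invoke Theorem~\ref{Foam_denominator} directly: it asserts that $\angf{F}$, for a $GL(N)$ foam, lies in $\kk\llbracket x_1,\dots,x_N\rrbracket^{S_N}$, free of denominators. Specializing to $N=2$ gives $\angf{F} \in \pseries{x_1,x_2}^{S_2}$ immediately, which is the core content. The only gap is that Theorem~\ref{Foam_denominator} is phrased for the general $GL(N)$ evaluation~(\ref{eq:new_eval_2}), and one should confirm that the $GL(2)$ formula~(\ref{eq:def_eval_gl2})--(\ref{eq:def_eval_gl3}) used in this section is literally that general formula specialized to $N=2$ --- which is precisely the chain of equalities displayed between~(\ref{eq_eval_modify}) and~(\ref{eq:def_eval_gl2}), where $\angf{F,c}$ is rewritten and shown to equal $(-1)^{\chi(F)/2}\ang{F,c}\,p_{12}^{\chi_1(c)/2}p_{21}^{\chi_2(c)/2}$, matching~(\ref{eq:new_eval_1}) up to the global sign $(-1)^{\chi(F)/2}$ that does not affect the denominator question. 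So in the write-up I would simply say: ``this is Theorem~\ref{Foam_denominator} for $N=2$,'' and add one or two lines reconciling notation.

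Finally, the identification $\pseries{x_1,x_2}^{S_2} \cong \pseries{E_1,E_2}$: this is the classical fundamental theorem of symmetric functions in the completed (power series) setting, $\kk\llbracket x_1,x_2\rrbracket^{S_2} = \kk\llbracket E_1,E_2\rrbracket$ with $E_1 = x_1+x_2$, $E_2 = x_1 x_2$. I would either cite this as standard or give the one-line argument that a symmetric power series can be rewritten uniquely as a power series in $e_1,e_2$ by the usual leading-term induction, which converges because each homogeneous piece is a polynomial in $e_1,e_2$ and the degrees are bounded below. \emph{The main obstacle} is essentially bookkeeping rather than conceptual: one must be careful that the section~\ref{sec:deform} conventions (notably the extra global sign $(-1)^{\chi(F)/2}$, and the fact that $F_{1\cap 2}(c)$ and $F_{12}(c)$ are coloring-independent in the two-color case, so $\chi_{i\cap j}$ drops out) really do make~(\ref{eq:def_eval_gl2}) an instance of~(\ref{eq:new_eval_1}); once that matching is made explicit, Theorem~\ref{Foam_denominator} does all the work and nothing further needs to be proved. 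There is genuinely no new hard estimate here --- the substance was already carried out in the proof of Theorem~\ref{Foam_denominator} via Lemma~\ref{Foam invariance} (the $T_s(F,c)$ divisibility argument at genus-zero components).
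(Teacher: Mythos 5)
Your proposal is correct, but it takes a different (and perfectly legitimate) route from the paper's own proof of Theorem~\ref{thm:no_denoms_GL2}. You reduce the statement to Theorem~\ref{Foam_denominator}: since the only color pair for $N=2$ is $(1,2)$ and double facets carry no decorations, formula (\ref{eq_eval_modify}) is literally (\ref{eq:deformed_glN_1}) specialized to $N=2$, and the rewriting (\ref{eq:def_eval_gl2}) coincides \emph{exactly} with (\ref{eq:new_eval_1}) --- note that your hedge ``up to the global sign $(-1)^{\chi(F)/2}$'' is unnecessary: that sign only enters when comparing the deformed evaluation with the \emph{undeformed} Robert--Wagner value $\ang{F,c}$, not when matching the two deformed formulas, though of course it would be harmless for the denominator question anyway. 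With that reconciliation done, Theorem~\ref{Foam_denominator} gives $\angf{F}\in\pseries{x_1,x_2}^{S_2}$, and the identification $\pseries{x_1,x_2}^{S_2}\cong\pseries{E_1,E_2}$ is the standard completed form of the fundamental theorem of symmetric functions. The paper explicitly sanctions this reduction (``The theorem above has already been proved in Section~\ref{sec:deformed} for general $N$''), but then deliberately gives a self-contained $N=2$ proof so that Section~\ref{sec:deform} does not depend on Section~\ref{sec:deformed}: it pairs each coloring $c$ with its Kempe flip $c_1$ on a genus-zero component $\Sigma$ of the thin surface $F_{12}$, tracks the dots $t_1,t_2$, the sign via the parity relation $r\equiv\wchi_2(\Sigma)\ (\mathrm{mod}\ 2)$, and the exponents of $p_{12},p_{21}$, to show $\angf{F,c}+\angf{F,c_1}$ carries an explicit factor $x_1^{t_1}x_2^{t_2}p_{12}^{\ell}-x_1^{t_2}x_2^{t_1}p_{21}^{\ell}$ divisible by $x_1-x_2$. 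So what your approach buys is brevity and no duplication of the divisibility argument (which indeed already lives in Lemma~\ref{Foam invariance} and the $T_s(F,c)$ analysis); what the paper's approach buys is logical independence of the $GL(2)$ section and explicit local formulas (signs, dot placements, powers of $p_{12},p_{21}$) that are reused in the skein-relation computations later in the section. Both are valid proofs of the statement.
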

In other words, $\angf{F}$ is a power series in $x_1, x_2$ that's symmetric under the permutation action of $S_2$ on $x_1,x_2$. Equivalently, it's a power series
in elementary symmetric functions $E_1, E_2$.
Consider the chain of inclusions
\begin{equation}\label{eq:ring_inclusions}
    \kk \ \subset \ \pseries{E_1,E_2} \ \subset \  \pseries{x_1,x_2} \
    \subset \ \pseries{x_1,x_2} \biggl[\frac{1}{x_1-x_2}\biggr].
\end{equation}
Denote these rings by
\begin{eqnarray}
  \wR & = &  \pseries{E_1,E_2}, \label{eq:ring_wR}\\
  R' & = &  \pseries{x_1,x_2}, \label{eq:ring_R_prime}\\
  R'' & = & \pseries{x_1,x_2} \biggl[\frac{1}{x_1-x_2}\biggr],\label{eq:ring_R_dprime}
\end{eqnarray}
resulting in the chain of ring inclusions
\begin{equation}\label{eq:ring_inclusions_2}
    \kk \ \subset \ \wR \  \subset \ R' \ \subset \ R''.
\end{equation}
The theorem above has already been proved in Section~\ref{sec:deformed} for general $N$, see Theorem~\ref{Foam_denominator}. We include a more detailed proof for the special case $N=2$ to make this section independent from Section~\ref{sec:deformed}.

\begin{proof}
The evaluation $\angf{F,c}$ can be written, via (\ref{eq_eval_modify}), as a power series in $x_1,x_2$ with coefficients in $\kk$
divided by a power of $x_1-x_2$, either positive or negative, thus it belongs to the ring $R''$, see above.

Group $S_2$ acts on 2-colorings of $F$ by transposing the colors $1$ and $2$. This action is compatible with the evaluation in the sense that $\sigma(\angf{F,c})= \angf{F, \sigma(c)},$ where $\sigma=(12)$ is the nontrivial element of $S_2$. Therefore, $\angf{F}$ is in the subring $(R'')^{S_2}$ of $S_2$-invariants of $R''$.

$\angf{F}$ potentially has a denominator $(x_1-x_2)^{\chi(F_{12}(c))/2}$. Surface $F_{12}(c)=F_{12}$ is a union of connected components $\Sigma_1,\dots, \Sigma_m$, each one contributing
$(x_1-x_2)^{\chi(\Sigma_k)/2}$ to the product. Only connected components of genus $0$ have positive Euler characteristic, $\chi(\Sigma_k)=2$, and contribute $x_1-x_2$ to the denominator.

Consider one such component $\Sigma$ and a coloring $c$. The Kempe move on $\Sigma$ replaces $c$ with a coloring $c_1=(c,\Sigma)$ which is identical to $c$ outside $\Sigma$ and swaps colors $1,2$ of $c$ on $\Sigma$. We compare $\angf{F,c}$ and $\angf{F,c_1}$ in formula (\ref{eq:def_eval_gl2}).

If there are $t_i$ dots on color $i$ facets of $\Sigma$ under $c$, $i=1,2$, then
\[ x_1^{d_1(c)}x_2^{d_2(c)} =
x_1^{t_1}x_2^{t_2}u, \ x_1^{d_1(c_1)}x_2^{d_2(1)} =
x_1^{t_2}x_2^{t_1}u,
\]
for a monomial $u$ in $x_1,x_2$ counting dots on facets not in $\Sigma$.

If $\Sigma$ has $r$ singular circles, let $\theta^+(c,\Sigma)$ be the number of positive circles on $\Sigma$ under $c$ and $\theta^-(c,\Sigma)$ be the number of negative circles. Under the swap $c\leftrightarrow c_1$, positive circles on $\Sigma$ become negative circles on $\Sigma$ and vice versa, so that $\theta^+(c,\Sigma)+\theta^+(c_1,\Sigma)=r$.

Let $\wchi_i(\Sigma)=\chi(\Sigma\cap F_i(c))$, $i=1,2,$ be the Euler characteristic of the union of color $i$ facets of $\Sigma$, for coloring $c$. We have
\[ \chi_2(c_1)=\chi_2(c)+\wchi_1(\Sigma)-\wchi_2(\Sigma) = \chi_2(c) + \chi(\Sigma) - 2 \wchi_2(\Sigma),
\]
since $\chi(\Sigma) = \wchi_1(\Sigma)+\wchi_2(\Sigma).$
Defining integer $\ell$ by  $2\ell = \widetilde{\chi}_1(\Sigma)-\widetilde{\chi}_2(\Sigma).$, we have
 \[ \chi_2(c_1) \ = \ \chi_2(c) + 2\ell, \ \chi_1(c_1) \ = \ \chi_1(c) - 2\ell.
 \]
 Consequently, one can write
 \[ p_{12}^{\chi_1(c_1)/2}p_{21}^{\chi_2(c_1)/2} =
 p_{12}^{\ell}u', \ \
 p_{12}^{\chi_1(c)/2}p_{21}^{\chi_2(c)/2}= p_{21}^{\ell}u',
  \]
for a suitable monomial $u'$ in $p_{12}, p_{21}$, possibly with negative exponents.

Also,
\[ \frac{(-1)^{s'(F,c_1)}}{(-1)^{s'(F,c)}} = \frac{(-1)^{\theta^+(c_1)+\chi_2(c_1)/2}}
{(-1)^{\theta^+(c)+\chi_2(c)/2}}=
(-1)^{r + \chi(\Sigma)/2 - \wchi_2(\Sigma)}.
\]

When $\Sigma\cong \SS^2$,
\[r+\chi(\Sigma)/2 - \wchi_2(\Sigma) \equiv 1 + r - \wchi_2(\Sigma)\equiv 1 (\mathrm{mod}\ 2),
\]
since $r\equiv \chi_2(\Sigma)(\mathrm{mod}\ 2)$. The last comparison modulo $2$ can be proved by induction on $r$, by removing an innermost singular circle of $\Sigma$. This operation reduces $r$ by $1$ and changes $\wchi_2(\Sigma)$ by $\pm 1$. We see that
$(-1)^{s'(F,c_1)}=-(-1)^{s'(F,c)}$.

Putting these relations together,
\[ \angf{F,c} + \angf{F,c_1} =
s'(F,c)\cdot\bigl( x_1^{t_1}x_2^{t_2} p_{12}^{\ell}
-
x_1^{t_2}x_2^{t_1}p_{21}^{\ell}
\bigr) u u' (x_1-x_2)^{-\chi_{12}(F)/2} .
\]
The expression $( x_1^{t_1}x_2^{t_2} p_{12}^{\ell}
-
x_1^{t_2}x_2^{t_1}p_{21}^{\ell}) $ is divisible by $x_1-x_2$ and  allows to cancel out that term from the denominator.

Repeating this argument simultaneously for all $S^2$-components of $F_{12}$ shows that $\angf{F} \in \kk\llbracket x_1,x_2 \rrbracket.$
Permutation action of $S_2$ on $\kk\llbracket x_1,x_2 \rrbracket$ and on colorings shows that $\angf{F} \in \kk\llbracket x_1,x_2 \rrbracket^{S_2}=\kk\llbracket E_1,E_2\rrbracket.$
\end{proof}

The sum $\chi(F) = \chi_1(c)+\chi_2(c)$ does not depend on the coloring $c$ and is the Euler characteristic of the surface $F$. In particular, in the symmetric case (when $p_{12}=p_{21}$), one has that
\begin{equation} \angf{F} = \ang{F} \cdot (-p_{12})^{\chi(F)/2}
\end{equation}
so that the new evaluation is proportional to the original one with the coefficient that depends only on $\chi(F)$. We expect that non-symmetric case will prove more interesting.


\subsection{Examples} \label{sec:examples}

$\quad$

{\it Example 1:} Let $F=\SS^2_{1,n}$ be the two-sphere of thickness one with $n$ dots (or, equivalently, with a single dot labelled $n$). Here the lower index $(1,n)$ lists thickness followed by the number of dots.

$\SS^2_{1,n}$ has two colorings $c_1$ and $c_2$, where in the coloring $c_i$ the 2-sphere carries color $i$. For the coloring $c_1$
$$ F_1(c_1) \cong \SS^2, \ F_2(c_1) = \emptyset , \ F_{1\cap 2}(c_1) = \emptyset, $$
and likewise for $c_2$, so that
\begin{eqnarray*}
 \angf{\SS^2_{1,n},c_1}  & = & \frac{x_1^n}{x_1 [-1] x_2} = \frac{x_1^n p_{12}}{x_1-x_2}, \\
 \angf{\SS^2_{1,n},c_2}  & = & \frac{x_2^n}{x_2 [-1] x_1} = \frac{x_2^n p_{21}}{x_2-x_1},
\end{eqnarray*}
and
\begin{equation*}
    \angf{\SS^2_{1,n}} = \frac{x_1^n p_{12} - x_2^n p_{21}}{x_1-x_2}.
\end{equation*}
To explicitly cancel $x_1-x_2$ in the denominator, expand $p_{12}$ and $p_{21}$ into power series and then cancel. The result is a power series symmetric in $x_1,x_2$ with coefficients which are polynomials in $\beta_{i,j}$.

We denote
\begin{equation}\label{eq:def_rho_n}
    \rho_n \ = \ \angf{\SS^2_{1,n}} = \frac{x_1^n p_{12} - x_2^n p_{21}}{x_1-x_2}.
\end{equation}

Note that the following relation holds:
\begin{equation}\label{eq:2sphere-inductive}
   \rho_{n+2} - E_1 \rho_{n+1} +
   E_2 \rho_n = 0,
\end{equation}
where, recall, $E_1=x_1+x_2$, $E_2= x_1 x_2$. It follows from the relation
\[  (x_1^{n+2}p_{12} - x_2^{n+2}p_{21}) - (x_1+x_2) (x_1^{n+1}p_{12} - x_2^{n+1}p_{21}) + x_1x_2
(x_1^{n}p_{12} - x_2^{n}p_{21}) = 0.
\]
Relation (\ref{eq:2sphere-inductive}) allows to inductively write $\rho_n=\angf{\SS^2_{1,n}}$
as a linear combination of $\rho_0=\angf{\SS^2_{1,0}}$ and
$\rho_1=\angf{\SS^2_{1,1}}$ with coefficients in $\Z[E_1,E_2]$. The latter are
\begin{equation}\label{eq:2sphere_few_dots}
  \rho_0 = \angf{\SS^2_{1,0}}  =  \frac{ p_{12} - p_{21}}{x_1-x_2}, \ \ \ \
  \rho_1 = \angf{\SS^2_{1,1}}  =  \frac{x_1 p_{12} - x_2 p_{21}}{x_1-x_2}.
\end{equation}


{\it Example 2:} Let the foam $F$ be a thin two-torus $T^2$ with $n$ dots and standardly embedded in $\R^3$ (embedding of a surface does not influence its evaluation).
As in the previous example, there are two colorings, $c_1$ and $c_2$, with
$F_1(c_1)\cong T^2$, $F_2(c_1)= \emptyset$, $F_{1\cap 2}(c_1)= \emptyset,$ and  $\brak{T^2}'= x_1^n + x_2^n.$

\vspace{0.1in}

{\it Example 3:} Closed surface $M$ of genus $g\ge 1$ with $n$ dots.
\begin{equation}\label{eq:genus_g_surface}
\angf{M} =  \frac{x_1^n p_{12}^{1-g}}{(x_1-x_2)^{1-g}} +
\frac{x_2^n p_{21}^{1-g}}{(x_2-x_1)^{1-g}}
    =  (x_1^n p_{12}^{1-g} + (-1)^{g-1}x_2^n p_{21}^{1-g})(x_1-x_2)^{g-1}.
\end{equation}

\vspace{0.1in}

{\it Example 4:} $F$ is  2-sphere $\SS^2$ of thickness two, also denoted $\SS^2_2$. It has a unique coloring $c$, with the facet labelled by $\{1,2\}$ and $F_1(c)=F_2(c)=F_{1\cap 2}(c) \cong \SS^2$ so that
\begin{eqnarray*}
\angf{\SS^2_2} & = & \angf{\SS^2_2,c} = \frac{(x_1- x_2)^2}{(x_1[-1]x_2)(x_2 [-1] x_1)}=
\frac{(x_1- x_2)^2p_{12}p_{21}}{(x_1-x_2)(x_2-x_1)} \\
 & = & - p_{12}p_{21}
 \end{eqnarray*}
 Denote the value of this foam by $\rho$, so that
 \begin{equation}\label{eq:value_S22}
 \rho \ = \ \angf{\SS^2_2} \ = \ - p_{12}p_{21}.
 \end{equation}
 Note that $\rho$ is an invertible element of the ground ring.
In the original case, when $p(x,y)=1$, the double sphere $\SS^2_2$ evaluates to $-1$.

\vspace{0.1in}

{\it Example 5:} An oriented closed surface $M$ of genus $g\ge 1$ and thickness two:

\begin{equation}
\angf{M} = \frac{(x_1-x_2)^{2-2g}p_{12}^{1-g}q_{21}^{1-g}}{(x_1-x_2)^{1-g}(x_2-x_1)^{1-g}} =(-p_{12}p_{21})^{1-g}=\rho^{1-g}.
\end{equation}

In the special case, when $g=1$ so that $M=T^2$ is a two-torus,  $\angf{T^2_2}=1.$

\vspace{0.1in}

{\it Example 6:} The theta-foam $\Theta$ with $n_1$ and $n_2$ dots on thin facets, suitably oriented.

\begin{figure}[h]
\begin{center}
\includegraphics[scale=0.7]{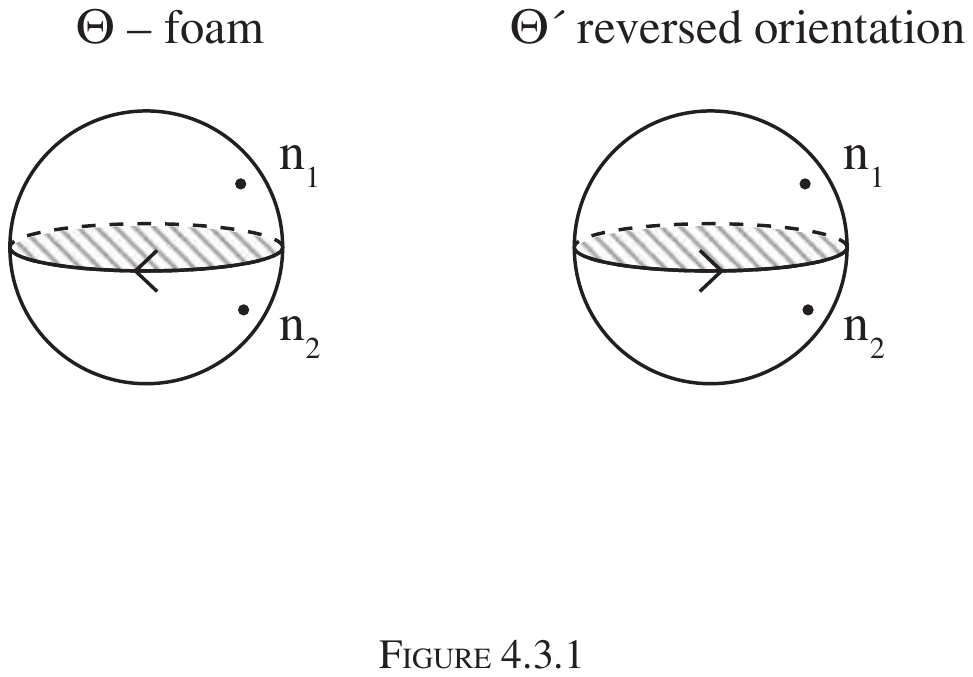}
\caption{\label{fig:theta_foam_1} On the left: $\Theta$-foam. On the right: same foam $\Theta'$ with the reversed orientation.}
\end{center}
\end{figure}

Let $c_1$ be the coloring of $\Theta$ with its top facet colored $1$. Then the bottom facet is colored $2$. Surfaces $F_1(c_1), $ $F_2(c_1),$ and  $F_{12}(c_1)=F_{12}$ are all 2-spheres, with Euler characteristics $2.$ The sign $\theta^+_{12}(c_1)=1$.

\begin{figure}[h]
\begin{center}
\includegraphics[scale=0.8]{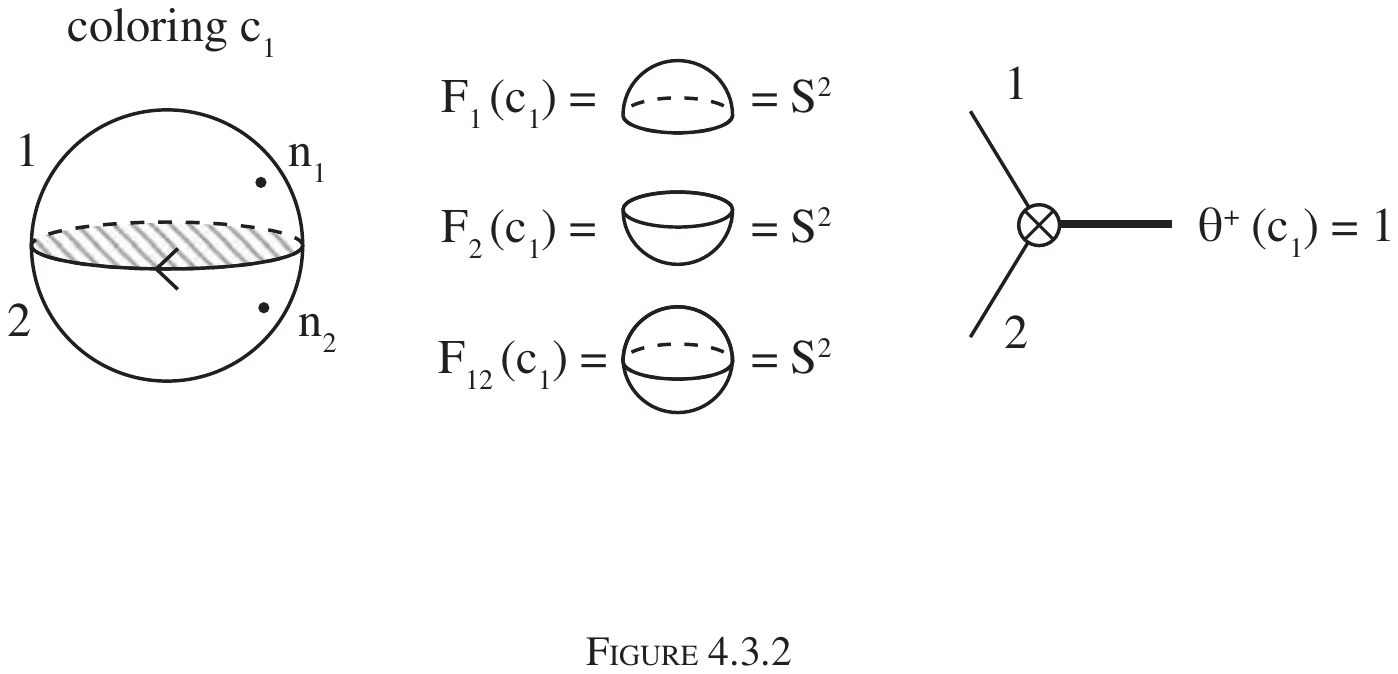}
\caption{\label{fig:theta_foam2} Computing $\angf{\Theta,c_1}$. }
\end{center}
\end{figure}
For the sign,
\[
s'(\Theta,c_1)=\theta^+(c_1)+\chi_2(c_1)/2=1+1=2, \ \ (-1)^{s'(\Theta,c_1)}= 1.
\]
We get
\[
\angf{\Theta,c_1} = \frac{x_1^{n_1}x_2^{n_2}p_{12}p_{21}}{x_1-x_2},
\ \
\angf{\Theta,c_2} = \frac{-x_1^{n_2}x_2^{n_1}p_{12}p_{21}}{x_1-x_2},
\]
where $c_2$ is the other coloring (with the opposite sign in the evaluation and transposed exponents of $x_1,x_2$). Assuming $n_1\ge n_2$,
\begin{equation}\label{eq:theta_foam_value}
    \angf{\Theta} \ = \ (x_1x_2)^{n_2} \frac{x_1^{n_1-n_2}-x_2^{n_1-n_2}}{x_1-x_2} p_{12}p_{21} \ = \ (x_1x_2)^{n_2} h_{n_1-n_2-1}(x_1,x_2) p_{12}p_{21},
\end{equation}
where $h_k(x_1,x_2)=x_1^k + x_1^{k-1}x_2+\dots + x_2^k$ is the $k$-the complete symmetric function in $x_1,x_2$. Note that we can write
\begin{equation}\label{eq:theta_foam_value_2}
    \angf{\Theta} \ = \ - E_2^{n_2} h_{n_1-n_2-1}(x_1,x_2) \rho ,
\end{equation}
and that $h_k(x_1,x_2)$ is a polynomial in $E_1,E_2$, the latter elementary symmetric functions in $x_1,x_2$. Also, $\angf{\Theta}$ is the product of a Schur function for $GL(2)$ and $-\rho$.

Note that if the two thin facets of the theta-foam carry the same number of dots, $n_1=n_2$, then it
evaluates to zero, $\angf{\Theta} =0$. If we reverse the orientation of $\Theta$ to get a foam $\Theta'$, then $\angf{\Theta'} = - \angf{\Theta}.$ In general, if foam $F$ contains $k$ singular circles and $\overline{F}$ is given by reversing the orientation of $F$, then $\angf{\overline{F}} = (-1)^k \angf{F}$.

Recall that our ground ring $\wR$ is the power series  $\pseries{E_1,E_2}$, where $\kk =\Z[\beta_{i,j}]$ is polynomials in various negative degree generators with integer coefficients, see formulas (\ref{eq_ring_kk}), (\ref{eq:ring_wR}). Let $R$ be the subring of $\wR=\pseries{E_1,E_2}$ generated by
$E_1,E_2,  \rho_0, \rho_1, \rho^{\pm 1}$ over $\Z$:
\begin{equation}\label{eq:def_ground_ring}
    R \  = \  \langle E_1, E_2, \rho_0, \rho_1, \rho^{\pm 1} \rangle   \ \subset \ \wR \ = \  \pseries{E_1,E_2}.
\end{equation}
In all the examples above, the foam evaluates to an element of this subring. We'll see soon that this is true for any closed foam and that the ground ring of the theory can be reduced from the rather large power series ring
$\pseries{E_1,E_2}$ to the subring $R$, which is
finitely generated over the image of $\Z$ in $\kk$.

Let us summarize that
\begin{eqnarray}
  \rho_0 & = & \angf{\SS^2_{1,0}}  =  \frac{ p_{12} - p_{21}}{x_1-x_2}, \nonumber \\
  \label{eq:rho_values}
  \rho_1 & = & \angf{\SS^2_{1,1}}  =  \frac{x_1 p_{12} - x_2 p_{21}}{x_1-x_2}, \\
  \rho & = & \angf{\SS^2_{2,0}}  = - p_{12}p_{21} \nonumber
\end{eqnarray}
are the evaluations of the thin 2-sphere with zero dots, with one dot, and the double 2-sphere, respectively. The subring $R$ is graded, with homogeneous generators in degrees

\begin{center}
    \begin{tabular}{|c|c|c|c|c|c|}
    \hline
       generator   & \ $\rho_0$ \ & \ $\rho_1$ \  & \ $\rho$ \ & \ $E_1$ \ & \ $E_2$ \   \\
       \hline
       degree   & -2 & 0  & 0 & 2 & 4  \\
       \hline
    \end{tabular}
\end{center}
Notice that only $\rho_0$ has a negative degree.
Using that $E_1^2-4E_2=(x_1-x_2)^2$, it's easy to compute
\begin{equation}\label{eq:express_rho}
    \rho_1^2 - E_1 \rho_1 \rho_0 + E_2 \rho_0^2 = \frac{E_1^2-4E_2}{(x_1-x_2)^2}\ p_{12}p_{21} = -  \rho.
    \end{equation}
Define the ring
\begin{equation}\label{eq_ring_ovR}
\ovR = \Z[E_1,E_2,\rho_0,\rho_1,(\rho_1^2 - E_1 \rho_1 \rho_0 + E_2 \rho_0^2)^{-1}]
\end{equation}
as the localization of the polynomial ring with generators
$E_1,E_2,\rho_0,\rho_1$ at the element
\begin{equation}
\rho=-(\rho_1^2 - E_1 \rho_1 \rho_0 + E_2 \rho_0^2).
\end{equation}
There is an obvious homomorphism $\ovR\lra R\subset \wR$, and we now prove that it's an isomorphism between $\ovR$ and $R$.

Consequently, we can think of $R$ as the localization,
\begin{equation}\label{eq:def_R}
   R \ \cong \ \Z[E_1,E_2,\rho_0,\rho_1,(\rho_1^2 - E_1 \rho_1 \rho_0 + E_2 \rho_0^2)^{-1}].
\end{equation}
We will show that this localization has a basis over  $\Z[E_1,E_2]$:
\begin{equation} \label{eq_basis_R}
\mathscr{B} := \{ \rho_1^{n_1}\rho_0^{n_2} \rho^{n_3} , \ \
n_1\in \{0,1\}, n_2\in \Z_+, n_3 \in \Z \}
\end{equation}
To establish isomorphism  $\ovR\cong R$ of rings, denote by
\begin{equation}\label{eq_Rminus}
\Rminus := \Rminuslong
\end{equation}
the graded ring of power series in $E_1,E_2$ with coefficients in the ring $\Z[\rho_0, \rho_1^{\pm 1}]$. In this definition, we view $\rho_0,\rho_1$ as additional generators and not as power series.

\begin{lemma} Ring $\Rminus$ is  naturally a subring of $\wR$, via power series expansions (\ref{eq:rho_values}) for $\rho_0$ and $\rho_1$.
\end{lemma}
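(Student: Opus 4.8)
The plan is to first exhibit the homomorphism and then prove it is injective. Since $\rho_0=\angf{\SS^2_{1,0}}$ and $\rho_1=\angf{\SS^2_{1,1}}$ are evaluations of closed foams, Theorem~\ref{thm:no_denoms_GL2} gives $\rho_0,\rho_1\in\wR=\pseries{E_1,E_2}$; equivalently one checks directly that $(p_{12}-p_{21})/(x_1-x_2)$ and $(x_1p_{12}-x_2p_{21})/(x_1-x_2)$ are symmetric and free of denominators. From the expansion~(\ref{eq:rho_values}) one has $\rho_1\equiv 1 \pmod{(E_1,E_2)}$, so $\rho_1$ is a unit of the $(E_1,E_2)$-adically complete ring $\wR$, with inverse $\sum_{n\ge 0}(1-\rho_1)^n$. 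Thus there is a ring homomorphism $\Z[\rho_0,\rho_1^{\pm1}]\to\wR$, and because $E_1,E_2$ map into the ideal $(E_1,E_2)\wR$ and $\wR$ is complete for it, this extends uniquely to a continuous ring homomorphism $\phi\colon\Rminus=\Rminuslong\to\wR$ fixing $E_1,E_2$: a series $\sum_{a,b}c_{a,b}E_1^aE_2^b$ with $c_{a,b}\in\Z[\rho_0,\rho_1^{\pm1}]$ goes to $\sum_{a,b}c_{a,b}(\rho_0,\rho_1^{\pm1})E_1^aE_2^b$, which converges since its $(a,b)$ term lies in $(E_1,E_2)^{a+b}\wR$. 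This is the map asserted by the lemma, and the content is that it is injective.

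To prove injectivity I would compose $\phi$ with the specialization $\wR\to\Z[\beta_{1,0},\beta_{0,1}]\llbracket E_1,E_2\rrbracket$ induced by the $\Z$-algebra quotient of $\kk=\Z[\beta_{k,\ell}]$ that sends $\beta_{k,\ell}\mapsto 0$ for all $(k,\ell)\ne(1,0),(0,1)$; it then suffices to show the composite $\phi'$ is injective. For the specialized Ansatz $p(x,y)=1+\beta_{1,0}x+\beta_{0,1}y$ a one-line computation gives the exact identities $\rho_0\mapsto s:=\beta_{1,0}-\beta_{0,1}$ and $\rho_1\mapsto 1+\beta_{1,0}E_1=1+(s+t)E_1$, writing $t:=\beta_{0,1}$; so $\phi'$ sends $\rho_0\mapsto s$, $\rho_1\mapsto 1+(s+t)E_1$, $E_i\mapsto E_i$, landing in $\Z[s,t]\llbracket E_1,E_2\rrbracket$. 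Now take $F=\sum_{a,b}c_{a,b}(\rho_0,\rho_1^{\pm1})E_1^aE_2^b$ in $\ker\phi'$. Its coefficient of $E_2^b$ is $\sum_a c_{a,b}\bigl(s,(1+(s+t)E_1)^{\pm1}\bigr)E_1^a$, which must vanish in $\Z[s,t]\llbracket E_1\rrbracket$ for every $b$. Expanding the Laurent polynomial $c_{a,b}(s,y)$ about $y=1$, write $c_{a,b}(s,1+w)=\sum_{m\ge 0}\mu_{a,b,m}(s)w^m$ with $\mu_{a,b,m}\in\Z[s]$, and substitute $w=(s+t)E_1$; the coefficient of $E_1^k$ becomes $\sum_{a=0}^{k}\mu_{a,b,k-a}(s)\,(s+t)^{k-a}$, which must be $0$ in $\Z[s,t]$. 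Viewing this as a polynomial in $t$, the term with index $a$ has $t$-degree $k-a$ with top coefficient $\mu_{a,b,k-a}(s)$; as these $t$-degrees are distinct, unwinding from the top forces $\mu_{0,b,k}(s)=0$, then $\mu_{1,b,k-1}(s)=0$, and inductively $\mu_{a,b,m}(s)=0$ for all $a,b,m$. Hence $c_{a,b}(s,1+w)=0$, so $c_{a,b}=0$ in $\Z[\rho_0,\rho_1^{\pm1}]$ (the change of variable $y=1+w$ being invertible once the pole at $y=0$ is cleared), and therefore $F=0$.

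The only real obstacle is this injectivity. Because $\rho_1$ is homogeneous of degree $0$ and begins with the unit $1$ rather than lying in the ideal $(E_1,E_2)$, the cheap arguments — reducing modulo $(E_1,E_2)$, or comparing graded components degree by degree — collapse the $\rho_1$-direction and fail. Degenerating $p(x,y)$ to a linear Ansatz, for which $\rho_1=1+\beta_{1,0}E_1$ holds exactly, makes the structure transparent; the spare variable $t=\beta_{0,1}$ then supplies, for each power of $E_1$, a polynomial identity in $t$ whose terms $\mu_{a,b,k-a}(s)(s+t)^{k-a}$ sit in distinct $t$-degrees, so no cancellation among the unknown coefficients $c_{a,b}$ can occur. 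Everything else — well-definedness of $\phi$, invertibility of $\rho_1$, convergence — is routine given completeness of $\wR$ and the expansions~(\ref{eq:rho_values}).
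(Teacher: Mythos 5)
Your proof is correct, and it takes a genuinely different route to injectivity than the paper does. The paper stays in the universal ring $\wR$: it records that the expansion of $\rho_1$ is $1+AE_1+\ldots$ and never involves $B=\beta_{0,1}$, while $\rho_0 = A-B+\ldots$ contains $B$ linearly, then composes $\tau$ with the involution of $\wR$ sending $B\mapsto\tau(\rho_0)$ (fixing all other generators) so as to reduce to the map with $\rho_0\mapsto B$, and finishes by invoking linear independence of the powers $B^k$ over the subring of series with coefficients in $\Z[\beta_{i,j},\,(i,j)\neq(0,1)]$ together with algebraic independence of $\tau(\rho_1),E_1,E_2$. You instead specialize all $\beta_{k,\ell}$ with $k+\ell>1$ to zero, which makes the expansions exact ($\rho_0\mapsto s$, $\rho_1\mapsto 1+(s+t)E_1$), and then kill the coefficients by a triangularity argument in the $t$-degree; injectivity of the composite of course gives injectivity of the original map. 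Both arguments hinge on the same structural fact (the series for $\rho_1$ omits $\beta_{0,1}$ while $\rho_0$ contains it linearly), but your specialization trick buys a more elementary and self-contained verification — it sidesteps having to check that $B\mapsto\tau(\rho_0)$ is an automorphism and that $\tau(\rho_1),E_1,E_2$ are algebraically independent, steps the paper leaves mostly implicit — while the paper's argument, by working universally, exhibits the independence inside $\wR$ itself rather than in a small quotient. Your more careful construction of the homomorphism $\phi$ (invertibility of $\rho_1$ modulo $(E_1,E_2)$ and continuity/completeness) is a routine point the paper takes for granted, and your unwinding of the identity $\sum_{a=0}^{k}\mu_{a,b,k-a}(s)(s+t)^{k-a}=0$ from the top $t$-degree down is sound, as is the final step clearing the pole of the Laurent polynomial before inverting the substitution $y=1+w$.
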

\begin{proof} We can write the power series
\[ p_{12} = 1 + Ax_1 + Bx_2 + \sum_{i+j>1} \beta_{ij}x_1^ix_2^j, \]
where $A=\beta_{1,0}$ and $B=\beta_{0,1}$, also see formula (\ref{eq_p_series}).
The power series for $\rho_1$ is invertible, since the expansion starts with $1+A(x_1+x_2)=1+A E_1$ followed by higher degree terms in $x_1$ and $x_2$ with coefficients in the variables $\beta_{ij}$ for $i+j>1$,
\[ \rho_1 \longmapsto \frac{x_1 p_{12}-x_2 p_{21}}{x_1-x_2} =1 + A (x_1+x_2) + h.o.t.,
\]
where $h.o.t.$ stands for 'higher order terms'.
Furthermore, the series expansion for $\rho_1$ does not involve the coefficient $B$. The series
\[ \rho_0 \longmapsto \frac{p_{12}-p_{21}}{x_1-x_2} = A - B + h.o.t.
\] begins with the element $A-B$ followed by higher degree terms in $x_1$ and $x_2$ with coefficients only involving the parameters $\beta_{ij}$ for $i+j>1$. Consider the homomorphism
\[ \tau: \Rminus = \Rminuslong \longrightarrow \wR, \]
given by expanding $\rho_0$ and $\rho_1$ as power series, so that
\[ \tau(\rho_0) = A-B + h.o.t., \quad \quad \tau(\rho_1) = 1 + A E_1 + h.o.t.
\]
To show that $\tau$ is injective, compose $\tau$ with the involution of $\wR$ that sends the  generator $B=\beta_{0,1}$ to $\tau(\rho_0)$, and fixes all other generators (generators $E_1,E_2$ and  $\beta_{i,j}$ for $(i,j)\not= (0,1))$. So the question reduces to showing  injectivity of the map
\[ \pi : \Rminus \longrightarrow \wR, \quad \pi(\rho_0) = B, \quad \pi(\rho_1) = \tau(\rho_1) = 1 + AE_1 + h.o.t., \quad \pi(E_i)=E_i, \ i=1,2. \]
Now consider any homogeneous element of degree $2n$ in the kernel of $\pi$
\[ \sum_{i,j,k, \, \, i+2j=n+k} \rho_0^k f_{ijk}(\rho_1) E_1^i E_2^j,  \]
where $f_{ijk}(\rho_1)$ is a Laurent polynomial in $\rho_1$. Mapping to $\wR$ under $\pi$ and observing that the elements $B^k$ are linearly independent  over the subring of $\tilde{R}$ given by power series in $E_1$ and $E_2$ with values in the polynomial algebra $\Z[\beta_{i,j}, (i,j) \neq (0,1)]$, we deduce that for any fixed $k \geq 0$ one has relations
\[ 0 = \sum_{i+2j=2n+k} f_{ijk}(\tau(\rho_1)) E_1^i E_2^j. \]
Notice that for any $k$, the above expression is a finite sum.  So by multiplying by a suitable power of $\tau(\rho_1)$, we may assume that each Laurent polynomial $f_{ijk}(\rho_1)$ is in fact a polynomial in $\rho_1$. The algebraic independence of the classes $\tau(\rho_1), E_1, E_2$ easily implies that each $f_{ijk}(\rho_1)$ must be trivial. In other words, the map $\pi$ is injective, which is what we wanted to prove.
\end{proof}

\begin{cor} The power series homomorphism $\ovR \lra \wR$ takes $\ovR$ isomorphically onto the subring $R$ of $\wR$. Moreover, the ring $R$ has a basis over $\Z[E_1, E_2]$ given by
\[ \mathscr{B} := \{ \rho_1^{n_1}\rho_0^{n_2} \rho^{n_3} , \ \
n_1\in \{0,1\}, n_2\in \Z_+, n_3 \in \Z \}.
\]
\end{cor}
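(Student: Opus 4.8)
The plan is to deduce the corollary from the preceding lemma together with an elementary module-theoretic description of $\ovR$; the lemma already carries all the content involving the power series substitutions, so what remains is formal commutative algebra.

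First I would observe that $\rho = -(\rho_1^2 - E_1\rho_1\rho_0 + E_2\rho_0^2)$ is a unit already in $\Rminus = \Rminuslong$, before any passage to $\wR$. Indeed, in $\Rminus$ the generator $\rho_1$ is invertible, so $\rho_1^2 - E_1\rho_1\rho_0 + E_2\rho_0^2 = \rho_1^2\,(1 - E_1\rho_0\rho_1^{-1} + E_2\rho_0^2\rho_1^{-2})$, and the second factor is congruent to $1$ modulo the ideal $(E_1,E_2)$; since $\Rminus$ is $(E_1,E_2)$-adically complete, that factor is a unit, hence so is $\rho$. Now $\Z[E_1,E_2,\rho_0,\rho_1]$ sits inside $\Rminus$ as a genuine polynomial subring (one checks directly that $E_1,E_2,\rho_0,\rho_1$ are algebraically independent in $\Rminus$), so by the universal property of localization the inclusion extends to a homomorphism $\ovR \to \Rminus$, and this is injective because $\Z[E_1,E_2,\rho_0,\rho_1]$ is a domain and $\rho$ is a nonzero element of it mapping to a unit. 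Composing with the inclusion $\Rminus \subset \wR$ of the lemma recovers the power series homomorphism $\ovR \to \wR$, which is therefore injective; its image is, by definition, the subring $R$ generated by $E_1,E_2,\rho_0,\rho_1,\rho^{\pm1}$. Thus $\ovR \cong R$.

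It remains to check that $\mathscr{B}$ is a free $\Z[E_1,E_2]$-basis of $\ovR$; one then transports it along $\ovR \cong R$. I would argue in stages inside the polynomial ring $\Z[E_1,E_2,\rho_0,\rho_1]$. Ordering monomials by $\rho_1$-degree, the relation $\rho = -(\rho_1^2 - E_1\rho_1\rho_0 + E_2\rho_0^2)$ shows $\rho^{n_3}$ has leading term $(-1)^{n_3}\rho_1^{2n_3}$, so the family $\{\rho^{n_3},\, \rho_1\rho^{n_3}\}_{n_3\ge0}$ differs from $\{\rho_1^i\}_{i\ge0}$ by a unitriangular change of basis and hence is a $\Z[E_1,E_2,\rho_0]$-basis of $\Z[E_1,E_2,\rho_0,\rho_1]$. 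Since $\Z[E_1,E_2,\rho_0]$ is free over $\Z[E_1,E_2]$ on the monomials $\rho_0^{n_2}$, it follows that $\Z[E_1,E_2,\rho_0,\rho_1]$ is free over $\Z[E_1,E_2]$ on $\{\rho_1^{n_1}\rho_0^{n_2}\rho^{n_3} : n_1\in\{0,1\},\ n_2,n_3\ge0\}$. Localizing at $\rho$, any element of $\ovR$ lies in $\rho^{-k}\Z[E_1,E_2,\rho_0,\rho_1]$ for some $k$, which is free over $\Z[E_1,E_2]$ on the above monomials shifted by $\rho^{-k}$; letting $k\to\infty$ shows that $\mathscr{B}$ both spans $\ovR$ and is $\Z[E_1,E_2]$-linearly independent, completing the argument.

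The main obstacle here is minor given the lemma: it is the bookkeeping required to treat $\rho_0$ and $\rho = -(\rho_1^2 - E_1\rho_1\rho_0 + E_2\rho_0^2)$ as bona fide commuting, algebraically independent polynomial generators — handled by the leading-$\rho_1$-degree (unitriangularity) arguments — together with the observation that $\rho$ is already a unit in the completed ring $\Rminus$, which is precisely what allows the power series homomorphism to factor through $\Rminus$ and thereby inherit injectivity from the lemma. Once these formal points are in place the rest is routine module algebra.
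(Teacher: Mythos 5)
Your proposal is correct, but it reorganizes the proof compared to the paper. The paper works inside $\Rminus$ throughout: it first notes that $R$ lands in (the image of) $\Rminus$ because $\rho^{-1}$ expands there as a power series — the same computation as your observation that $\rho$ is a unit in $\Rminus$ — and then proves that $\mathscr{B}$ is linearly independent over $\Z\llbracket E_1,E_2\rrbracket$ inside $\Rminus$ by an induction that reduces modulo $\rho_0$ (using $\rho \equiv -\rho_1^2 \bmod \rho_0$, so that distinct pairs $(n_1,n_3)$ with $n_1\in\{0,1\}$ give distinct powers of $\rho_1$, forcing each coefficient to be divisible by $\rho_0$, after which one factors out $\rho_0$ and repeats); the isomorphism $\ovR\cong R$ is then deduced from the basis statement. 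You instead prove the isomorphism first — injectivity of $\ovR\to\wR$ by factoring it through $\Rminus$, where $\rho$ becomes a unit, and combining the Lemma's injectivity of $\Rminus\subset\wR$ with standard facts about localizing a domain at a nonzero element — and then establish freeness of $\ovR$ on $\mathscr{B}$ entirely inside $\ovR$, via the leading-$\rho_1$-degree triangularity of $\{\rho^{n_3},\rho_1\rho^{n_3}\}$ against $\{\rho_1^i\}$ and the description of the localization as the increasing union $\bigcup_k \rho^{-k}\Z[E_1,E_2,\rho_0,\rho_1]$, transporting the basis along the isomorphism at the end. Both routes rest on the same two ingredients (the Lemma and the invertibility of $\rho$ in $\Rminus$), but yours isolates all the power-series content in the Lemma and replaces the paper's mod-$\rho_0$ induction by formal commutative algebra; it also makes explicit the spanning claim (that $\mathscr{B}$ generates both $\ovR$ and $R$ over $\Z[E_1,E_2]$), which the paper asserts without detail. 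One cosmetic point: the change of basis from $\{\rho_1^i\}$ to $\{\rho^{n_3},\rho_1\rho^{n_3}\}$ is triangular with diagonal entries $(-1)^{n_3}$ rather than unitriangular, but these are units in $\Z[E_1,E_2,\rho_0]$, so the argument is unaffected.
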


\begin{proof} By definition, the image of $\ovR$ in $\wR$ is equal to the ring $R$. Now both rings $\ovR$ and $R$ are generated as modules over $\Z[E_1, E_2]$ by the set of elements of $\mathscr{B}$.
To be more precise, both rings $\ovR$ and $R$ have a collection of generators $\mathscr{B}(\ovR)$ and $\mathscr{B}(R)$, respectively, as defined above that are compatible under the map from $\ovR$ to $R$.
Hence, to demonstrate the isomorphism between $\ovR$ and $R$, it is sufficient to show that the elements $\mathscr{B}(R)$ are linearly independent over $\Z[E_1, E_2]$ when seen as elements in $R$, thereby showing that the elements $\mathscr{B}(R)$ form a $\Z[E_1, E_2]$-module basis of $R$. It follows from this that the collection $\mathscr{B}(\ovR)$ also forms a $\Z[E_1,E_2]$-module basis of $\ovR$, and consequently, that the map from $\ovR$ to $R$ is an isomorphism.

\medskip
In what follows, we will actually show that the elements $\mathscr{B}(R)$ are linearly independent over $\Z\llbracket E_1, E_2 \rrbracket$ in the larger ring $\Rminus$, once we observe that the ring $R$ is contained in the image of $\Rminus \subset \wR$. For this it suffices to show that $\rho^{-1}$ is in $\Rminus$, which follows from formula
(\ref{eq:express_rho}) that expresses $\rho^{-1}$ as a power series in $E_1$ and $E_2$ with polynomial coefficients in $\rho_0, \rho_1^{\pm 1}$:
\[ \rho^{-1} = -(\rho_1^2 - E_1 \rho_1 \rho_0 + E_2 \rho_0^2)^{-1} = -\rho_1^{-2}(1-E_1  \rho_0\rho_1^{-1} + E_2 \rho_0^2\rho_1^{-2})^{-1}, \]
and then formally expanding the inverse as power series. This shows that the inclusion $R \subset \wR$ factors through the subring $\Rminus$.

It remains to show linear independence of the elements $\mathscr{B}(R)$ over $\Z\llbracket E_1, E_2 \rrbracket$ inside $\Rminus$. Since the set of elements $\{ \rho_0^{n_2} \}$ are linearly independent over $\Z\llbracket E_1, E_2 \rrbracket$, it is sufficient to show that the sub-collection of $\mathscr{B}(R)$ given by the elements $\{ \rho_1^{n_1} \rho^{n_3} \}$ is linearly independent over $\Z[\rho_0]\llbracket E_1, E_2 \rrbracket$. Let us consider a homogeneous relation
\begin{equation} \label{eq:relation}0 = \sum_{n:=(n_1, n_3)} A_n(\rho_0, E_1, E_2) \rho_1^{n_1}\rho^{n_3}, \end{equation}
where the indexing set is some finite subset of distinct pairs $n := (n_1, n_3)$ as above with $A_n(\rho_0, E_1, E_2)$ being a homogeneous element of $\Z[\rho_0]\llbracket E_1, E_2\rrbracket$. Reducing relation (\ref{eq:relation}) mod $\rho_0$ and using equation (\ref{eq:express_rho}), we obtain the relation in $\Z[\rho_1^{\pm 1}]\llbracket E_1, E_2 \rrbracket$
\[ 0 = \sum_{n:=(n_1, n_3)} (-1)^{n_3} A_n(0, E_1, E_2) \rho_1^{n_1 + 2n_3}, \]
which is clearly true only if $A_n(0,E_1,E_2) = 0$ for all $n$. This condition implies that each $A_n(\rho_0,E_1,E_2)$ is divisible by $\rho_0$. We may therefore factor $\rho_0$ out of the entire relation (\ref{eq:relation}), and repeat the argument (note that $\rho_0$ is not a zero divisor). This shows that  $A_n(\rho_0,E_1,E_2)$ must be trivial for all $n$, which is what we needed to establish.

\end{proof}
\begin{remark}
The inclusion $R \subset \Rminus$ is dense in the power series ring topology. In order to show this, it is sufficient to show that $\rho_1^{-1}$ can be described in terms of a power series in $E_1$ and $E_2$, with coefficients that are polynomials in $\rho_0, \rho_1, \rho^{\pm 1}$. This follows from formula (\ref{eq:express_rho}) which implies that
\[ \rho_1^{-1} = -\rho^{-1}(1 + \rho_0^2\rho^{-1} E_2)^{-1} (\rho_1 - \rho_0 E_1). \]
\end{remark}

Notice that in addition to the chain of ring inclusions in formulas (\ref{eq:ring_inclusions})-(\ref{eq:ring_inclusions_2}), there is also a chain of inclusions
\begin{equation}\label{eq:ring_inclusions_3}
    R \ \subset \  \wR \ \subset \ R' \ \subset \ R''.
\end{equation}

\vspace{0.1in}

The example 6 above for the evaluation of the  $\Theta$-foam is straightforward to generalize to $GL(N)$, where $\Theta$-foam has a disk of thickness $N$ with $N$ disks of thickness one attached to it,  carrying $n_1,\dots,n_N$ dots, respectively, where we can assume $n_1\ge n_2\ge \dots \ge n_N$, see Figure~\ref{fig:theta_foam_gln}.

\begin{figure}[h]
\begin{center}
\includegraphics[scale=0.70]{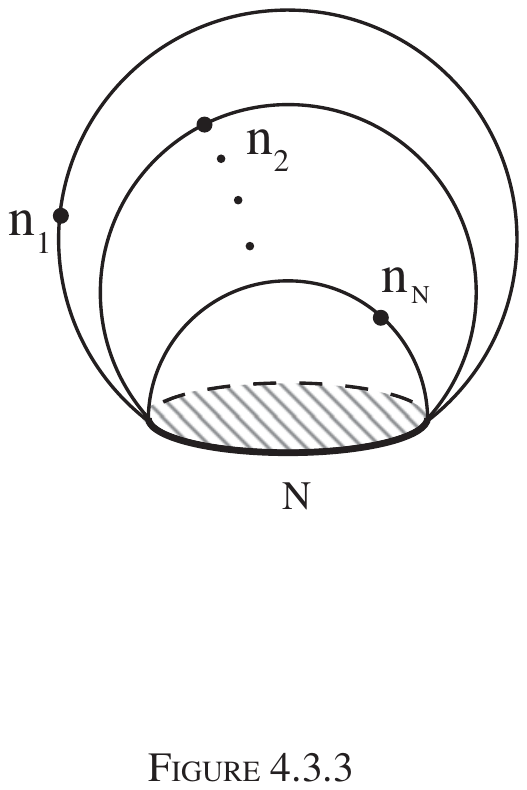}
\caption{\label{fig:theta_foam_gln} $\Theta$-foam for $GL(N)$ }
\end{center}
\end{figure}
Let $\lambda_i=n_i-N+i,$ so that $\lambda=(\lambda_1,\dots, \lambda_N)$ is a partition iff $n_i>n_{i+1}$ for all $i$. Denote this foam by $\Theta_{\lambda}$. One can compute the foam evaluation

\begin{equation}
    \angf{\Theta_{\lambda}} = \pm \sum_{\sigma\in S_N}
    \frac{(-1)^{\ell(\sigma)}\prod_{i=1}^N x_{\sigma(i)}^{n_i}}
    {\prod_{i<j}(x_i-x_j)}
    \prod_{i\not=j} p_{ij} = \pm s_{\lambda}  \prod_{i\not=j} p_{ij},
\end{equation}
where $s_{\lambda}$ is the $GL(N)$ Schur function for the partition $\lambda$.
The last equality holds if $\lambda$ is a partition, otherwise $\angf{\Theta_{\lambda}}=0$. One can argue that our deformation does not go far enough, since it does not deform Schur functions in an interesting way and only scales them by the product of $p_{ij}$'s. At least it does deform the value of the thin 2-sphere with dots and other closed surfaces in a non-trivial way.


\subsection{Skein relations} \label{sec:skein_relations}

\begin{prop}
   The skein relation (\emph{singular neck-cutting relation}) in Figure~\ref{fig:sneck_cut_1} holds.
\end{prop}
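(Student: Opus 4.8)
The plan is to follow the route standard for verifying a local relation in foam evaluation. A relation $\sum_i a_i F_i = 0$ among $GL(2)$ foams that coincide outside a ball $B\subset\R^3$ holds in the relevant state space precisely when $\sum_i a_i\,\angf{F_i}=0$ for every closed foam obtained by gluing one fixed foam onto the common boundary of the $F_i$ inside $B$. So first I would reduce the proposition, once and for all, to an identity among the evaluations $\angf{F_i}$ of closed $GL(2)$ foams that agree away from $B$, and then attack that identity directly with the evaluation formula (\ref{eq:def_eval_gl2})--(\ref{eq:def_eval_gl3}). (At this stage of the paper the state spaces of webs of Section~\ref{subsec:direct_sum_dec} are not yet available, so a direct evaluation argument is the natural one.)

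Second, I would fix such a closed foam $F$ and expand $\angf{F_i}$ as a sum over admissible colorings. For $GL(2)$ a coloring is a checkerboard coloring of each connected component of the thin surface $F_{12}$, reversed across every singular circle, so the set of colorings is governed by purely local data near $B$. The modification in Figure~\ref{fig:sneck_cut_1} alters only a short list of the quantities entering (\ref{eq:def_eval_gl2}): the connected components of $F_{12}$, hence simultaneously the number of admissible colorings and the exponent $\chi(F_{12})/2$ of $(x_1-x_2)$ in the denominator; the Euler characteristics $\chi_1(c),\chi_2(c)$, and hence the powers of $p_{12}$ and $p_{21}$; the dot counts $d_1(c),d_2(c)$; and the count $\theta^+_{12}(c)$ of positive singular circles. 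I would tabulate, for each term $F_i$ relative to a chosen reference term, exactly how these change, paying particular attention to the cap/cup (and double-cap/double-cup) pieces created by cutting the neck: these split or merge thin components, producing factors of $2$ in the coloring count that must be matched against factors of $(x_1-x_2)^{\pm 1}$ in the denominator.

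Third, collecting the contributions of all colorings that restrict to a fixed coloring of $F\setminus B$, the relation to be proved collapses to a single identity in $\pseries{x_1,x_2}[\tfrac{1}{x_1-x_2}]$, written in terms of $x_1,x_2,p_{12},p_{21}$; after passing to the symmetric subring this is a polynomial identity among $E_1,E_2,\rho_0,\rho_1$ and $\rho=-p_{12}p_{21}$. The coefficients appearing in Figure~\ref{fig:sneck_cut_1} are, by design, exactly the ones making it hold: they invert the $2\times2$ matrix of thin-sphere values with entries $\rho_0,\rho_1,\rho_1,E_1\rho_1-E_2\rho_0$, whose determinant is $\rho$ by (\ref{eq:express_rho}). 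With this choice the identity is forced, and it reduces to the two-sphere recursion (\ref{eq:2sphere-inductive}) together with the relation $\rho_1^2-E_1\rho_1\rho_0+E_2\rho_0^2=-\rho$ of (\ref{eq:express_rho}); I would then verify it directly, which is routine algebra with the generators of $R$.

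The step I expect to be the real work lies in the last two paragraphs: keeping the sign $(-1)^{\theta^+_{12}(c)+\chi_2(c)/2}$ consistent across all terms and all colorings at once --- the relevant parity input being the congruence ``$r\equiv\chi_2(\Sigma)\pmod 2$'' used in the proof of Theorem~\ref{thm:no_denoms_GL2} --- and setting up the $2$-to-$1$ and $1$-to-$2$ correspondences between colorings of the various $F_i$ so that each is matched with the corresponding change in the power of $x_1-x_2$. Once this dictionary is fixed, the remaining computation is straightforward.
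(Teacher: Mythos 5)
Your overall framework (reduce to an identity among evaluations of closed foams agreeing outside a ball, then compare coloring by coloring using (\ref{eq:def_eval_gl2})) is the same as the paper's, but the heart of your plan misidentifies what the relation actually is, and the target identity you propose to verify is the wrong one. The singular neck-cutting relation of Figure~\ref{fig:sneck_cut_1} has coefficients $\pm 1$ with a single dot placed on one or the other cap; it involves no $2\times 2$ matrix of thin-sphere values, no $\rho_0,\rho_1,\rho$, and it does not reduce to the recursion (\ref{eq:2sphere-inductive}) together with (\ref{eq:express_rho}). That machinery belongs to the delooping decomposition of Figure~\ref{fig:dir_sum_3} and to the ordinary thin neck-cutting relation of Figure~\ref{fig:neck_cut_1} (which carries a $\rho^{-1}$ and is deduced later from the singular relation plus Figure~\ref{fig:disk_cancel}), not to the relation at hand. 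In particular, cutting the neck here leaves $\chi_1(c)$ and $\chi_2(c)$ unchanged, so the powers of $p_{12},p_{21}$ are identical on both sides; only $\chi(F_{12})$ shifts by $2$, so the whole comparison is local and elementary.

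Concretely, the paper's proof has two key points your plan does not supply. First, colorings of the two cut foams that give the same color to both caps do not come from colorings of $F$, and they cancel pairwise between the two terms of the right-hand side (because the dot contributes the same $x_i$ to both); they are not "matched against factors of $(x_1-x_2)^{\pm1}$" as in a delooping count. Second, for the colorings induced from colorings of $F$, the change $\chi_{12}(F,c)=\chi_{12}(F_1,c')-2$ puts one extra factor of $x_1-x_2$ into $\angf{F,c}$, which is matched against the dot contributions $x_1$ and $x_2$ in $\angf{F_1,c'}$ and $\angf{F_2,c'}$; the sign bookkeeping is just the observation that when the relevant facet is colored $1$ the singular circle of $F$ is positive (so $\theta^+(c)=\theta^+(c')+1$) and when it is colored $2$ it is negative, yielding in both cases $\angf{F,c}=-\angf{F_1,c'}+\angf{F_2,c'}$, i.e.\ the identity $\pm(x_1-x_2)y=\mp x_1y\pm x_2y$ coloring by coloring. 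No passage to the symmetric subring and no parity lemma such as $r\equiv\chi_2(\Sigma)\pmod 2$ from Theorem~\ref{thm:no_denoms_GL2} is needed. As written, executing your third paragraph would have you trying to force an identity in $E_1,E_2,\rho_0,\rho_1,\rho$ that simply is not the content of Figure~\ref{fig:sneck_cut_1}, so the plan as stated would not go through without first correcting the statement being proved.
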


\begin{figure}[h]
\begin{center}
\includegraphics[scale=0.6]{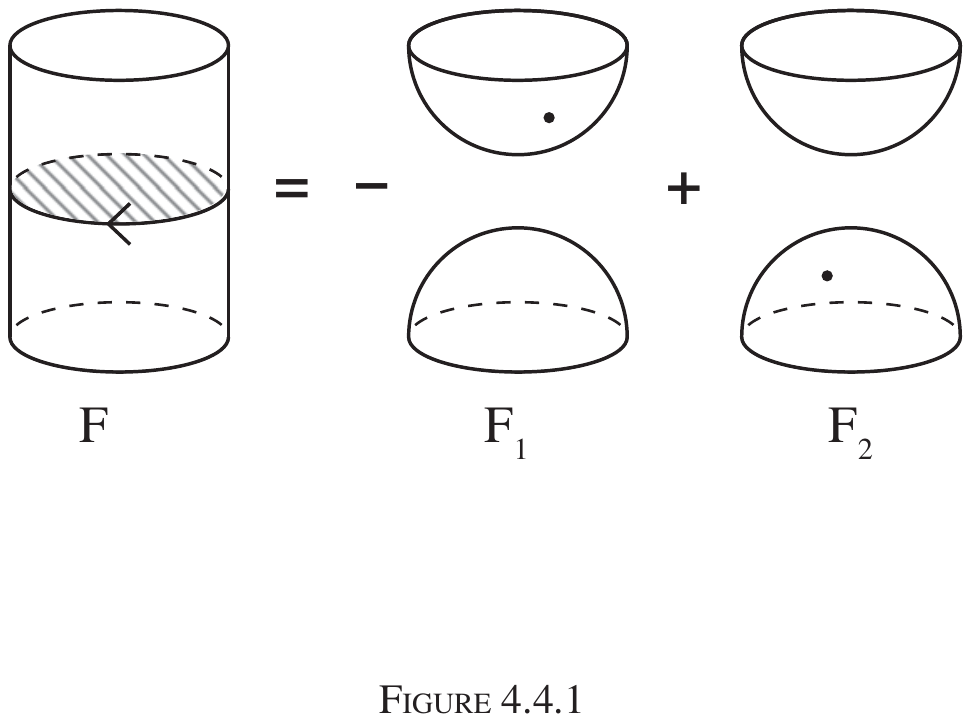}
\caption{\label{fig:sneck_cut_1} Singular neck-cutting relation}
\end{center}
\end{figure}

\emph{Proof:} Coloring $c$ of $F$ induces a coloring $c'$ of $F_1, F_2$ (the latter two foams differ only by dot placement, and we use $c'$ to denote corresponding coloring of both foams). Coloring $c'$ has opposite colors on the two disks of $F_1$ (and $F_2$). If a coloring $c_1$ of $F_1$ and $F_2$ has the same color on the two disks, $\angf{F_1,c_1}=\angf{F_2,c_1},$ since dots will contribute with the same $x_i$, $i\in\{1,2\}$, to the evaluations, and this coloring will not contribute to the difference $\angf{F_2}-\angf{F_1}$. Thus, we can restrict to colorings $c'$ as above, in bijection with colorings $c$ of $F$.

\begin{figure}[h]
\begin{center}
\includegraphics[scale=0.7]{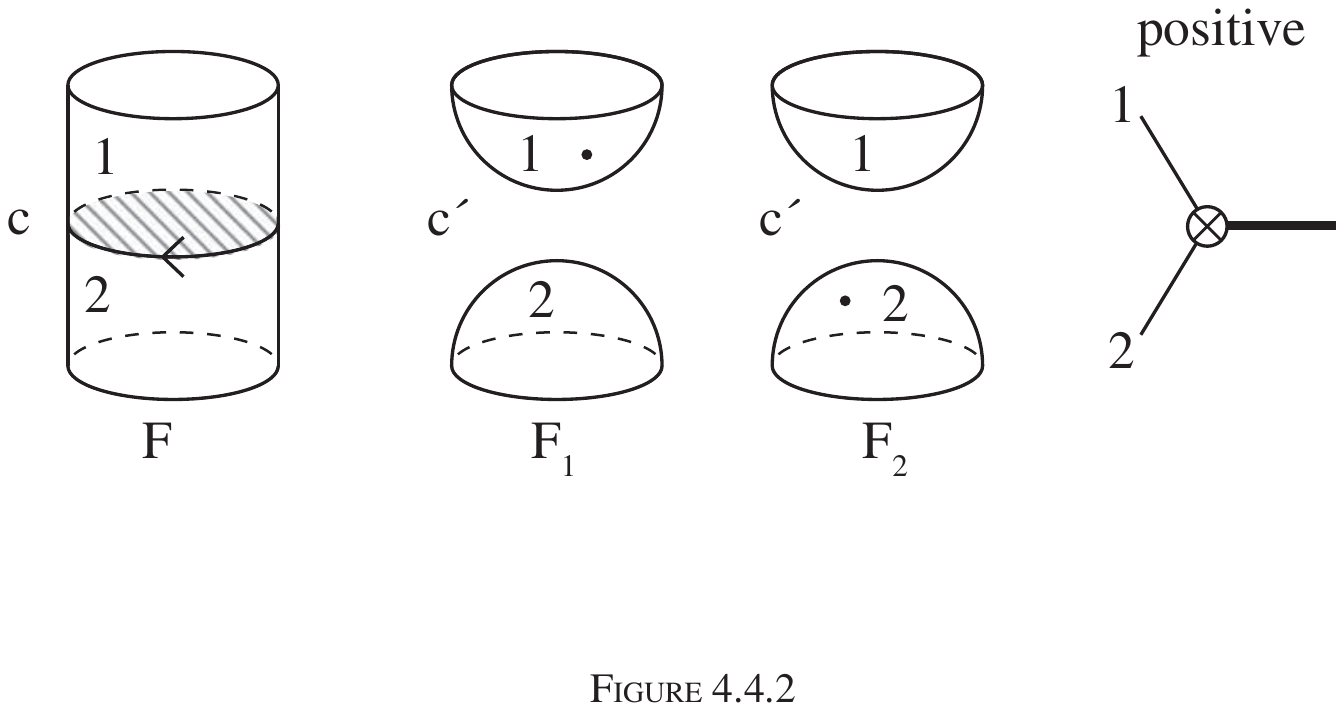}
\caption{\label{fig:sneck_cut_2} When top facet is colored $1$ }
\end{center}
\end{figure}
If the top facet of $c$ is colored $1$, see Figure~\ref{fig:sneck_cut_2}, then the circle of $F$ in the figure is positive and $\theta^+(c)=\theta^+(c')+1$. Also, $\chi_2(F,c)=\chi_2(F_1,c')=\chi_2(F_2,c')$, so that $- (-1)^{s'(F,c)}=(-1)^{'(F_1,c')} = (-1)^{'(F_2,c')}.$

We have $\chi_{12}(F,c)=\chi_{12}(F_1,c')-2$, so that $\angf{F,c}$ has an additional $(x_1-x_2)$
in the numerator, compared to $\angf{F_1,c'}$ and $\angf{F_2,c'}$. Due to a dot on facet colored $1$ there's an extra $x_1$ in $\angf{F_1,c'}$ and an extra $x_2$ in $\angf{F_2,c'}$. More accurately, we can write
\[ \angf{F,c}=-(x_1-x_2)y, \ \angf{F_1,c'} = x_1 y, \ \angf{F_2,c'} = x_2 y
\]
for some $y$, so that $\angf{F,c} = - \angf{F_1,c'} + \angf{F_2,c'}$.

\begin{figure}[h]
\begin{center}
\includegraphics[scale=0.7]{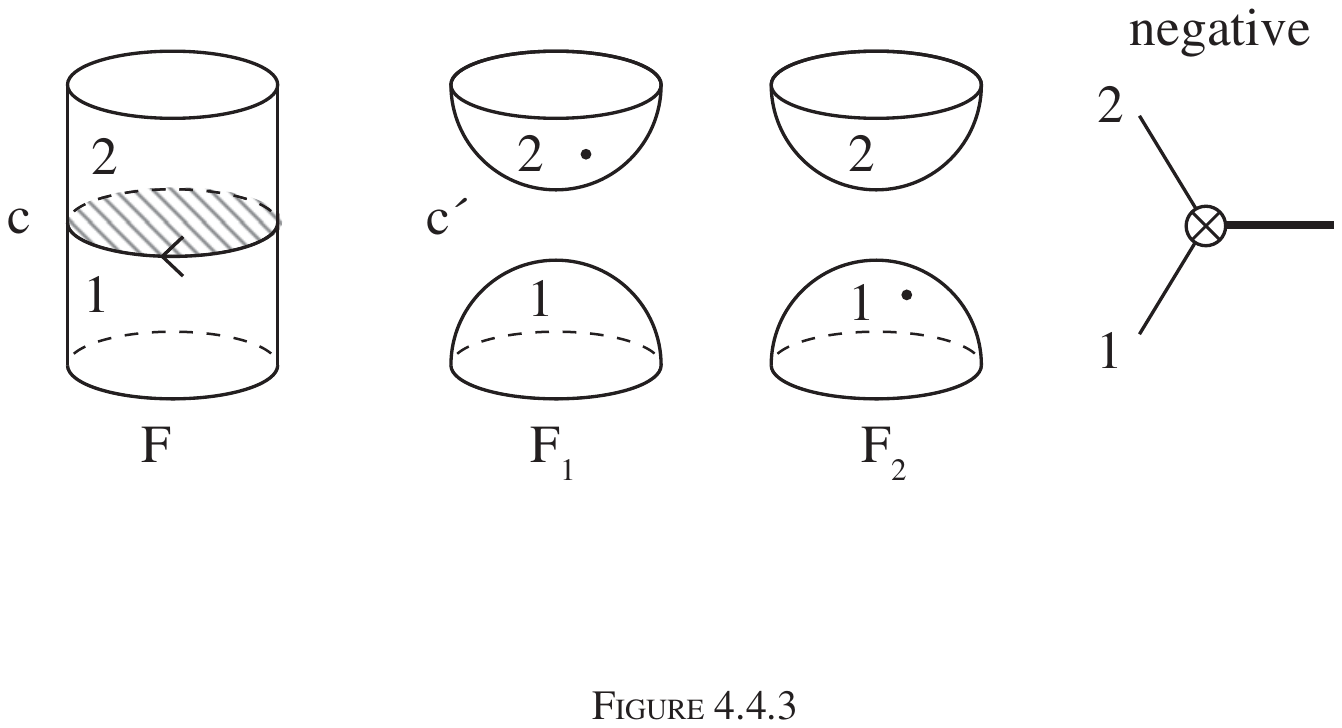}
\caption{\label{fig:sneck_cut_3} When top facet is colored $2$}
\end{center}
\end{figure}

The other case is when the top facet of $F$ is colored $2$ by $c$, see Figure~\ref{fig:sneck_cut_3}. In this case the singular circle of $F$ in the figure is negative for the coloring $c$, so that $\theta^+(c)=\theta^+(c')$ and  $s'(F,c)=s'(F_1,c')=s'(F_2,c').$ This change of sign is balanced by the opposite coloring of the two disks in $F_1,F_2$, so that
\[ \angf{F,c}=(x_1-x_2)y, \ \angf{F_1,c'} = x_2 y, \ \angf{F_2,c'} = x_1 y
\]
for some $y$, and we still have $\angf{F,c} = - \angf{F_1,c'} + \angf{F_2,c'}$. Summing over all $c$ implies the proposition. $\square$

Reversing the orientation of the singular circle (and hence of the entire connected component of $F$) changes the signs in the relation, see Figure~\ref{fig:sneck_cut_4} and Proposition~\ref{prop:reverse_orient} below.

\begin{figure}[h]
\begin{center}
\includegraphics[scale=0.6]{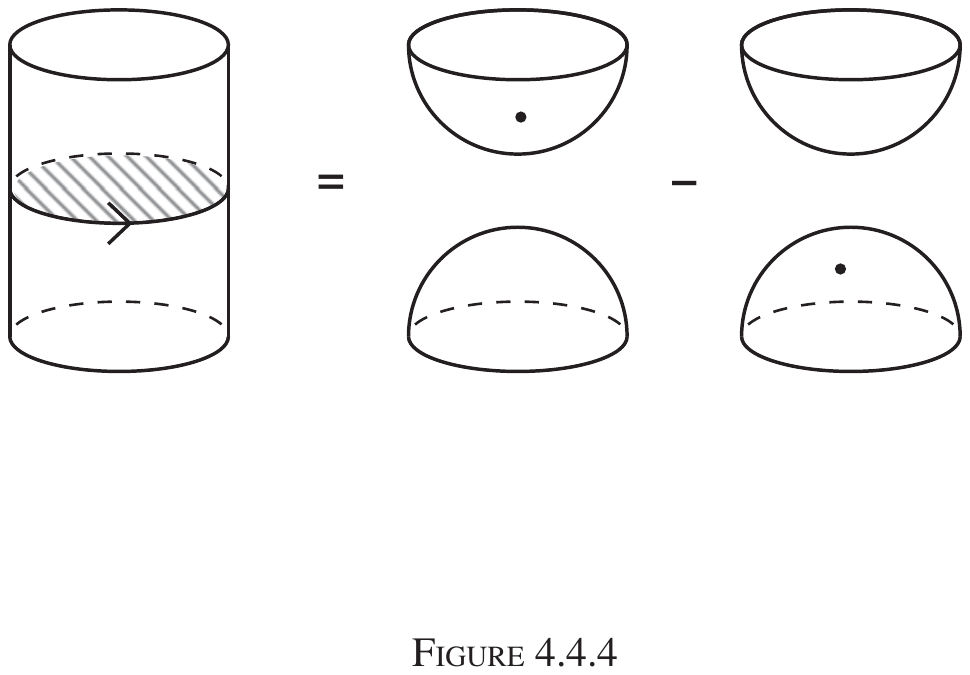}
\caption{\label{fig:sneck_cut_4} Relation for the other orientation}
\end{center}
\end{figure}

\begin{prop} \label{prop_double_disks}
   The skein relation (\emph{canceling double disks}) in Figure~\ref{fig:disk_cancel} holds.
\end{prop}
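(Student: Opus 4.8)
The plan is to prove the \emph{canceling double disks} relation by the same coloring-bookkeeping method used in the proof of the singular neck-cutting relation, working locally inside a ball where the foam differs. Concretely, the statement asserts that a foam $F$ containing a pair of double disks glued along a common singular circle (so that locally the double surface is a $2$-sphere bounding, or a ``double bubble'') equals, up to the invertible coefficient $\rho = \angf{\SS^2_2} = -p_{12}p_{21}$, the foam $F_0$ obtained by deleting that double-sphere component (or, depending on the exact picture in Figure~\ref{fig:disk_cancel}, by capping off and reconnecting the thin facets). First I would fix a coloring $c_0$ of $F_0$ and observe that it extends uniquely to a coloring $c$ of $F$: the extra double facets forming the closed double surface must be colored by the full set $\{1,2\}$, and there is no new connected component of the thin surface $F_{12}$ introduced (or exactly one new $\SS^2$-component, which is where the $\rho$ comes from), so the set of colorings of $F$ is in natural bijection with those of $F_0$.

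Next I would compare the three pieces of the evaluation formula (\ref{eq:def_eval_gl2}) term by term for the pair $(F,c)$ versus $(F_0,c_0)$. The dot-monomial $x_1^{d_1(c)}x_2^{d_2(c)}$ is unchanged since no dots sit on double facets. For the Euler-characteristic denominator, the extra closed double sphere contributes nothing to $F_{12}$ if the double-bubble is attached so that the thin surface is untouched, or it forces one to track the change $\chi(F_{12}) \mapsto \chi(F_{12}) + 2$ when a thin $\SS^2$ is created/destroyed; in the latter case the $(x_1-x_2)^{-1}$ that appears is exactly absorbed by the same mechanism as in Example 4. The key local computation is then that the ratio
\[
\frac{\angf{F,c}}{\angf{F_0,c_0}} \;=\; (-1)^{\Delta\theta^+ + \Delta\chi_2/2}\, p_{12}^{\Delta\chi_1/2}\, p_{21}^{\Delta\chi_2/2}
\]
equals $\rho = -p_{12}p_{21}$ independently of $c_0$, because the local surfaces $F_1(c)$ and $F_2(c)$ each gain one extra $\SS^2$ (so $\Delta\chi_1 = \Delta\chi_2 = 2$), and the added singular circle contributes a definite sign controlled by $\theta^+_{12}$ via the orientation convention of Figure~\ref{fig:theta_signs}. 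Summing over all colorings and factoring out the common invertible scalar $\rho$ then yields $\angf{F} = \rho\,\angf{F_0}$, which is the asserted skein relation (with the appropriate sign of the exponent of $\rho$ read off from the picture).

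The main obstacle I anticipate is purely a matter of carefully matching the orientation conventions: the sign $(-1)^{\theta^+_{12}(c)}$ depends on which of the two cyclic orders of the three facets around the new singular circle is declared \emph{positive}, and one must check that this sign is \emph{the same} for both colorings $c$ of the local double-bubble (equivalently that the two colorings of the created thin $\SS^2$-component each contribute a consistent sign), so that the scalar $\rho$ factors out cleanly rather than with a coloring-dependent twist. This is the same subtlety handled in Example 6 (the $\Theta$-foam) and in the two cases of the neck-cutting proof, so I would model the argument on those: reduce to an innermost singular circle, compute $\chi_2$ and $\theta^+$ modulo $2$ by the induction already used in the proof of Theorem~\ref{thm:no_denoms_GL2}, and check the one genuinely local configuration by hand. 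Everything else — the bijection of colorings, the invariance of the dot monomials, the cancellation of any spurious $(x_1-x_2)$ — is routine.
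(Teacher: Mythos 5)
Your plan is in essence the paper's own proof: a bijection between colorings of the two foams together with a term-by-term comparison of (\ref{eq:def_eval_gl2}), in which $\Delta\chi_1=\Delta\chi_2=2$ produces the factor $p_{12}p_{21}$ and the sign is checked to be coloring-independent, so the scalar $\rho=-p_{12}p_{21}$ factors out of the sum over colorings. One correction to your reading of the local picture, which also settles the sign worry you rightly single out: in Figure~\ref{fig:disk_cancel} the two foams differ by inserting (or deleting) \emph{two parallel double disks} inside a thin tube. The two double disks are not glued along a common singular circle (along a singular circle of a $GL(2)$ foam exactly two thin facets and one double facet meet, so that configuration is not allowed), and no new connected component of the thin surface $F_{12}$ is created, so $\rho$ does not arise from an extra thin $\SS^2$; it arises entirely from $\chi_1$ and $\chi_2$ each jumping by $2$ (the tube's annulus gets capped into disks on one color and the middle annulus closes up into a sphere on the other) together with the sign $(-1)^{\Delta\chi_2/2}=-1$. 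Because the modification creates \emph{two} new singular circles, and for every coloring these two circles have the same type, $\theta^+$ changes by $0$ or $2$ and its parity is unchanged --- this is exactly the point the paper records, and it is what makes your ratio independent of the coloring; with a single added circle, as in your phrasing, the $\theta^+$-contribution would indeed flip with the coloring and the argument would not close. With the figure read this way, the by-hand local check you propose succeeds and your argument coincides with the paper's.
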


\begin{figure}[h]
\begin{center}
\includegraphics[scale=0.7]{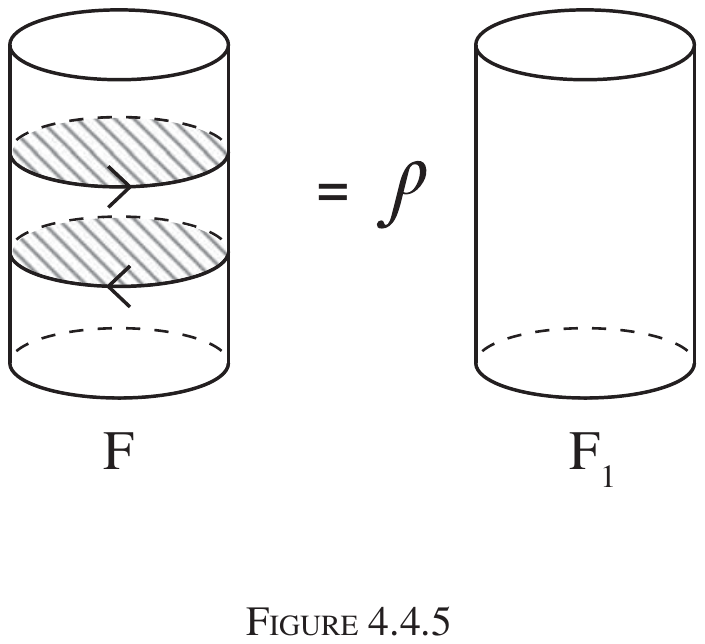}
\caption{\label{fig:disk_cancel} Canceling parallel double disks}
\end{center}
\end{figure}

\emph{Proof:} There is a bijection between colorings $c$ of $F$ and colorings $c_1$ of $F_1,$ see Figure~\ref{fig:color_dd}.

\begin{figure}[h]
\begin{center}
\includegraphics[scale=0.7]{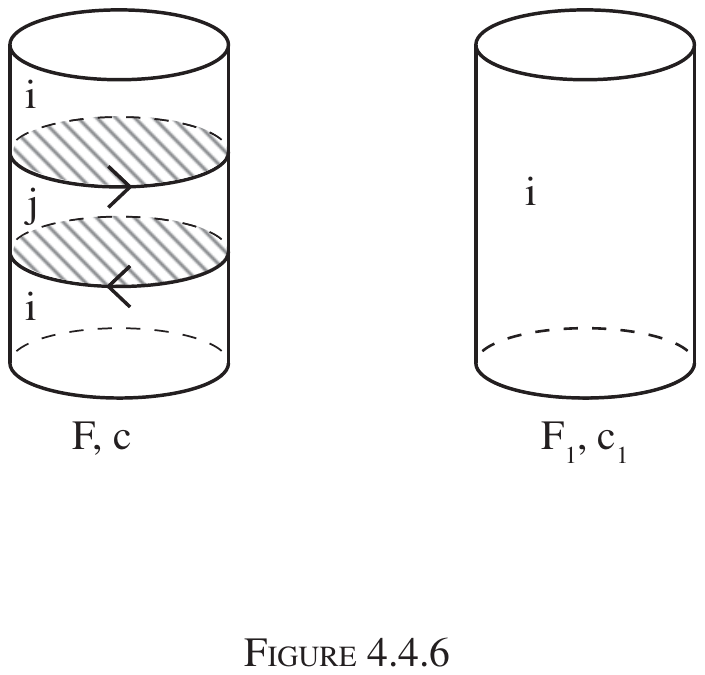}
\caption{\label{fig:color_dd} A coloring $c$ of $F$ and the corresponding coloring $c_1$ of $F_1$}
\end{center}
\end{figure}

One checks that $\chi_k(F_1,c_1)=\chi_k(F,c)+2$, $k=1,2$, and $\chi_{12}(F,c) = \chi_{12}(F_1,c_1)$. For any coloring, $\theta^+(c')\equiv \theta^+(c) (\mathrm{mod}\ 2),$ since the two singular circles in $F_1$  have the same parity, and
$(-1)^{s'(F,c)} = - (-1)^{s'(F_1,c_1)}$. Comparing the contributions,
\[ \angf{F,c} = - p_{12}p_{21} \angf{F_1,c_1} =
\rho \angf{F_1,c_1}.
\]
Summing over all $c$, the result follows.
$\square$

Since $\rho$ is invertible, this relation shows that either of the two foams in Figure~\ref{fig:disk_cancel} can be written as the other foam times $\rho^{\pm 1}$.

Reversing orientation of the two singular circles on the left hand side of Figure~\ref{fig:disk_cancel} gives a similar skein relation, with no sign added since the parity of the number of singular circles is the same on both sides of the relation.

\begin{prop}\label{prop:nc}
   The skein relation (\emph{neck-cutting relation}) in Figure~\ref{fig:neck_cut_1} holds.
\end{prop}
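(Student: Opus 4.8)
The plan is to follow the same coloring-bijection strategy used in the proofs of the singular neck-cutting relation and the double-disk cancellation, but now applied to the classical neck-cutting relation on a thin surface. Let $F$ be the foam with a thin neck (a thin annulus glued in), and let the right-hand side be a linear combination of the two foams $F_1$, $F_2$ obtained by cutting the neck, inserting two capping disks, and decorating one disk with a dot — with appropriate coefficients in the ground ring $R$ (involving $\rho_0$, $\rho_1$, and $\rho^{\pm 1}$). First I would set up the correspondence between colorings: a coloring $c$ of $F$ restricts to a coloring of the cut foams, and since cutting the thin neck may split one component of the thin surface $F_{12}$ into two (or merge, depending on the picture), I need to track how the set of colorings on the right compares to the set on the left. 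The key point is that summing $\angf{\cdot,c}$ over the "extra" colorings that appear after cutting is exactly what produces the $\rho_n$ coefficients, via Example 1 and the inductive relation (\ref{eq:2sphere-inductive}).

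Next I would carry out the local computation of the three relevant quantities — $\theta^+$, the $\chi_i(c)$'s, and $\chi_{12}(F)$ — under the cut. Cutting a thin neck and capping with two disks changes the Euler characteristic of the ambient thin surface by $+2$ on the relevant component(s), while the colorings of the two new disks are free and independent, contributing exactly the data that reconstitutes a thin 2-sphere (possibly with a dot). Concretely, for each coloring $c$ of $F$, the contribution $\angf{F,c}$ should factor as (contribution of the cut foam with a fixed disk coloring) times an expression that, after summing over the two disk colorings, becomes $\rho_0$ when the disks are bare and $\rho_1$ when one disk carries a dot — mirroring the algebra in the singular neck-cutting proof where $\angf{F,c}=-(x_1-x_2)y$ and the dotted pieces give $x_1 y$, $x_2 y$. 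I would verify the sign bookkeeping exactly as in Proposition~\ref{prop_double_disks}: the parity change in $s'$ coming from the change in $\chi_2$ is compensated (or not) by the change in $\theta^+$, and any leftover sign is absorbed into the coefficient.

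The main obstacle I anticipate is the case analysis on the colors of the facets adjacent to the neck, together with getting the precise coefficients right. Unlike the double-disk relation, where the coefficient is the single invertible element $\rho$, here the neck-cutting relation will have several terms (typically one term with $\rho_1 \otimes 1 + 1 \otimes \rho_1$ type distribution and one with $\rho_0$ times a dot on each side, all divided by $\rho$ or similar), and one must check that the identity (\ref{eq:express_rho}) relating $\rho_1^2 - E_1\rho_1\rho_0 + E_2\rho_0^2$ to $-\rho$ is exactly what makes the two sides agree coloring-by-coloring. So the real work is: (i) enumerate the colorings of $F$ near the neck; (ii) for each, express $\angf{F,c}$ as a common factor $y$ times a polynomial in $x_1,x_2$; (iii) match that polynomial against the sum of the corresponding terms on the right, using $\rho_0 \mapsto (p_{12}-p_{21})/(x_1-x_2)$ and $\rho_1 \mapsto (x_1p_{12}-x_2p_{21})/(x_1-x_2)$ and the recursion for $\rho_n$; (iv) sum over all colorings. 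Everything else is routine Euler-characteristic and sign accounting of the type already rehearsed in the preceding two propositions.
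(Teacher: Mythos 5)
The paper does not prove this proposition by a coloring computation at all: it derives the relation in two steps from skein relations already established. First, the double-disk cancellation of Proposition~\ref{prop_double_disks} (Figure~\ref{fig:disk_cancel}) is used to trade the bare thin tube for a tube containing two parallel double disks, at the cost of an invertible factor $\rho^{\pm 1}$; then the reversed-orientation singular neck-cutting relation (Figure~\ref{fig:sneck_cut_4}) is applied to the top double disk. This is also why the right-hand side of Figure~\ref{fig:neck_cut_1} looks the way it does: the cut pieces carry \emph{singular} caps (thin disks with a double disk still attached) at the bottom, together with dots, and the only coefficients occurring are signs and $\rho^{\pm 1}$.

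Your outline has two concrete gaps. First, you are aiming at a different relation than the one asserted: you posit a right-hand side made of bare capping disks with a dot and coefficients built from $\rho_0$, $\rho_1$ and the identity (\ref{eq:express_rho}); none of $\rho_0$, $\rho_1$, or (\ref{eq:express_rho}) enters Figure~\ref{fig:neck_cut_1}, and the double facets do not disappear from its right-hand side. (A bare-cap surgery formula with such coefficients can be extracted afterwards, but it is not the statement being proved.) Second, the mechanism you lean on --- that summing over the ``extra'' colorings created by the cut produces $\rho_n$ coefficients via Example~1 and (\ref{eq:2sphere-inductive}) --- is not how these local relations work: the values $\rho_n$ arise only when a cap closes off an entire connected component into a genuine 2-sphere, whereas in a skein relation the caps are glued into arbitrary ambient facets. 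In the proofs of the neighboring relations the colorings assigning equal colors to the two new disks simply cancel between the two right-hand terms; they do not generate ground-ring coefficients. A direct coloring-by-coloring verification of the true relation is possible in principle, but it would essentially reprove Proposition~\ref{prop_double_disks} and the singular neck-cutting relation, and as written your plan would not establish the relation in Figure~\ref{fig:neck_cut_1}.
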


\begin{figure}[h]
\begin{center}
\includegraphics[scale=0.75]{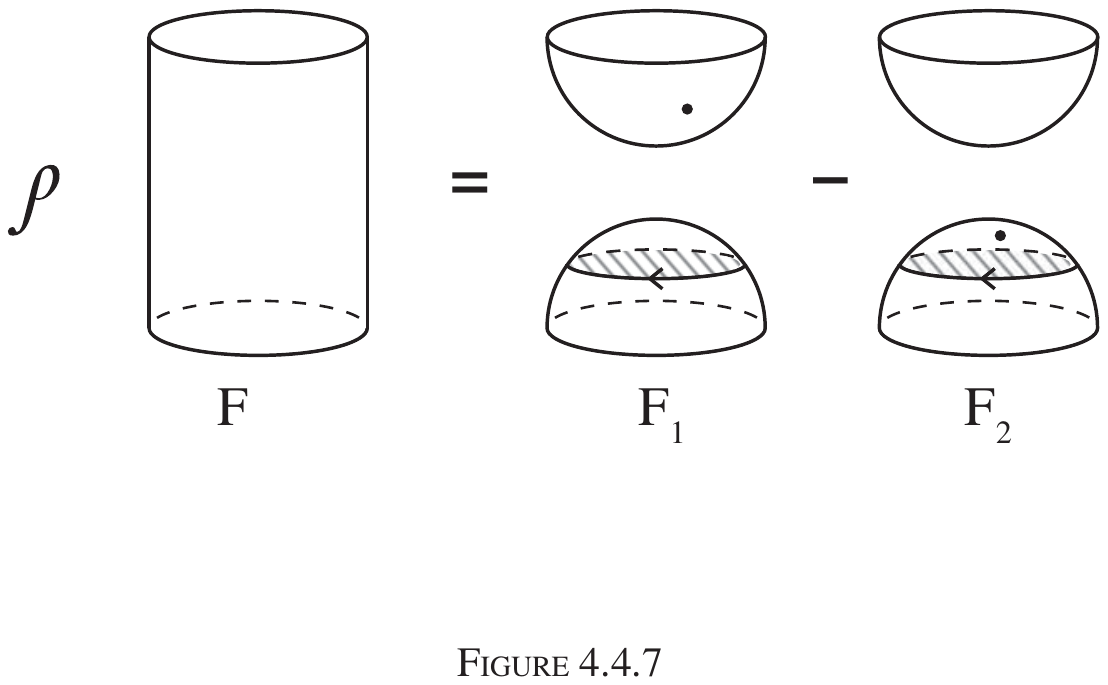}
\caption{\label{fig:neck_cut_1} Neck-cutting relation }
\end{center}
\end{figure}

Again, that $\rho$ is invertible, and the relation allows us to do a surgery on an annulus which is part of a thin facet of $F$.

\emph{Proof:} Apply Figure~\ref{fig:disk_cancel} relation to pass to a tube with two double disks and then use Figure~\ref{fig:sneck_cut_4}  relation to do surgery on the top double disk.
$\square$

Doing the surgery on the bottom double disk using Figure~\ref{fig:sneck_cut_1} results in a similar relation, depicted in Figure~\ref{fig:neck_cut_2} where singular disks now appear at the top rather than the bottom on the right hand side.

\begin{figure}[h]
\begin{center}
\includegraphics[scale=0.75]{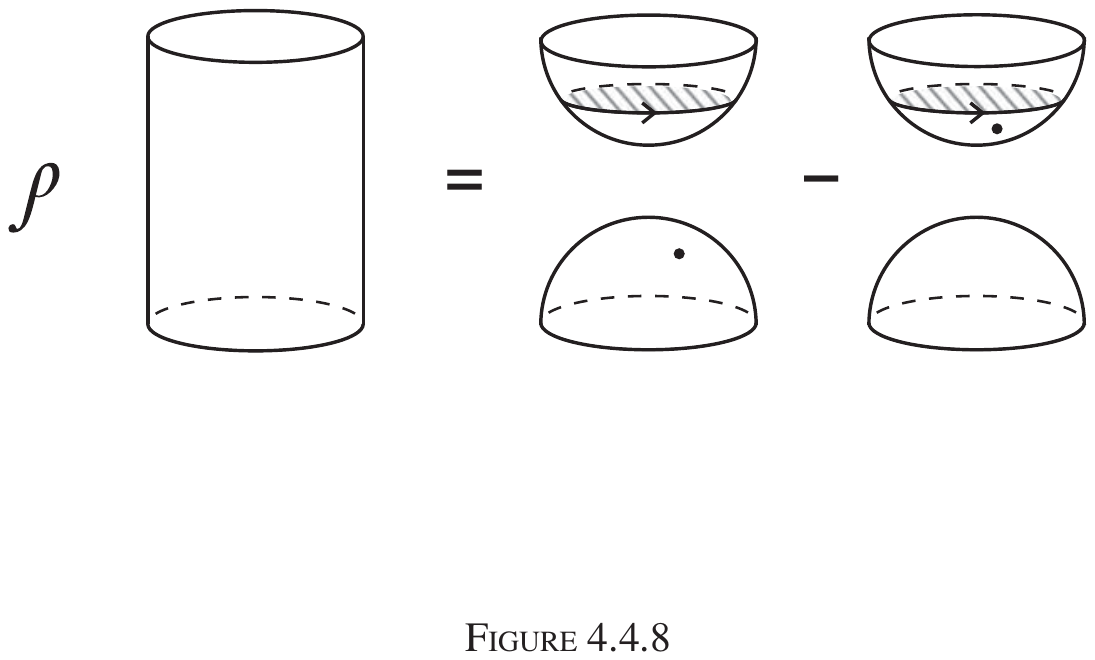}
\caption{\label{fig:neck_cut_2} Neck-cutting relation with double disks at the top}
\end{center}
\end{figure}

\begin{prop} \label{prop_disk_flip} If a double disk $D^2$ bounding a singular circle in a foam $F$ can be completed to a 2-sphere without additional interections with $F$, denote by $F_1$ the foam given by removing the 2-disk from $F$ and adding its complement in $\SS^2$, see Figure~\ref{fig:ddisk_flip}. Then $\angf{F}= - \angf{F_1}$.
\end{prop}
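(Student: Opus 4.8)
The plan is to compare $\angf{F,c}$ and $\angf{F_1,c}$ coloring by coloring and show that for every admissible coloring $c$ one has $\angf{F_1,c}=-\angf{F,c}$; summing over $c$ then gives $\angf{F}=-\angf{F_1}$. Write $\gamma=\partial D^2$ for the singular circle in question, and let $D'^2$ denote the closure of $\SS^2\setminus D^2$, so $F_1$ is obtained from $F$ by deleting $D^2$ and gluing $D'^2$ back along $\gamma$. The first observations are purely formal: $F$ and $F_1$ have the same thin surface $F_{12}$, the same thin facets, and the same dots, so they have the same set of admissible colorings, and for each $c$ the quantities $d_1(c),d_2(c)$ and $\chi(F_{12})$ occurring in (\ref{eq:def_eval_gl2}) are identical for $F$ and $F_1$.

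Next I would fix $c$ and check that $\chi_1(c)=\chi(F_1(c))$ and $\chi_2(c)=\chi(F_2(c))$ are unchanged as well. The closed surface $F_i(c)$ (for either $F$ or $F_1$) is obtained from one and the same subcomplex $X$ — the part of $F_i(c)$ not containing the double disk — by attaching a $2$-disk along $\gamma$: namely $D^2$ for $F$ and $D'^2$ for $F_1$. Since $\chi(X\cup_\gamma D)=\chi(X)+\chi(D)-\chi(\gamma)=\chi(X)+1$ for any disk $D$ glued along the circle $\gamma$, we get the same Euler characteristic in both cases. Hence also the factor $p_{12}^{\chi_1(c)/2}p_{21}^{\chi_2(c)/2}$ in (\ref{eq:def_eval_gl2}) is unchanged, and all of $x_1^{d_1(c)}x_2^{d_2(c)}$, $p_{12}^{\chi_1(c)/2}p_{21}^{\chi_2(c)/2}$, $(x_1-x_2)^{\chi(F_{12})/2}$, and $\chi_2(c)$ agree for $F$ and $F_1$. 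Therefore $\angf{F_1,c}/\angf{F,c}=(-1)^{\theta^+_{12,F_1}(c)-\theta^+_{12,F}(c)}$, and it remains only to analyze how the number of positive singular circles changes.

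The key step — and the one I expect to be the main obstacle — is to show that $\gamma$ switches type, positive $\leftrightarrow$ negative, on passing from $F$ to $F_1$, while every other singular circle keeps its type. The latter is immediate, since $D'^2$ meets $F$ only along $\gamma$, so every singular circle other than $\gamma$ has the same neighbourhood in $F$ and in $F_1$. For $\gamma$ itself one has to track the orientation conventions of Figure~\ref{fig:ori_conv}: the double facet along $\gamma$ moves from $D^2$ to the complementary disk $D'^2$, which sits on the side of $\gamma$ opposite to $D^2$ inside the sphere $\SS^2$, and one checks, using the rule for inducing an orientation on a singular circle together with the left-hand rule defining positivity, that the net effect is exactly a change of type. (This is the same mechanism as the already-noted fact that reversing all orientations along a component with $k$ singular circles scales the evaluation by $(-1)^k$, here applied to the single circle $\gamma$; alternatively, when the ball bounded by $\SS^2$ is disjoint from the rest of $F$ one may realize $F\rightsquigarrow F_1$ as an isotopy dragging the double facet across that ball and argue that the induced orientation on $\gamma$ is reversed.) Granting the type change, $\theta^+_{12}(c)$ changes parity, so $\angf{F_1,c}=-\angf{F,c}$ for every $c$, and summation yields $\angf{F}=-\angf{F_1}$.
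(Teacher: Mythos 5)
Your proof is correct and follows essentially the same route as the paper: the colorings of $F$ and $F_1$ are in bijection, all Euler-characteristic and dot data entering (\ref{eq:def_eval_gl2}) are unchanged, and the only change is that the circle $\gamma$ switches type, so $s'$ changes parity and every coloring's contribution flips sign. The one weak spot is the parenthetical isotopy remark (dragging the double disk across the ball is not an isotopy through foams, and what changes at $\gamma$ is the cyclic order of the attached facets, i.e.\ the preferred thin facet, not the orientation of $\gamma$), but that aside is inessential to your argument.
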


\begin{figure}[h]
\begin{center}
\includegraphics[scale=0.70]{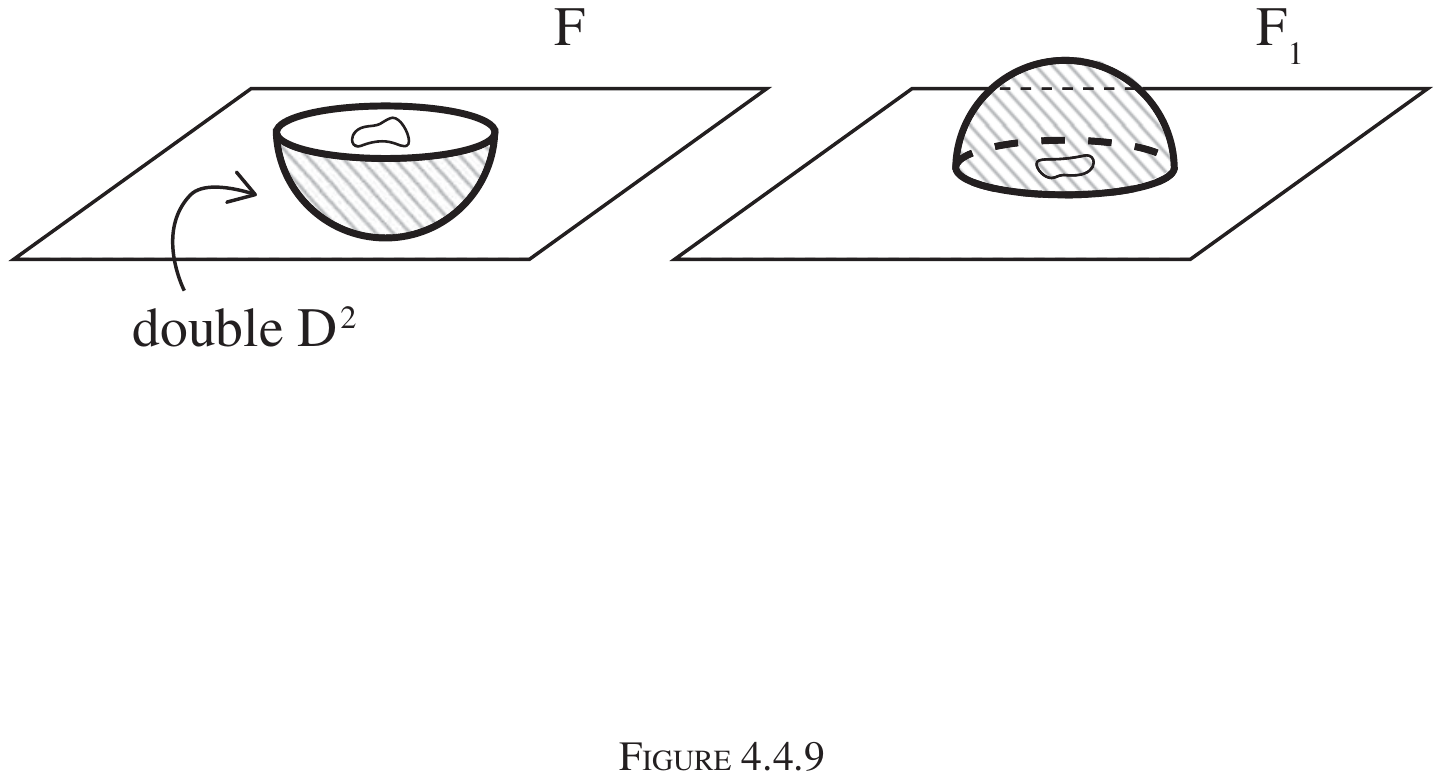}
\caption{\label{fig:ddisk_flip} Double disk flipping}
\end{center}
\end{figure}

\emph{Proof:} There is a bijection between colorings $c$ of $F$ and colorings $c_1$ of $F_1$, with the only difference in evaluations coming from the type of the singular circle, so
that $s'(F,c) = s'(F_1,c_1)\pm 1$ and
$\angf{F,c} = - \angf{F_1,c_1}.$

$\square$

\begin{prop} \label{prop:reverse_orient}
If $\overline{F}$ is a foam $F$ with the reversed orientation of all facets, then $\angf{\overline{F}}=(-1)^k \angf{F}$, where $k$ is the number of singular circles of $F$.
\end{prop}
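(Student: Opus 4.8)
The plan is to track how each ingredient in the evaluation formula \eqref{eq:def_eval_gl2} transforms when all facet orientations of a connected closed foam are reversed, then multiply these contributions together and sum over colorings. Since evaluation is multiplicative over connected components, it suffices to treat $F$ connected; if $F$ has $c$ connected components with $k_1,\dots,k_c$ singular circles, the connected case applied componentwise yields the global sign $(-1)^{k_1+\cdots+k_c}=(-1)^k$. So fix a connected foam $F$ and a coloring $c$; write $\overline{c}$ for the induced coloring of $\overline{F}$ (reversing orientations does not change which set of facets carries which color, so colorings of $F$ and $\overline{F}$ are in natural bijection).

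The first step is to observe that the quantities $d_1(c)$, $d_2(c)$ (dot counts), the surfaces $F_i(c)$, $F_{12}$ and hence their Euler characteristics $\chi_1(c)$, $\chi_2(c)$, $\chi(F_{12})$ are all orientation-independent, as are the power series $p_{12}$, $p_{21}$ and the variables $x_1,x_2$. Therefore the only factor in \eqref{eq:def_eval_gl2} that can change is the sign $(-1)^{\theta^+_{12}(c)+\chi_2(c)/2}$, and only through $\theta^+_{12}(c)$. So the entire proof reduces to the claim: reversing the orientation of (the single connected component) $F$ changes $\theta^+_{12}(c)$ by $k \pmod 2$, i.e.\ $\theta^+_{12}(\overline{c}) \equiv \theta^+_{12}(c) + k \pmod 2$, where $k$ is the number of singular circles.

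The key step is then the local analysis of a single singular circle. By the positivity convention spelled out in Figure~\ref{fig:ori_conv} and Figure~\ref{fig:theta_signs} — the left-hand rule with the thumb along the positive orientation of the singular circle and the fingers turning from the color-$1$ facet to the color-$2$ facet — reversing the orientations of all three facets meeting along a given singular circle reverses the induced orientation of that circle, hence flips it from positive to negative type or vice versa. Thus each of the $k$ singular circles changes its sign contribution, so exactly $k$ of the summands defining $\theta^+_{12}$ toggle between counted and not counted, giving $\theta^+_{12}(\overline{c}) \equiv \theta^+_{12}(c) + k \pmod 2$ as claimed. (Here one uses that among the two facets with a single color of $\{1,2\}$, say with colors $i$ and $j$, and the double facet, the cyclic order that determines positivity is governed purely by the induced circle orientation together with the fixed ambient left/right convention, none of the other data entering.) Combining, $\angf{\overline{F},\overline{c}} = (-1)^k\,\angf{F,c}$ for every coloring $c$, and summing over the (bijectively matched) colorings gives $\angf{\overline{F}} = (-1)^k \angf{F}$ for connected $F$; the general case follows by multiplicativity as above.

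The main obstacle I expect is the careful bookkeeping in the local orientation analysis: one must be sure that reversing all facet orientations genuinely reverses the induced singular-circle orientation under the chosen convention (rather than, say, leaving it invariant because two competing reversals cancel), and that nothing else in the definition of a ``positive circle'' — in particular the ambient left-hand rule — secretly depends on the facet orientations beyond through the circle's orientation. This is a finite check against the conventions in Figure~\ref{fig:ori_conv} and Figure~\ref{fig:theta_signs}, but it is the crux; once it is granted, the rest is immediate from \eqref{eq:def_eval_gl2} since every other factor is manifestly orientation-independent.
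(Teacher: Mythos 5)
Your proposal is correct and follows essentially the same route as the paper: colorings of $F$ and $\overline{F}$ are identified, all factors in (\ref{eq:def_eval_gl2}) except $\theta^+_{12}$ are orientation-independent, and reversing all facet orientations flips the type of each of the $k$ singular circles, giving the sign $(-1)^k$ coloring by coloring. The connected-component reduction you include is harmless but not needed, since the circle-by-circle sign flip already works for arbitrary $F$.
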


\emph{Proof:} Each coloring $c$ of $F$ is a coloring of $\overline{F}$ as well, and $\angf{\overline{F},c}=(-1)^k\angf{F,c}$, since $s'(\overline{F},c)=(-1)^k s'(F,c)$ as the type (positive or negative) of each singular circle of $F$ is reversed in $\overline{F}$. Summing over $c$ implies the proposition. $\square$

This proposition can be applied, for instance, to the neck-cutting relation in Figure~\ref{fig:neck_cut_1}. Reversing the orientation of singular circles in $F_1,F_2$ reverses the orientation of all facets as well. Since $F$ has one less singular circle than $F_1,F_2$, there'll be an additional overall minus sign, which can go either to the left or right hand side.

\begin{prop} For a foam $F_2$ with a facet with two dots, the relation
\[ \angf{F_2} = E_1 \angf{F_1} - E_2\angf{F_0}
\]
hold, where $F_1$ and $F_0$ are the foams with one fewer and two fewer dots on the same facet, see Figure~\ref{fig:dot_reduction}.
\end{prop}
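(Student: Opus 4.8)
The plan is to argue coloring-by-coloring and reduce everything to the quadratic identity satisfied by $x_1$ and $x_2$. First I would note that the foams $F_2$, $F_1$, $F_0$ have the same underlying CW-complex, orientations, and thin surface, differing only in the number of dots ($2$, $1$, $0$) on one fixed thin facet $f$. Hence they have exactly the same set of admissible colorings, and for each coloring $c$ everything entering formula (\ref{eq:def_eval_gl2}) except the dot contribution of $f$ --- the sign $(-1)^{\theta^+_{12}(c)+\chi_2(c)/2}$, the exponents $\chi_1(c), \chi_2(c), \chi(F_{12})$, the factors $p_{12}, p_{21}$, and the dot monomials on all other facets --- agrees for the three foams.

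Next I would isolate the contribution of $f$. Fix a coloring $c$ and let $i=i(c)\in\{1,2\}$ be the color that $c$ assigns to $f$. Writing $y_c$ for the common factor of $\angf{F_2,c}$, $\angf{F_1,c}$, $\angf{F_0,c}$ obtained from (\ref{eq:def_eval_gl2}) by deleting the contribution of the dots on $f$, we have
\[ \angf{F_2,c} = x_i^2\, y_c, \qquad \angf{F_1,c} = x_i\, y_c, \qquad \angf{F_0,c} = y_c. \]
Since $x_1$ and $x_2$ are the two roots of $t^2 - E_1 t + E_2$, we have $x_i^2 = E_1 x_i - E_2$ for $i=1,2$, hence
\[ \angf{F_2,c} = (E_1 x_i - E_2)\, y_c = E_1\, \angf{F_1,c} - E_2\, \angf{F_0,c}. \]

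Finally I would sum over all colorings $c$: by (\ref{eq:def_eval_gl3}) the left side becomes $\angf{F_2}$ and the right side becomes $E_1\angf{F_1} - E_2\angf{F_0}$, which is the claimed relation. I do not expect a genuine obstacle here; the only thing to check is that $y_c$ is indeed common to all three evaluations, and this is immediate from (\ref{eq:def_eval_gl2}), where dots on a thin facet colored $i$ enter only through the monomial $x_i^{d_i(c)}$ and changing the number of dots on the single facet $f$ alters nothing else in the formula. (The same computation with $x_i^{k} = E_1 x_i^{k-1} - E_2 x_i^{k-2}$ gives the analogous dot-reduction relation for any $k\ge 2$ dots on a facet, paralleling the recursion (\ref{eq:2sphere-inductive}) for the dotted thin spheres $\rho_n$.)
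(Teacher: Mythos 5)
Your argument is correct and is exactly the paper's proof, just written out in more detail: the paper simply notes that the relation follows from $x_i^2 = E_1 x_i - E_2$ for $i=1,2$, which is the identity you apply coloring-by-coloring before summing. Nothing to add.
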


\begin{figure}[h]
\begin{center}
\includegraphics[scale=0.70]{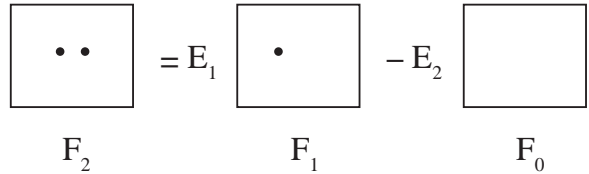}
\caption{\label{fig:dot_reduction} Dot reduction relation}
\end{center}
\end{figure}

\emph{Proof:} Follows, since $x_i^2=E_1 x_i - E_2$ for $i=1,2$. $\square$

\begin{prop} \label{prop_double_nc} (Double facet neck-cutting relation) Evaluations of foams $F$ and $F_1$ in Figure~\ref{fig:neck_cutting_d} satisfy
$\angf{F} = \rho \angf{F_1}$.
\end{prop}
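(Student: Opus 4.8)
The plan is to prove this by comparing the coloring-by-coloring evaluations, in the spirit of Propositions~\ref{prop_double_disks} and~\ref{prop_disk_flip}; the point is simply that double facets carry no decoration and that the double $2$-sphere evaluates to the invertible element $\rho = -p_{12}p_{21}$ of Example~4, so cutting a double-facet neck must multiply the evaluation by $\rho^{\pm1}$.

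First I would observe that the modification relating $F$ and $F_1$ in Figure~\ref{fig:neck_cutting_d} --- replacing a tubular neck of a double facet by two double disks (or vice versa) --- takes place entirely inside the double surface $F_{1\cap 2}$, away from every thin facet, every dot and every singular circle. Consequently the thin surface $F_{12}$ and all of its checkerboard colorings are unchanged, giving a canonical bijection $c \leftrightarrow c_1$ between colorings of $F$ and of $F_1$; under this bijection the dot exponents $d_1(c), d_2(c)$, the positive-circle count $\theta^+_{12}(c)$, and $\chi(F_{12})$ are all unchanged.

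Next I would track the only quantities that do change, the Euler characteristics $\chi_i(c) = \chi(F_i(c))$ for $i=1,2$. Every double facet lies in both $F_1(c)$ and $F_2(c)$, and replacing an annulus ($\chi = 0$) by two disks ($\chi = 2$) inside the double surface shifts its Euler characteristic by $\pm2$; hence $\chi_1(c_1) = \chi_1(c)\pm2$ and $\chi_2(c_1) = \chi_2(c)\pm2$, with the same sign for both $i$. Substituting this into~(\ref{eq:def_eval_gl2}) and forming the ratio, the monomial $x_1^{d_1}x_2^{d_2}$ and the factor $(x_1-x_2)^{-\chi(F_{12})/2}$ cancel, the shift of $\chi_2(c)$ changes the sign $(-1)^{\theta^+_{12}(c)+\chi_2(c)/2}$ by $(-1)^{\pm1} = -1$, and the $p$-powers contribute $p_{12}^{\pm1}p_{21}^{\pm1}$, so that
\[ \frac{\angf{F,c}}{\angf{F_1,c_1}} \ = \ (-p_{12}p_{21})^{\pm 1} \ = \ \rho^{\pm 1}, \]
a constant independent of $c$. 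In the labelling of Figure~\ref{fig:neck_cutting_d} the double surface of $F$ is the larger one, so the exponent is $+1$; summing the identity $\angf{F,c} = \rho\,\angf{F_1,c_1}$ over all colorings $c$ via the bijection above gives $\angf{F} = \rho\,\angf{F_1}$.

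The only place demanding care is this Euler-characteristic bookkeeping together with the sign $(-1)^{\chi_2(c)/2}$ appearing in~(\ref{eq:def_eval_gl2}): one must check that the $\pm2$ shift of $\chi_2(c)$ delivers precisely the extra sign needed to turn $p_{12}p_{21}$ into $\rho=-p_{12}p_{21}$, and that the neck genuinely lies in the interior of a double facet so that no singular circle is disturbed (this is part of the hypothesis encoded in the figure). An alternative, more roundabout route would run as in the proof of Proposition~\ref{prop:nc}: use Proposition~\ref{prop_double_disks} to split off a pair of parallel double disks and then apply the singular neck-cutting relations of Figures~\ref{fig:sneck_cut_1}--\ref{fig:sneck_cut_4}; but the direct computation above is shorter.
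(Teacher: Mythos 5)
Your argument is correct and is essentially the paper's own proof: a coloring bijection, the shift $\chi_i$ by $2$ for both $i=1,2$, and the resulting factor $(-1)\,p_{12}p_{21}=\rho$ in formula (\ref{eq:def_eval_gl2}), summed over colorings. Your extra bookkeeping (unchanged dots, $\theta^+_{12}$, $\chi(F_{12})$, and the resolution of the $\pm$ sign from the figure) just makes explicit what the paper leaves implicit, so no substantive difference.
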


\begin{figure}[h]
\begin{center}
\includegraphics[scale=0.65]{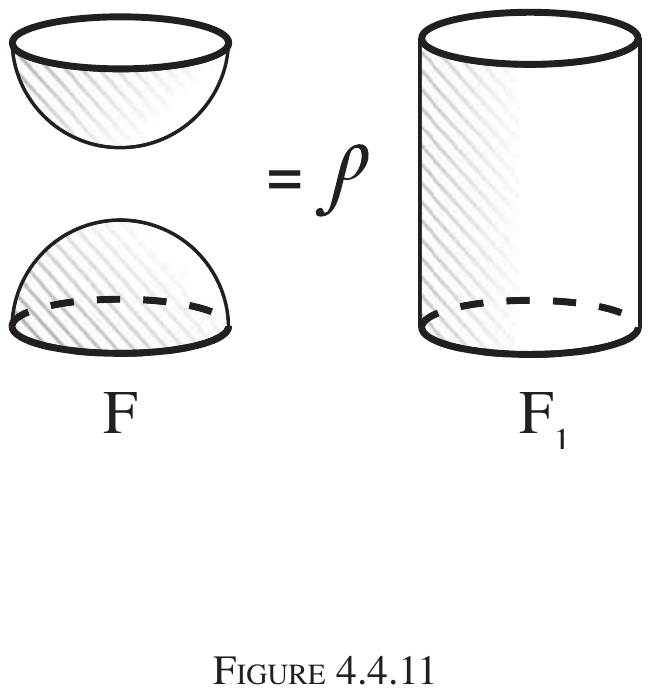}
\caption{\label{fig:neck_cutting_d} Neck cutting on a double facet}
\end{center}
\end{figure}
\emph{Proof:} Again, there's a bijection between colorings $c$ of $F$ and colorings $c_1$ of $F_1$. Difference in the Euler characteristics
$\chi_i(F,c)=\chi_i(F_1,c_1)+2,$ for $i=1,2$, contributes the term $-p_{12}p_{21}=\rho$ to the evaluation of $\angf{F,c}$ compared to that of $\angf{F_1,c_1}$. Summing over $c$ implies the proposition.
$\square$

\begin{prop} (Dot migration relations)
\begin{enumerate}
\item Evaluations of foams $F_1, F_2$ and $F_1$ in Figure~\ref{fig:dot_migr_1} satisfy
$\angf{F_1} + \angf{F_2} = E_1 \angf{F}$.
\item Figure~\ref{fig:dot_migr_2} relation holds.
\end{enumerate}
\end{prop}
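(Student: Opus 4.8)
The plan is to argue coloring by coloring, exactly as in the proofs of the neck-cutting and dot-reduction relations above. First I would observe that the foams $F_1$, $F_2$ and $F$ of Figure~\ref{fig:dot_migr_1} differ only in the placement of dots on thin facets: they have the same thin surface $F_{12}$, the same double surface $F_{1\cap 2}$, the same singular circles, and hence the same admissible colorings, with an evident bijection between the colorings of the three foams. Fix such a coloring $c$. Comparing the three evaluations via formula~(\ref{eq:def_eval_gl2}), every factor --- the sign $(-1)^{\theta^+_{12}(c)+\chi_2(c)/2}$, the power series $p_{12}^{\chi_1(c)/2}p_{21}^{\chi_2(c)/2}$, and the denominator $(x_1-x_2)^{\chi(F_{12})/2}$ --- is literally identical for $F_1$, $F_2$, $F$; the only discrepancy is the dot monomial $x_1^{d_1(c)}x_2^{d_2(c)}$.

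Next I would use the local picture near the singular circle in the figure. The two thin facets carrying the migrating dot are the two thin facets meeting along one common singular circle, so every admissible coloring assigns them the two distinct colors $1$ and $2$. Hence, writing $m = x_1^{d_1(c)}x_2^{d_2(c)}$ for the dot monomial of the reference foam $F$ under $c$ and $y$ for the common remaining factor, so that $\angf{F,c} = m\,y$, the dot monomial of $F_1$ equals $x_i\,m$ and that of $F_2$ equals $x_j\,m$, where $\{i,j\} = \{1,2\}$ according to how $c$ colors the two facets. In either case $\angf{F_1,c} + \angf{F_2,c} = (x_1 + x_2)\,m\,y = E_1\,\angf{F,c}$, and summing over all colorings $c$ gives part~(1).

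Part~(2) is handled in the same way: the foams of Figure~\ref{fig:dot_migr_2} again differ only in dot data, so $\angf{\cdot,c}$ again factors as (dot monomial)$\,\cdot\,y$ with $y$ independent of that data, and the relation reduces, coloring by coloring, to an identity among monomials in $x_1,x_2$ --- this time the relevant identity being the other symmetric-function relation $x_1 x_2 = E_2$ (equivalently $x_i^2 = E_1 x_i - E_2$, as already used in the dot-reduction relation). Summing over colorings then yields the asserted identity.

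These computations are routine; the one point that genuinely requires care is checking that in each figure the dotted facets really are the two thin facets adjacent to a single common singular circle. This is exactly what forces them to carry complementary colors under every admissible coloring, and hence what makes the per-coloring monomial identity --- and with it the whole proposition --- go through.
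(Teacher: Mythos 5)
Your proposal is correct and follows exactly the paper's argument: the foams differ only in dot placement, so for each coloring all factors in the evaluation formula agree except the dot monomial, and since the two dotted thin facets meet the same singular circle they carry opposite colors, giving the per-coloring identities $x_1+x_2=E_1$ (resp.\ $x_1x_2=E_2$ for the second relation) and the result upon summing over colorings. The paper's proof is just a terser version of the same reasoning.
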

\begin{figure}[h]
\begin{center}
\includegraphics[scale=0.7]{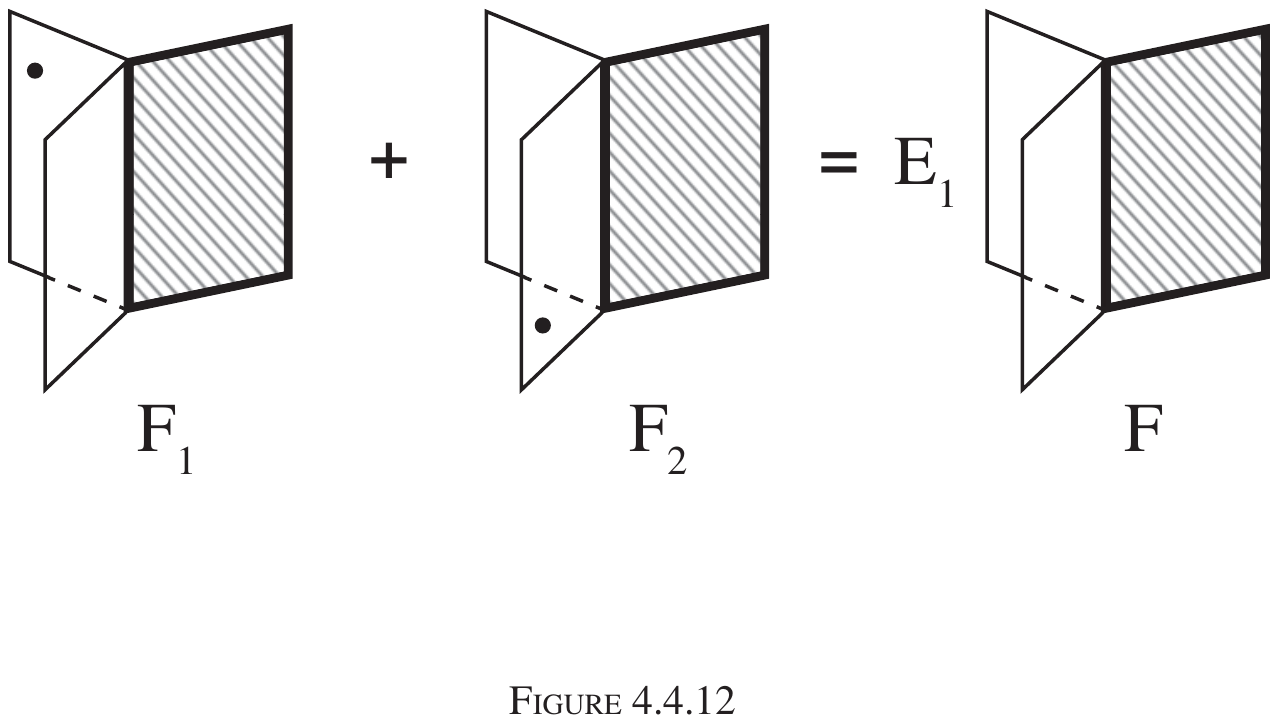}
\caption{\label{fig:dot_migr_1} Dot migration relation}
\end{center}
\end{figure}

\begin{figure}[h]
\begin{center}
\includegraphics[scale=0.70]{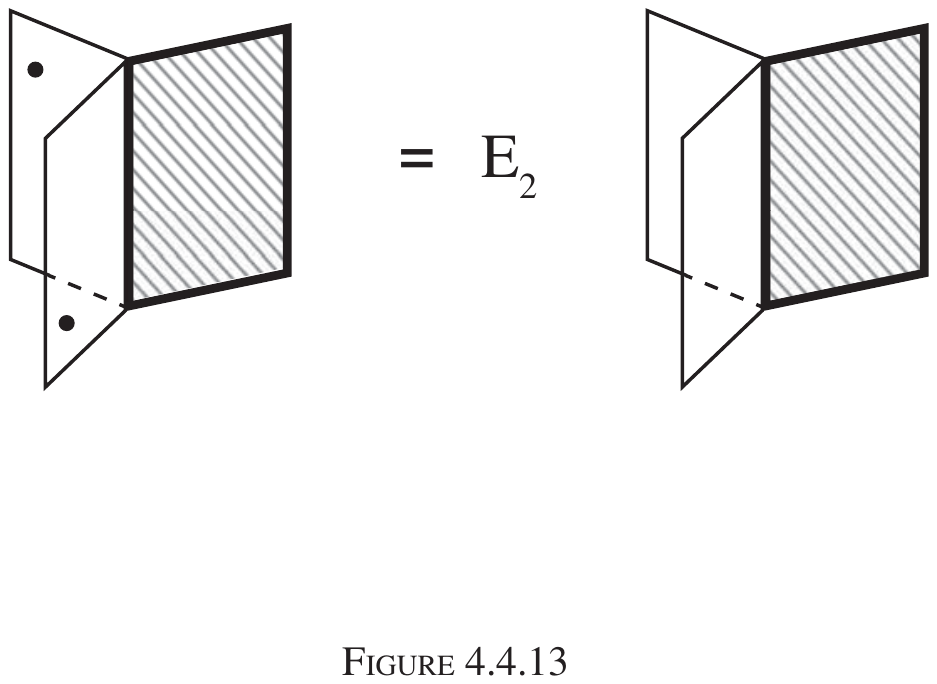}
\caption{\label{fig:dot_migr_2} Second dot migration relation }
\end{center}
\end{figure}

\emph{Proof:} Follows, since these foams differ only by dot placement, any for any coloring $c$ the two facets with dots carry opposite colors. These dots contribute $x_1$ and $x_2$ to the evaluation. Consequently,
\[\angf{F_1,c}+\angf{F_2,c}=(x_1+x_2)\angf{F_3,c} = E_1 \angf{F_3,c}
\]
The same argument implies the second relation.
$\square$

\begin{prop}  Skein relation in Figure~\ref{fig:digon_decomp_1} holds.
\end{prop}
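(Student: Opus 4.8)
\emph{Proof proposal.} The plan is to derive the digon decomposition from the neck-cutting relation of Proposition~\ref{prop:nc} (equivalently, from the singular neck-cutting relation together with the canceling-double-disks relation of Proposition~\ref{prop_double_disks}) and the evaluations of the elementary closed surfaces computed in Section~\ref{sec:examples}. In principle one could instead attack the identity directly from formula (\ref{eq:def_eval_gl2}), matching colorings of the left-hand foam with colorings of the foams on the right, but routing through the previously established skein relations keeps the sign and coefficient bookkeeping more manageable.

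First I would localize: the foam $F$ on the left of Figure~\ref{fig:digon_decomp_1} contains a digon piece, namely two thin facets and one double facet meeting along two singular circles. The geometric point is that one of the two thin facets contains an embedded annulus whose intersection with $s(F)$ is controlled (empty, or exactly one of the two singular circles), so that the neck-cutting relation applies to it. Applying neck-cutting to this annulus rewrites $\angf{F}$, up to an explicit power of the invertible element $\rho$, as a $\Z$-linear combination of two foam evaluations, the two terms differing by the placement of a dot on the new disk. In each of these two foams the digon has been ``opened up'': a small closed piece --- a thin $2$-sphere carrying some dots, the double $2$-sphere, or a theta-foam --- splits off as a connected component, while the remainder is precisely the foam obtained from $F$ by replacing the digon with a single double facet. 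By multiplicativity of $\angf{\,\cdot\,}$ on disjoint unions, each term factors as the evaluation of that remainder times the evaluation of the split-off piece; the latter is $\rho_n$, $\rho$, or a Schur-function multiple of $\rho$ by Examples 1, 4 and 6 of Section~\ref{sec:examples}, and the inductive relation (\ref{eq:2sphere-inductive}) reduces $\rho_n$ to $\rho_0$ and $\rho_1$. Summing the two terms, cancelling the power of $\rho$ against its inverse, and using the dot-reduction and dot-migration relations to normalize any leftover dots should reproduce exactly the right-hand side of Figure~\ref{fig:digon_decomp_1}.

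I expect the main obstacle to be the sign and $p$-weight bookkeeping, exactly as in the proofs of the preceding propositions. Opening up the digon changes the Euler characteristics $\chi_1(c)$ and $\chi_2(c)$ and the count $\theta^+_{12}(c)$ of positive singular circles, so one must track the resulting change in $(-1)^{s'(F,c)}$ (see (\ref{eq:s_prime})) as well as in the monomial $p_{12}^{\chi_1(c)/2}p_{21}^{\chi_2(c)/2}$, i.e.\ in the power of $\rho$. One must also check that the two choices of which thin facet to cut, and the two choices of orientation of the singular circles, give consistent answers, the latter being governed by Proposition~\ref{prop:reverse_orient}. Once a single coloring $c$ has been analyzed carefully, the remaining colorings follow from the equivariance $\sigma(\angf{F,c}) = \angf{F,\sigma(c)}$ under the transposition $\sigma = (12)$, and summing over all colorings completes the proof.
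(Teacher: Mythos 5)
Your plan is not the paper's argument, and its first geometric step fails. The tube being cut in Figure~\ref{fig:digon_decomp_1} carries the two singular edges: any circle realizing the cut crosses both seams, so it does not lie inside a single thin facet. Proposition~\ref{prop:nc} (neck-cutting) only applies to an annulus contained in one thin facet, disjoint from the singular set; an annulus inside one of the two thin sheets of the digon piece is necessarily trivial there, and surgering along it does not open the digon. The subsequent claims then collapse as well: the relation is local, the thin and double facets of the digon piece continue arbitrarily into the rest of $F$, so after any such surgery no closed component (thin $2$-sphere, double sphere, or theta-foam) splits off, and multiplicativity of $\angf{\cdot}$ on disjoint unions together with the evaluations of Examples 1, 4, 6 cannot be invoked. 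Moreover, even if you obtained some expression for $\angf{F}$ this way, you never explain how it is matched against the two specific dotted foams $F_1,F_2$ on the right-hand side, which would require a further analysis of the same kind. Finally, the closing remark that one coloring suffices by $S_2$-equivariance is wrong: $\sigma$ only relates a coloring to its global color swap, whereas distinct colorings differ by Kempe moves on individual components of the thin surface.

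The paper's proof is instead a direct comparison of colorings, in the style of Proposition~\ref{prop:nc} but not using it as an ingredient: colorings of $F_1,F_2$ in which the two dotted facets receive the same color contribute identical terms to $\angf{F_1}$ and $\angf{F_2}$ and cancel in the difference; the remaining colorings are in bijection with colorings $c$ of $F$, and for these one records $\chi_{12}(F,c)=\chi_{12}(F_k,c_1)-2$, $\chi_{\ell}(F,c)=\chi_{\ell}(F_k,c_1)$, checks the behaviour of $\theta^+$ and of the sign $(-1)^{s'}$ in the two local cases $(i,j)=(1,2)$ and $(2,1)$, and sees that the extra factor $x_1-x_2$ in $\angf{F,c}$ is exactly matched by the dots contributing $x_1$ and $x_2$ to $\angf{F_1,c_1}$ and $\angf{F_2,c_1}$, giving $\angf{F,c}=\angf{F_1,c_1}-\angf{F_2,c_1}$ coloring by coloring. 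If you want a proof along these lines, you should redo your sign and Euler-characteristic bookkeeping in that framework rather than route through the neck-cutting relation.
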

\begin{figure}[h]
\begin{center}
\includegraphics[scale=0.65]{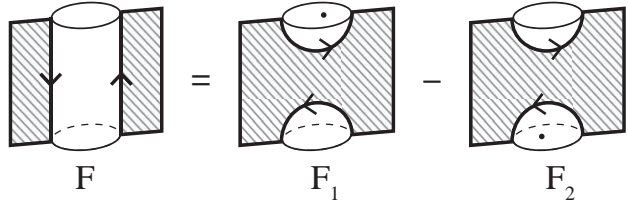}
\caption{\label{fig:digon_decomp_1} Cutting a tube with two singular edges}
\end{center}
\end{figure}

\emph{Proof:} Colorings $c'$ of $F_1,F_2$ that don't come from colorings of $F$ have the property that the front thin bottom and back thin top facets are colored by the same color, see Figure~\ref{fig:digon_decomp_2} left.

\begin{figure}[h]
\begin{center}
\includegraphics[scale=0.65]{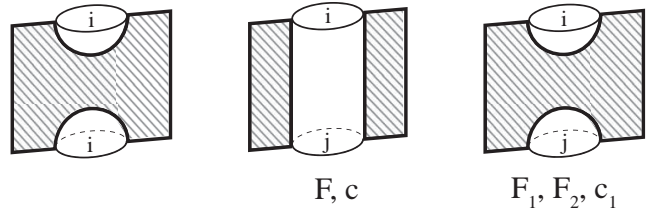}
\caption{\label{fig:digon_decomp_2}  Left: a coloring of $F_1,F_2$ that does not come from a coloring of $F$. Right: a coloring $c'$ of $F$ and induced coloring $c_1$ of $F_1,F_2$.}
\end{center}
\end{figure}
The dots on $F_1,F_2$ will have the same color and these terms will cancel out from the difference, with  $\angf{F_1,c'}-\angf{F_2,c'}=0$.

The remaining colorings $c_1$ of $F_1,F_2$ are in bijection with colorings $c$ of $F$, see Figure~\ref{fig:digon_decomp_2} right. For these colorings we have
\[ \chi_{12}(F,c) = \chi_{12}(F_k,c_1)-2, \
\chi_{\ell}(F,c) = \chi_{\ell}(F_k,c_1), \ \ell,k=1,2.
\]
The rest of the computation is similar to that in the proof of Proposition~\ref{prop:nc}. If $i=1,j=2$, one
checks that $\theta^+(c_1)=\theta^+(c)$ and
the signs $(-1)^{s'}$ are the same in the three evaluations. Due to difference in $\chi_{12}$, the evaluation $\angf{F,c}$ will acquire $(x_1-x_2)$ in the numerator compared to the other two foams. This will be matched by the dots, contributing $x_1$ to $\angf{F_1,c_1}$ and $x_2$ to $\angf{F_2,c_1}$, correspondingly.

If $i=2,j=1$, there will be sign difference
$(-1)^{s'(F,c)}=-(-1)^{s'(F_k,c_1)},$ $k=1,2$. Dots will now contribute $x$'s with the opposite indices to the evaluations of $F_1,F_2$, canceling the sign difference, so that again $\angf{F,c}=\angf{F_1,c_1}-\angf{F_2,c_1}$.

$\square$

This relation with the opposite singular circles orientation, see Figure~\ref{fig:digon_decomp_3}, can be obtained from that in Figure~\ref{fig:digon_decomp_1} by looking at foams there from the opposite side of the plane.
Furthermore, rotating foams in Figure~\ref{fig:digon_decomp_1} by $180^{\circ}$ (or using dot migration relation  in  Figure~\ref{fig:dot_migr_1} twice) yields a similar to Figure~\ref{fig:digon_decomp_1} relation but with a different distribution of dots across thin facets.
\begin{figure}[h]
\begin{center}
\includegraphics[scale=0.65]{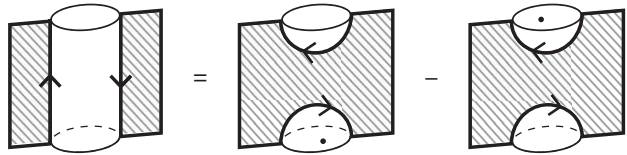}
\caption{\label{fig:digon_decomp_3} Tube-cutting with the  other orientation.  }
\end{center}
\end{figure}

In a foam $F$, let $\gamma$ be a curve that connects two points on singular lines and lies in a single facet of $F$, see Figure~\ref{fig:gamma_move_1} left. Let us call such a curve a \emph{proper} curve. The foam $F_1=m(\gamma,F)$ on that figure on the right is called the modification of $F$ along $\gamma$.
In the undeformed case, when $p(x,y)=1$,  $GL(2)$ foam evaluation satisfies $\brak{F}=\pm \brak{F_1}$, with the sign depending on orientation of singular edges of $F$, see~\cite[Equation (2.10)]{BHPW}.

\begin{figure}[h]
\begin{center}
\includegraphics[scale=0.70]{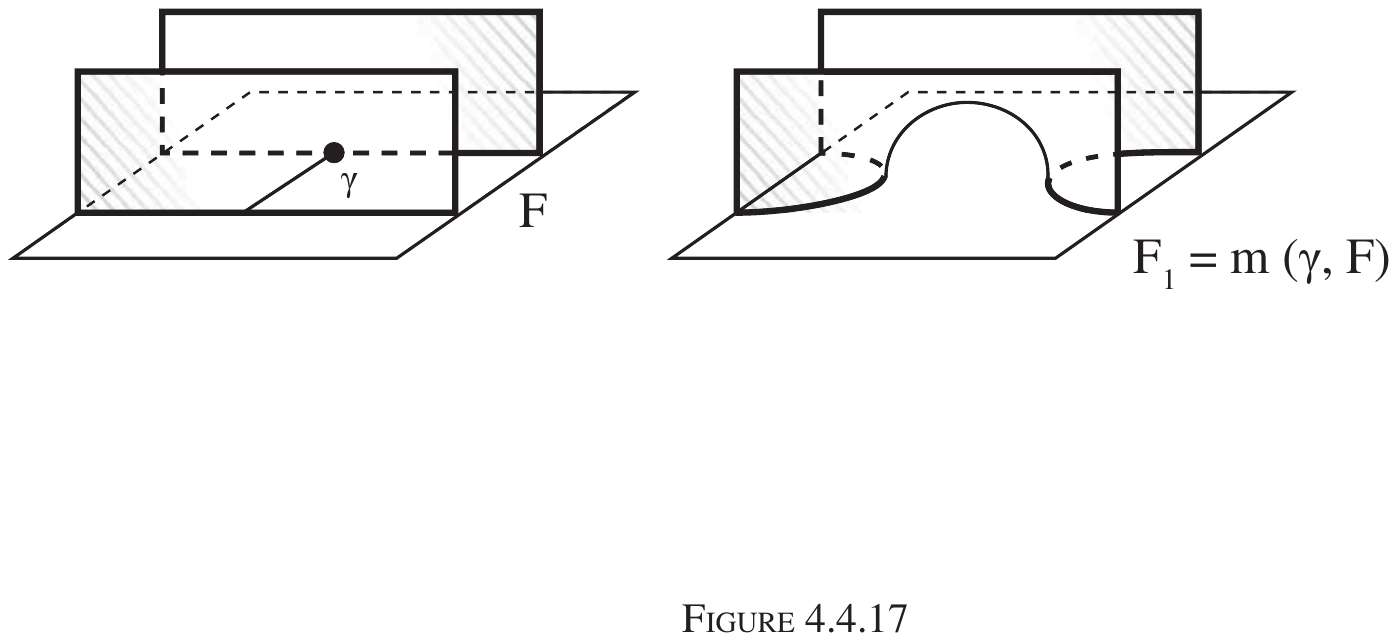}
\caption{\label{fig:gamma_move_1} Modifying foam $F$ along curve $\gamma$ in a thin facet. $\gamma$ connects two points on the singular set of $F$.  }
\end{center}
\end{figure}

The relation is more subtle in our case. We start with orientations of singular edges as shown on Figure~\ref{fig:gamma_move_2}; note that choosing orientation of one edge forces the orientation of the other edge of $F$ shown.
Choose a coloring $c$ of $F$ and denote by $c'$ the corresponding coloring of $F_1$ (there's a bijection between colorings of $F$ and $F_1$), see Figure~\ref{fig:gamma_move_2}.

\begin{figure}[h]
\begin{center}
\includegraphics[scale=0.7]{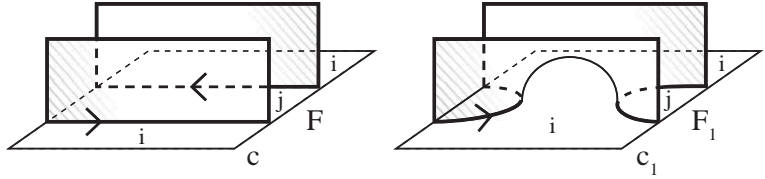}
\caption{\label{fig:gamma_move_2} Notice orientation of singular edges. }
\end{center}
\end{figure}

We have
\[ \chi_{12}(F_1,c_1)=\chi_{12}(F,c), \
\chi_i(c_1) = \chi_i(c)-2, \ \chi_j(c_1)= \chi_j(c).
\]
The number of singular circles of $F_1$ is one less or more of that of $F$, depending on whether the two singular edges shown in $F$ belong to different or the same singular circle.

If $i=1,j=2$ then these circles are negative, they make no contribution to $\theta^+(c)$ and $\theta^+(c_1)$, and
$ s'(F,c)=s'(F_1,c_1),$ since $\chi_2(c)=\chi_2(c_1)$ and $\theta^+(c)=\theta^+(c_1)$. If $i=2,j=1$, the circles are positive and $(-1)^{\theta^+(c)}=-(-1)^{\theta^+(c_1)}.$ Also, the Euler characteristics $\chi_2(c), \chi_2(c_1)$ differ by two and contribute a sign to the differece as well. We again get
$ s'(F,c)=s'(F_1,c_1),$ so that for any coloring
$ s'(F,c)=s'(F_1,c_1).$

Combining these computations,
\begin{equation}\label{eq:gamma_1}
    \angf{F,c}\ = \ p_{ij} \angf{F',c'}.
\end{equation}
Note that $p_{ij}$ is not an element of our ground ring $\pseries{E_1,E_2}$ and summing this equality over all colorings $c$ of $F$ will not get an immediate relation between evaluations of $F$ and $F'$.

We now look at the oppositely oriented case, see Figure~\ref{fig:gamma_move_3}. Circles now carry opposite signs from that of the previous case, and one can check that $s'(F,c) = - s'(F_1,c_1)$ in each of the cases $(i,j)=(1,2)$ and $(i,j)=(2,1).$
\begin{figure}[h]
\begin{center}
\includegraphics[scale=0.7]{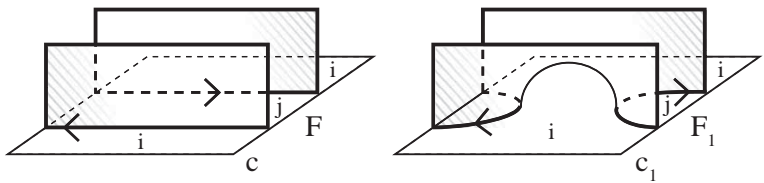}
\caption{\label{fig:gamma_move_3} Singular edges have the opposite orientation from that in the previous figure}
\end{center}
\end{figure}
A similar computation now gives
\begin{equation}\label{eq:gamma_2}
    \angf{F,c} = -p_{ij} \angf{F_1,c_1},
\end{equation}
which is similar to $(\ref{eq:gamma_1})$ but with an additional sign.

Consider the thin surface $F_{12}$ of $F$ and choose a connected component $\Sigma$ in it. Recall that we are looking at modifications of $F$ along proper curves $\gamma$ and now restrict to $\gamma$ on a component $\Sigma$. Notice that the double facets at the endpoints of $\gamma$ are pointing in the same direction relative to $\Sigma$, either both outward or both inward.
Also, if we were to redraw $F$ in Figure~\ref{fig:gamma_move_2} keeping orientations of the singular edges but drawing double facets on the opposite side of $\Sigma$ ('below', rather than 'above'), the type of the diagram would change to the one in Figure~\ref{fig:gamma_move_3}, and vice versa.
Proper disjoint curves or arcs $\gamma_1,\gamma_2\in \Sigma$ are called \emph{complementary} if for a coloring $c$ of $F$ they lie in differently colored regions. Thi property does not depend on the choice of $c$.

To a pair $(\gamma_1,\gamma_2)$ of complementary arcs we assign a sign $s(\gamma_1,\gamma_2).$ Namely, consider the four double facets of $F$ at the endpoints of $\gamma_1$ and $\gamma_2$. If these four facets all point into the same connected component of $\R^3\setminus \Sigma$, we set
$s(\gamma_1,\gamma_2)=1$. Otherwise we define
$s(\gamma_1,\gamma_2)=-1$.

An example when $s(\gamma_1,\gamma_2)=1$ is shown in Figure~\ref{fig:double_gamma_5}. In general, $\gamma_1,\gamma_2$ don't have to have an endpoint on the same singular circle.

\begin{figure}[h]
\begin{center}
\includegraphics[scale=0.80]{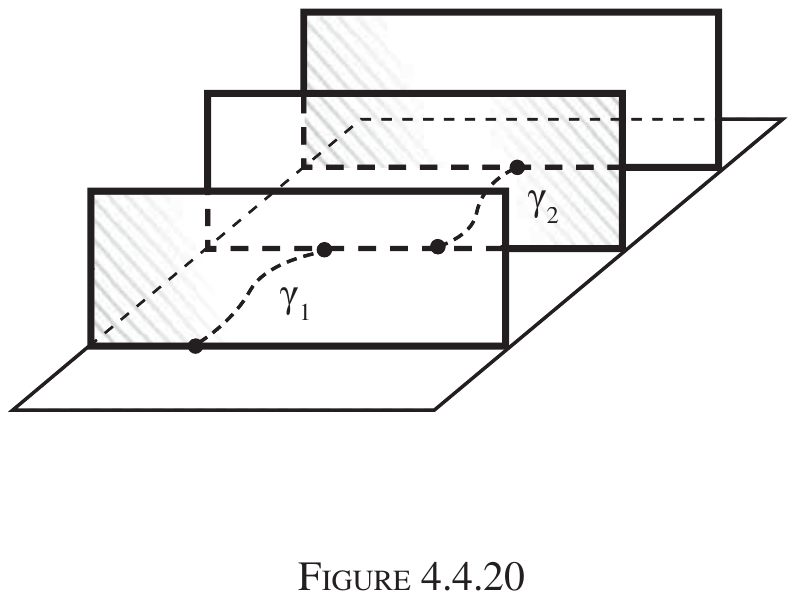}
\caption{\label{fig:double_gamma_5} Complementary proper arcs $\gamma_1,\gamma_2$ with $s(\gamma_1,\gamma_2)=1$}
\end{center}
\end{figure}

Given complementary proper arcs $\gamma_1,\gamma_2$ in $\Sigma$, we can do commuting modifications along $\gamma_1,\gamma_2$ to get from $F$ to the foam
\[ F_2=m(\gamma_1,m(\gamma_2,F)) =m(\gamma_2,m(\gamma_1,F)).
\]
\begin{prop} \label{prop:gammas} For $F$ and $F_2$ as above,
  $\angf{F} = s(\gamma_1,\gamma_2) \rho\cdot  \angf{F_2}.$
\end{prop}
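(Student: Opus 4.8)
The plan is to prove the identity \emph{coloring by coloring}, applying the single-arc modification formulas (\ref{eq:gamma_1})--(\ref{eq:gamma_2}) once along $\gamma_1$ and once along $\gamma_2$, and then to sum over all colorings of $F$. Fix a coloring $c$ of $F$. Each single modification $m(\gamma_k,-)$ induces a bijection on colorings, so their composition gives a bijection $c\mapsto c_2$ between colorings of $F$ and colorings of $F_2=m(\gamma_1,m(\gamma_2,F))$, and it suffices to prove $\angf{F,c}=s(\gamma_1,\gamma_2)\,\rho\,\angf{F_2,c_2}$ for each $c$. Since $\gamma_1$ and $\gamma_2$ are disjoint, $m(\gamma_1,-)$ is supported in a neighbourhood of $\gamma_1$ disjoint from $\gamma_2$; hence in $m(\gamma_1,F)$ the curve $\gamma_2$ is still a proper curve, the color of the facet carrying it is still the complement of the color of the facet carrying $\gamma_1$, and all the local orientation data of $F$ along $\partial\gamma_2$ (the orientations of the two singular edges and the side of the thin surface into which the two adjacent double facets point) is unchanged. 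In particular the diagram type at $\gamma_2$, in the sense of Figures~\ref{fig:gamma_move_2} and \ref{fig:gamma_move_3}, is the same whether read off from $F$ or from $m(\gamma_1,F)$.

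Now let $i_1$ be the color of the facet of $F$ carrying $\gamma_1$ under $c$ and $j_1$ the other color. Applying (\ref{eq:gamma_1}) or (\ref{eq:gamma_2}) to the modification along $\gamma_1$ gives
\[ \angf{F,c} \ = \ \eta_1\,p_{i_1 j_1}\,\angf{m(\gamma_1,F),c_1}, \]
where $\eta_1=+1$ if the local picture at $\gamma_1$ is of the type of Figure~\ref{fig:gamma_move_2} and $\eta_1=-1$ if it is of the type of Figure~\ref{fig:gamma_move_3}; by the computation preceding those equations this sign is independent of which of the two colors $i_1$ is. By complementarity the facet of $m(\gamma_1,F)$ carrying $\gamma_2$ has color $j_1$, with complementary color $i_1$, so the same relation applied along $\gamma_2$ yields
\[ \angf{m(\gamma_1,F),c_1} \ = \ \eta_2\,p_{j_1 i_1}\,\angf{F_2,c_2}, \qquad \eta_2\in\{\pm1\}. \]
Multiplying and using $p_{i_1 j_1}p_{j_1 i_1}=p_{12}p_{21}=-\rho$ (see (\ref{eq:value_S22})), which is symmetric in $x_1,x_2$ and hence independent of $c$, we obtain
\[ \angf{F,c} \ = \ -\,\eta_1\eta_2\,\rho\,\angf{F_2,c_2}, \]
with $\eta_1\eta_2$ independent of the coloring $c$.

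It remains to identify the sign, i.e. to show $-\eta_1\eta_2=s(\gamma_1,\gamma_2)$; this is the crux. The diagram type at an arc $\gamma$ is toggled both by reversing the orientations of its two singular edges and by moving the two adjacent double facets to the opposite side of $\Sigma$ (the paragraph after (\ref{eq:gamma_2})), so $\eta(\gamma)$ is, up to a fixed global normalization, the product of a sign $\epsilon^{\mathrm{side}}(\gamma)$ recording the side of $\Sigma$ into which those double facets point and a sign $\epsilon^{\mathrm{edge}}(\gamma)$ recording the singular-edge orientations relative to the facet containing $\gamma$. Since for each $\gamma_k$ the two double facets at $\partial\gamma_k$ point to the same side of $\Sigma$, the product $\epsilon^{\mathrm{side}}(\gamma_1)\,\epsilon^{\mathrm{side}}(\gamma_2)$ is $+1$ exactly when all four double facets point to the same component of $\R^3\setminus\Sigma$, i.e. equals $s(\gamma_1,\gamma_2)$ by definition. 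On the other hand, the facet orientations on $\Sigma$ reverse across each singular circle (so that all facets of a given color carry a consistent orientation, opposite to that of the facets of the other color); since $\gamma_1$ and $\gamma_2$ lie in facets of opposite color, one gets $\epsilon^{\mathrm{edge}}(\gamma_1)\,\epsilon^{\mathrm{edge}}(\gamma_2)=-1$. Hence $\eta_1\eta_2=-s(\gamma_1,\gamma_2)$, so $\angf{F,c}=-\eta_1\eta_2\,\rho\,\angf{F_2,c_2}=s(\gamma_1,\gamma_2)\,\rho\,\angf{F_2,c_2}$. Summing over all colorings $c$ of $F$ and using the bijection $c\mapsto c_2$ finishes the proof.

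The step I expect to be the main obstacle is the last one, the verification that $-\eta_1\eta_2=s(\gamma_1,\gamma_2)$. While each ingredient --- the relations (\ref{eq:gamma_1})--(\ref{eq:gamma_2}), the toggling rules for the diagram type, and the definition of $s$ --- is elementary, keeping the four orientation bits mutually consistent (in particular verifying carefully that complementarity of $\gamma_1,\gamma_2$ forces the two singular-edge contributions to differ by $-1$) requires drawing the local picture near each singular circle met by $\gamma_1$ and $\gamma_2$ and checking the cases; everything else is bookkeeping that is forced once the single-arc relations and the bijection of colorings are in hand.
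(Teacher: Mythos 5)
Your proposal is correct and follows essentially the same route as the paper's proof: for each coloring you apply the single-arc relations (\ref{eq:gamma_1})--(\ref{eq:gamma_2}) once along $\gamma_1$ and once along $\gamma_2$, use $p_{12}p_{21}=-\rho$, and identify the leftover sign with $s(\gamma_1,\gamma_2)$, which is exactly the paper's argument (the paper asserts the key correlation --- types opposite when $s=1$, types equal when $s=-1$ --- by inspecting the figures, whereas you justify it via the side/orientation factorization together with the fact that facet orientations of $\Sigma$ reverse across seams and hence correlate with the coloring). One small wording fix: your $\epsilon^{\mathrm{edge}}(\gamma)$ should be defined by comparing the orientation of the facet containing $\gamma$ (equivalently, the induced seam orientations) with a fixed global orientation of $\Sigma$, not ``relative to the facet containing $\gamma$'' itself --- which is how you actually use it in the final step.
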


\emph{Proof:} For a coloring $c$ of $F$ curves $\gamma_1$ and $\gamma_2$ lie in differently colored regions of $\Sigma$, say $i$ and $j$-colored regions, $\{i,j\}=\{1,2\}$. When $s(\gamma_1,\gamma_2)=1$, orientations on singular edges will make one of curves $\gamma$ the type in Figure~\ref{fig:gamma_move_2} and the other in Figure~\ref{fig:gamma_move_3}, with $(i,j)$ replaced by $(j,i)$ in one of these two cases.
Using equations (\ref{eq:gamma_1}) and (\ref{eq:gamma_2}), we obtain
$\angf{F,c}= - p_{12}p_{21}\angf{F_2,c_2} = \rho \angf{F_2,c_2}$ for the corresponding coloring $c_2$ of $F_2$.

When $s(\gamma_1,\gamma_2)=-1$, orientations on singular edges will make both $\gamma_1,\gamma_2$ either the type in Figure~\ref{fig:gamma_move_2} or the type in Figure~\ref{fig:gamma_move_3}, with $(j,i)$ in place for $(i,j)$ for one of $\gamma_1,\gamma_2$. This will introduce minus sign, with $\angf{F,c}= - \rho \angf{F_2,c_2}$.

$\square$

This proposition may be generalized in  some cases when one of $\gamma_1,\gamma_2$ is not a proper arc. One would need $\gamma_2$ to be a proper arc in $m(\gamma_1,F)$, in the region of color opposite to that of $\gamma_1$, with a coloring of $F$ naturally converted to a coloring of $m(\gamma_1,F)$. We provide an example of such pair of arcs in Figure~\ref{fig:double_gamma_0} and leave the details to the reader.

\begin{figure}[h]
\begin{center}
\includegraphics[scale=0.70]{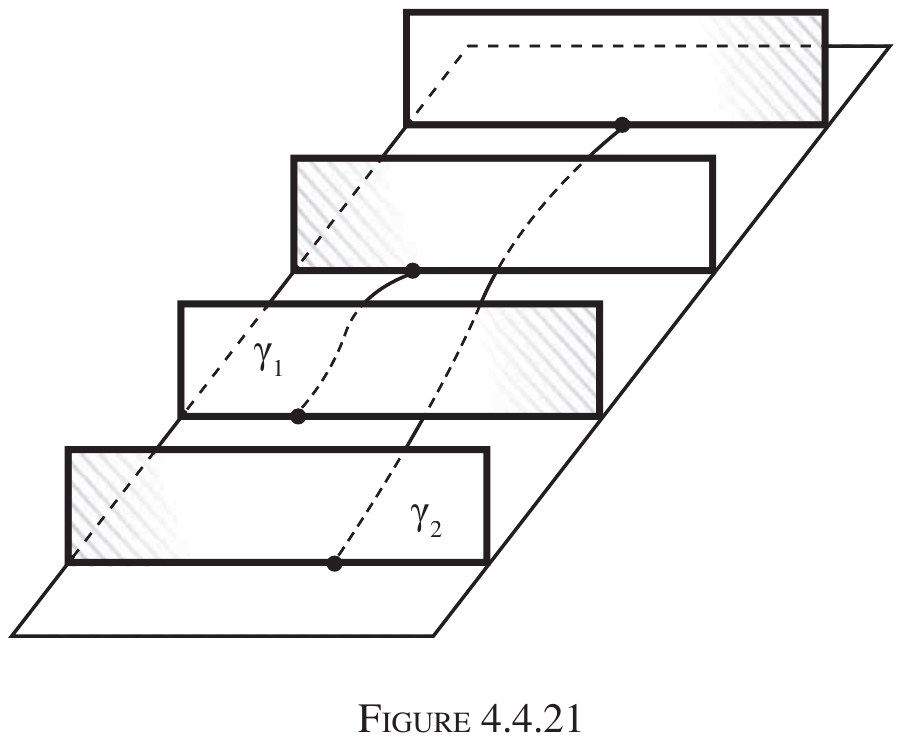}
\caption{\label{fig:double_gamma_0} Arc  $\gamma_2$ is not proper but becomes proper in $m(\gamma_1,F)$}
\end{center}
\end{figure}

\begin{cor}
    $\angf{F} = \rho \angf{F_1}$ for foams $F,F_1$ in Figure~\ref{fig:double_gamma_1}.
\end{cor}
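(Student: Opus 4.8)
The plan is to deduce this from Proposition~\ref{prop:gammas} and the generalization sketched just above it, using the pair of complementary arcs depicted in Figure~\ref{fig:double_gamma_1}. First I would identify the connected component $\Sigma$ of the thin surface $F_{12}$ that is involved, together with the two arcs $\gamma_1,\gamma_2$ such that $F_1$ is obtained from $F$ by the commuting modifications $m(\gamma_1,-)$ and $m(\gamma_2,-)$: one of them, say $\gamma_1$, is a proper arc in $F$ lying in the region of some color $i$, and $\gamma_2$, which need not be proper in $F$, becomes a proper arc in $m(\gamma_1,F)$ lying in the region of the opposite color $j$, exactly the situation of Figure~\ref{fig:double_gamma_0}. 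A coloring $c$ of $F$ then corresponds bijectively to a coloring $c_1$ of $F_1$.

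Next I would read off the sign $s(\gamma_1,\gamma_2)$ from the figure: the four double facets at the endpoints of $\gamma_1$ and $\gamma_2$ all stick out on the same side of $\Sigma$, so $s(\gamma_1,\gamma_2)=1$. With this in place, the computation is the same bookkeeping as in the proof of Proposition~\ref{prop:gammas}. Applying equation (\ref{eq:gamma_1}) to the modification along $\gamma_1$ produces a factor $p_{ij}$ per coloring; after this modification the picture near $\gamma_2$ has switched type (from the configuration of Figure~\ref{fig:gamma_move_2} to that of Figure~\ref{fig:gamma_move_3}, or vice versa, as in the remark preceding Proposition~\ref{prop:gammas}), so applying equation (\ref{eq:gamma_2}) to the modification along $\gamma_2$ contributes $-p_{ji}$. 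Hence $\angf{F,c} = -p_{ij}p_{ji}\,\angf{F_1,c_1} = \rho\,\angf{F_1,c_1}$ for every coloring $c$, and summing over all colorings gives $\angf{F} = \rho\,\angf{F_1}$.

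The main obstacle will be the orientation bookkeeping near $\gamma_2$: one must check that performing $m(\gamma_1,-)$ genuinely flips the local type of the diagram at the endpoints of $\gamma_2$, moving the double facets to the other side of $\Sigma$, so that the two sign factors are $p_{ij}$ and $-p_{ji}$ rather than $p_{ij}$ and $p_{ji}$ or the wrong signs; this is precisely what makes the product collapse to $-p_{12}p_{21}=\rho$ independently of which of $i,j$ equals $1$. Once the orientations are matched to the $s(\gamma_1,\gamma_2)=1$ case, no further input is needed and the statement is immediate. Alternatively, the same identity can be obtained by combining the canceling-double-disks relation of Proposition~\ref{prop_double_disks} with a neck-cutting move from Proposition~\ref{prop:nc}, which avoids the non-proper arc but requires recognizing the relevant pair of parallel double disks inside $F$.
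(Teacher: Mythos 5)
Your argument is essentially the paper's: the corollary is deduced from Proposition~\ref{prop:gammas} applied to a pair of complementary arcs with $s(\gamma_1,\gamma_2)=1$, and your sign bookkeeping (factors $p_{ij}$ and $-p_{ji}$, giving $-p_{12}p_{21}=\rho$ per coloring) is exactly the intended computation. The only divergence is that you route through the unproven generalization to a non-proper arc (the situation of Figure~\ref{fig:double_gamma_0}), whereas the additional double cap on $F$ in Figure~\ref{fig:double_gamma_1} is there precisely so that both arcs (shown in Figure~\ref{fig:double_gamma_2}) are already complementary \emph{proper} arcs and Proposition~\ref{prop:gammas} applies verbatim, with no appeal to the sketched extension.
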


\begin{figure}[h]
\begin{center}
\includegraphics[scale=0.70]{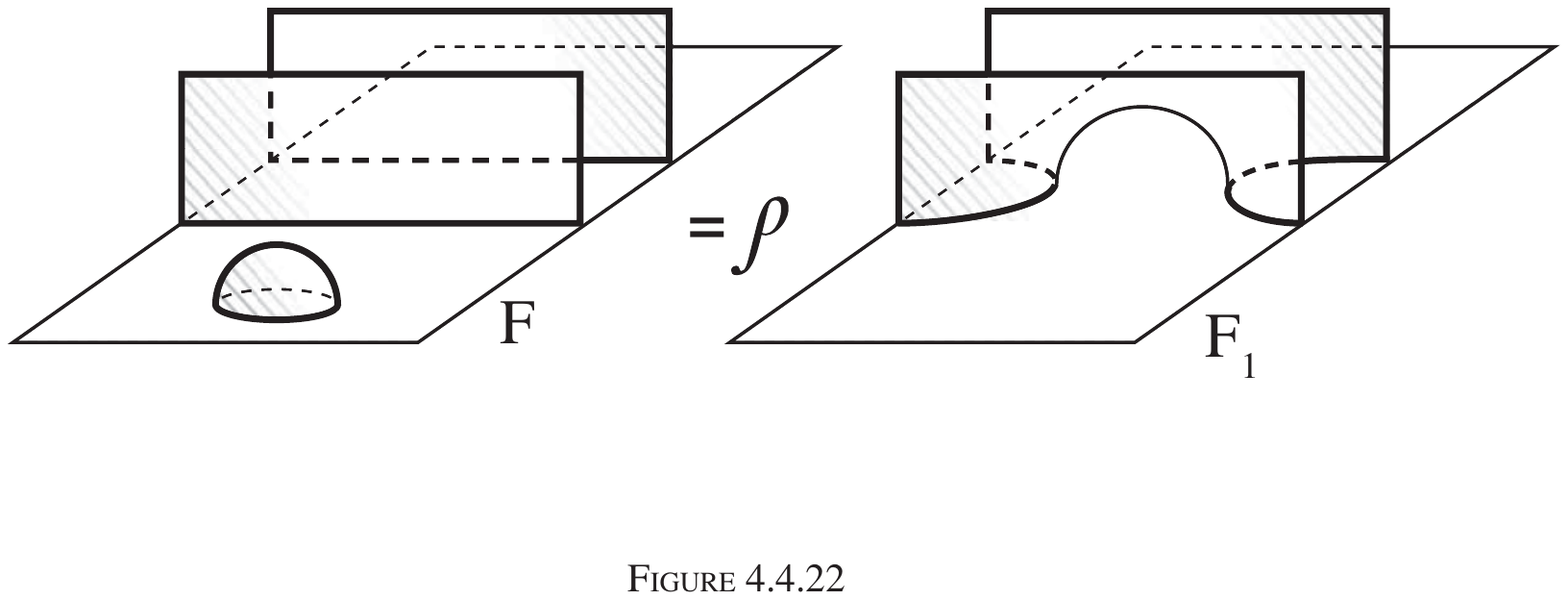}
\caption{\label{fig:double_gamma_1} Notice additional double cap on the foam $F$, used to create a pair of complementary proper arcs on it.}
\end{center}
\end{figure}

This follows from Proposition~\ref{prop:gammas} using the pair of arcs in Figure~\ref{fig:double_gamma_2} with $s(\gamma_1,\gamma_2)=1$. $\square$

\begin{figure}[h]
\begin{center}
\includegraphics[scale=0.70]{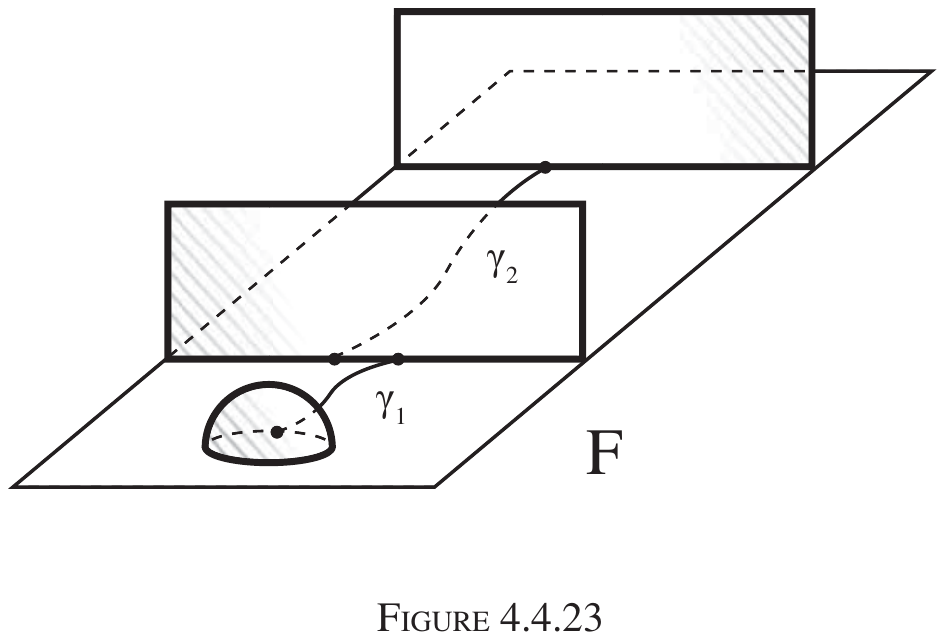}
\caption{\label{fig:double_gamma_2} Arcs $\gamma_1$,$\gamma_2$ are complementary proper with $s(\gamma_1,\gamma_2)=1$  }
\end{center}
\end{figure}

\begin{cor}
    Figure~\ref{fig:double_gamma_3} relation on foam evaluations holds.
\end{cor}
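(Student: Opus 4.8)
The plan is to deduce the relation displayed in Figure~\ref{fig:double_gamma_3} from Proposition~\ref{prop:gammas}, following the same strategy as in the proof of the preceding corollary. First I would locate the connected component $\Sigma$ of the thin surface $F_{12}$ of the left-hand foam $F$ on which the surgery is performed, and exhibit a pair of complementary proper arcs $\gamma_1,\gamma_2\subset\Sigma$ (adding, if necessary, an auxiliary double cap to $F$ to make one of the arcs proper, exactly as in passing from Figure~\ref{fig:double_gamma_1} to Figure~\ref{fig:double_gamma_2} and invoking the cap-augmented version of Proposition~\ref{prop:gammas} illustrated in Figure~\ref{fig:double_gamma_0}), chosen so that the iterated modification $F_2=m(\gamma_1,m(\gamma_2,F))$ is precisely the right-hand foam of the relation.

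Next I would read off the sign $s(\gamma_1,\gamma_2)\in\{+1,-1\}$: by definition this is $+1$ precisely when the four double facets of $F$ attached at the endpoints of $\gamma_1$ and $\gamma_2$ all point into the same connected component of $\R^3\setminus\Sigma$, and $-1$ otherwise. The orientations of the relevant singular edges are forced once one of them is chosen, as noted after Figure~\ref{fig:gamma_move_2}, so this is a finite check. Proposition~\ref{prop:gammas} then yields $\angf{F}=s(\gamma_1,\gamma_2)\,\rho\cdot\angf{F_2}$ directly. If an auxiliary double cap was introduced, I would finally remove it using $\angf{\SS^2_2}=\rho$ together with the double-disk cancellation of Proposition~\ref{prop_double_disks} (or, depending on the configuration, the disk-flipping relation of Proposition~\ref{prop_disk_flip}), absorbing the resulting factor of $\rho^{\pm1}$ and the accompanying sign into the coefficient, thereby obtaining the asserted relation.

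The main obstacle is purely one of sign- and $\rho$-bookkeeping: one must orient the singular edges at the endpoints of $\gamma_1,\gamma_2$ consistently, correctly determine $s(\gamma_1,\gamma_2)$ from the positions of the four double facets relative to $\Sigma$, and keep track of the $\rho^{\pm1}$ contributed by any auxiliary cap. No new evaluation computation is needed beyond what is already recorded in Proposition~\ref{prop:gammas} and its cap-augmented form, together with the evaluations of the double sphere and double disks established earlier.
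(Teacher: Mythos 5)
Your proposal is correct and follows essentially the same route as the paper: the paper derives the Figure~\ref{fig:double_gamma_3} relation immediately from the preceding corollary, which is itself exactly the application of Proposition~\ref{prop:gammas} to the pair of complementary proper arcs created by the added double cap (Figures~\ref{fig:double_gamma_1}--\ref{fig:double_gamma_2}) with $s(\gamma_1,\gamma_2)=1$, which is what you spell out. The only minor difference is that the bubble (double cap) remains part of the left-hand foam in the stated relation, so your contingency step of removing it at the end is not needed.
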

\begin{figure}[h]
\begin{center}
\includegraphics[scale=0.60]{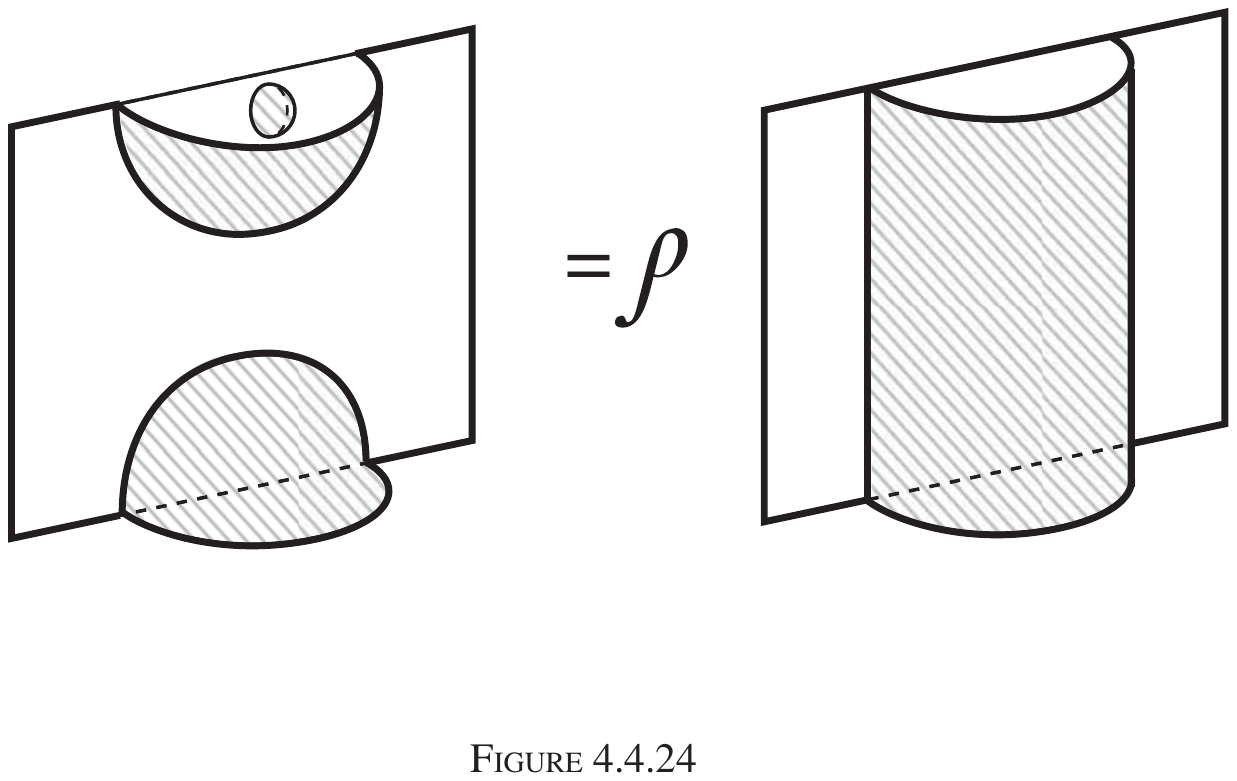}
\caption{\label{fig:double_gamma_3} The bubble on the thin plane on the LHS foam points its double facet toward us, being on the same side of the thin plane as portions of the other double facets shown on the LHS. }
\end{center}
\end{figure}
The corollary follows at one from the previous one. $\square$


\subsection{Prefoams and ground ring reduction}
\label{sec:prefoams}
$\quad $

To prove Proposition~\ref{prop:eval_in_subring} below, it's convenient to introduce the notion of $GL(2)$ prefoam and its evaluation. An (oriented) $GL(2)$ prefoam (or pre-foam) $F$ has the same local structure as a $GL(2)$ foam, but without an embedding into $\R^3$. It has oriented thin and double facets, with facets orientations
compatible along singular edges as in Figure~\ref{fig:gl2_orientations}. In particular, orientations of facets induce orientations of singular circles. Vice versa, an orientation of a singular circle in a connected component of a prefoam will induce orientation on all facets of that component.

Along each singular edge a preferred facet out of two adjacent thin facets is specified. One can encode this choice by an arrow (a normal direction) out of the singular edge and into the thin surface of the pre-foam (the union of its thin facets).
 A pre-foam may carry dots on its thin facets.

 A $GL(2)$ foam $F$ gives rise to a $GL(2)$ prefoam, also denoted $F$. Embedding of foam $F$ in $\R^3$ together with orientation of singular circles induces an order on the two thin facets attached to a given singular circle. Namely, look in the direction of the orientation on the circle and choose the first thin facet counterclockwise starting from the double facet attached to the circle. This is then the preferred facet for the singular circle in the underlying pre-foam $F$.

Coloring $c$ of a pre-foam is defined in the same way as for foams. For each coloring $c$ surfaces $F_1(c)$ and $F_2(c)$ inherit orientations from the facets of $F$ they contain.
Surface $F_{12}(c)$ is orientable as well, say with orientation matching that of thin facets of $F_{12}(c)$ colored $1$ and opposite to that of thin facets colored $2$.

Orientation requirements for facets ensure that each connected component of the thin surface $F_{12}$ of a prefoam $F$ will admit two checkerboard colorings, so that a prefoam $F$ will admit $2^k$ colorings, where $k$ is the number of connected components of $F$.

Given a coloring $c$ of $F$, the preferred thin facet at a singular circle $u$ allows to label the circle \emph{positive} or \emph{negative}, as in Figure~\ref{fig:theta_signs}. Namely, if the preferred facet is colored $1$, the circle is \emph{positive}. If the preferred facet is colored $2$, the circle is negative.

Define $\theta^+(c)=\theta^+_{12}(c)$ as the number of positive singular circles for the coloring $c$.

Thus, in a pre-foam $F$, each singular circle $u$ comes with both an orientation (induced from the orientation of attached facets and, vice versa, determining them) and a choice of preferred thin facet (normal direction to the thin surface $F_{12}$) along $u$. The evaluation of $F$, though, will only depend on the choice of preferred facet at each singular circle, not on its orientation.

Unlike the foam case, in a $GL(2)$ pre-foam we can reverse the thin normal direction (reverse the choice of preferred thin facet) at any subset of its singular circles without making any other changes, such as reversing orientations of facets or singular circles, changing the embedding into $\R^3$, etc.
In a foam, the analogous operation of reversing the cyclic order of facets at a single circle via a simple modification of the embedding is possible only sometimes, see Figure~\ref{fig:ddisk_flip}  for an example.

Recall the chain of inclusions of rings $R\subset \wR
\subset R' \subset R''$ defined in formulas (\ref{eq:ring_inclusions})-(\ref{eq:ring_inclusions_2}) and (\ref{eq:def_ground_ring}).


Now, to a coloring $c$ of a prefoam $F$ we assign an element $\angf{F,c}\in R''$ using the  formula (\ref{eq:def_eval_gl2}).  Furthermore,
 define $\angf{F}$ via the formula (\ref{eq:def_eval_gl3}).

\begin{prop}\label{prop:prefoam_eval_in_subring_1}
   Evaluation $\angf{F}$ of any $GL(2)$ prefoam $F$ belongs to the subring  $\pseries{E_1,E_2}$ of $R''$.
\end{prop}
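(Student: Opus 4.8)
The plan is to observe that Proposition~\ref{prop:prefoam_eval_in_subring_1} is simply the prefoam analogue of Theorem~\ref{thm:no_denoms_GL2}, and that the proof of the latter given above is entirely combinatorial: it never uses the embedding $F\hookrightarrow\R^3$, only the local structure along singular circles together with, for each coloring $c$, the choice of a preferred thin facet at every singular circle (equivalently, the labeling of each circle as positive or negative). For a prefoam this data is part of the definition, and the quantities $\theta^+_{12}(c)$, $\chi_i(c)$, $d_i(c)$, $\chi(F_{12})$, as well as the decomposition of the thin surface $F_{12}$ into connected components, all make sense verbatim. So I would reprise the argument of Theorem~\ref{thm:no_denoms_GL2} line by line.

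First I would record that $\angf{F,c}$, defined by~(\ref{eq:def_eval_gl2}), is a power series in $x_1,x_2$ over $\kk$ divided by a power of $x_1-x_2$, hence lies in $R''$, and that the $S_2$-action on colorings that swaps the colors $1,2$ satisfies $\sigma(\angf{F,c})=\angf{F,\sigma(c)}$; therefore $\angf{F}=\sum_c\angf{F,c}$ lies in $(R'')^{S_2}$. The only possible denominator is a power of $x_1-x_2$, and in the product over the connected components $\Sigma_1,\dots,\Sigma_m$ of $F_{12}$ only the genus-zero components $\Sigma\cong\SS^2$ contribute, each a single factor $x_1-x_2$. Then I would fix such a $\Sigma$ and pair up colorings $c$ and $c_1=(c,\Sigma)$ related by the Kempe color-swap on $\Sigma$, performing the same bookkeeping as before: the dots on $\Sigma$ contribute $x_1^{t_1}x_2^{t_2}u$ for $c$ versus $x_1^{t_2}x_2^{t_1}u$ for $c_1$; the $p$-factors give $p_{12}^{\ell}u'$ versus $p_{21}^{\ell}u'$ with $2\ell=\wchi_1(\Sigma)-\wchi_2(\Sigma)$; and the sign ratio is $(-1)^{r+\chi(\Sigma)/2-\wchi_2(\Sigma)}$, which equals $-1$ when $\Sigma\cong\SS^2$ by the parity identity $r\equiv\wchi_2(\Sigma)\pmod 2$. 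Summing the two terms yields
\[
\angf{F,c}+\angf{F,c_1}=(-1)^{s'(F,c)}\bigl(x_1^{t_1}x_2^{t_2}p_{12}^{\ell}-x_1^{t_2}x_2^{t_1}p_{21}^{\ell}\bigr)\,u\,u'\,(x_1-x_2)^{-\chi_{12}(F)/2},
\]
and the parenthesized factor is antisymmetric under $\sigma$ (which also interchanges $p_{12}$ and $p_{21}$), hence divisible by $x_1-x_2$, canceling the offending denominator coming from that $\SS^2$-component. Carrying this out simultaneously for all genus-zero components of $F_{12}$ shows $\angf{F}\in\kk\llbracket x_1,x_2\rrbracket$, and the $S_2$-invariance noted above upgrades this to $\angf{F}\in\kk\llbracket x_1,x_2\rrbracket^{S_2}=\kk\llbracket E_1,E_2\rrbracket=\pseries{E_1,E_2}$, as claimed.

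The one point that deserves care — and the closest thing to an obstacle — is that the parity identity $r\equiv\wchi_2(\Sigma)\pmod 2$ was proved above by ``removing an innermost singular circle'', a phrase that presupposes an embedding. For an abstract prefoam I would instead argue intrinsically: cutting the genus-zero surface $\Sigma$ along its $r$ disjoint singular circles produces $r+1$ planar pieces whose dual incidence graph is a tree, and deleting a leaf of this tree (a disk region bounded by a single circle) decreases both $r$ and $\wchi_2(\Sigma)$ by $1$, so the induction goes through, with base case $r=0$ where $\Sigma$ is monochromatic. Equivalently, one may simply observe that $r\equiv\wchi_2(\Sigma)\pmod 2$ depends only on the abstract combinatorial type of the triple $(\Sigma,\text{singular circles},c)$, which coincides with that of any $GL(2)$ foam realizing it, so the identity is inherited directly from the foam case. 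With this remark in place, the rest of the argument is identical to the proof of Theorem~\ref{thm:no_denoms_GL2}, and no genuinely new input is required.
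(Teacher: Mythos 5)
Your proposal is correct and follows essentially the same route as the paper, which simply observes that the proof of Theorem~\ref{thm:no_denoms_GL2} is purely combinatorial and ``extends to prefoams without change.'' Your extra care with the parity identity $r\equiv\wchi_2(\Sigma)\ (\mathrm{mod}\ 2)$ --- replacing ``innermost circle'' by an intrinsic leaf-of-the-dual-tree induction (where, to be precise, merging the leaf disk into its neighbour changes $\wchi_2(\Sigma)$ by $\pm 1$, not always $-1$) --- is a welcome clarification of the one step whose original phrasing nominally invoked the embedding, but it does not change the argument.
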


\emph{Proof:} Our proof of this result for foams, Theorem~\ref{thm:no_denoms_GL2}, extends to prefoams without change. $\square$

\begin{prop}\label{prop:prefoam_eval_in_subring_2}
   Evaluation $\angf{F}$ of any $GL(2)$ prefoam $F$ belongs to the subring $R$ of $\wR=\pseries{E_1,E_2}.$
\end{prop}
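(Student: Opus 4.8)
The plan is to combine two facts already in hand: the inclusion $\angf{F}\in\wR$ from Proposition~\ref{prop:prefoam_eval_in_subring_1}, and the description of $R$ from the corollary above as the $\Z[E_1,E_2]$-span inside $\wR$ of the basis $\mathscr{B}=\{\rho_1^{n_1}\rho_0^{n_2}\rho^{n_3}\}$, with $\rho$ invertible. I would show that the skein relations of Section~\ref{sec:skein_relations} reduce the evaluation of an arbitrary $GL(2)$ prefoam to evaluations of a short list of \emph{elementary} prefoams, each of which is computed in Section~\ref{sec:examples} to lie in $R$. Since every skein move either multiplies $\angf{F}$ by an invertible element of $R$ or rewrites $\angf{F}$ as an $R$-linear combination of evaluations of strictly simpler prefoams, and since $\angf{\,\cdot\,}$ is multiplicative under disjoint union, it follows that $\angf{F}\in R$. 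All relations of Section~\ref{sec:skein_relations} are combinatorial statements about a neighbourhood in the (pre)foam, so they hold verbatim for prefoams; indeed they are \emph{more} freely available, because in a prefoam one may reverse the preferred thin facet at any single singular circle at the cost of a global sign, exactly as in Proposition~\ref{prop:reverse_orient}.

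Concretely I would carry out the reduction in the following order. First, apply the neck-cutting relation (Proposition~\ref{prop:nc}) along non-separating simple closed curves in thin facets to lower the genus of each component of the thin surface $F_{12}$ to zero; the coefficients introduced are powers of the invertible $\rho$ and polynomials in $E_1$. Next, apply the double-facet neck-cutting relation (Proposition~\ref{prop_double_nc}, coefficient $\rho$) and the canceling-double-disks relation (Proposition~\ref{prop_double_disks}, coefficient $\rho^{\pm1}$) to cut the handles and parallel tubes of the double surface $F_{1\cap 2}$, leaving each of its components either a disk capping one singular circle or a closed sphere. Then use the $\gamma$-modification relations (Proposition~\ref{prop:gammas} and its corollaries — whose only ingredient beyond the local moves is the sign $s(\gamma_1,\gamma_2)$, which is defined purely from the normal directions along singular circles and hence makes sense for prefoams), together with reversals of preferred facets, to disentangle any thin-surface component carrying several singular circles. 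Finally, use the dot-reduction relation $x_i^2=E_1x_i-E_2$ to bound the number of dots per facet. Each step strictly decreases the lexicographic pair consisting of (total genus of $F_{12}\cup F_{1\cap 2}$ plus the number of singular circles; total number of dots), so the procedure terminates.

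The prefoams that survive are disjoint unions of thin spheres $\SS^2_{1,n}$, double spheres $\SS^2_{2}$, and theta-prefoams $\Theta_{n_1,n_2}$. Their evaluations are known: by Example~1 and the recursion (\ref{eq:2sphere-inductive}), $\angf{\SS^2_{1,n}}=\rho_n\in\Z[E_1,E_2]\rho_0+\Z[E_1,E_2]\rho_1\subset R$; by Example~4, $\angf{\SS^2_{2}}=\rho\in R$; and by Example~6, specifically (\ref{eq:theta_foam_value_2}), $\angf{\Theta_{n_1,n_2}}=-E_2^{n_2}h_{n_1-n_2-1}(x_1,x_2)\,\rho$, which lies in $R$ because $h_{k}$ is a polynomial in $E_1,E_2$. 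Multiplicativity under disjoint union puts the evaluation of every terminal prefoam in $R$, and unwinding the reduction gives $\angf{F}\in R$.

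The step I expect to be the real obstacle is the disentangling of a thin-surface component carrying several singular circles: one must show such a configuration can always be brought, via tube-cutting, double-facet neck-cutting, $\gamma$-modifications and reversals of preferred facets, to a disjoint union of spheres and theta-prefoams, and one must track the resulting signs (the interaction of $\theta^+(c)$, the normal-direction reversals, and the $s(\gamma_1,\gamma_2)$) carefully enough to see that the bookkeeping closes up. The freedom, available only for prefoams, to flip the preferred facet at a single singular circle independently of everything else is precisely what makes this manageable, and is the reason prefoams are introduced before this proposition.
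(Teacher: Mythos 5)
Your overall strategy---extend the skein relations to prefoams and reduce, by induction on complexity, to elementary evaluations already known to lie in $R$---is the same as the paper's, but your specific reduction scheme has a genuine hole exactly where you flag it: the ``disentangling'' of a thin-surface component carrying several singular circles via $\gamma$-modifications is asserted, not proved. As proposed it is not clear it can be carried out. Proposition~\ref{prop:gammas} only applies to a \emph{pair} of complementary proper arcs (a single $\gamma$-move holds coloring by coloring with coefficient $p_{ij}$, which is not in $R$), such pairs need not exist in a given configuration without first creating extra double caps as in Figure~\ref{fig:double_gamma_1}, and a single $\gamma$-modification can \emph{increase} the number of singular circles (the paper notes it changes it by $\pm 1$, depending on whether the two singular edges lie on the same circle). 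So your lexicographic measure (genus plus number of singular circles; dots) is not visibly decreased by these moves and termination at thin spheres, double spheres and theta-prefoams is not established; since you yourself identify this as ``the real obstacle,'' the proof is incomplete at its key step.

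The gap is also unnecessary, and this is where the paper's proof is simpler than your plan. After your double-surface stage (double-facet neck-cutting, Proposition~\ref{prop_double_nc}, and cancelling double disks, Proposition~\ref{prop_double_disks}, with coefficients powers of the invertible $\rho$, plus Examples 4--5 of Section~\ref{sec:examples} for closed double components), every remaining double facet is a disk capping a single singular circle. At that point one applies the \emph{singular} neck-cutting relation of Figure~\ref{fig:sneck_cut_1} (and its reversed-orientation variant) at each singular circle: it removes the double cap and the circle outright, rewriting the evaluation as a difference, with coefficients $\pm 1$, of prefoams having strictly fewer singular circles and extra dots. Iterating eliminates all singular circles and leaves disjoint unions of closed dotted thin surfaces, whose evaluations lie in $R$ by Examples 1--3 together with the recursion (\ref{eq:2sphere-inductive}) (or after thin neck-cutting, Proposition~\ref{prop:nc}, if you prefer to reduce genus first). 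No theta-prefoams, no $\gamma$-moves, and no sign bookkeeping beyond the already-proved relations are needed; the only point requiring care---which you do address correctly---is that the relations of Section~\ref{sec:skein_relations} and the evaluations of Section~\ref{sec:examples} depend only on the prefoam structure, not on an embedding in $\R^3$.
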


\emph{Proof:} Evaluations of surfaces and theta-foams, with dots, in Section~\ref{sec:examples} depend only on the pre-foam structure, not on an embedding in $\R^3$. Skein relations described in Section~\ref{sec:skein_relations} extend, with suitable care, to pre-foams. In Figure~\ref{fig:sneck_cut_1} relation, a pre-foam on the LHS induces pre-foam structures on terms on the right, with orientations of facets in the RHS coming from those of the LHS. With this convention, Figure~\ref{fig:sneck_cut_1} relation holds for pre-foams, where in the pre-foam $F$ on the LHS one also remembers the cyclic order of the facets

Relations in Figures~\ref{fig:sneck_cut_4}, \ref{fig:disk_cancel} extend likewise. In Figure~\ref{fig:neck_cut_1} choice of orientations of all foams (respectively, pre-foams) is encoded in the orientation of the singular circle on the RHS (equivalently, of the cyclic order of the 3 facets at the circle).

Analogue of Proposition~\ref{prop:reverse_orient} for prefoams is that $\angf{F'}=(-1)^k\angf{F}$, where $F'$ is obtained from $F$ by reversing the cyclic order of facets at some $k$ singular circles of $F$.

Figure~\ref{fig:dot_reduction} relation obviously extends to prefoams. In the double facet neck-cutting relation in Figure~\ref{fig:neck_cutting_d} relation prefoam $F_1$ on the right induces an orientation on the prefoam $F$ on the left. With this convention, Figure~\ref{fig:neck_cutting_d} relation extends to pr-foams. Dot migration relations in Figures~\ref{fig:dot_migr_1} and \ref{fig:dot_migr_2} as well as the tube-cutting relation  in Figure~\ref{fig:digon_decomp_1} extend to prefoams.

Modification $m(\gamma,F)$ in Figure~\ref{fig:gamma_move_2} can be done to a prefoam $F$, assuming compatible orientations and cyclic orders along the two singular edges of $F$. Proposition~\ref{prop:gammas} will hold for prefoams as well, again assuming compatibility of the orientations and cyclic orders along the three singular edges shown in Figure~\ref{fig:double_gamma_5}.

Starting with a prefoam $F$, look at the thin surface $F_{12}$. It may have several connected components, some of which are connected in $F$ by double facets.
Applying the double neck-cutting relation in Figure~\ref{fig:neck_cutting_d}, using multiplicativity of $\angf{F}$ on the disjoint union of prefoams, and the evaluation of closed double surfaces (Examples 4, 5 in Section~\ref{sec:examples}), we can reduce the evaluation to the case when $F_{12}$ is connected and each double facet is a disk.
Applying the singular neck-cutting relation in Figure~\ref{fig:sneck_cut_1} along each singular circle of $F$, the evaluation reduces to that of a closed thin surface, possibly with dots, see Examples 1-3 in Section~\ref{sec:examples}. All coefficients in the  skein relations and in the evaluation of closed surfaces belong to the ring $R$, implying the proposition.
$\square$

The proposition implies the next result.

\begin{prop}\label{prop:eval_in_subring}
   Evaluation $\angf{F}$ of any closed foam $F$ coincides with the evaluation of the associated prefoam. In particular, it belongs to the subring $R$ of $\wR=\pseries{E_1,E_2}.$
\end{prop}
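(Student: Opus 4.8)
\emph{Proof proposal.} The plan is to observe that the right-hand side of the foam evaluation formula (\ref{eq:def_eval_gl2}) is computed entirely from data intrinsic to the underlying prefoam, so that the two evaluations agree coloring by coloring; membership in $R$ then follows from Proposition~\ref{prop:prefoam_eval_in_subring_2}. First I would recall that for a coloring $c$ of $F$ we have, by (\ref{eq:def_eval_gl2}),
\[
\angf{F,c} \ = \ (-1)^{\theta^+_{12}(c)+\chi_2(c)/2}\ \frac{x_1^{d_1(c)}x_2^{d_2(c)}\,p_{12}^{\chi_1(c)/2}\,p_{21}^{\chi_2(c)/2}}{(x_1-x_2)^{\chi(F_{12})/2}} .
\]
The Euler characteristics $\chi_1(c),\chi_2(c),\chi(F_{12})$ of the colored surfaces and of the thin surface, the dot counts $d_1(c),d_2(c)$, and the power series $p_{12},p_{21}$ depend only on the combinatorial CW-complex $F$ together with its facet orientations, its coloring, and its dot placement, i.e.\ only on the associated prefoam. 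Likewise the set of colorings of $F$ as a foam coincides with the set of colorings of $F$ as a prefoam: in both cases this is the set of $2^k$ checkerboard colorings of the connected components of $F_{12}$. Hence the only term in the formula that a priori refers to the embedding $F\subset\R^3$ is the sign exponent $\theta^+_{12}(c)$, the number of positive singular circles.

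The crux of the argument is therefore to verify that, for each coloring $c$ and each singular circle $u$ of $F$, the circle $u$ is positive in the foam sense (the left-hand rule of Figure~\ref{fig:theta_signs}, as in Section~\ref{sec:deformed}) if and only if it is positive in the prefoam sense (its preferred thin facet is colored $1$). But this is precisely the way the preferred facet at $u$ in the associated prefoam was defined: looking in the direction of the orientation of $u$, the preferred facet is the first thin facet encountered counterclockwise starting from the attached double facet, and unwinding the orientation conventions on facets and on singular circles shows that this facet carries color $1$ exactly when the left-hand rule declares $u$ positive. Consequently $\theta^+_{12}(c)$ computed for $F$ as a foam equals $\theta^+_{12}(c)$ computed for $F$ as a prefoam, so every term $\angf{F,c}$ agrees in the two settings. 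Summing over all colorings $c$ via (\ref{eq:def_eval_gl3}) gives that $\angf{F}$ computed as a foam equals $\angf{F}$ computed for the associated prefoam, and Proposition~\ref{prop:prefoam_eval_in_subring_2} then yields $\angf{F}\in R\subset\wR=\pseries{E_1,E_2}$.

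The only genuinely delicate point is the orientation bookkeeping in the second paragraph: one must carefully match the two conventions for declaring a singular circle ``positive'' — the embedded left-hand rule against the preferred-facet rule — keeping track of how the orientations of the three adjacent facets, the orientation of $u$, and the coloring interact. This is routine, but it is the step where an error would most easily hide; everything else is a termwise identification of the ingredients of (\ref{eq:def_eval_gl2}).
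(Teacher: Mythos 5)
Your proposal is correct and follows essentially the same route as the paper, whose proof is the one-line observation that the evaluation formulas (\ref{eq:def_eval_gl2})--(\ref{eq:def_eval_gl3}) use only prefoam data, so one may pass to the associated prefoam and invoke Proposition~\ref{prop:prefoam_eval_in_subring_2}. Your extra check that the sign $\theta^+_{12}(c)$ agrees in the two settings is exactly the point built into the definition of the preferred facet of the associated prefoam, so it is a welcome (and correct) elaboration rather than a different argument.
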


\emph{Proof:} Foam $F$ lives in $\R^3$, but to evaluate it using the formulas  (\ref{eq:def_eval_gl2}) and (\ref{eq:def_eval_gl3}) we can pass to the associated prefoam and evaluate it instead.
$\square$

Consequently, evaluations of all closed foams belong to the subring $R$ of $\wR$. It can then be chosen as the ground ring of the theory instead of $\wR$, in the $GL(2)$ case.


\subsection{$GL(2)$ webs, their state spaces, and direct sum decompositions}\label{subsec:direct_sum_dec}

$\quad$

We define $GL(2)$ closed webs $\Gamma$ as generic intersections of $GL(2)$ foams with planes $\R^2$ in $\R^3$. A $GL(2)$ web $\Gamma$ is a plane trivalent oriented graph with \emph{thin} and \emph{thick} (or \emph{double}) edges and vertices as in Figure~\ref{fig:web}.

\begin{figure}[h]
\begin{center}
\includegraphics[scale=0.40]{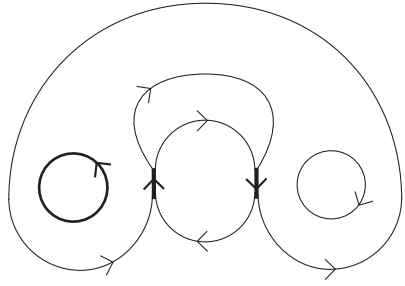}
\caption{\label{fig:web} A $GL(2)$ web $\Gamma$ with  two thick edges, four thin edges, one thin and one thick circle.}
\end{center}
\end{figure}

 Vertices of $GL(2)$ foams may be of two types. In one type, a pair of oriented thin edges flows into the vertex and a double edge flows out. In the other type, a double edge flows in and a pair of oriented thin edges flows out of the vertex. The web in Figure~\ref{fig:web} has two vertices of each type.

Single and double closed loops are allowed, as well as the empty web.
The union of thin edges of $\Gamma$ is called the \emph{thin one-manifold} of $\Gamma$, or the \emph{thin cycles} of $\Gamma$ and denoted $\Gamma_{(1)}$. For $\Gamma$ in Figure~\ref{fig:web}, the thin one-manifold $\Gamma_{(1)}$ has three connected components.

One defines $GL(2)$ foams with boundary a $GL(2)$ web $\Gamma$ in the usual way. We use Figure~\ref{fig:gl2_orientations} as the convention for the induced orientation of the web that's the boundary of a $GL(2)$ foam. Note that $GL(2)$ foams $F$ in
$\R^2\times [0,1]$ with the boundary $(-\Gamma_0)\sqcup \Gamma_1$, where $\Gamma_i = F
\cap (\R^2\times \{i\})$, $i=0,1$,  may be viewed as cobordisms between $\Gamma_0$ and $\Gamma_1$.

Define $\Foamt$ as the  category  where objects are $GL(2)$ webs $\Gamma$ and morphisms from $\Gamma_0$ to $\Gamma_1$ are
isotopy classes (rel boundary) of $GL(2)$ foams with the boundary $(-\Gamma_0)\sqcup \Gamma_1$. Composition is the concatenation of foams.

Define the degree of a foam $F$, not necessarily closed, as
\begin{equation}\label{eq:foam_degree}
\deg(F) = - \chi(F_{12}) + 2|d(F)|,
\end{equation}
where $d(F)$ is the number of dots of $F$.
Thin surface $F_{12}$ of $F$ is well-defined for foams with boundary. The boundary of $F_{12}$ is the union of thin circles on the boundary of $F$.

For closed foams $F$, $\deg(F)$ equals the degree of $\angf{F}$, viewed as a homogeneous element of either $\wR$ or its subring $R$.
Degree of a foam is additive under composition of foams.

We define the state space $\angf{\Gamma}$ of a $GL(2)$ web $\Gamma$ using the universal construction as in~\cite{BHMV,Kh2}.

First, let $\Free(\Gamma)$ be the free graded $R$-module with a basis $\{[F]\}_F$, over all
foams $F$ from the empty web to $\Gamma$. The degree of the generator $[F]$ is defined to be $\deg(F)$. Define a bilinear form on $\Free(\Gamma)$ by
\begin{equation} \label{eq:bilin_form}
([F],[G]) = \angf{w(G)F},
\end{equation}
where $w(G)$ is the reflection of $G$ in the horizontal plane together with the orientation reversal of all facets of $G$ to make $F$ and $w(G)$ composable along $F$. The foam $w(G)F$ is closed and can be evaluated to an element of $R$.
Given a closed foam $H$, reflecting it about a plane into a foam $H'$ may add sign to the evaluation, $\angf{H'} = (-1)^k \angf{H}$, where $k$ is the number of singular circles of $H$.
To get rid of the sign, reverse orientation of all facets of $H'$ to get a foam $w(H)$ with $\angf{w(H)} = \angf{H}$.
A similar argument works for non-closed foams. Consequently, the bilinear form (\ref{eq:bilin_form}) is symmetric.

Define the state space $ \angf{\Gamma} $ as the quotient of $\Free(\Gamma)$ by the kernel of the bilinear form $(,)$.
The state space $\angf{\Gamma}$ is a graded $R$-module, via the degree formula (\ref{eq:foam_degree}). As usual in the universal construction, a foam $F$ with boundary $(-\Gamma_0)\sqcup \Gamma_1$ induces a homogeneous $R$-module map
\[ \angf{F} \ : \ \angf{\Gamma_0} \lra \angf{\Gamma_1}
\]
of degree $\deg(F)$ taking an element $\angf{G}\in \angf{\Gamma_0}$ associated to a foam $G$ with boundary $\Gamma_0$ to the element $\angf{FG}$ associated to the foam
$FG$ with boundary $\Gamma_1$. These maps assemble into a functor from the category of $GL(2)$ foams to the category of graded $R$-modules and homogeneous $R$-module maps. The results below imply that the functor is monoidal.

The state space of the empty web is naturally isomorphic to the free rank one module over $R$ with a generator in degree zero, $\angf{\emptyset}\cong R$.

Let $\Gamma'$ denote the web $\Gamma$ with an innermost thin circle (with one of the two orientations) added in a region of $\Gamma$. Thus, $\Gamma'$ depends on the choice of a region of $\Gamma$ and the  orientation of the circle.

\begin{prop} There are natural isomorphisms of graded $R$-modules
\[ \angf{\Gamma'} \ \cong \ \angf{\Gamma}\{1\} \oplus \angf{\Gamma}\{-1\},
\]
for $\Gamma,\Gamma'$ as above and $\{m\}$ the grading shift up by $m$.
\end{prop}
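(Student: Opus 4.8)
The plan is to write down explicit cup and cap foams supported near the new circle $O$ and to reduce the whole statement to the invertibility of a single $2\times 2$ matrix over $R$, namely the one underlying identity (\ref{eq:express_rho}). Write $\Gamma' = \Gamma \sqcup O$, with $O$ the added thin circle. For $i\in\{0,1\}$ let $\iota_i\colon \Gamma \to \Gamma'$ be the foam $\mathrm{id}_\Gamma \sqcup D_i$, where $D_i$ is a thin disk carrying $i$ dots whose boundary is $O$ (a ``birth'' of $O$, placed in the planar region of $\Gamma$ in which $O$ sits), and dually let $\epsilon_i\colon \Gamma' \to \Gamma$ be $\mathrm{id}_\Gamma \sqcup D_i$ with $D_i$ now a ``death'' disk for $O$ carrying $i$ dots. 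By the degree formula (\ref{eq:foam_degree}) the cylinder $\mathrm{id}_\Gamma$ contributes $0$ and a disk with $i$ dots contributes $-1+2i$, so $\deg\iota_0 = \deg\epsilon_0 = -1$ and $\deg\iota_1 = \deg\epsilon_1 = 1$. Hence
\[
\Phi \ := \ (\iota_1,\iota_0)\ \colon \ \angf{\Gamma}\{1\}\oplus\angf{\Gamma}\{-1\} \ \longrightarrow \ \angf{\Gamma'}
\]
is a homogeneous map of degree $0$, and the goal is to show it is an isomorphism.

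For the composite $\epsilon_j\iota_i\colon\Gamma\to\Gamma$, gluing the birth disk $D_i$ to the death disk $D_j$ along $O$ produces $\mathrm{id}_\Gamma \sqcup \SS^2_{1,i+j}$. Since the closed-foam evaluation $F\mapsto\angf{F}$ is multiplicative under disjoint union, the induced operator on $\angf{\Gamma}$ is multiplication by $\angf{\SS^2_{1,i+j}} = \rho_{i+j}$ (see (\ref{eq:def_rho_n}), and recall $\rho_2 = E_1\rho_1 - E_2\rho_0$ by (\ref{eq:2sphere-inductive})). Therefore, if $\Phi(a_1,a_0)=0$, applying $\epsilon_0$ and $\epsilon_1$ yields the system $\rho_0 a_0 + \rho_1 a_1 = 0$ and $\rho_1 a_0 + \rho_2 a_1 = 0$, whose determinant is $\rho_0\rho_2-\rho_1^2 = -(\rho_1^2 - E_1\rho_1\rho_0 + E_2\rho_0^2) = \rho$, an invertible element of $R$ by (\ref{eq:express_rho}). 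Hence $a_0=a_1=0$, so $\Phi$ is injective.

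For surjectivity -- the main point -- take any foam $G\colon\emptyset\to\Gamma'$; the classes $[G]$ generate $\angf{\Gamma'}$ over $R$. A collar of the top boundary circle $O$ is a thin annulus $O\times[1-\varepsilon,1]$ inside $G$, carrying no dots and disjoint from the singular set of $G$ because $O$ is innermost in a planar region. Applying the neck-cutting relation of Proposition~\ref{prop:nc} to this annulus rewrites $[G]$ as an $R$-linear combination of classes of foams in which $O$ has been capped off from above by a disk with $0$ or $1$ dots, while the remainder of $G$ has become a foam $H\colon\emptyset\to\Gamma$, the two pieces being disjoint; each such term equals $\iota_0(H)$ or $\iota_1(H)$ up to a unit of $R$. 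Thus $[G]\in\mathrm{Im}(\Phi)$, so $\Phi$ is surjective, hence an isomorphism.

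Naturality with respect to a foam cobordism $F\colon\Gamma_0\to\Gamma_1$ extended trivially over $O$ is immediate: $\iota_i$ and $\epsilon_i$ are built from foams supported in a small ball disjoint from $F$, so they commute with $\angf{F}$. The step I expect to demand the most care is the surjectivity argument: one must check that the neck-cutting relation genuinely applies inside the collar of a \emph{boundary} circle (that this collar is indeed a dot-free thin annulus and that the cut produces honestly disjoint pieces, one of which is a legitimate foam onto $\Gamma$), and keep track of the $R$-coefficients it produces. Everything else -- the degree bookkeeping and the $2\times 2$ determinant -- is already in hand via (\ref{eq:express_rho}).
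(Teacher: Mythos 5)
Your argument is correct, but it is organized differently from the paper's. The paper exhibits explicit mutually inverse maps (Figure~\ref{fig:dir_sum_3}): the cup and cap foams there carry singular circles and dots chosen so that the composites $\Gamma\to\Gamma'\to\Gamma$ become $\Theta$-foams, whose evaluations from Section~\ref{sec:examples} are $0$ or $\pm\rho$, and so that the composite $\Gamma'\to\Gamma\to\Gamma'$ sums to the identity by the neck-cutting relation of Proposition~\ref{prop:nc}; this gives the splitting idempotents explicitly, which is convenient later. You instead use only the plain dotted birth foams $\iota_0,\iota_1$, prove injectivity from the Gram-type matrix $\bigl(\begin{smallmatrix}\rho_0&\rho_1\\ \rho_1&\rho_2\end{smallmatrix}\bigr)$ whose determinant $\rho_0\rho_2-\rho_1^2=\rho$ is a unit by (\ref{eq:2sphere-inductive}) and (\ref{eq:express_rho}) (together with multiplicativity of $\angf{\cdot}$ under disjoint union, which the paper also uses), and prove surjectivity by neck-cutting in a collar of $O$; this buys you a proof that requires no guessing of the correct decorated cups, at the cost of producing the inverse only implicitly through the adjugate. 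The one step you rightly flag does need a little more care than ``up to a unit'': the right-hand side of the neck-cutting relation has singular disks (thin disks with an attached double membrane) on one side of the cut, so you should use whichever of the two versions (Figure~\ref{fig:neck_cut_1} versus Figure~\ref{fig:neck_cut_2}, according to the orientation of $O$) places the singular disks on the $\Gamma$-side; then each resulting term is literally $\pm\rho^{-1}\iota_i([H'])$ for some foam $H'\colon\emptyset\to\Gamma$ (the singular cup being absorbed into $H'$), and surjectivity follows as you state. Also note the collar annulus you cut should be taken slightly below the boundary so the surgery happens in the interior; this is cosmetic.
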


\begin{figure}[h]
\begin{center}
\includegraphics[scale=0.50]{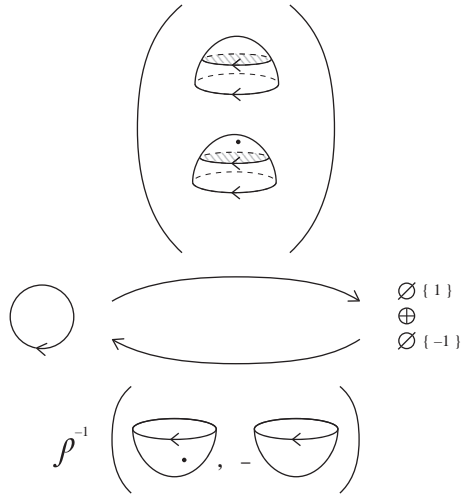}
\caption{\label{fig:dir_sum_3}  Direct sum decomposition for an innermost thin circle with the clockwise orientation. For the opposite orientation of the circle, reverse the orientation of the singular circle as well and add an overall minus sign to one of the two maps. }
\end{center}
\end{figure}

\emph{Proof:} Foam cobordisms that deliver this direct sum decomposition are shown in Figure~\ref{fig:dir_sum_3}. The composition of the maps in either order is the identity, as follows from $\Theta$-foam evaluations in section~\ref{sec:examples} and neck-cutting relation in Figure~\ref{fig:neck_cut_1}. $\square$

\begin{prop} The saddle cobordism in a thick facet induces a grading-preserving isomorphism between the state spaces of its two boundary webs, see Figure~\ref{fig:dir_sum_1}. The inverse isomorphism is given by the adjoint saddle cobordism scaled by $\rho^{-1}$.
\end{prop}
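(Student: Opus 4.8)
The plan is to produce an explicit two-sided inverse for the saddle map, up to the invertible scalar $\rho$, and to check that everything happens in degree zero.

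Write $S\colon \angf{\Gamma_0}\to\angf{\Gamma_1}$ for the $R$-module map induced by the saddle foam inside the thick facet, and $S^{\dagger}\colon\angf{\Gamma_1}\to\angf{\Gamma_0}$ for the map induced by its adjoint, i.e.\ by the reflected foam $w(S)$ (with facet orientations reversed so that no spurious sign appears, as in the discussion following~\eqref{eq:bilin_form}). First I would compute degrees. The saddle alters only the double surface of the cobordism; its thin surface is a product $\Gamma_{(1)}\times[0,1]$, so it has Euler characteristic zero, and the saddle foam carries no dots. Hence $\deg(S)=\deg(S^{\dagger})=0$ by~\eqref{eq:foam_degree}, and both induced maps are grading preserving.

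The core step is to evaluate the composites $S^{\dagger}\circ S$ and $S\circ S^{\dagger}$. Up to isotopy rel boundary, the foam underlying $S^{\dagger}\circ S$ is the identity cobordism $\Gamma_0\times[0,1]$ with a tube (neck) inserted on the double facet along which the saddle acted: performing a saddle and then the reverse saddle glues a neck onto a thick facet. Applying the double facet neck-cutting relation, Proposition~\ref{prop_double_nc}, to this neck replaces it by a pair of parallel double disks at the cost of the factor $\rho$, and the resulting foam is isotopic rel boundary to $\Gamma_0\times[0,1]$. (Equivalently, the neck-cut foam is the trivial cobordism with a closed double $2$-sphere split off, and Example~4 together with multiplicativity gives the same factor $\rho$.) Since the state space is built by the universal construction, it suffices to know this relation inside every closed foam $w(G)\,(S^{\dagger}S)\,F$, which is exactly what Proposition~\ref{prop_double_nc} supplies; therefore $S^{\dagger}\circ S=\rho\cdot\mathrm{id}_{\angf{\Gamma_0}}$. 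Exchanging the roles of $\Gamma_0$ and $\Gamma_1$ gives $S\circ S^{\dagger}=\rho\cdot\mathrm{id}_{\angf{\Gamma_1}}$ in the same way.

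Finally, $\rho=-p_{12}p_{21}$ is a unit of the ground ring $R$ (Example~4 and~\eqref{eq:def_ground_ring}), so the two relations above show that $S$ is invertible with $S^{-1}=\rho^{-1}S^{\dagger}$; combined with the degree computation this is precisely the statement of the proposition. I expect the only real obstacle to be the local bookkeeping in the third paragraph: one must check carefully that the composite saddle-then-adjoint-saddle genuinely presents a neck on a double facet whose complement, after cutting, is the trivial cobordism, and that the conventions for orientations and cyclic orders of facets --- normalized via $w(\cdot)$ --- are arranged so that the neck-cutting scalar is exactly $\rho$, with no leftover sign. Once that local picture is fixed, the rest is formal.
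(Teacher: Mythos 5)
Your proof is correct and follows essentially the same route as the paper, whose entire argument consists of citing the double-facet neck-cutting relation (Proposition~\ref{prop_double_nc}) applied, as you do, to the composite of the saddle with its adjoint, together with the invertibility of $\rho$ and the degree count. The only point worth double-checking against the convention of Figure~\ref{fig:neck_cutting_d} is the direction of the scalar when the connecting neck is traded for two double caps (i.e.\ whether the composite is $\rho\,\mathrm{id}$ or $\rho^{-1}\,\mathrm{id}$), since the Euler-characteristic bookkeeping in the proof of Proposition~\ref{prop_double_nc} pins down which side of that relation carries the factor $\rho$.
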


\begin{figure}[h]
\begin{center}
\includegraphics[scale=0.50]{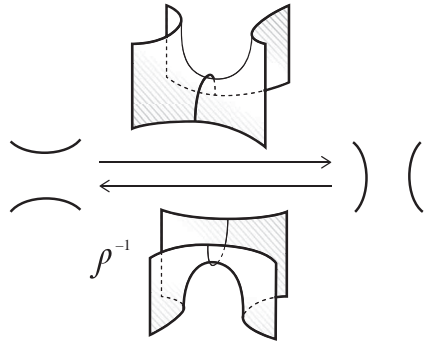}
\caption{\label{fig:dir_sum_1} Saddle isomorphism on a double facet. Double edges on the left and right must carry compatible orientation (that is, extendable to the orientation of the surface).}
\end{center}
\end{figure}

\emph{Proof:} This follows from the thick neck-cutting relation in Figure~\ref{fig:neck_cutting_d}. $\square$

\begin{prop} Let $\Gamma$ be a web and $\Gamma'$ be $\Gamma$ with added innermost thick circle. There is a canonical degree zero isomorphism of state spaces
\[ \angf{\Gamma'} \ \cong \ \angf{\Gamma}
\]
given by the cobordisms in  Figure~\ref{fig:dir_sum_2}.
\end{prop}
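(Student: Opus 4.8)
The plan is to realize the claimed isomorphism as a pair of mutually inverse foam-induced maps, exactly in the spirit of the two preceding propositions. Write $C$ for the added innermost thick circle, so that $\Gamma' = \Gamma \sqcup C$ and, by innermostness, $C$ bounds a disk region of the plane meeting $\Gamma'$ only along $C$. Let $F_u : \Gamma \to \Gamma'$ be the foam consisting of the identity cobordism on $\Gamma$ together with a \emph{double cup} (a single double disk born at a local minimum and expanding to $C$ at the top), and let $F_v : \Gamma' \to \Gamma$ be the identity cobordism on $\Gamma$ together with a \emph{double cap} killing $C$; these are the cobordisms of Figure~\ref{fig:dir_sum_2}. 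Each is a legitimate $GL(2)$ foam with no singular circles, so the orientation of $C$ is immaterial by Proposition~\ref{prop:reverse_orient}, and since $\Gamma_{(1)}$ is a disjoint union of circles the thin surface of $F_u$ (resp.\ $F_v$) is $\Gamma_{(1)}\times[0,1]$, of Euler characteristic $0$; with no dots either, (\ref{eq:foam_degree}) gives $\deg F_u = \deg F_v = 0$. I would then set $u := \angf{F_u}$ and $v := \rho^{-1}\angf{F_v}$; since $\rho$ is an invertible degree-$0$ element of $R$, both $u$ and $v$ are homogeneous degree-$0$ maps of graded $R$-modules.

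The main point is then to check $v u = \mathrm{id}_{\angf{\Gamma}}$ and $u v = \mathrm{id}_{\angf{\Gamma'}}$ by identifying the two composite foams. The composite $F_v F_u : \Gamma \to \Gamma$ is the identity cobordism on $\Gamma$ disjoint union with the closed double $2$-sphere obtained by gluing the double cup of $F_u$ to the double cap of $F_v$. Since the evaluation (\ref{eq:def_eval_gl2})--(\ref{eq:def_eval_gl3}) is multiplicative over disjoint unions of closed foams and $\angf{\SS^2_2} = \rho$ by (\ref{eq:value_S22}), for any foams $G : \emptyset \to \Gamma$ and $H : \Gamma \to \emptyset$ one has $\angf{H (F_v F_u) G} = \angf{(HG) \sqcup \SS^2_2} = \rho\,\angf{HG}$, so $\angf{F_v F_u} = \rho\cdot\mathrm{id}_{\angf{\Gamma}}$ and hence $v u = \rho^{-1}\rho\cdot\mathrm{id}_{\angf{\Gamma}} = \mathrm{id}_{\angf{\Gamma}}$. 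The composite $F_u F_v : \Gamma' \to \Gamma'$, on the other hand, is the identity cobordism on $\Gamma$ together with the double cap of $F_v$ near the bottom and the \emph{disjoint} double cup of $F_u$ near the top over the thick circle $C$; that is, it is obtained from the identity cobordism $\mathrm{id}_{\Gamma'}$ by cutting the neck of the double cylinder over $C$. By the double facet neck-cutting relation (Proposition~\ref{prop_double_nc}), the cut foam evaluates to $\rho$ times the uncut one, so $\angf{H (F_u F_v) G} = \rho\,\angf{HG}$ for all $G,H$ as above, giving $\angf{F_u F_v} = \rho\cdot\mathrm{id}_{\angf{\Gamma'}}$ and $u v = \rho^{-1}\rho\cdot\mathrm{id}_{\angf{\Gamma'}} = \mathrm{id}_{\angf{\Gamma'}}$. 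Thus $u$ and $v$ are mutually inverse degree-$0$ isomorphisms, and they are canonical since the double cup and double cap on an innermost thick circle are unique up to isotopy rel boundary.

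The only delicate step is the bookkeeping behind these two composites: one must see that it is $F_v F_u$, and not $F_u F_v$, that produces a genuine closed double $2$-sphere (contributing the factor $\rho$ by multiplicativity together with (\ref{eq:value_S22})), while $F_u F_v$ produces the cut double cylinder (contributing the factor $\rho$ by Proposition~\ref{prop_double_nc}), and one must apply Proposition~\ref{prop_double_nc} in the correct direction (``cut $=\rho\cdot$ uncut''). Getting this straight is exactly what forces the normalization $v = \rho^{-1}\angf{F_v}$ rather than $\rho\,\angf{F_v}$; and the innermost hypothesis is what lets the double cup and cap be taken as small disks near $C$, disjoint from the rest of the foam, so that these identifications of $F_v F_u$ and $F_u F_v$ are legitimate.
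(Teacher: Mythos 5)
Your proof is correct and is essentially the argument the paper intends (the paper states this proposition with only a pointer to the double cup/cap cobordisms of Figure~\ref{fig:dir_sum_2}, in the same style as the neighboring propositions): evaluating one composite via multiplicativity on disjoint unions together with $\angf{\SS^2_2}=\rho$, and the other via the double-facet neck-cutting relation of Proposition~\ref{prop_double_nc}, then scaling one map by $\rho^{-1}$. You also apply Proposition~\ref{prop_double_nc} in the correct direction (cut $=\rho\cdot$ uncut), so both composites equal $\rho\cdot\mathrm{id}$ and your normalization indeed yields mutually inverse degree-zero isomorphisms.
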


\begin{figure}[h]
\begin{center}
\includegraphics[scale=0.50]{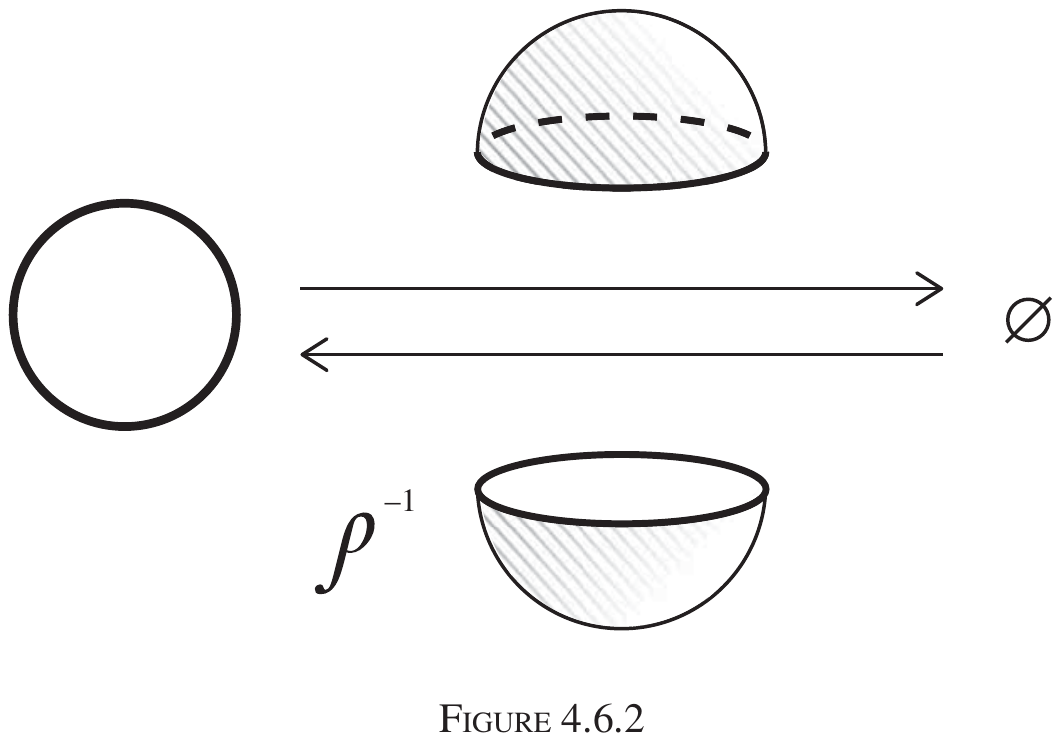}
\caption{\label{fig:dir_sum_2} An isomorphism between a diagram with an innermost double circle and the diagram without it, via double cup and cap cobordisms.}
\end{center}
\end{figure}

\begin{prop} \label{prop_digon_d} Let web $\Gamma$ have a thin edge and denote by $\Gamma'$ the web $\Gamma$ with an attached double edge along the thin edge. The state spaces of $\Gamma$ and $\Gamma'$ are naturally isomorphic as graded $R$-modules via the  maps given in
Figure~\ref{fig:dir_sum_4}.
\end{prop}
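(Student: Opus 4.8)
The plan is to realize the isomorphism by the foam cobordisms of Figure~\ref{fig:dir_sum_4} --- a ``digon-opening'' foam $\alpha\colon\Gamma\to\Gamma'$ that grows the double facet (and its two singular seams) off a neighbourhood of the thin edge, and a ``digon-closing'' foam $\beta\colon\Gamma'\to\Gamma$ that reabsorbs it, each possibly carrying a single dot as drawn --- and then to identify each composite, as a foam, with the identity foam up to an invertible scalar in $R$. Since both foams are supported in a ball meeting $\Gamma$ (resp.\ $\Gamma'$) only along the relevant edge, and are dotless outside that ball, the degree formula~(\ref{eq:foam_degree}) shows $\deg\alpha$ and $\deg\beta$ are determined by the local picture; one checks they are $0$, and if one of the maps must be rescaled by a power of the unit $\rho$ (as with the saddle isomorphism above), the rescaling is still degree $0$ since $\rho$ has degree $0$. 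The key general principle is locality: every relation in Section~\ref{sec:skein_relations} may be applied inside a composite foam regarded as a cobordism, so it suffices to reduce each composite foam to (a unit times) the identity foam to conclude that the induced maps on state spaces are equal.

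First I would evaluate $\beta\circ\alpha\colon\Gamma\to\Gamma$: opening and immediately closing the digon produces a foam which, over the relevant region, differs from the identity foam on $\Gamma$ by a closed bubble --- a small thin or double sphere, possibly with one dot, carried along the thin edge. This bubble is removed using the singular and ordinary neck-cutting relations (Figures~\ref{fig:sneck_cut_1}, \ref{fig:neck_cut_1}), the double-facet neck-cutting relation (Figure~\ref{fig:neck_cutting_d}, Proposition~\ref{prop_double_nc}), and the closed-surface and $\Theta$-foam evaluations of Examples~1--6 in Section~\ref{sec:examples}; the net effect is multiplication by an invertible element of $R$ (a power of $\rho$, possibly $1$), so $\langle\beta\rangle\circ\langle\alpha\rangle$ is that scalar times $\mathrm{id}_{\langle\Gamma\rangle}$. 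Next I would evaluate $\alpha\circ\beta\colon\Gamma'\to\Gamma'$: here the composite foam contains a tube with two singular edges running along the double edge of $\Gamma'$, to which the tube-cutting relation (Figure~\ref{fig:digon_decomp_1}) applies; combining its two output terms by the dot-migration relations (Figures~\ref{fig:dot_migr_1}--\ref{fig:dot_migr_2}) and simplifying by neck-cutting returns the identity foam on $\Gamma'$ times the same scalar. Rescaling $\langle\alpha\rangle$ (or $\langle\beta\rangle$) by the inverse of that scalar yields mutually inverse degree-zero $R$-module maps $\langle\Gamma\rangle\cong\langle\Gamma'\rangle$; naturality is immediate because $\alpha,\beta$ are supported away from the rest of the web and hence commute with cobordisms acting there.

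The main obstacle is not the topology but the sign and $\rho$-power bookkeeping. The tube-cutting relation of Figure~\ref{fig:digon_decomp_1} outputs a \emph{difference} of two dotted foams, and one must check --- using dot migration and the relations of Section~\ref{sec:examples} --- that this difference recombines into a single scalar multiple of the identity foam rather than a rank-two expression; this is precisely the point at which the hypothesis that a single thin edge, not a thin bigon, is being thickened enters. Likewise the $\gamma$-modification relations (Proposition~\ref{prop:gammas}) that may be invoked carry the signs $s(\gamma_1,\gamma_2)$ and factors of $\rho$ depending on the side of the thin surface on which the double facets sit and on the orientations of the singular circles, so one has to verify that the invertible scalars produced by the two composites genuinely coincide, ensuring that a single rescaling inverts both. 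Beyond this careful accounting, the argument is a direct application of the skein calculus and closed-foam evaluations already established.
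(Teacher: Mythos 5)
Your overall strategy coincides with the paper's: realize the maps by the digon-creation and digon-annihilation foams of Figure~\ref{fig:dir_sum_4}, show that each composite equals an invertible element of $R$ times the identity foam, and rescale one map. The paper's proof is in fact a two-line citation of exactly two relations: Figure~\ref{fig:double_gamma_3} (a consequence of the $\gamma$-modification Proposition~\ref{prop:gammas}, via the auxiliary double cap of Figure~\ref{fig:double_gamma_1}) and the canceling-parallel-double-disks relation of Figure~\ref{fig:disk_cancel} (Proposition~\ref{prop_double_disks}).

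Where your plan has a genuine gap is in the identification of the composite on the $\Gamma'$ side. The foam ``annihilate the digon, then recreate it'' is the identity sheet on the thin strand together with two tongue-shaped thin facets and two half-disk double facets, one capping the incoming double edge and one the outgoing one; its two singular arcs are disjoint cap/cup-shaped arcs. It does not contain the local configuration of Figure~\ref{fig:digon_decomp_1} (a thin tube with two singular seams running along it), so the tube-cutting relation is not applicable here, and the difficulty you flag --- forcing its two-term dotted output to recombine into a single scalar --- is a symptom of this misidentification rather than a bookkeeping issue; that relation is the rank-two (Reidemeister~2 type) decomposition, whereas the digon here has multiplicity one. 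The correct step is precisely Figure~\ref{fig:disk_cancel}: the two parallel double half-disks cancel into the single double band of the identity foam on $\Gamma'$ at the cost of one unit $\rho^{\pm 1}$, with no dotted terms at all. On the $\Gamma$ side, the composite is likewise not ``the identity plus a disjoint closed sphere'': it is the identity sheet with a double-facet blister attached along a singular circle, and the singular neck-cutting relation of Figure~\ref{fig:sneck_cut_1} does not apply to it either, since the local thin part is a disk with an interior singular circle rather than a tube. The paper removes the blister with Figure~\ref{fig:double_gamma_3}; your alternative --- neck-cut the thin sheet around the blister via Proposition~\ref{prop:nc} and evaluate the resulting closed pieces using Section~\ref{sec:examples} --- can be made to work, but the right-hand side of Figure~\ref{fig:neck_cut_1} carries singular caps, so the closed pieces are $\Theta$-type foams, not dotted spheres. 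With these two corrections both composites become $\pm\rho^{\pm 1}$ times the identity, and your rescaling and naturality arguments go through.
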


\begin{figure}[h]
\begin{center}
\includegraphics[scale=0.45]{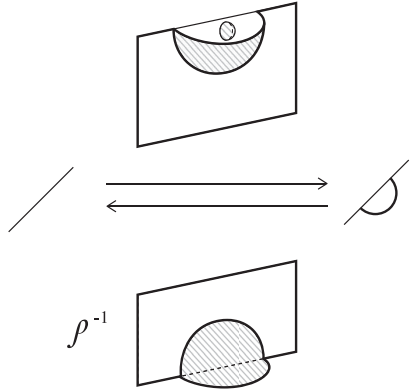}
\caption{\label{fig:dir_sum_4} Mutually-inverse isomorphisms between a thin edge and a thin edge with an attached double edge.}
\end{center}
\end{figure}

\emph{Proof:} This follows from relations in Figures~\ref{fig:double_gamma_3} and \ref{fig:disk_cancel}. $\square$

\begin{theorem}
   $\angf{\Gamma}$ is a free graded $R$-module of graded rank $[2]^{m}$, where $m$ is the number of components (circles) of the thin one-manifold $\Gamma_{(1)}$ and $[2]=q+q^{-1}$.
\end{theorem}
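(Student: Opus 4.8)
The plan is to reduce an arbitrary $GL(2)$ web $\Gamma$ to a disjoint union of $m$ thin circles by a finite chain of the graded $R$-module isomorphisms supplied by the four preceding propositions --- splitting off an innermost thin circle, the double-facet saddle, deleting an innermost thick circle, and digon removal (Proposition~\ref{prop_digon_d}) --- and then to compute the state space of that standard web directly. The key bookkeeping point is that each move used in the reduction preserves $m=|\pi_0(\Gamma_{(1)})|$: deleting an innermost thick circle and performing a double-facet saddle do not touch the thin edges, while digon removal replaces a thin arc carrying a parallel double edge by the bare thin arc, leaving $\Gamma_{(1)}$ unchanged up to isotopy. Hence the terminal disjoint union of thin circles has exactly $m$ components, and it suffices to show that $\Gamma$ reduces to it and that $\angf{\bigsqcup_m S^1}$ is free of graded rank $[2]^m$.

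For the reduction I would induct on the number of thick edges of $\Gamma$ (thick circles together with thick arcs, a thick arc being a thick edge joining two trivalent vertices). Since exactly one thick edge meets each vertex, the thick edges split into thick circles and thick arcs whose endpoints are the vertices, while the thin edges, smoothed through the vertices, form the $m$ thin circles; moreover the orientation condition at a vertex (both thin edges pointing in, or both out) forces each thin circle of $\Gamma_{(1)}$ to pass through an \emph{even} number of vertices. If $\Gamma$ has no thick edges we are in the base case below. Otherwise an innermost-disk argument yields one of three situations: a thick circle bounding a disk disjoint from the rest of $\Gamma$, which we delete; two consecutive vertices $v_1,v_2$ on some thin circle joined by a single thick arc that, together with the thin arc between them, bounds an innermost digon removable by Proposition~\ref{prop_digon_d}; or two consecutive vertices $v_1,v_2$ whose incident thick edges $T_1,T_2$ are distinct but cofacial (the thin arc between them abuts a face meeting both), in which case a double-facet saddle applied to $T_1,T_2$ reconnects them into a thick arc joining $v_1$ to $v_2$ plus a second thick arc, returning to the previous case after one further digon removal. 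In each of the first two situations the number of thick edges drops, so iterating carries $\Gamma$ to $\bigsqcup_m S^1$ through a composition of degree-preserving $R$-module isomorphisms.

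It remains to evaluate the standard web. Starting from $\angf{\emptyset}\cong R$ and introducing the $m$ circles one at a time, each newly added circle is innermost in a region of the current web, so the innermost-thin-circle decomposition gives $\angf{\bigsqcup_{k+1}S^1}\cong\angf{\bigsqcup_{k}S^1}\{1\}\oplus\angf{\bigsqcup_{k}S^1}\{-1\}$, whence $\angf{\bigsqcup_{m}S^1}$ is free over $R$ of graded rank $(q+q^{-1})^m=[2]^m$; composing with the reduction isomorphisms shows $\angf{\Gamma}$ is free of graded rank $[2]^m$. As a cross-check on the $\bigsqcup_m S^1$ case one can compute with the bilinear form~(\ref{eq:bilin_form}) directly: the $2^m$ foams capping each thin circle by a disk bearing $0$ or $1$ dot span the state space by the neck-cutting and dot-reduction relations of Section~\ref{sec:skein_relations}, and their Gram matrix is, up to reindexing, the $m$-th tensor power of $\left(\begin{smallmatrix}\rho_0&\rho_1\\\rho_1&\rho_2\end{smallmatrix}\right)$, whose determinant is $\rho_0\rho_2-\rho_1^2=-(\rho_1^2-E_1\rho_0\rho_1+E_2\rho_0^2)=\rho$ by~(\ref{eq:2sphere-inductive}) and~(\ref{eq:express_rho}); since $\rho$ is a unit of $R$, the Gram matrix is invertible, so these foams form an $R$-basis --- which is precisely why $\rho$ was inverted in the definition of $R$.

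The main obstacle is the combinatorial reduction lemma of the second paragraph: showing that innermost thick-circle deletion, double-facet saddles, and digon removal really do suffice to bring every planar $GL(2)$ web to a disjoint union of thin circles. Two points need care --- the innermost-disk bookkeeping, so that when thick circles are nested around thin sub-webs one can still exhibit one bounding an empty disk; and termination of the saddle-then-remove step, for which one should exhibit a monovariant (for instance the total number of thin arcs lying between cofacial thick arcs, or simply the total vertex count) that strictly decreases at each step and forces an innermost removable digon to appear. These facts are routine for $GL(2)$ webs; everything else is bookkeeping with the isomorphisms already in hand.
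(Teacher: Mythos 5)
Your end computation of $\angf{\bigsqcup_m S^1}$ by iterated innermost-circle splitting is fine, and the Gram-matrix cross-check is correct (the pairing of dotted cup foams gives $\rho_0\rho_2-\rho_1^2=\rho$, a unit, by (\ref{eq:2sphere-inductive}) and (\ref{eq:express_rho})). The gap is in the reduction lemma: the trichotomy in your second paragraph is false, and the failure is not the ``routine bookkeeping'' you defer. Take $\Gamma$ to be a single thin circle sitting inside a concentric double circle. There are no vertices, so neither of your digon-type moves applies; the double circle does not bound a disk disjoint from the rest of $\Gamma$, so Figure~\ref{fig:dir_sum_2} does not apply --- and no cap foam of the required shape even exists, since a double cap for that circle would be an embedded disk in $\R^2\times[0,1]$ avoiding the vertical annulus over the thin circle, while the thick circle is not null-homotopic in the complement of that annulus; finally, a double-facet saddle performed in the annular face only splits off an empty thick circle and reproduces the same nested picture. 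The same obstruction recurs with thick arcs: after a saddle you can be left with a thick digon whose inner face contains a thin circle, and Proposition~\ref{prop_digon_d} then does not apply. So, under your constraint of never touching thin circles until the very end, any web in which thick edges enclose thin circles is irreducible for your move set; no monovariant rescues this, because it is the induction scheme, not termination, that breaks.

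The repair is the paper's interleaving: induct on $m$, not on the number of thick edges. Choose a circle $\alpha$ of $\Gamma_{(1)}$ innermost in the plane; use double saddles (Figure~\ref{fig:dir_sum_1}) to arrange that every thick arc meeting $\alpha$ has both endpoints on $\alpha$; then a parity/orientation argument --- consecutive vertices along $\alpha$ alternate between split and merge type, so if no two consecutive thick-edge germs lay on the same side of $\alpha$, all germs inside the disk would be outgoing (or all incoming) and the inside thick arcs would have nowhere to end --- guarantees a consecutive same-side pair, which one saddle plus one digon removal cancels, lowering the vertex count on $\alpha$ by two. After clearing $\alpha$ and deleting the now-innermost thick circles inside its disk, split $\alpha$ off with the $[2]$-decomposition of Figure~\ref{fig:dir_sum_3} and recurse. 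Note that your case (c) silently requires exactly this existence statement: you never show that one of your three situations must occur, and the nested examples show that it need not.
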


\emph{Proof:} This can be proved by induction on $m$. An innermost thin or double circle of $\Gamma$, see Figure~\ref{fig:innermost_circles}, can be removed using isomorphisms in Figures~\ref{fig:dir_sum_3} and \ref{fig:dir_sum_2}, respectively.

\begin{figure}[h]
\begin{center}
\includegraphics[scale=0.35]{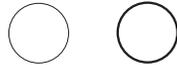}
\caption{\label{fig:innermost_circles} Thin and double innermost circles.}
\end{center}
\end{figure}
Now look at $\Gamma_{(1)}$ and choose an innermost circle $\alpha$ in it. We distinguish between innermost circles of $\Gamma$ and those of $\Gamma_{(1)}$. The latter correspond to thin circles in $\Gamma$ which may contain vertices and thus have attached double edges. $\alpha$ bounds a disk $D^2$ in $\R^2$. Double edges emanating out of $\alpha$ split into those inside and outside of $D^2$. Repeatedly applying the double saddle isomorphism in Figure~\ref{fig:dir_sum_1}, we can reduce to the case when each of these double edges has both endpoints on $\alpha$. Going along $\alpha$ one encounters $2n$ vertices (an even number due to orientation reversal along $\alpha$ at each vertex). If at two consecutive vertices double edges both point in or out of $D^2$, one can apply an  isomorphism in Figure~\ref{fig:dir_sum_1} followed by an isomorphism in Figure~\ref{fig:dir_sum_4} to reduce from $2n$ to $2n-2$ vertices along $\alpha$. A configuration where such a pair of vertices does not  exist is impossible for $n>0$, for then the $n$ ends of  double edges pointing into $D^2$ from  $\alpha$ would all have the same orientations and there would be no room for the other $n$ ends of these edges to land. This concludes the inductive argument. $\square$

\begin{cor}
Associating the state space $\angf{\Gamma}$ to a $GL(2)$ web $\Gamma$ and the map $\angf{F}$ of
state spaces to a foam $F$ with boundary
is a monoidal functor from the category of
$GL(2)$ foams to the category of free graded $R$-modules of  finite rank.
\end{cor}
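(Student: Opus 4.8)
The statement combines three ingredients, two of which are already available, so the plan is mostly to assemble them and then supply the one missing piece. First, the universal construction of Section~\ref{subsec:direct_sum_dec} is already functorial: a foam $F$ with $\partial F=(-\Gamma_0)\sqcup\Gamma_1$ induces the degree-$\deg(F)$ homomorphism $\angf{F}\colon\angf{\Gamma_0}\lra\angf{\Gamma_1}$, and as recorded above these compose correctly, send identity foams to identity maps, and are additive in degree. Second, the Theorem just proved identifies each $\angf{\Gamma}$ as a free graded $R$-module of graded rank $[2]^{m}$, with $m$ the number of components of $\Gamma_{(1)}$; hence the functor takes values in free graded $R$-modules of finite rank, and every $R$-module map between two such objects is a morphism there, so each $\angf{F}$ indeed lands in that category. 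The only remaining point is that $\angf{-}$ is \emph{monoidal}, with disjoint union of webs and foams on the source side and $\otimes_R$ on the target side.

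First I would construct the comparison maps. For webs $\Gamma_1,\Gamma_2$ define
\[
\mu_{\Gamma_1,\Gamma_2}\colon\ \angf{\Gamma_1}\otimes_R\angf{\Gamma_2}\ \lra\ \angf{\Gamma_1\sqcup\Gamma_2},\qquad [F_1]\otimes[F_2]\ \longmapsto\ [F_1\sqcup F_2],
\]
on the generators of $\Free(\Gamma_1)\otimes_R\Free(\Gamma_2)$, where each $F_i$ runs over foams from $\emptyset$ to $\Gamma_i$. The key computation is that the closed-foam evaluation is multiplicative under disjoint union, $\angf{H_1\sqcup H_2}=\angf{H_1}\,\angf{H_2}$: this is immediate from the coloring-by-coloring formula (\ref{eq:def_eval_gl2})--(\ref{eq:def_eval_gl3}), since a coloring of $H_1\sqcup H_2$ is a pair of colorings and every factor of (\ref{eq:def_eval_gl2}), including the sign, splits as the product of the corresponding factors for $H_1$ and $H_2$. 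Because $w(G_1\sqcup G_2)=w(G_1)\sqcup w(G_2)$ and composition of foams distributes over disjoint union, the bilinear form (\ref{eq:bilin_form}) then satisfies $([F_1\sqcup F_2],[G_1\sqcup G_2])=([F_1],[G_1])\,([F_2],[G_2])$, so an element lying in the kernel of the form on one of the two factors maps into the kernel on $\Free(\Gamma_1\sqcup\Gamma_2)$; hence $\mu_{\Gamma_1,\Gamma_2}$ descends to a well-defined map of state spaces. It is visibly $R$-bilinear, natural in both variables with respect to foam morphisms (disjoint union of foams is a bifunctor on $\Foamt$), and the associativity and unit coherences hold because disjoint union of foams is strictly associative and $\angf{\emptyset}\cong R$ is a strict unit.

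The remaining, and main, task is to show each $\mu_{\Gamma_1,\Gamma_2}$ is an isomorphism. By the Theorem, its source and target are free graded $R$-modules of the same finite graded rank $[2]^{m_1}[2]^{m_2}=[2]^{m_1+m_2}$, and a surjective homomorphism of free graded modules of equal finite rank over the domain $R$ is automatically an isomorphism (the surjection splits, and the complementary kernel, being both torsion and torsion-free, vanishes); so it suffices to prove surjectivity. For that I would reuse the reduction in the proof of the Theorem: any web is carried to $\emptyset$ by a finite sequence of the isomorphisms of the innermost-thin-circle, double-saddle, innermost-double-circle and digon Propositions of this section, and each of these isomorphisms is induced by a foam supported in a small ball where the web looks like the corresponding standard local picture. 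Taking the disjoint union of each such local foam with the identity foam on the untouched remainder realizes the same reduction for $\Gamma_1\sqcup\Gamma_2$; running the reduction of $\Gamma_1$ first and then that of $\Gamma_2$, and bookkeeping the bases produced, one finds that $\mu_{\Gamma_1,\Gamma_2}$ carries the tensor-product basis onto a generating set of $\angf{\Gamma_1\sqcup\Gamma_2}$ of the expected size, hence onto. The only genuinely delicate point I anticipate is precisely this locality bookkeeping — confirming that the elementary reduction foams from the proof of the Theorem can be chosen inside balls disjoint from the second web, so that disjoint union with the identity on $\Gamma_2$ turns a reduction of $\Gamma_1$ into one of $\Gamma_1\sqcup\Gamma_2$ compatibly with $\mu$. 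Once this is in place, $\mu$ is a natural, coherent family of isomorphisms, and $\angf{-}$ is a monoidal functor.
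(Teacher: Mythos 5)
Your construction of the comparison maps $\mu_{\Gamma_1,\Gamma_2}$ is the right idea, but the well-definedness step as written has a genuine gap: the kernel of the bilinear form on $\Free(\Gamma_1\sqcup\Gamma_2)$ is detected by pairing against \emph{all} foams $G$ from $\emptyset$ to $\Gamma_1\sqcup\Gamma_2$, and such a $G$ need not split as $G_1\sqcup G_2$ --- it may be connected, joining the two webs. So the identity $([F_1\sqcup F_2],[G_1\sqcup G_2])=([F_1],[G_1])\,([F_2],[G_2])$, which is all you establish, does not by itself show that $x\sqcup F_2$ lies in the kernel when $x\in\Free(\Gamma_1)$ does. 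The repair is short but needed: for an arbitrary $G$ one has $w(G)\circ(F_1\sqcup F_2)=\bigl(w(G)\circ(\mathrm{id}_{\Gamma_1}\sqcup F_2)\bigr)\circ F_1$ as closed foams, and $H:=w(G)\circ(\mathrm{id}_{\Gamma_1}\sqcup F_2)$ is a foam from $\Gamma_1$ to $\emptyset$; since $w$ is an involution on foams, $H=w(G')$ for $G'=w(H)\colon\emptyset\to\Gamma_1$, whence $(x\sqcup F_2,[G])=(x,[G'])=0$ whenever $x$ is in the kernel for $\Gamma_1$. With this "bent closure" argument the descent of $\mu$ is immediate, and the coloring-by-coloring multiplicativity $\angf{H_1\sqcup H_2}=\angf{H_1}\angf{H_2}$ (which is correct, and is in fact used by the paper in the prefoam section) is only needed for the unit and coherence checks.

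Apart from this, your route is sound and is genuinely more than the paper records: the paper states the corollary without proof, having only remarked before the universal construction that "the results below imply that the functor is monoidal", so your explicit comparison maps and isomorphism argument fill in detail rather than retrace an existing proof. For the isomorphism step, rank-plus-surjectivity is legitimate here because $R$ is a domain (a localization of a polynomial ring over $\Z$), but the locality bookkeeping you flag as delicate can be avoided: $\mu$ is natural with respect to foams of the form $F\sqcup\mathrm{id}_{\Gamma_2}$ (check it on generators), and every isomorphism used in the proof of the Theorem --- innermost thin or double circle removal, the double saddle, the digon move --- is an $R$-linear combination of maps induced by foams supported in a ball disjoint from $\Gamma_2$ once the monoidal product places the webs side by side; so each such move replaces $\mu_{\Gamma_1,\Gamma_2}$ by a conjugate (or a direct sum of grading shifts) of $\mu_{\Gamma_1',\Gamma_2}$ for the simplified web $\Gamma_1'$, and induction reduces to $\mu_{\emptyset,\Gamma_2}$, which is the identity. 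Either way the conclusion stands once the well-definedness step is fixed.
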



\section{Reidemeister moves invariance and link homology}
\label{sec:reidemeister}

With the state spaces $\angf{\Gamma}$ of $GL(2)$ webs $\Gamma$ defined, we can associate homology groups to a generic projection $D$ of an oriented link $L\subset \R^3$, as follows.
Let $D$ has $n$ crossings. We resolve each crossing into two resolutions, $0$- and $1$-resolutions, as in Figure~\ref{fig:two_resolutions}.

\begin{figure}[h]
\begin{center}
\includegraphics[scale=0.5]{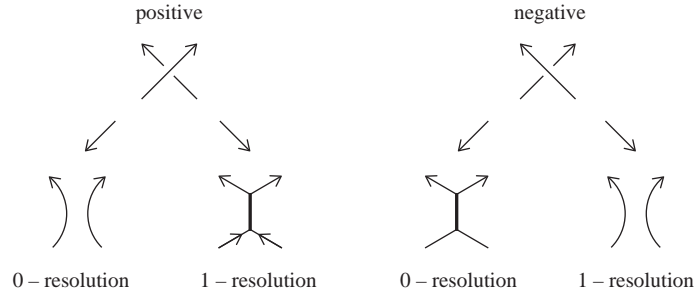}
\caption{\label{fig:two_resolutions} Resolutions of a positive and a negative crossings.}
\end{center}
\end{figure}
One of the resolutions consists of two disjoint thin edges, the other contains a double edge and four adjoint thin edges. All the edges are oriented. Choose a total order on crossings of $D$.
Doing this procedure over all crossings results in $2^n$ resolutions  of $D$ into $GL(2)$ webs $D(\ovmu)$, for $\ovmu=(\mu_1,\dots,\mu_n)$, with $\mu_i\in\{0,1\}$. In a web $D(\ovmu)$ the $i$-th crossing is resolved according to $\mu_i$.

To a crossing  now associate a complex of two webs with boundaries and the differential induced by the "singular saddle" cobordism between them, see Figure~\ref{fig:complexes_for_crossings} which sets us the terms in the complex, and Figure~\ref{fig:differential} which depics "singular saddle" foams inducing the differential.
\begin{figure}[h]
\begin{center}
\includegraphics[scale=0.60]{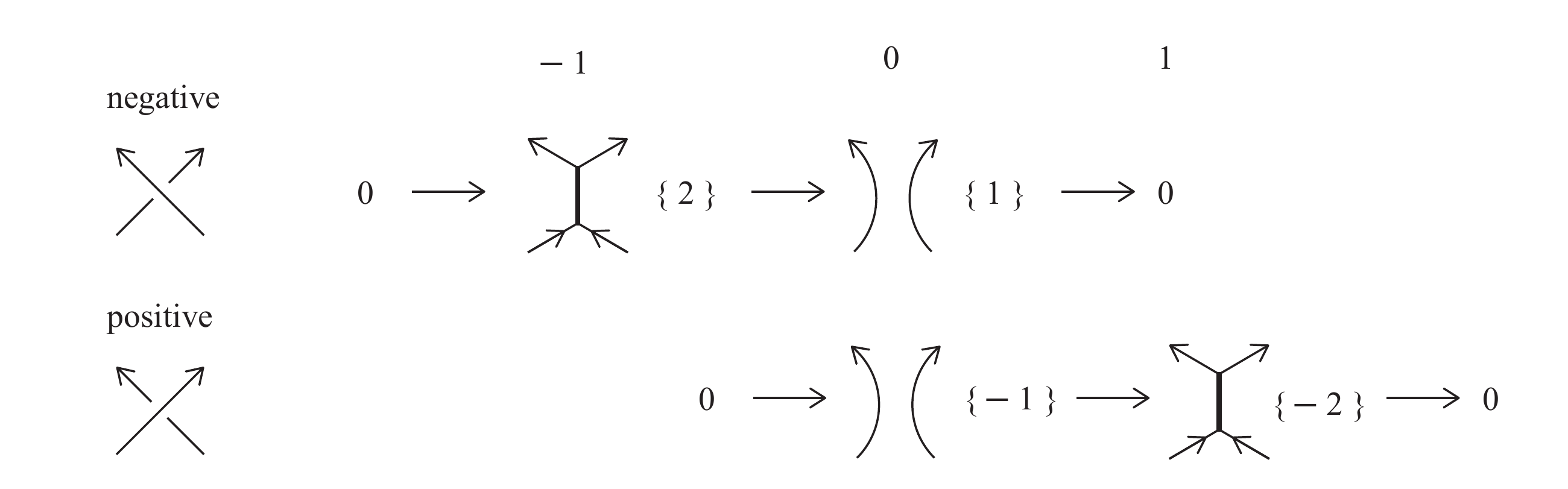}
\caption{\label{fig:complexes_for_crossings} Complexes associated to positive and negative crossings. Numbers at the top show homological gradings of the terms. Resolution into two edges is always in homological degree $0$.}
\end{center}
\end{figure}
\begin{figure}[h]
\begin{center}
\includegraphics[scale=0.40]{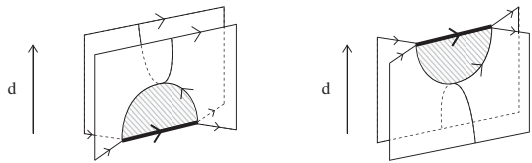}
\caption{\label{fig:differential} Foams that induce the differential in the complexes for a positive and negative crossings. Upward-pointing arrows next to the foams indicate the 'direction' of the differential.}
\end{center}
\end{figure}
These complexes make sense whenever the two webs are closed on the outside into two closed $GL(2)$ webs.
Grading shifts are inserted to make the map induced by the  "singular saddle" cobordism grading-preserving (and, later, to have full invariance under the Reidemeister I  move, rather than  an invariance up to an overall grading shift).

In this way, one can form a commutative $n$-dimensional cube which has the graded $R$-module $\angf{D(\ovmu)}$ in its vertex labelled by the sequence $\ovmu$ and maps induced by "singular  saddle" foams associated to oriented edges of the cube. The  maps commute for every square of the cube.

This setup with "singular saddle" cobordisms goes back to Blanchet~\cite{B}, and is also visible in the earlier papers of Clark-Morrison-Walker~\cite{CMW} and Caprau~\cite{Ca1,Ca2}, where the double facet is not there, but its boundary, a singular edge along the foam, together with a choice of normal direction, is present.

The commuting  cube of graded $R$-modules $\angf{\Gamma(\ovmu)}$ and grading-preserving homomorphisms between them collapses, in the standard way upon adding minus signs, to a complex of graded $R$-modules with a degree-preserving differential. This complex starts in the homological degree -- minus  the number of negative crossings of $D$ and ends in the homological degree which is the number of positive crossings of $D$.

Denote this complex by $F(D)$.

\begin{theorem} \label{thm_homotopy_invariance} For two diagrams $D_1$ and $D_2$ of an oriented link $L$, complexes $F(D_1)$ and $F(D_2)$ are chain homotopy equivalent as complexes of graded $R$-modules.
\end{theorem}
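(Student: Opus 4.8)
The plan is to follow the now-standard strategy for proving Reidemeister invariance of Khovanov-type homology built from a foam or web TQFT, as in \cite{Kh1,B,BHPW}: one checks the three Reidemeister moves one at a time, using the direct sum decomposition and isomorphism results from Section~\ref{subsec:direct_sum_dec} together with Gaussian elimination on complexes. Since the deformed evaluation retains all the structural skein relations of the undeformed $GL(2)$ theory (neck-cutting, digon decomposition, the isomorphisms of Propositions~\ref{prop_digon_d} and the innermost-circle propositions), only bookkeeping of the new coefficients $\rho_0,\rho_1,\rho^{\pm1}$ and signs needs care; the underlying homotopy equivalences are formally identical to the $p(x,y)=1$ case. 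Throughout I would work in the category of complexes of graded $R$-modules up to chain homotopy, repeatedly using the \emph{Gaussian elimination lemma}: if a subquotient of a complex contains an isomorphism as a component of the differential, that acyclic piece may be cancelled at the cost of adding a ``zig-zag'' correction term to the remaining differential.

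\textbf{Reidemeister I.} For a positive curl, resolve the single crossing to get a two-term complex whose $0$-resolution is $D$ with an extra thin circle (in some region) and whose $1$-resolution is $D$ with a small double edge attached along a thin edge. By Proposition~\ref{prop_digon_d} the $1$-resolution's state space is canonically $\angf{D}$, while by the innermost-thin-circle proposition the $0$-resolution's state space splits as $\angf{D}\{1\}\oplus\angf{D}\{-1\}$; one computes that the differential (the singular saddle) composed with these isomorphisms is, up to the invertible scalars $\rho^{\pm1}$ and $\rho_0,\rho_1$ coming from $\Theta$-foam and neck-cutting evaluations in Section~\ref{sec:examples}, an isomorphism onto one of the two summands. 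Gaussian elimination then removes that summand and the $1$-resolution term simultaneously, leaving $\angf{D}$ in homological degree $0$, with the grading shifts in Figure~\ref{fig:complexes_for_crossings} chosen precisely to absorb the shift and make the equivalence degree-preserving. The negative curl is handled by the mirror argument; the orientation-reversal sign from Proposition~\ref{prop:reverse_orient} is exactly compensated by the definition of $w(-)$ in the bilinear form.

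\textbf{Reidemeister II and III.} For RII, the two crossings give a $2\times 2$ square of resolutions; three of the four vertices collapse (via the double-edge isomorphisms of Figure~\ref{fig:dir_sum_1}, \ref{fig:dir_sum_2}, and the innermost-circle decompositions) and two of the connecting maps are isomorphisms, so two rounds of Gaussian elimination reduce the total complex of the two-crossing tangle to the identity tangle's complex; here the inverse of the saddle isomorphism carries the factor $\rho^{-1}$ (Proposition on the saddle isomorphism), which must be tracked so that the composite correction map is well-defined over $R$ — this is where invertibility of $\rho$ in $R$ is essential. For RIII one uses the standard reduction: after applying RII-type simplifications inside the triangle, both sides of the move become chain homotopy equivalent to a common complex obtained by resolving the ``middle'' strand, so the equivalence follows by transitivity; invariance under the mixed-orientation variants of RII and RIII reduces to the displayed cases using Proposition~\ref{prop:reverse_orient} and the dot-migration and $\gamma$-modification relations of Section~\ref{sec:skein_relations}. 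Finally, since $F(D)$ is built functorially from the commuting cube and all cube edge maps are $R$-module maps, each elementary move yields a chain homotopy equivalence of complexes of graded $R$-modules, and composing the equivalences along a sequence of Reidemeister moves connecting $D_1$ to $D_2$ (which exists by Reidemeister's theorem) gives the result.

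\textbf{Main obstacle.} The genuinely new difficulty compared with \cite{BHPW} is not the combinatorics of the moves but verifying that every coefficient appearing in the homotopies and correction terms lies in $R$ rather than merely in the larger ring $\wR$ or $R''$ — in particular that the factors $p_{ij}$ appearing locally (as in equations~\eqref{eq:gamma_1}, \eqref{eq:gamma_2}) always occur in symmetric combinations $p_{12}p_{21}=-\rho$ or cancel in ratios, so that each map is defined over $R=\Z[E_1,E_2,\rho_0,\rho_1,\rho^{\pm1}]$. This is guaranteed by Proposition~\ref{prop:eval_in_subring} (closed foams evaluate into $R$) together with the fact that all the maps in sight are induced by foams with boundary whose closures are closed foams; but making the Gaussian elimination correction terms manifestly $R$-linear — rather than just $\wR$-linear — requires choosing the explicit foam representatives of the homotopies carefully at each step and checking that the scalars produced by $\Theta$-foams and neck-cutting are always among $\pm\rho_0,\pm\rho_1,\pm\rho^{\pm1}$ and $E_1,E_2$. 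Once this is confirmed the rest is the routine diagram-chase familiar from the undeformed theory.
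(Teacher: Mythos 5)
Your proposal follows essentially the same route as the paper: R1 is handled by splitting off the extra thin circle and cancelling against the digon resolution (the paper packages this Gaussian elimination as explicit maps $f_0,g_0,h,d$ satisfying $dh=\mathrm{id}$, $df_0=0$, $g_0f_0=\mathrm{id}$, $f_0g_0+hd=\mathrm{id}$), R2 is taken over verbatim from the undeformed theory since the digon and bubble relations are undeformed, and R3 is proved by partially resolving one crossing on each side and contracting acyclic pieces of the cube until both sides reach a common complex, the only genuinely new ingredient being invertible scalars such as $\tau_0\tau_1=-\rho^{-1}\,\mathrm{id}$ coming from Propositions~\ref{prop_double_nc} and~\ref{prop_disk_flip} (and $R$-linearity of all maps is automatic from Proposition~\ref{prop:eval_in_subring}, so your ``main obstacle'' is lighter than you suggest). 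One caveat: your claim that mixed-orientation variants of R2 and R3 ``reduce to the displayed cases using Proposition~\ref{prop:reverse_orient}'' is not a valid mechanism, since reversing all facet orientations corresponds to reversing the orientation of the entire link rather than of a single strand, although the paper likewise confines its written argument to the displayed orientations.
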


\emph{Proof:} Consider the Reidemeister move R1, undoing a positive curl in Figure~\ref{fig:R1_move}.

\begin{figure}[h]
\begin{center}
\includegraphics[scale=0.60]{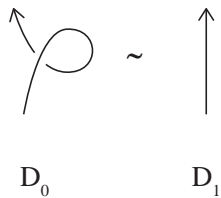}
\caption{\label{fig:R1_move} Reidemeister move R1, for a positive twist.}
\end{center}
\end{figure}

\begin{figure}[h]
\begin{center}
\includegraphics[scale=0.55]{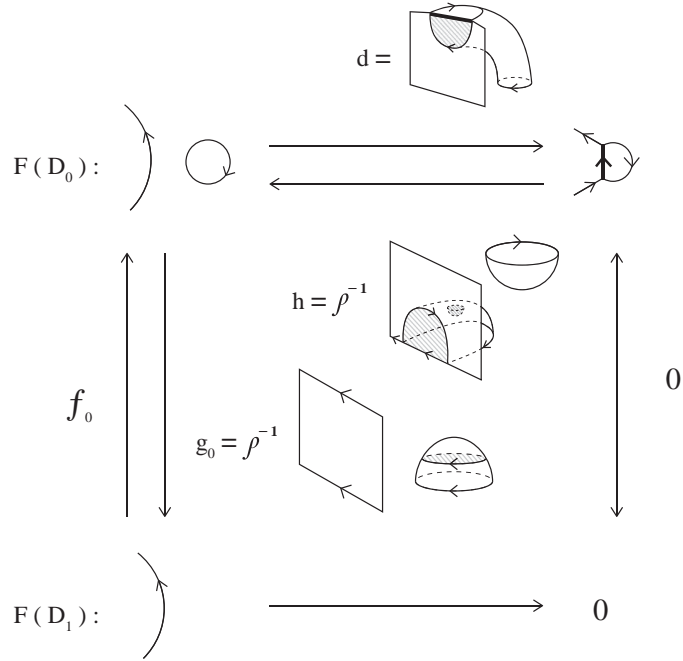}
\caption{\label{fig:solving_R1} Top row, together with the right-pointing arrow $d$, encodes the complex $F(D_0)$. Top left-pointing arrow $h$ is a self-homotopy of $F(D_0)$. Down and up arrows $h_0$ and $g_0$ are maps of
complexes $F(D_0)$ and $F(D_1)$. Map $f_0$ is given in the next figure.}
\end{center}
\end{figure}

\begin{figure}[h]
\begin{center}
\includegraphics[scale=0.50]{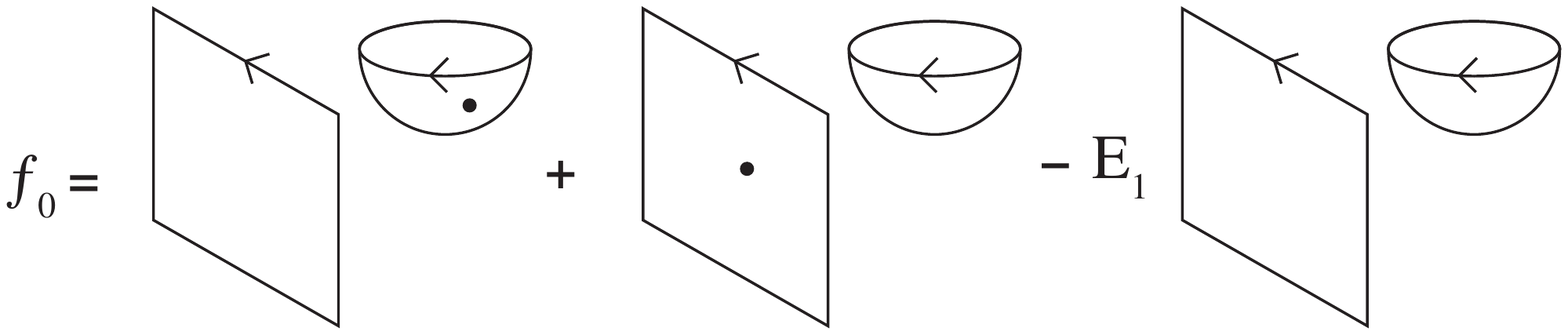}
\caption{\label{fig:h_null} Map $f_0: F(D_1)\lra F(D_0)$ of complexes. }
\end{center}
\end{figure}

\begin{prop} The following relations hold on maps $f_0,g_0,h$ and $d$ in Figures~\ref{fig:solving_R1}, \ref{fig:h_null}:
\begin{eqnarray}
    d  h & = & \mathrm{id}, \\
    d f_0 & = & 0 , \\
    g_0  f_0 & = & \mathrm{id}_{F(D_1)}, \\
    \mathrm{id} & = & f_0 g_0 + h d.
\end{eqnarray}
The map $\mathrm{id}$ in the  first equation is the identity of the complex $F(D_0(1))$, associated to the diagram in the top right corner of Figure~\ref{fig:solving_R1}, while $\mathrm{id}$ in the last equation is the identity of  the complex $F(D_0(0))$ associated to the diagram the top left corner of the figure.
\end{prop}

\emph{Proof} is a direct computation using skein relations derived in Section~\ref{sec:skein_relations}.
This proof is very  similar to the proof of the invariance under the Reidemeister move in~\cite{MSV}, that does it in the non-equivariant $GL(N)$ case, in particular see Figure 8 there.
$\square$

\begin{cor} Complexes $F(D_0)$ and $F(D_1)$, for diagrams in Figure~\ref{fig:R1_move}, are chain homotopy equivalent as complexes of graded $R$-modules.
\end{cor}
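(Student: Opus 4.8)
The plan is to deduce the corollary formally from the preceding proposition, by the standard homological-algebra argument (``Gaussian elimination'') that turns an explicit deformation-retract datum into a chain homotopy equivalence. Recall the general statement: if $(X,d_X)$ and $(Y,d_Y)$ are complexes of graded $R$-modules, $f\colon Y\to X$ and $g\colon X\to Y$ are degree-preserving chain maps, and $H\colon X\to X$ is a homogeneous map of homological degree $-1$ with $gf=\mathrm{id}_Y$ and $\mathrm{id}_X-fg=d_XH+Hd_X$, then $f$ and $g$ are mutually inverse homotopy equivalences, so $X\simeq Y$. It therefore suffices to repackage the maps supplied by the proposition into this form.

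First I would set up the data. In the notation of Figures~\ref{fig:solving_R1} and \ref{fig:h_null}, the complex $F(D_0)$ decomposes (up to the shift built into the definition of $F(D)$) as a two-step assembly with summands $F(D_0(0))$ and $F(D_0(1))$ in consecutive homological degrees, $d$ being the component of the differential between them. I put $X=F(D_0)$, $Y=F(D_1)$, $f=f_0$, $g=g_0$, and let $H$ be the homological-degree $-1$ endomorphism of $F(D_0)$ that equals $h$ on the $F(D_0(1))$-summand and is zero on $F(D_0(0))$. That $f_0$ and $g_0$ are degree-preserving chain maps is part of the proposition --- they are induced by foams and their $\rho^{-1}$-scaled adjoints, hence commute with the cube differentials, and the grading shifts inserted into $F(D)$ are precisely what makes them homogeneous of degree $0$; likewise $h$ commutes with internal differentials, so $H$ is a legitimate homotopy.

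Next I would verify the two required identities componentwise. The identity $g_0f_0=\mathrm{id}_{F(D_1)}$ is exactly the third relation of the proposition. For $\mathrm{id}_X-fg=d_XH+Hd_X$: restricted to the $F(D_0(0))$-summand, $H$ vanishes, so the right-hand side reduces to $Hd_X$, and the only component of $d_X$ on which $H$ is nonzero is $d$ (landing in $F(D_0(1))$), giving $hd$, which equals $\mathrm{id}-f_0g_0$ there by the fourth relation; restricted to the $F(D_0(1))$-summand, the right-hand side contributes $dh=\mathrm{id}$ by the first relation, while $df_0=0$ confines $f_0$, hence $f_0g_0$, to the $F(D_0(0))$-summand, so $\mathrm{id}-f_0g_0$ is also the identity there, and the residual internal-differential cross terms cancel because $h$ is a chain map. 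With both hypotheses checked, the general statement yields $F(D_0)\simeq F(D_1)$ as complexes of graded $R$-modules, which is the corollary.

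The genuine content all sits in the proposition that feeds this argument --- constructing $f_0,g_0,h,d$ from the $GL(2)$ skein relations of Section~\ref{sec:skein_relations} and verifying the four identities; once that is granted, the only delicate point in the present deduction is the bookkeeping of which summand each map targets, in particular the observation that $df_0=0$ keeps $f_0$ in the $F(D_0(0))$-summand so that $f_0g_0$ has no component into $F(D_0(1))$. I do not foresee any obstacle beyond that; the analogue for a negative curl, and (for the full Theorem~\ref{thm_homotopy_invariance}) for the $R2$ and $R3$ moves, is obtained by the same mechanism together with the corresponding local skein computations.
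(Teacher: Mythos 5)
Your deduction is correct and matches the paper, which treats this corollary as an immediate consequence of the proposition: the four relations $dh=\mathrm{id}$, $df_0=0$, $g_0f_0=\mathrm{id}$, $\mathrm{id}=f_0g_0+hd$ are exactly a deformation-retract datum exhibiting $f_0$ and $g_0$ as mutually inverse homotopy equivalences, with $h$ (extended by zero) as the homotopy. Your componentwise bookkeeping, including the role of $df_0=0$ and the compatibility with internal differentials from other crossings, is the same routine verification the paper leaves implicit.
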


\begin{prop}
  For each pair of the diagrams $D_0$, $D_1$ in Figure~\ref{fig:R23move}, which shows Reidemeister moves R2 and R3, complexes $F(D_0)$ and $F(D_1)$ are chain homotopy equivalent as complexes of graded $R$-modules.
\end{prop}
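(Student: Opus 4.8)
The plan is to prove invariance under the Reidemeister moves R2 and R3 by the standard "Gaussian elimination" (cancellation of acyclic subcomplexes) technique that has become routine in foam-based link homology, and which the excerpt has already set up all the tools for. First I would treat R2. Resolving the two crossings of the two-crossing tangle $D_0$ gives a four-term cube of webs; using the isomorphisms from Section~\ref{subsec:direct_sum_dec} — in particular the digon isomorphism of Proposition~\ref{prop_digon_d} (a thin edge with an attached double edge is isomorphic to a thin edge) and the double-circle removal of the proposition on innermost thick circles — I would identify each resolution web with a simpler web. Two of the four resolutions turn out to be isomorphic, and the "singular saddle" differential between them becomes, after these identifications, an isomorphism (this is where the saddle isomorphism on a double facet, Proposition with Figure~\ref{fig:dir_sum_1}, and the double-facet neck-cutting relation $\langle F\rangle = \rho\langle F_1\rangle$ are used to check invertibility, remembering that $\rho$ is invertible in $R$). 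Gaussian elimination of this contractible summand collapses the complex $F(D_0)$ to a complex isomorphic to $F(D_1)$, the two parallel strands. The grading shifts inserted into the complexes (see the discussion before Figure~\ref{fig:complexes_for_crossings}) are exactly what makes the surviving term land in homological degree $0$ with no shift, so the equivalence is as complexes of graded $R$-modules.

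For R3 I would use the by-now-standard reduction: R3 follows formally from R2 together with naturality, by the "braid-like" argument. Concretely, one writes the complex for one side of R3 as the mapping cone of a map between complexes assigned to the two resolutions of the top (or bottom) crossing; each of these smaller complexes is, by R2-invariance already established, homotopy equivalent to the corresponding complex on the other side of R3; and one checks that the connecting map commutes with these homotopy equivalences up to homotopy, so the cones are homotopy equivalent. Alternatively, and perhaps more in the spirit of the foam calculus here, one resolves all three crossings of $D_0$ and $D_1$ into their $2^3$-term cubes, performs Gaussian elimination on the acyclic pieces identified via the web isomorphisms of Section~\ref{subsec:direct_sum_dec} and the skein relations of Section~\ref{sec:skein_relations} (digon removal, square/bigon decompositions, double-facet relations), and observes that both cubes collapse to the same reduced complex. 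In either approach the key inputs are: the explicit web isomorphisms of Propositions in Section~\ref{subsec:direct_sum_dec}, the invertibility of $\rho$, the skein relations (especially neck-cutting and the tube-cutting relation), and the compatibility of the singular-saddle foams with these isomorphisms, which is checked by composing foams and evaluating closed ones using the formulas of Section~\ref{sec:examples}.

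The main obstacle I expect is the bookkeeping of signs and orientations. Every singular circle carries a preferred thin facet (or, for foams, a cyclic order of facets), and reversing it multiplies the evaluation by $-1$ per circle (Proposition~\ref{prop:reverse_orient}); similarly the "double disk flipping" relation (Proposition~\ref{prop_disk_flip}) introduces signs, and the differentials in the cube already carry the usual Koszul signs. Verifying that the homotopies $f_0, g_0, h, d$ (and their R2/R3 analogues) satisfy the cone/elimination identities on the nose — not merely up to an unspecified unit — requires keeping careful track of all of these. This is precisely the kind of computation that was done in~\cite{MSV} in the non-equivariant $GL(N)$ case and, for the undeformed $GL(2)$ theory, in~\cite{BHPW}; since our deformation only rescales closed-foam evaluations by invertible powers of $p_{12}, p_{21}$ and signs (and our skein relations reduce to theirs at $p=1$), the same homotopies work, but one must confirm that the extra factors of $\rho$ and $p_{ij}$ appearing in the deformed skein relations cancel consistently. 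I would carry out the R2 verification in full detail (it is short) and then invoke the formal cone argument for R3, flagging the sign check as the only delicate point.

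\begin{proof}[Proof sketch]
For R2, resolve both crossings of $D_0$. Using Proposition~\ref{prop_digon_d} and the innermost double-circle isomorphism, identify the four resolution webs with webs built from the two parallel strands; two of them become isomorphic, and the singular-saddle differential between them is an isomorphism by the double-facet neck-cutting relation (Proposition~\ref{prop_double_nc}), since $\rho$ is invertible in $R$. Gaussian elimination of this contractible summand yields a complex isomorphic to $F(D_1)$; the grading shifts in the crossing complexes make this a degree-preserving equivalence.

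For R3, write $F(D_0)$ as the mapping cone of the singular-saddle map between the two complexes obtained by resolving the top crossing. Each of these is homotopy equivalent, by the R2 case, to the corresponding complex for $D_1$, and the connecting maps commute with these equivalences up to chain homotopy (checked by composing the relevant foams and evaluating closed ones via Section~\ref{sec:examples} and the skein relations of Section~\ref{sec:skein_relations}). Hence the cones, $F(D_0)$ and $F(D_1)$, are chain homotopy equivalent as complexes of graded $R$-modules. The only subtle point throughout is tracking the signs coming from Propositions~\ref{prop_disk_flip} and~\ref{prop:reverse_orient} and the Koszul signs in the cube, exactly as in~\cite{MSV, BHPW}; since the deformation rescales closed-foam values only by invertible units, these signs cancel as before.
\end{proof}
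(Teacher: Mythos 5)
Your R2 argument is essentially the paper's: the authors simply observe that the digon decomposition and bubble-removal relations are undeformed (they follow from the theta-foam values and Proposition~\ref{prop_double_nc}), so the standard Gaussian-elimination proof goes through verbatim for the braid-like R2 in Figure~\ref{fig:R23move}. For R3, your ``alternative'' route is the one the paper actually takes, while your preferred route (write each side as a cone over one crossing and invoke R2 plus naturality) is not purely formal: mapping cones are not functorial, and the homotopy-commutativity of the comparison square between the connecting maps is precisely where all the work sits, so deferring it to ``one checks'' hides the proof rather than giving it. The paper sidesteps this by never comparing $C(D_0)$ to $C(D_0')$ directly: it splits two contractible summands off the cube for $C(D_0)$ (Figure~\ref{fig_R23_d0_res}), collapsing it onto the single resolution $C(D_0(10))$, does the same on the other side, and then identifies the two reduced cones (Figure~\ref{fig:R23_final}) by an explicit reflection symmetry of diagrams and foams.

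The genuine gap is in treating the deformation as ``sign bookkeeping inherited from \cite{MSV,BHPW}.'' The deformation-sensitive step is the invertibility of the straightening maps $\tau_0,\tau_1$ between $C(D_0(00))$ and $C(D_2)$ (Figure~\ref{fig:R23_simplify}), where the paper must prove $\tau_0\tau_1=-\rho^{-1}\mathrm{id}$. This cannot be read off coloring-by-coloring from the modification relations (\ref{eq:gamma_1})--(\ref{eq:gamma_2}), because those produce factors $p_{ij}$ that do \emph{not} lie in the ground ring $R$; only when the modifications are paired along complementary proper arcs (Proposition~\ref{prop:gammas}) do they combine to $\pm\rho$, and one still needs Proposition~\ref{prop_double_nc} twice together with the disk-flip sign of Proposition~\ref{prop_disk_flip} to land on $-\rho^{-1}\mathrm{id}$ rather than some other unit, which in turn is what makes $-\rho\tau_1$ the honest inverse of $\tau_0$ used in the splitting. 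Your sketch names the right ingredients but leaves exactly this computation undone; it is the one place where the $p(x,y)$-deformation could in principle obstruct the Reidemeister III argument, so it should be carried out explicitly rather than delegated to the undeformed references.
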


\begin{figure}[h]
\begin{center}
\includegraphics[scale=0.60]{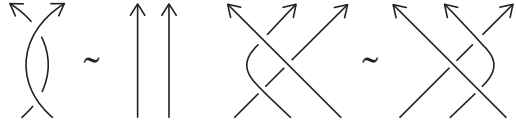}
\caption{\label{fig:R23move} Reidemeister moves R2 and R3.}
\end{center}
\end{figure}

\emph{Proof:} For the Reidemeister R2 move, relation (\ref{fig:digon_decomp_3}) used in the direct sum of decomposition of a web $\Gamma_1$ with a digon facet into the sum of two copies of the simpler web $\Gamma_0$ is no different from the corresponding decomposition in the usual $SL(N)$ graphical calculus, for an arbitrary $N$ (see~\cite[Proposition 8]{Kh2} for the analogous decomposition in the non-equivariant $SL(3)$ case). As one of the relations for this decomposition, the relation of removing a bubble on a double facet with at most one dot on one of the two thin facets is identical with the corresponding relation in the usual $SL(N)$ foam calculus, whether for the standard calculus or the equivariant  one. Bubble removal relation follows from the combination of theta foam evaluation in  Example 6 in Section~\ref{sec:examples} for $n_1,n_2\le 1$ and Proposition~\ref{prop_double_nc}. For essentially the same relations  in the $SL(3)$ case see, for instance, the top two relations in~\cite[Figure 18]{Kh2}.

For this reason, the usual proof of  the Reidemester R2 relation, when both strands are oriented in the same direction, as in Figure~\ref{fig:R23move} left, repeats without any changes in our case, see for instance~\cite[Section 5.2]{Kh2}, \cite[Theorem 7.1]{MSV}, and many other sources.

\begin{figure}[h]
\begin{center}
\includegraphics[scale=0.60]{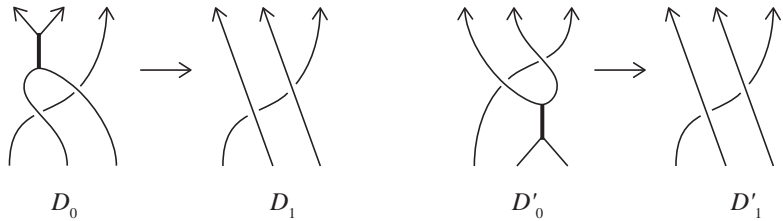}
\caption{\label{fig:R23_partial_res} Two partial resolutions of each of $D$ and $D'$. Note that $D_1$ and $D_1'$ are identical diagrams.}
\end{center}
\end{figure}

Consider the Reidemeister R3 move in Figure~\ref{fig:R23move}. Denote by $D$ and $D'$ the diagrams on the left and right of this move.

We start by resolving a single crossing in each of $D$ and $D'$, see Figure~\ref{fig:R23_partial_res}. Complexes $C(D)$ and $C(D')$ are isomorphic to cones of maps $C(D_0)\lra C(D_1)$ and $C(D'_0)\lra C(D'_1)$ built out of foams between complete resolutions of these diagrams.

Tangle diagrams $D_1$ and $D_1'$ are canonically isomorphic, and their resolutions result in the total complex of the square shown in Figure~\ref{fig:R23_d1_res} with the differential coming from the four foams associated to the arrows of the diagram, with each foam a standard singular saddle in the appropriate position.

\begin{figure}[h]
\begin{center}
\includegraphics[scale=0.60]{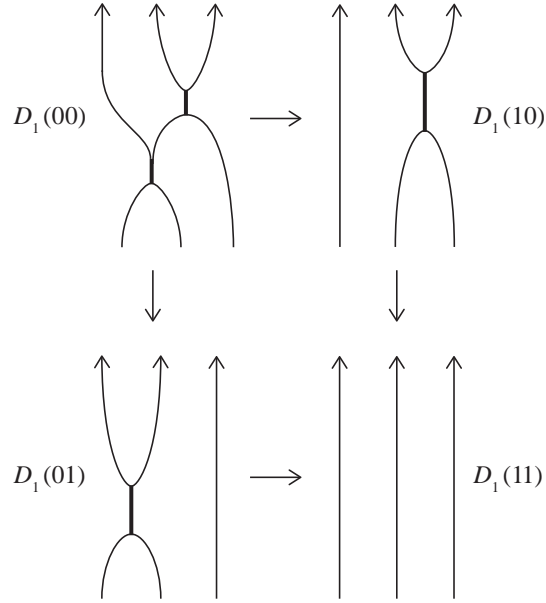}
\caption{\label{fig:R23_d1_res} Resolution of the diagram $D_1 \cong D_1'$.}
\end{center}
\end{figure}

Consider now the diagram $D_0$ and its resolution in Figure~\ref{fig_R23_d0_res}. Maps $\psi_k$, $k=1,\dots, 4$ are homomorphisms between state spaces of web induced by appropriate foams (singular saddle foams). Summing over all possible resolutions of crossings of $D_0$ not shown in the diagram gives homomorphisms, also denoted $\psi_1, \dots, \psi_4,$ of corresponding complexes.

\begin{figure}[h]
\begin{center}
\includegraphics[scale=0.60]{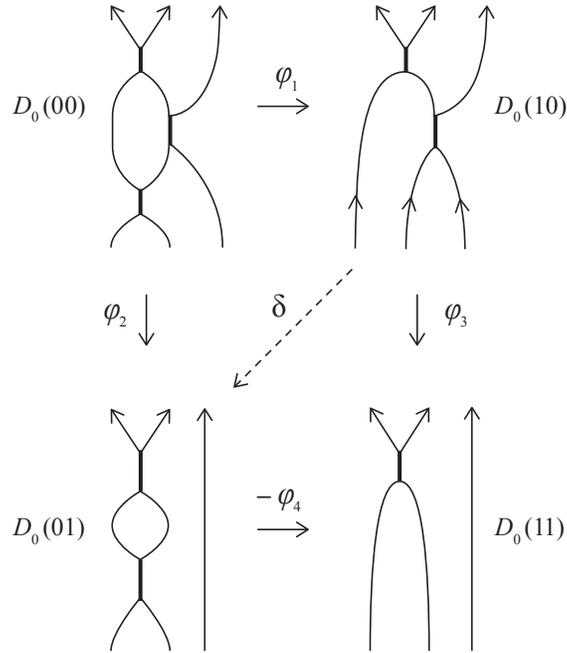}
\caption{\label{fig_R23_d0_res} Resolution of the diagram $D_0$.}
\end{center}
\end{figure}

The four terms $C(D_0(k\ell))$, $k,\ell\in\{0,1\}$, will also map to the corresponding four terms $C(D_1(k\ell))$ in $C(D_1)$ in Figure~\ref{fig:R23_d1_res} to constitute a 3-dimensional cube diagram (not shown).

The complex  $C(D_0(00))$ of the diagram in the upper left of Figure~\ref{fig_R23_d0_res} is isomorphic (and not just homotopy equivalent) to the complex $C(D_2)$ of the diagram $D_2$ shown in
Figure~\ref{fig:R23_simplify} left.

\begin{figure}[h]
\begin{center}
\includegraphics[scale=0.60]{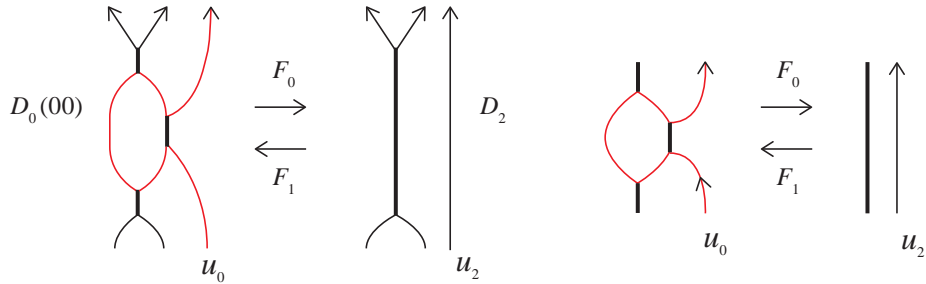}
\caption{\label{fig:R23_simplify} Diagrams $D_0(00)$ and $D_2$ have isomorphic state spaces for any resolution of these diagrams. Complexes $C(D_0(00))$ and $C(D_2)$, with the differentials  induced by various foams between their resolutions, are canonically isomorphic,
$C(D_0(00))\cong C(D_2)$ in the abelian category of complexes (before factoring by homotopies).}
\end{center}
\end{figure}
Foam $F_0$ going from $D_0(00)$ to $D_2$ 'straightens out' the long thin arc $u_0$ of $D_0(00)$ by canceling in pairs the four vertices on this arc where double edges meet $u_0$. Arc $u_0$ becomes the rightmost arc $u_2$ of $D_2$. Seam edges that cancel the four vertices in pairs are shown in Figure~\ref{fig_R23_d0_flat} as two arcs in the upper half of the diagram. The upper half shows the thin facet of $F_0$ where singular vertices along $u_0$ are cancelled in pairs. These cancellations are done via singular arcs, shown in Figure~\ref{fig_R23_d0_flat} top, along which double facets are attached to the thin facet.

\begin{figure}[h]
\begin{center}
\includegraphics[scale=0.40]{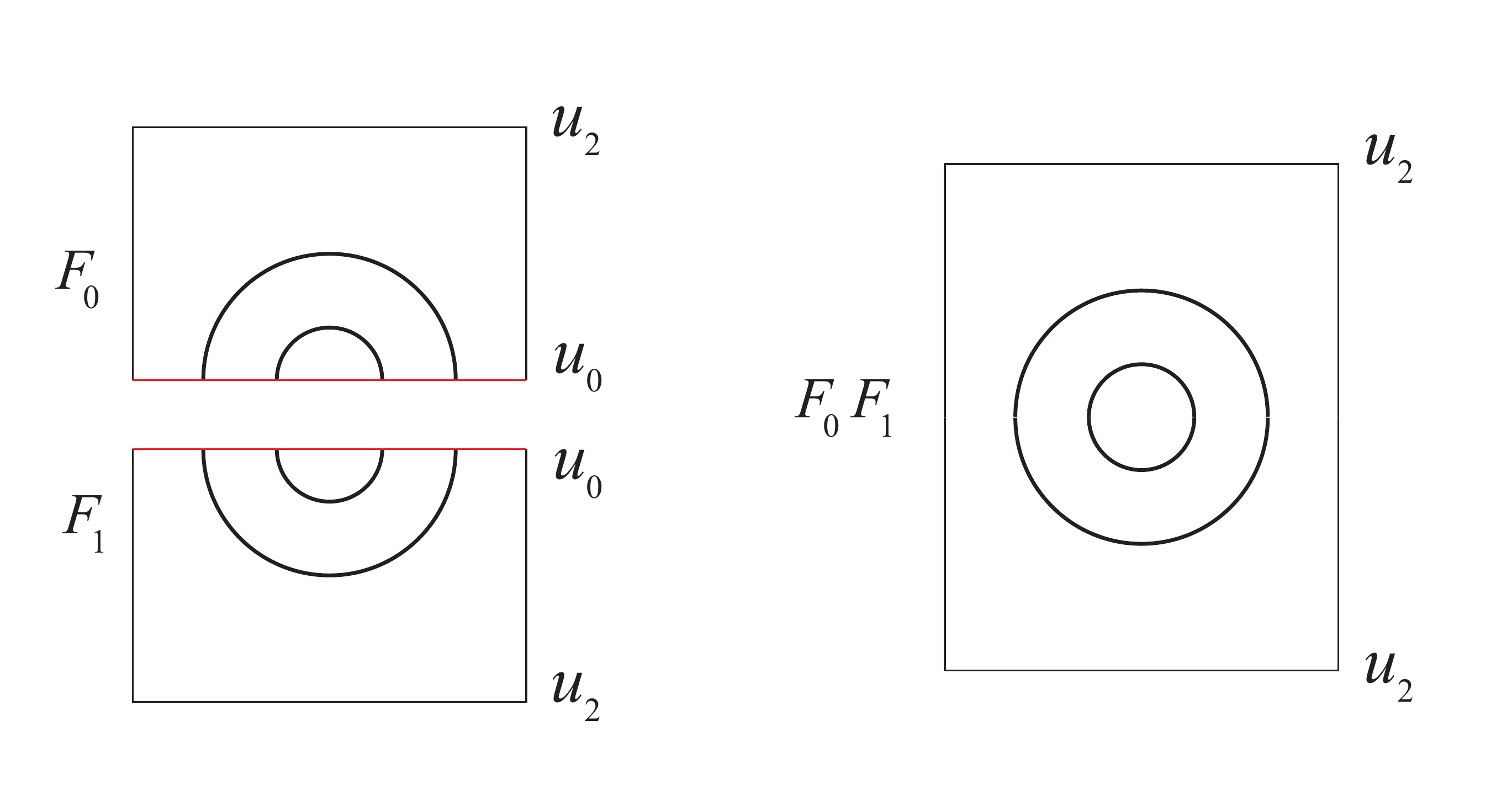}
\caption{\label{fig_R23_d0_flat}  Flattened thin facets of $F_0$ and $F_1$ containing arcs $u_0,u_2$. Composition $F_0F_1$ contains thin surface $S$ (shown on the right) given by gluing the two thin surfaces along the common arc $u_0$. This surface has two singular circles where double facets attach.}
\end{center}
\end{figure}
Foam $F_1$ goes back from $D_2$ to $D_0(00)$ and is given by reflecting $F_0$ in the horizontal plane.
The thin facet of $F_1$ is shown as the lower half of Figure~\ref{fig_R23_d0_flat}. Semicircles depict singular edges along the thin facet.

Denote the maps $F_0,F_1$ induce on state spaces and on complexes built out of the state spaces of all resolutions of $D_0(00)$ and $D_2$ by
\begin{equation}
    \tau_0: \ C(D_0(00)) \lra C(D_2), \ \ \tau_1: C(D_2)\lra C(D_0(00)).
\end{equation}
We know that both $\tau_0$ and $\tau_1$ are isomorphisms of the state spaces and corresponding complexes, since annihilating a digon facet with a thick edge is an isomorphism, see Proposition~\ref{prop_digon_d}.

More precisely, $\tau_0\tau_1 = - \rho^{-1}\Id$. Indeed, the  composition $\tau_0\tau_1$ is an endomorphism of the state space $\angf{\Gamma}$ for each web resolution $\Gamma$ of $D_2$ and the induced endomorphism of the complex $C(D_2)$. The map
$\tau_0\tau_1: \angf{\Gamma}\lra \angf{\Gamma}$ transforms arc $u_2$ of the diagram $D_2$ to the arc $u_0$ of $D_0(00)$ and back, via the composition of foams $F_0F_1$.

Consider the thin surface $S$ bounded by $u_2$ at the top and bottom of the cobordism $F_0F_1$. It can be visualized by gluing the two thin surfaces for $F_0$ and  $F_1$ shown in Figure~\ref{fig_R23_d0_flat} along the common arc $u_0$, shown in red. Surface $S$
contains two nested singular circles, where double facets of $F_0F_1$ meet $S$. Double facets at these two circles attach to $S$ from opposite sides, as one can glean from Figure~\ref{fig:R23_simplify}. This corresponds to having two double edges attached to arc $u_0$ on one side and the other double edge attached to $u_0$ on the other side of the plane, at both endpoints, see the leftmost diagram in Figure~\ref{fig:R23_simplify}.

Apply Proposition~\ref{prop_double_nc} at each of these attached double facets to simplify the non-trivial part of the foam $F_0F_1$ to the surface $S$ with two double disks attached to it from the  opposite sides along the two singular circles, with an  additional factor $\rho^{-2}$. We then apply Proposition~\ref{prop_disk_flip} to flip one of the disks to the opposite side, gaining a minus sign, and then use Proposition~\ref{prop_double_nc} to reduce to the identity foam times $-\rho^{-1}$.

Consequently, maps $\tau_0$ and $-\rho\tau_1$ are mutually-inverse isomorphisms.

Note that diagrams $D_0(11)$ and $D_2$ are isotopic and their complexes are canonically isomorphic. Complex $C(D_0(01))$ decomposes into direct sum of two copies of $D_0(11)$ in the usual way. The composition $\psi_2\tau_1: C(D_2)\lra C(D_0(01))$ is a split inclusion into one of these copies.Since $\tau_1$ is an isomorphism, this composition allows to split off contractible  summand
\[ 0 \lra C(D_0(00)) \stackrel{\cong}{\lra}
\mathrm{im}(\psi_2) \lra 0
\]
from the total complex of $D_0$, also see Figure~\ref{fig_R23_d0_res}. The map $\psi_4$ induces an isomorphism from the complementary direct summand of $C(D_0(01))$, also isomorphic to $C(D_2)$, to $C(D_0(11))$, allowing to split the second contractible summand from $C(D_0)$. After removing these contractible summands, the entire complex $C(D_0)$ in Figure~\ref{fig_R23_d0_res} is downsized to $C(D_0(10))$. The inclusion
$C(D_0(10))\subset C(D_0)$ realizing this chain homotopy equivalence is given in coordinates by  $(\mathrm{id},\delta),$ see Figure~\ref{fig_R23_d0_res} with $\delta$ the diagonal map induced by the simplest cobordism from $D_0(10)$ to $D_0(01)$, with the property $\psi_3=\psi_4\delta$.

\begin{figure}[h]
\begin{center}
\includegraphics[scale=0.40]{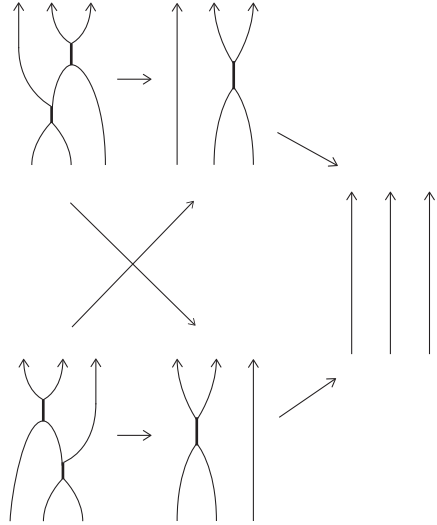}
\caption{\label{fig:R23_final} Common reduction of $C(D)$ and $C(D')$.}
\end{center}
\end{figure}

Reducing the map of complexes $C(D_0)\lra C(D_1)$ to the map $C(D_0(10))\lra C(D_1)$ via the above inclusion of complexes results in the complex shown
in Figure~\ref{fig:R23_final}, with all arrows given by maps induced by the elementary foams between these webs. Signs need to  be added to make each square anticommute, but the isomorphism class of the complex does not depend on the distribution of signs. This complex has an obvious symmetry given by reflecting all diagrams and foams about the vertical axis (or plane, in case of foams) and permuting top and bottom terms in the complex.

The cone of the map $C(D_0')\lra C(D_1')$ in Figure~\ref{fig:R23_partial_res} right reduces to the isomorphic complex, by removing contractible summands of $C(D_0')$ in the same fashion as for $C(D_0)$.
$\square$

This completes the proof of Theorem~\ref{thm_homotopy_invariance}. $\square $

Our proofs of the Reidemeister R2 and R3 relations, for upwards orientations and in $N=2$ case, are essentially identical to those in the usual equivariant case, when $p_{ij}=1$. This observation mirrors our earlier Theorem~\ref{theorem_nilHecke} and Remark~\ref{remark_surprise}  that our deformation does not change the nilHecke algebra relation.
This makes it likely that our $p(x,y)$ deformation does not modify the Soergel category and that the Soergel category will act in the deformed situation as well, with the proofs of Reidemeister R2 and R3 moves for upward orientations identical to that in the $p(x,y)=1$ case. Then $p(x,y)$ deformation would only modify the first Reidemeister move and Reidemeister moves R2 and R3 for non-braid orientations of strands. This expectation mirrors our observation that $p(x,y)$ may only contribute to the deformation of the Frobenius structure, not of multiplication. In the $N=2$ case, similar deformations can be hidden at the level of link homology, see~\cite{V,Kh4}.



\end{document}